\newcommand{\items}{\begin{itemize}[leftmargin=25pt,rightmargin=15pt]
  \setlength\itemsep{2pt}}
\lstdefinelanguage{Mathematica}{
    keywords={Module, For, Table, Graphics, Print, If, While, Do, Function, Plot},
    sensitive=true,
    morecomment=[l]{(*},
    morecomment=[s]{(*}{*)},
    morestring=[b]"
}
\newcommand{\stopitems}{\end{itemize}}
\subjclass[2020]{57K41, 57R58, 32S25}
\newtheorem{thm}{Theorem} 
\newtheorem{theorem}{Theorem}[section] 
\newtheorem*{theorem*}{Theorem}
\newtheorem{lemma}[theorem]{Lemma}
\newtheorem*{conjecture*}{Conjecture}
\newtheorem*{question*}{Question}
\newtheorem*{lemma*}{Lemma}
\newtheorem{conj}{Conjecture} %
\newtheorem*{acknowledgement}{Acknowledgement}
\newtheorem{ques}[conj]{Question} 
\newtheorem{proposition}[theorem]{Proposition}
\newtheorem{corollary}[theorem]{Corollary}
\newtheorem*{corollary*}{Corollary}
\theoremstyle{definition}
\newtheorem{definition}[theorem]{Definition}
\newtheorem{remark}[theorem]{Remark}
\newtheorem{example}[theorem]{Example}
\newtheorem*{example*}{Example}
\newtheorem*{remark*}{Remark}
\newtheorem*{remarks*}{Remarks}
\newtheorem*{addenda*}{Addenda}
\newtheorem*{construction*}{Construction}
\newcommand{\RP}{\mathbb{RP}}
\newcommand{\CP}{\mathbb{CP}}
\DeclareMathOperator{\SWF}{SWF}
\DeclareMathOperator{\swapr}{SW_{apr}^{+}}
\DeclareMathOperator{\Bafu}{BF}
\DeclareMathOperator{\FBafu}{FBF}
\DeclareMathOperator{\Det}{det}
\DeclareMathOperator{\ind}{Ind}
\newcommand{\R}{\mathbb R}
\newcommand{\Z}{\mathbb Z}
\newcommand{\bc}{\mathbb C}
\newcommand{\id}{\textup{id}}
\newcommand{\image}{\textup{im}}
\renewcommand{\phi}{\varphi}
\DeclareMathOperator{\SO}{SO}
\DeclareMathOperator{\Pin}{Pin}
\newcommand{\unred}[1]{ \ignorespaces}  
\newcommand{\baf}{\textup{BF}}
\newcommand{\del}{\partial}
\title[Constraints on Lefschetz fibrations with four-dimensional fibers]{Constraints on Lefschetz fibrations with four-dimensional fibers from Seiberg--Witten theory}
\author{Hokuto Konno}
\address{Graduate School of Mathematical Sciences, the University of Tokyo, 3-8-1 Komaba, Meguro, Tokyo 153-8914, Japan \\and\\
RIKEN iTHEMS, Wako, Saitama 351-0198, Japan}
\email{konno@ms.u-tokyo.ac.jp}
\author{Jianfeng Lin}
\address{Yau Mathematical Sciences Center, Tsinghua University, Beijing, 100871, China}
\email{linjian5477@mail.tsinghua.edu.cn}
\author{Anubhav Mukherjee}
\address{Department of Mathematics, Princeton University, Princeton, 08540, USA}
\email{anubhavmaths@princeton.edu}
\author{Juan Muñoz-Echániz}
\address{Simons Center for Geometry and Physics, State University of New York, Stony Brook, 11794, USA}
\email{jmunozechaniz@scgp.stonybrook.edu}
\begin{document}
\maketitle
\setlength{\headheight}{12.0pt}
\begin{abstract} 
We establish constraints on the topology of smooth Lefschetz fibrations with $4$-dimensional fibers, by studying the family Bauer-Furuta invariant. To compute this invariant, we analyze the framed bordism class of 1-dimensional Seiberg--Witten moduli spaces using the local index theorem by Bismut--Freed. Using this, we deduce new obstructions to the smooth isotopy to the identity for compositions of Dehn twists on $(-2)$--spheres in closed $4$-manifolds. We obtain several applications: (1) We exhibit the first examples of closed simply-connected symplectic $4$-manifolds admitting Torelli symplectomorphisms which are smoothly non-trivial. In particular, their symplectic Torelli mapping class group is not generated by squared Dehn--Seidel twists on Lagrangian spheres --- providing a negative answer to a question of Donaldson. (2) We provide the first examples of irreducible closed $4$-manifolds (both symplectic and non-symplectic) that admit exotic diffeomorphisms given by Seifert-fibered Dehn twist.
\end{abstract}
\section{Introduction}



The structure of the smooth mapping class group $\pi_0 \mathrm{Diff}(X)$ of a closed oriented smooth $4$-manifold can be probed through diffeomorphisms arising from several generalizations of the classical Dehn twist. One such construction uses a smoothly embedded $2$-sphere $S \subset X$ of self-intersection $S \cdot S = -2$ (a “$(-2)$–sphere”) to define a diffeomorphism $\tau_S \in \pi_0 \mathrm{Diff}(X)$ called the \emph{Dehn twist} on $S$, which acts as the antipodal involution on $S$ and is supported in an arbitrarily small neighborhood of $S$ (see \S \ref{section: Framings of H plus} for its definition). 
Important examples of $(-2)$--spheres $S$ are the \textit{Lagrangian} spheres in symplectic $4$-manifolds $(X, \omega)$, in which case the reflection $\tau_S$ naturally lifts to the symplectic mapping class group as the \textit{Dehn--Seidel twist} $\tau_S \in \pi_0 \mathrm{Symp}(X, \omega)$ (\cite{Arnold1995,seidelknotted,seidel}).

In this article, we establish new obstructions to the smooth isotopy to the identity for compositions of Dehn twists $\tau_{S_1} \cdots \tau_{S_n}$ (Theorem \ref{thm: main}, Corollary \ref{cor: examples}), which can be interpreted as constraints on the topology of Lefschetz fibrations with four-dimensional fibers (Theorem \ref{thm: main generalized}, Corollary \ref{cor: K3}). Our results elucidate the following phenomenon: compositions of Dehn twists in a closed oriented $4$-manifold $X$ may act \textit{trivially on the homology} of $X$ yet still \textit{fail to be smoothly isotopic to the identity}. In some of these examples, the spheres $S_1, \ldots, S_n$ can even be taken to be Lagrangian for a symplectic structure on $X$ yielding, in particular, a negative answer to a well-known question by Donaldson (Question \ref{Donaldson}, Theorem \ref{thm:donaldson}). Our obstructions are not limited to the symplectic case: for instance, we shall exhibit similar phenomena in closed oriented irreducible $4$-manifolds which do not admit symplectic structures (Theorem \ref{thm: nonsymplectic}). 


These results are obtained by analyzing the \textit{framed bordism class} of the family Seiberg--Witten moduli spaces associated to the mapping torus of the diffeomorphism $\tau_{S_1} \cdots \tau_{S_n}$. Namely, we equip these moduli spaces with various stable framings with topological significance and then compare and calculate the corresponding bordism classes.

\subsection{The symplectic Torelli group and Donaldson's question}

This article provides new insights into the structure of symplectic mapping class groups in dimension $4$. For a closed symplectic $4$-manifold $(X, \omega )$, the \textit{symplectic Torelli group} is the subgroup of the symplectic mapping class group acting trivially on the cohomology:
\[
I(X, \omega ):= \mathrm{Ker}\Big( \pi_0 \mathrm{Symp}(X, \omega )\to \mathrm{Aut}H^\ast (X, \mathbb{Z} )\Big).
\]
For all symplectic manifolds of dimension a multiple of $4$, the squared Dehn--Seidel twist $\tau_L^2$ on a Lagrangian sphere is an element of $I(X, \omega )$. The following is a well-known question (\cite{sheridan-smith}):
\begin{ques}[Donaldson]\label{Donaldson}
    
For a closed simply--connected symplectic $4$-manifold $(X, \omega )$, is the symplectic Torelli group $I(X, \omega )$ generated by squared Dehn--Seidel twists on Lagrangian spheres?
\end{ques}

The answer to Question \ref{Donaldson} is known to be affirmative for positive rational surfaces \cite{li-li-wu}, but otherwise remains widely open. If one drops the simple--connectivity assumption on $M$ then examples exist for which both answers are negative \cite{arabadji-baykur,Smirnov23}. If one drops the assumption that $X$ be closed, and considers compact simply-connected symplectic $4$-manifolds with convex boundary, then the authors have also provided counterexamples \cite{KLMME}. 

On the other hand, it is a special fact in $4$ dimensions that $\tau_{L}^2$ is also \textit{smoothly} isotopic to the identity\footnote{$\tau_{L}^2$ is also smoothly trivial in dimension $12$ \cite{RW-Keating}.}, but often non-trivial in $\pi_0 \mathrm{Symp}(X, \omega )$. Thus, the \textit{smoothly trivial symplectic mapping class group}
\[
K(X, \omega ) := \mathrm{Ker}\Big( \pi_0 \mathrm{Symp}(X, \omega ) \to \pi_0 \mathrm{Diff}(X) \Big).
\]
has a rich structure in dimension $4$. Of course, $K(X, \omega ) $ is a subgroup of $ I(X, \omega )$. Besides an affirmative answer for positive rational surfaces \cite{li-li-wu}, the following natural question also remains open:
\begin{ques}\label{ques:KvsI}
    For a closed symplectic $4$-manifold $(X, \omega )$, is $K(X, \omega ) = I (X, \omega )$ ?
\end{ques}

Note that if $X$ is simply-connected and Question~\ref{ques:KvsI} has a negative answer --- that is, $K(X, \omega )$ is a proper subgroup of $I(X, \omega )$ --- then Donaldson’s Question~\ref{Donaldson} does as well, since $\tau_L^2 \in K(X,\omega)$. 
We give a \emph{negative} answer to Donaldson’s Question~\ref{Donaldson} by showing that Question~\ref{ques:KvsI} also has a negative answer:

\begin{thm}
\label{thm:donaldson}
There exist infinitely many simply-connected closed minimal symplectic 4-manifolds $(X, \omega)$ for which $K(X, \omega ) \neq I(X, \omega  )$.
\end{thm}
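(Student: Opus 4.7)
The plan is to reduce the statement to the paper's main obstruction theorem on compositions of Dehn twists on $(-2)$-spheres, namely Theorem \ref{thm: main} (or its concrete Corollary \ref{cor: examples}). Since every Dehn--Seidel twist $\tau_L$ on a Lagrangian $(-2)$-sphere lifts the corresponding smooth Dehn twist to $\pi_0 \Symp(X, \omega)$ and acts on $H^\ast(X; \mathbb{Z})$ by the reflection $r_{[L]}$, it suffices to exhibit, for infinitely many simply-connected closed minimal symplectic $4$-manifolds $(X, \omega)$, a configuration of Lagrangian $(-2)$-spheres $L_1, \ldots, L_n$ such that the composition $\phi := \tau_{L_1} \cdots \tau_{L_n}$ (a) acts trivially on $H^\ast(X; \mathbb{Z})$, so that $\phi \in I(X, \omega)$, and (b) satisfies the hypotheses of Corollary \ref{cor: examples}, so that $\phi \notin K(X, \omega)$.

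For the base example, I would turn to an elliptic surface $E(n)$ with $n \geq 2$, or to the $K3$ surface, both of which carry rich $ADE$-type configurations of Lagrangian $(-2)$-spheres arising as vanishing cycles of Lefschetz fibrations. Within such a configuration one can select a word in the Dehn--Seidel twists that maps to the identity in the associated Weyl group -- for instance, through a Coxeter-type relation, or by taking a product $\tau_{L_1}\tau_{L_2}$ for two disjoint Lagrangian spheres representing the same class in $H_2$. This secures (a). For (b), one verifies the topological hypotheses of Corollary \ref{cor: examples} for the chosen configuration; this is precisely where the framed bordism class of the $1$-dimensional family Seiberg--Witten moduli space enters, with non-vanishing controlled by the Bismut--Freed local index theorem computations carried out later in the paper.

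To produce infinitely many pairwise non-diffeomorphic examples, I would vary the base manifold using a construction that preserves both the Lagrangian sphere configuration and the non-vanishing of the obstruction. A natural candidate is Fintushel--Stern knot surgery on a regular torus fiber of $E(n)$ (or $K3$) using infinitely many distinct fibered knots: the resulting $4$-manifolds remain simply-connected, closed, minimal, and symplectic; the surgery is supported away from the chosen Lagrangian spheres, so $\phi$ continues to define an element of $I(X, \omega)$; and the Seiberg--Witten-type obstruction is compatible with knot surgery (through its interaction with the Alexander polynomial), yielding infinitely many choices of $(X,\omega)$ with $K(X, \omega) \neq I(X, \omega)$.

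The main obstacle is arranging (a) and (b) simultaneously for a single configuration: the word in Dehn--Seidel twists must satisfy a Weyl-group relation on cohomology while still yielding a non-vanishing framed bordism class on the family Seiberg--Witten moduli space. This balance is controlled by the delicate interplay between the Lagrangian configuration and the Seiberg--Witten basic classes of $(X, \omega)$, and is the technical core of the argument, handled by the paper's main obstruction theorem together with a check that the obstruction survives the chosen infinite-family construction.
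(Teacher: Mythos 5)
Your high-level reduction is the right one (find Lagrangian $(-2)$--spheres whose product of Dehn--Seidel twists is homologically trivial, then invoke Theorem \ref{thm: main}/Corollary \ref{cor: examples} to rule out smooth triviality), but the proposal leaves out precisely the content that makes the argument work, and the specific configurations you suggest would fail. Corollary \ref{cor: examples} is not a general statement about any word that is trivial in the Weyl group: the obstruction is the non-vanishing of the spin number $\Delta$, and this must be established for a concrete configuration. Your candidate ``$\tau_{L_1}\tau_{L_2}$ for two disjoint Lagrangian spheres in the same homology class'' cannot work: if $L_1$ and $L_2$ are smoothly isotopic this composition is smoothly isotopic to $\tau_{L_1}^2$, which is smoothly trivial in dimension $4$, so it lies in $K(X,\omega)$ and detects nothing --- this is exactly the squared-twist phenomenon that Donaldson's question is about. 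A generic ``Coxeter-type relation'' is likewise unsubstantiated; by Theorem \ref{thm: main} such a word can only be useful if $\Delta \neq 0 \bmod 2$, and you never exhibit a configuration with this property. The paper's key input here is the configuration $\mathcal{S}$ of Definition \ref{definition:Sexceptional}: the $h$-fold repetition of a distinguished basis of vanishing spheres of an exceptional unimodal singularity (e.g.\ $M(2,3,7)$, $\mu =12$, $h=42$), whose homological triviality comes from the finite order $h$ of the monodromy and whose spin number is shown to be a generator of $\pi_1 SO(b^+)$ by the explicit winding-number computation of \S\ref{subsubsection:algorithm} (Proposition \ref{prop: Milnor fiber of unimodal singularity}) --- not by the Bismut--Freed local index theorem, which enters only in the proof of Theorem \ref{thm: main generalized} itself.

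The choice of ambient manifolds is also a genuine gap. Neither $K3$ nor $E(n)$ works as a ``base example'': in $K3$ the composition $(\tau_{S_1}\cdots\tau_{S_\mu})^h$ is in fact smoothly isotopic to the identity (\cite[Proposition 2.25]{KLMME}, as recalled after Corollary \ref{cor: K3}), and for $E(n)$ one cannot satisfy the hypotheses of Corollary \ref{cor: examples} (a mod $2$ basic class $\mathfrak{s}$ with $c_1(\mathfrak{s})$ and $\sigma(X)$ divisible by $32$, $d(\mathfrak{s})=0$, $c_1(\mathfrak{s})|_M=0$). The paper instead takes $X=E(4n)_{p,q}$ with $p,q$ odd and coprime (outside the exceptional list \eqref{eq: exceptional p q})): Lemma \ref{lem: BK mod 2 basic class} supplies the required basic class, and Lemma \ref{lem: fiber sum}/Theorem \ref{thm: elliptic surface examples} supply a symplectic form $\omega$ together with a \emph{symplectic} embedding of $(M(2,3,7),\omega_0)$ away from the nucleus --- a nontrivial construction (weighted-projective compactification, Eliashberg--Gromov, symplectic rational blowdown realizing the log transforms) that your proposal does not address but which is needed so that the $S_i$ are Lagrangian in $(X,\omega)$ and the composition defines an element of $I(X,\omega)$. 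Your knot-surgery route with fibered knots could in principle give further symplectic examples, but you would still have to verify all of the divisibility and mod $2$ Seiberg--Witten conditions there; in the paper knot surgery with twist knots is used for the complementary purpose of producing \emph{non-symplectic} examples (Theorem \ref{thm: nonsymplectic}), while the symplectic family for Theorem \ref{thm:donaldson} comes from the logarithmic transforms, with infinitude obtained by varying $n,p,q$.
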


\begin{remark}Recently, Du--Li \cite{Li-Du2025} have also announced a counterexample to Donaldson's Question \ref{Donaldson} for a one-point blow up of a $K3$ surface. Their symplectomorphisms are the so-called ``elliptic twists'' along embedded tori with self-intersection $-1$, which are trivial in the smooth mapping class group. Thus, their examples showcase a new phenomenon (i.e., $K(X,\omega)$ is not generated by squared Dehn--Seidel twists when $X=K3\#\overline{\mathbb{CP}}^2$) essentially different from the one we study in this article. \end{remark}

As an example of Theorem \ref{thm:donaldson}, consider the $4$-manifold $X = E(4n)_{p,q}$ obtained by performing two logarithmic transformations of orders $p,q$ on the simply-connected minimal elliptic surface $E(4n)$, where $p,q \ge 1$ are odd coprime integers (excluding finitely many exceptional pairs $(p,q)$; see~(\ref{eq: exceptional p q})). Let $M = M(2,3,7) \subset \mathbb{C}^3$ be a (compact) Milnor fiber of the Brieskorn singularity $x^2 +y^3 +z^7 = 0$, equipped with the symplectic form $\omega_0$ given by restriction of the standard one in $\mathbb{C}^3$. In Theorem \ref{thm: elliptic surface examples} we construct a symplectic form $\omega$ on $X$ with certain symplectic embedding $(M,\omega_0 ) \hookrightarrow (X, \omega )$. Let $S_1 , \ldots , S_\mu$ be any distinguished basis of vanishing (Lagrangian) spheres in $(M,\omega_0 )$ (here $\mu = 12$ is the Milnor number). Then, the symplectomorphism of $(X, \omega )$ given by $(\tau_{S_1} \cdots \tau_{S_\mu})^h$ with $h = 42$ acts trivially on the cohomology of $X$, but we prove that it is smoothly non-trivial on $X$; see Corollary~\ref{cor: examples} and Example~\ref{examples_intro} below. That is, $(\tau_{S_1} \cdots \tau_{S_\mu})^h$ belongs in the symplectic Torelli group $ I(X, \omega )$ but not in $K(X, \omega )$.


\subsection{Homologically-trivial products of Dehn twists}\label{subsubsection:productsofref}

Many important classes of four-dimensional diffeomorphisms --- such as monodromies of isolated surface singularities and certain Seifert-fibered Dehn twists --- can be expressed as products of Dehn twists on $(-2)$–spheres (\cite{arnold,seidelgraded,KLMME}). This motivates the development of new techniques for studying such products of Dehn twists directly. The present article is primarily concerned with the following:
\begin{ques}\label{ques:composition}
Given a sequence of smoothly embedded $(-2)$--spheres $S_1,\cdots, S_n$ (not necessarily distinct) in a closed oriented $4$-manifold $X$, when is the product of Dehn twists $\tau_{S_1}\cdots  \tau_{S_{n}}$ smoothly isotopic to the identity?    
\end{ques}


Obviously, a necessary condition to have an affirmative answer to Question \ref{ques:composition} is that the automorphism of the cohomology $H^2 (X, \mathbb{Z} )$ induced by the product of Dehn twists be the identity:
\begin{align}
(\tau_{S_1}\cdots \tau_{S_n} )^\ast = \mathrm{Id}_{H^2 (X , \mathbb{Z} )}. \label{identitycoho}
\end{align}
Recall that each Dehn twist $\tau_{S_i}$ acts non-trivially on \(H^2(X,\mathbb{Z})\) by the Picard--Lefschetz formula $\tau_{S_i}^*\alpha = \alpha + (\alpha\cdot S_i)\,\mathrm{PD}(S_i)$,  which says that $\tau_{S_i}^\ast$ is the reflection on the hyperplane orthogonal to $S_i$ --- in particular, $(\tau_{S_i}^\ast)^2 = \mathrm{Id}$. It is natural to ask whether sufficiently intricate configurations of spheres \(S_1,\dots,S_n\), as measured by their homological intersections, could give rise to a composition \(\tau_{S_1}\cdots\tau_{S_n}\) that is not smoothly isotopic to the identity, while also satisfying \eqref{identitycoho}. To this end, we introduce the following homological invariant (which may be re-phrased in purely lattice-theoretic terms):
\begin{definition}
 Let $S_1 , \ldots , S_n $ be an ordered collection of $(-2)$--spheres in a closed oriented $4$-manifold $X$ satisfying (\ref{identitycoho}). The \textit{spin number} of $S_1, \ldots , S_n$ is the element
\[
\Delta (S_1 , \ldots S_n ) \in \pi_1 SO(b^+ (X) ) \cong \begin{cases}
    \mathbb{Z}/2 & \text{if } b^+ (X) > 2\\
    \mathbb{Z} & \text{if } b^+ (X) = 2\\
    \{ 0 \} & \text{if } b^+ (X) < 2 
\end{cases}
\]
obtained as follows. Let $\mathcal{E}$ denote the space of linear embeddings $e : \mathbb{R}^{b^+ (X)} \hookrightarrow H^2 (X , \mathbb{R} ) $ whose image $\mathrm{Im}(e)$ is a positive subspace (hence of maximal dimension $b^+ (X)$) with respect to the intersection product on $H^2 (X, \mathbb{R} )$. Fixing an embedding $e_0 \in \mathcal{E}$ yields a homotopy-equivalence $SO(b^+ (X) ) \simeq \mathcal{E} $ by reparametrisation of $e_0$. For $i = 1, \ldots , n$ let $e_i = \tau_{S_i}^\ast \circ \cdots \tau_{S_1}^\ast \circ e_0 \in \mathcal{E}$ and choose a path $\gamma_i$ in $\mathcal{E}$ from $e_{i-1}$ to the subspace $\mathcal{E}_i \subset \mathcal{E}$ consisting of embeddings whose image is orthogonal to $S_i$. Then $\Delta (S_1 , \ldots , S_n )$ is the element of $\pi_1 (\mathcal{E} , e_0 )\cong \pi_1 SO(b^+ (X) )$ given by concatenating the following $2n$ paths:
\[
\gamma_1 \, , \,  \tau_{S_1}^\ast \circ \overline{\gamma_1} \, , \, \gamma_2 \,  , \,  \tau_{S_2}^\ast\circ  \overline{\gamma_2 } \, ,\,  \cdots \, ,\,  \gamma_n \, ,\,  \tau_{S_n}^\ast \circ \overline{\gamma_n }
\]
where $\overline{\gamma_i}$ stand for the reversed. See Figure \ref{fig:spinnumber}. It can be shown that $\Delta (S_1 , \ldots , S_n )$ is independent of all auxiliary choices made (Lemma \ref{lemma:indep_choices}). 
\end{definition}

\begin{figure}[htbp]
    \centering    \includegraphics[width=0.55\textwidth]{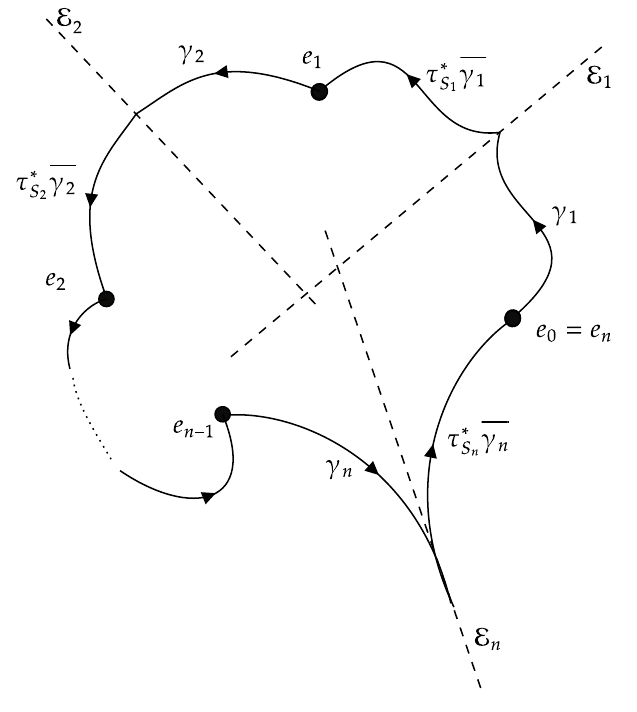}
    \caption{Schematic depiction of the spin number $\Delta (S_1 , \ldots , S_n )$ as a loop in $\mathcal{E}$ based at $e_0$.}
    \label{fig:spinnumber}
\end{figure}

The following result gives conditions on $X$ under which the non-vanishing of the spin number obstructs the smooth isotopy of $\tau_{S_1} \cdots \tau_{S_n}$ to the identity, and is a particular case of Theorem \ref{thm: main generalized} discussed below:
\begin{thm}\label{thm: main}
Let $(X, \mathfrak{s})$ be a closed simply-connected spin-c smooth $4$-manifold. 
Let $S_{1},\cdots, S_n$ be a collection of smoothly embedded $(-2)$--spheres. Assume the following conditions hold:
\begin{itemize}
    \item Both $c_{1}(\mathfrak{s})$ and $\sigma(X)$ are divisible by $32$.
    \item $d(\mathfrak{s}) := \frac{1}{4} (c_1 (\mathfrak{s} )^2 - 2 \chi (X) - 3 \sigma (X) )=0$ and the Seiberg--Witten invariant $\operatorname{SW}(X,\mathfrak{s})$ is odd.
    \item $S_i$ pairs trivially with $c_{1}(\mathfrak{s})$, i.e $ c_1(\mathfrak{s})\cdot S_i =0$.
    \item The composition $\tau_{S_1}\cdots \tau_{S_n}$ is smoothly isotopic to the identity.
\end{itemize}
Then $\Delta(S_1,\cdots, S_{n})=0$ modulo $2$. 
\end{thm}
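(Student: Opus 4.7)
The plan is to exploit the family Bauer--Furuta invariant associated to the mapping torus of $f := \tau_{S_1}\cdots \tau_{S_n}$, viewed as a family of spin-c $4$-manifolds over $S^1$. First I would form the mapping torus $\pi : E_f \to S^1$; the hypothesis $c_1(\mathfrak{s})\cdot S_i = 0$ for all $i$ ensures each Dehn twist preserves the isomorphism class of $\mathfrak{s}$, so $E_f$ carries a family spin-c structure $\mathfrak{s}_E$. Since $d(\mathfrak{s}) = 0$, the associated family Seiberg--Witten moduli space $\mathcal{M}(E_f, \mathfrak{s}_E)$ is a closed $1$-manifold over $S^1$, equipped with a canonical stable framing coming from the family Dirac operator via the Bismut--Freed local index theorem. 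The resulting framed bordism class in $\Omega^{\mathrm{fr}}_1(S^1)$ is the invariant to be analyzed.

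The core of the argument is a two-way computation of this framed bordism class. On one side, the assumption that $f$ is smoothly isotopic to the identity identifies $(E_f, \mathfrak{s}_E)$ with the trivial family $(X\times S^1, \mathfrak{s}\boxtimes \mathrm{pt})$ through a spin-c family diffeomorphism. Under this identification the family moduli space becomes $\mathcal{M}(X,\mathfrak{s})\times S^1$ with a framing pulled back from a point, so its class in $\Omega^{\mathrm{fr}}_1(S^1)$ lies in the image of the trivial summand and yields zero in the ``circle direction'' component. The divisibility conditions $32\mid c_1(\mathfrak{s})^2$ and $32\mid \sigma(X)$ ensure the appropriate parity of the Dirac index, so the relevant $\mathbb{Z}/2$-valued invariants are well-defined and the trivial framing is canonical up to the ambiguity we wish to detect.

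On the other side, I would compute the same framed bordism class directly from the decomposition $f = \tau_{S_1}\cdots \tau_{S_n}$. Each $\tau_{S_i}$ is supported near $S_i$ and acts on harmonic self-dual forms as the Picard--Lefschetz reflection $\alpha\mapsto \alpha + (\alpha\cdot S_i)\,\mathrm{PD}(S_i)$. Concatenating these contributions along the loop parametrizing $E_f$, and comparing with the paths $\gamma_i \in \mathcal{E}$ that trivialize the positive cone near each reflection, produces exactly the loop in $SO(b^+(X))$ whose class is $\Delta(S_1,\ldots,S_n)$. The Bismut--Freed formula converts this topological monodromy on the $H^+$-bundle into the framing discrepancy of the family moduli space; its image under $\pi_1 SO(b^+(X)) \to \mathbb{Z}/2$ is a $\mathbb{Z}/2$ multiple of $\operatorname{SW}(X,\mathfrak{s})\cdot \Delta(S_1,\ldots,S_n)$. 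Since $\operatorname{SW}(X,\mathfrak{s})$ is odd, equating the two computations forces $\Delta(S_1,\ldots,S_n) \equiv 0 \pmod 2$.

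The main obstacle will be the second computation: precisely matching the topological invariant $\Delta$ with the analytic framing contributed by the Dehn twist factors. This requires a local Bismut--Freed calculation near each $\tau_{S_i}$, tracking the spectral flow of the family Dirac operator as the parameter traverses the reflection, and comparing it with the Picard--Lefschetz action on $H^+$. Showing that the answer depends only on the homotopy class of the loop in $\mathcal{E}$ (so that the auxiliary choices of paths $\gamma_i$ and base embedding $e_0$ drop out) is what makes $\Delta(S_1,\ldots,S_n)$, rather than just the monodromy of $f^*$ on cohomology, the correct obstruction.
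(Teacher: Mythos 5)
Your overall strategy (framed bordism class of the one-dimensional family Seiberg--Witten moduli space, compared for two natural framings, with the change-of-framing weighted by $\operatorname{SW}(X,\mathfrak{s})$) is the right flavor, but the proposal has two genuine gaps at exactly the points where the real work lies. First, the framing discrepancy between the ``trivial-family'' side and the ``Dehn-twist-decomposition'' side is \emph{not} just $\operatorname{SW}\cdot\Delta(S_1,\ldots,S_n)$: it has two independent contributions, one from the framing of the $H^+$-bundle over $S^1$ (this is the part that equals $\Delta$, as you say) and one from the framing of the determinant line of the family Dirac operator. The latter is an integer which is not automatically even, and evaluating it is where the hypotheses ``$c_1(\mathfrak{s})$ and $\sigma(X)$ divisible by $32$'' actually enter --- not, as you suggest, to make the invariants ``well-defined''. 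In the paper one caps off the mapping torus by a smooth Lefschetz fibration $E\to S^2$ (this uses the isotopy to the identity), extends $\mathfrak{s}$ to a spin-c structure on the $6$-manifold $E$, and proves via the Bismut--Freed local index theorem over the punctured base that the Dirac-determinant framing difference equals $\operatorname{ind}(D^+(E,\mathfrak{s}_E))$; only then does a cohomological argument on $E$ (writing $c_1(\mathfrak{s}_E)=f^*(a)+32b$ using an exact sequence for $H^2(E)$ and torsion-freeness of $H^3(E_0,\partial E_0)$) show this index is even. Your plan never constructs the $6$-manifold, so this term is simply unaccounted for.

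Second, your ``local Bismut--Freed calculation near each $\tau_{S_i}$, tracking spectral flow'' cannot produce the needed identity. Bismut--Freed controls the curvature/holonomy of the determinant line bundle, i.e.\ the comparison of framings $\xi_D$; it says nothing about the framed bordism class of the nonlinear Seiberg--Witten moduli space of the Dehn-twist family itself. The essential analytic/homotopy-theoretic input in the paper is the vanishing $\FBafu(T(\tau_{S_i}),\xi_D^{S_i},\xi_d^{S_i})=0$ for a single Dehn twist with its canonical (Dehn twist) framings, which is proved by a relative and family version of Bauer's excision along $\mathbb{R}P^3$ together with a $\Pin(2)$-equivariant ``BF-type'' vanishing argument exploiting the spin structure on $\nu(S_i)$; a second, softer vanishing statement handles the framings induced from the bounding family over the punctured surface. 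Without these two vanishing results (and a reduction from the single mapping torus of the full composition to the individual twists, which the paper achieves by working over the punctured surface with one boundary circle per vanishing cycle), there is no way to ``equate the two computations'' as your final step requires. So the skeleton is compatible with the paper's argument, but the two decisive steps --- the $\Pin(2)$-equivariant excision/vanishing for each Dehn twist, and the identification and parity of the $6$-dimensional Dirac index via the Lefschetz fibration --- are missing, and the role you assign to the divisibility hypotheses is incorrect.
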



In \S\ref{subsection:lef}, we interpret the spin number $\Delta(S_1, \ldots, S_n)$ in terms of Lefschetz fibrations on $6$-manifolds and interpret Theorem~\ref{thm: main} through this viewpoint (see Theorem~\ref{thm: main generalized}).\\


We now explain how Theorem~\ref{thm: main} can be applied to produce examples of configurations of spheres satisfying~(\ref{identitycoho}) for which Question~\ref{ques:composition} has a negative answer. 
Let $X$ be a closed oriented $4$-manifold containing a smoothly embedded copy $M \subset X$ of the Milnor fiber of an \emph{exceptional unimodal singularity} (\cite{arnold-normalforms}; see \S\ref{subsection: Configurations of spheres from exceptional unimodal singularities} for background). 
Let $S_1, \ldots, S_\mu \subset M$ be a distinguished basis of vanishing spheres of the singularity. The monodromy of the singularity is given by the composition of Dehn twists $\tau_{S_1} \cdots \tau_{S_\mu}$, which acts on $H^2(M, \mathbb{Z})$ with finite order $h$ (see Table~\ref{table:unimodal} for the corresponding values of $\mu = p+q+r$ and $h$). 
We then consider the ordered configuration of spheres in $X$
\begin{align}
\mathcal{S} := \underbrace{S_1, \ldots, S_\mu}_{h \text{ times}},\label{spheres}
\end{align}
which satisfies~(\ref{identitycoho}). For this configuration, we show that $\Delta(\mathcal{S}) \neq 0 \bmod 2$ (Proposition~\ref{prop: Milnor fiber of unimodal singularity}, Corollary~\ref{corollary:DeltaLefschetz}). Hence, Theorem~\ref{thm: main} yields:

\begin{corollary}\label{cor: examples}
Let $(X, \mathfrak{s})$ be a closed simply-connected oriented spin-c smooth 4-manifold. 
Assume the following conditions hold:
\begin{itemize}
    \item The Milnor fiber $M$ of an exceptional unimodal singularity is smoothly embedded in $X$ so that $c_1(\mathfrak{s})|_M=0$.
    \item Both $c_{1}(\mathfrak{s})$ and $\sigma(X)$ are divisible by $32$.
    \item $d(\mathfrak{s})=0$ and $\operatorname{SW}(X,\mathfrak{s})$ is odd.
\end{itemize}
Then the product of Dehn twists $(\tau_{S_1} \cdots \tau_{S_\mu })^h$, for the configuration of spheres in (\ref{spheres}), is not smoothly isotopic to the identity on $X$ (but satisfies (\ref{identitycoho})). 
\end{corollary}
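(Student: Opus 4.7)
The plan is to apply Theorem~\ref{thm: main} to the configuration $\mathcal{S}$ of (\ref{spheres}), via its contrapositive: once all four hypotheses of Theorem~\ref{thm: main} are verified for $\mathcal{S}$ and one separately establishes $\Delta(\mathcal{S}) \neq 0 \bmod 2$, it follows immediately that the composition $(\tau_{S_1}\cdots \tau_{S_\mu})^h$ cannot be smoothly isotopic to the identity on $X$.

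Three of the four hypotheses of Theorem~\ref{thm: main} --- the $32$-divisibility of $c_1(\mathfrak{s})$ and $\sigma(X)$, the vanishing of $d(\mathfrak{s})$, and the oddness of $\operatorname{SW}(X,\mathfrak{s})$ --- are assumed directly in the corollary. The condition $c_1(\mathfrak{s})\cdot S_i = 0$ for each vanishing sphere $S_i$ would follow from the observation that $S_i \subset M$, so the pairing is computed inside $M$ and vanishes by the hypothesis $c_1(\mathfrak{s})|_M = 0$. The precondition (\ref{identitycoho}) needed even to define $\Delta(\mathcal{S})$ is in turn a consequence of classical Picard--Lefschetz theory: the singularity monodromy $\tau_{S_1}\cdots \tau_{S_\mu}$ acts with order $h$ on $H^2(M,\mathbb{Z})$, and, since each individual reflection $\tau_{S_i}^\ast$ fixes the orthogonal complement of $M$ in $H^2(X,\mathbb{Z})$, so does the whole monodromy; therefore its $h$-th power acts as the identity on all of $H^2(X,\mathbb{Z})$.

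With the hypotheses of Theorem~\ref{thm: main} in place, the theorem delivers the implication \emph{``$(\tau_{S_1}\cdots\tau_{S_\mu})^h \simeq \id \;\Rightarrow\; \Delta(\mathcal{S}) = 0 \bmod 2$''}, and I would close the argument by quoting the spin number computation $\Delta(\mathcal{S}) \neq 0 \bmod 2$ recorded in Proposition~\ref{prop: Milnor fiber of unimodal singularity} (equivalently Corollary~\ref{corollary:DeltaLefschetz}). The bulk of the substantive content of the corollary therefore lies in that spin number computation, and that is the step I expect to be the main obstacle: one must understand the loop in $\pi_1 SO(b^+(X))$ traced out by iterating the Picard--Lefschetz reflections $h$ times through the space of maximal positive subspaces of $H^2(X,\mathbb{R})$, and extract its mod $2$ class from specific features of exceptional unimodal singularities --- essentially, the structure of the Milnor lattice and the spin character of the classical monodromy on the positive-definite part. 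Everything else in the proof is bookkeeping to line up the hypotheses of Theorem~\ref{thm: main}.
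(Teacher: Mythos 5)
Your proposal is correct and follows the same route as the paper: Corollary~\ref{cor: examples} is obtained exactly by verifying the hypotheses of Theorem~\ref{thm: main} for the configuration (\ref{spheres}) (with $c_1(\mathfrak{s})\cdot S_i=0$ coming from $S_i\subset M$ and (\ref{identitycoho}) from the order-$h$ homological monodromy acting trivially off $H^2(M)$), and then invoking the separately established non-vanishing $\Delta(\mathcal{S})\neq 0 \bmod 2$ of Proposition~\ref{prop: Milnor fiber of unimodal singularity} to run the contrapositive. Quoting that proposition for the spin-number computation, rather than redoing it, is precisely what the paper does as well.
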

\begin{example}\label{examples_intro}
Again, for example, consider the $4$-manifold $X = E(4n)_{p,q}$ obtained by performing two logarithmic transformations of orders $p,q$ on the simply connected minimal elliptic surface $E(4n)$, where $p,q \ge 1$ are odd coprime integers (excluding finitely many exceptional pairs $(p,q)$; see~(\ref{eq: exceptional p q})). 
The Milnor fiber $M = M(2,3,7)$ of the Brieskorn singularity $x^2 + y^3 + z^7 = 0$ --- an exceptional unimodal singularity with $\mu = 12$ and $h = 42$ --- admits a smooth embedding in $X = E(4n)_{p,q}$. 
Moreover, there exists a spin$^c$ structure $\mathfrak{s}$ on $X$ satisfying the required conditions, so that Corollary~\ref{cor: examples} implies the smooth nontriviality of $(\tau_{S_1} \cdots \tau_{S_\mu})^h$ in $X$ (see \S\ref{subsection:ellipticsurfaces} for details). 
An explicit picture of a loop representing the spin number $\Delta(\mathcal{S})$ in this case is given in Figure~\ref{figure:237} (and see Appendix~\ref{appendix:figures} for other exceptional unimodal singularities). We also note that similar examples can be constructed from other exceptional unimodal singularities (e.g. $x^2 + y^3 + z^8 = 0$).
Other examples obtained by knot surgery ---rather than logarithmic transformation--- on an elliptic surface are discussed in \S \ref{subsection:knowsurgery} (see also Theorem \ref{thm: nonsymplectic}).
\end{example}

When $X$ is simply-connected, such as in the aforementioned examples, the diffeomorphism $(\tau_{S_1} \cdots \tau_{S_\mu })^h$ from Corollary \ref{cor: examples} is topologically isotopic to the identity by \cite{Quinn86, gabai2023pseudoisotopies, Perron}, thus providing examples of \textit{exotic diffeomorphisms} in an irreducible closed $4$-manifold. The first examples of such were recently given by Baraglia and the first author \cite{baragliakonno2024irreducible}. On the other hand, $(\tau_{S_1} \cdots \tau_{S_\mu })^h$ agrees with the \textit{Seifert-fibered Dehn twist} on the boundary of the Milnor fiber $M \subset X$ \cite[Proposition 2.14]{KLMME}. Recently, exotic diffeomorphisms of $4$-manifolds arising as Dehn twists along Seifert fibered $3$-manifolds have been extensively studied~\cite{konno-mallick-taniguchi,KLMME,KangParkTaniguchi,miyazawa,KLMME2,KangParkTaniguchi2}. Most of these studies concern $4$-manifolds with boundary, and there have been no known examples of exotic Seifert-fibered Dehn twists on irreducible closed $4$-manifolds. 
The above examples thus provide the first instances of exotic Seifert-fibered Dehn twists on irreducible closed $4$-manifolds.

\subsection{Constraints on smooth Lefschetz fibrations in dimension 6}\label{subsection:lef}


Question \ref{ques:composition} can be re-phrased in terms of \textit{smooth Lefschetz fibrations } (\cite{donaldson-lefschetz,donaldson-lefschetz-mapping}). 
For a closed oriented $6$-manifold $E$, a \textit{smooth Lefschetz fibration} on $E$ consists of a smooth map $f : E \to \Sigma$ to a closed connected oriented surface $\Sigma$ with finitely-many critical points $p_1 , \ldots , p_n$, such that: 
\begin{itemize}
    \item $f(p_{i})\neq f(p_{j})$ for all $i\neq j$
    \item there exists oriented local coordinates at $p_{i}$ and $f(x_{i})$, such that the map $f$ is expressed as $(z_{1},\cdots, z_{k})\mapsto z^2_{1}+\cdots+z^2_{k},\ \text{ for }z_{1},\cdots,z_{k}\in \mathbb{C}$. 
\end{itemize}
From a smooth isotopy from the identity to a composition of Dehn twists $\tau_{S_1}\cdots \tau_{S_n}$ one can construct a smooth Lefschetz fibration $f : E \to \Sigma$ over $\Sigma = S^2$ with regular fiber $X$ and distinguished basis of vanishing spheres $S_1 , \ldots , S_n \subset X$; and this procedure can be reversed.

By results of Donaldson \cite{donaldson-lefschetz} and Gompf \cite{gompf-lefschetz,gompfstipsicz} the closed oriented $4$-manifolds that admit a Lefschetz fibration $X\to S^2$ are the symplectic $4$-manifolds up to blowups. On the other hand, it seems hard to characterize which closed oriented $6$-manifolds admit a smooth Lefschetz fibration. 
In this direction, the following result provides constraints on the topology of smooth Lefschetz fibrations on closed oriented $6$-manifolds:

\begin{thm}\label{thm: main generalized} Let $f:E\to \Sigma$ be a smooth Lefschetz fibration over a closed oriented surface, with regular fiber a closed connected oriented $4$-manifold $X$ with $b_{1}(X)=0$ and $b^{+}(X)\equiv 3\mod 4$. Suppose that there exists a spin-c structure $\mathfrak{s}_E$ on the $6$-manifold $E$ such that the Seiberg-Witten invariant $\operatorname{SW}(X,\mathfrak{s}_{E}|_{X})$ is odd and $d(\mathfrak{s}_{E}|_{X}) = 0$. 
Then one has 
\[
\ind(D^{+}(E,\mathfrak{s}_{E}))\equiv  w_{2}(H^{+}(f)) \cdot[\Sigma] \mod 2.
\]
\end{thm}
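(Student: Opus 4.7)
The plan is to analyse the stable-framed bordism class of a $1$-dimensional family Seiberg--Witten moduli space over $\Sigma$, carrying a framing that comes from the linearisation of the SW equations. Two natural stabilisations of this framing produce two computations of the same class in $\Omega_1^{\mathrm{fr}} \cong \mathbb{Z}/2$: the analytic one gives $\ind(D^+(E,\mathfrak{s}_E)) \bmod 2$, and the topological one gives $w_2(H^+(f)) \cdot [\Sigma]$. Equating them is the theorem.

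I would first construct the family SW moduli space $\mathcal{M} \to \Sigma$ associated to $(f,\mathfrak{s}_E)$. Over regular values of $f$ the fibre is $(X, \mathfrak{s}_E|_X)$; a fibrewise metric together with a generic family perturbation produces a $0$-dimensional moduli space on every regular fibre (since $d(\mathfrak{s}_E|_X) = 0$) of odd mod-$2$ count (since $\operatorname{SW}(X,\mathfrak{s}_E|_X)$ is odd). These assemble into a smooth $2$-manifold $\mathcal{M}$. The hypotheses $b_1(X) = 0$ and $b^+(X) \geq 3$ (implied by $b^+ \equiv 3 \bmod 4$) rule out reducibles generically, so the residual $S^1$-gauge action on $\mathcal{M}$ is free and $\overline{\mathcal{M}} := \mathcal{M}/S^1$ is a closed $1$-manifold with odd mod-$2$ degree over $\Sigma$. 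Across Lefschetz critical fibres, where the fibre degenerates by collapsing a vanishing $(-2)$-sphere, the moduli space extends using the standard local Lefschetz model in a manner compatible with the framing.

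The $1$-manifold $\overline{\mathcal{M}}$ inherits a canonical stable normal framing from the linearised SW equations, and the class $[\overline{\mathcal{M}}] \in \Omega_1^{\mathrm{fr}} \cong \mathbb{Z}/2$ may then be read off in two different ways. On the analytic side, stabilising using the complex family Dirac index bundle $\ind(D^+(f,\mathfrak{s}_E)) \in K^0(\Sigma)$ with its Quillen determinant line, the obstruction to framing triviality is $w_2$ of the underlying real bundle, which by the complex structure equals $c_1(\ind(D^+(f,\mathfrak{s}_E))) \cdot [\Sigma] \bmod 2$. By the Atiyah--Singer families index theorem, this coincides with $\ind(D^+(E,\mathfrak{s}_E)) \bmod 2$, the potential rank-times-Euler-characteristic correction vanishing mod $2$ under the parity hypothesis $b^+(X) \equiv 3 \bmod 4$. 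On the topological side, stabilising using the family real bundle $H^+(f)$ yields $w_2(H^+(f)) \cdot [\Sigma]$. Since both stabilisations compute the same $[\overline{\mathcal{M}}]$, we obtain $\ind(D^+(E,\mathfrak{s}_E)) \equiv w_2(H^+(f)) \cdot [\Sigma] \bmod 2$.

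The main technical obstacle is making the two framings rigorous and comparable globally over $\Sigma$, especially in neighbourhoods of Lefschetz critical fibres where the fibrewise Dirac operator degenerates along the vanishing cycle. The local index theorem of Bismut--Freed is essential here: it provides an explicit geometric connection on the determinant line of $\ind(D^+(f,\mathfrak{s}_E))$ whose curvature $2$-form represents $c_1$ of the family index bundle, which allows the analytic framing to be extended across critical discs and compared against the topological one. Verifying that the auxiliary rank-Euler correction in the families index theorem vanishes mod $2$ requires the parity hypothesis $b^+(X) \equiv 3 \bmod 4$, and the assumption that $\operatorname{SW}(X,\mathfrak{s}_E|_X)$ is odd ensures that $[\overline{\mathcal{M}}]$ is a genuine class in $\Omega_1^{\mathrm{fr}}$ rather than being obscured by even multiplicities that would trivialise the framed bordism count.
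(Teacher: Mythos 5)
Your outline captures the general flavor of the argument (framed bordism classes of $1$-dimensional Seiberg--Witten moduli, two framings, Bismut--Freed), but as a proof it has a genuine gap at exactly the hard point: the Lefschetz critical fibers. You assert that the family moduli space and its framing ``extend using the standard local Lefschetz model'' across the singular fibers, but no such extension is standard or available --- the fiberwise Seiberg--Witten equations, the family Dirac index bundle, and the bundle $H^+(f)$ are simply not defined over a singular fiber, so the quantities ``$c_1(\ind D^+(f))\cdot[\Sigma]$'' and the global moduli space over the closed surface $\Sigma$ that your argument relies on do not literally exist. The actual proof is structured precisely to avoid this: one removes disk neighborhoods of the critical values, obtaining a genuine bundle $E_0\to\Sigma_0$ whose boundary consists of mapping tori $T(\tau_{S_i})$ of Dehn twists, and works with $1$-dimensional moduli over these boundary circles. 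The comparison then rests on three ingredients you do not supply: (i) the canonical \emph{Dehn twist framings} $\xi_d^{S_i},\xi_D^{S_i}$, built from the negative-definiteness of $\nu(S_i)$ and the spin (quaternionic) structure of the Dirac operator on $\nu(S_i)$ with APS boundary conditions; (ii) the vanishing $\FBafu(\tau_{S_i},\mathfrak{s},\xi_D^{S_i},\xi_d^{S_i})=0$, proved by a gluing/excision argument along $\RP^3$ together with $\Pin(2)$-equivariant stable homotopy theory, alongside the vanishing of $\sum_i\FBafu$ for the boundary framings because they bound over $\Sigma_0$; and (iii) a change-of-framing formula in which the framing dependence enters multiplied by $\operatorname{SW}(X,\mathfrak{s})$, so that oddness of $\operatorname{SW}$ converts the two vanishings into the congruence $\sum_i(\xi_D^{\partial_i}-\xi_D^{S_i})\equiv\sum_i(\xi_d^{\partial_i}-\xi_d^{S_i})\bmod 2$. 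Your phrase ``both stabilisations compute the same class'' papers over all of this; without (ii) in particular there is no mechanism forcing any relation between the two characteristic numbers.

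Two further problems. First, your dimension count is internally inconsistent: with $d(\mathfrak{s})=0$ the parametrized irreducible moduli space over a $2$-dimensional base is a (compact) $2$-manifold after dividing by the full gauge group, not a closed $1$-manifold with a ``degree over $\Sigma$''; the $1$-dimensional moduli spaces in the correct argument live over the boundary circles $\partial_i\Sigma_0$, and the $2$-dimensional moduli over $\Sigma_0$ only enters as a bounding cobordism in the proof that the boundary-framed invariants sum to zero. Second, the identification of the analytic term with $\ind(D^+(E,\mathfrak{s}_E))$ is not an application of the Atiyah--Singer families index theorem over $\Sigma$ (which is unavailable, as above); it is a computation of a \emph{relative} Chern number of $\Det(\widetilde D^+(E_0))$ over $\Sigma_0$ with respect to the Dehn twist framings, requiring the Bismut--Freed local index theorem together with a nontrivial holonomy-vanishing argument (pulling back along a double cover, using the quaternionic structure and carefully chosen connections so that the curvature integrand vanishes off $E_0$). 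Relatedly, the hypothesis $b^+(X)\equiv 3\bmod 4$ is not there to kill a ``rank-times-Euler-characteristic correction'' in a families index theorem; it is used in setting up the $S^1$- (and $\Pin(2)$-) equivariant stable homotopy groups in which the family Bauer--Furuta invariant lives and in the vanishing result (ii).
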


Here, $\ind(D^{+}(E,\mathfrak{s}_{E})) \in \mathbb{Z}$ denotes the (complex) index of the Dirac operator on the spin-c $6$-manifold $(E, \mathfrak{s} )$, which can be computed by the index formula:
\[
\ind(D^{+}(E,\mathfrak{s}_{E}))=\frac{1}{48}\big(  p_{1}(E)\cdot c_{1}(\mathfrak{s}_E)-c^3_{1}(\mathfrak{s}_E)\big) \cdot [E].
\]
On the other hand, $H^{+}(f)$ denotes the vector bundle over $\Sigma$ constructed as follows. Let $z_1 , \ldots , z_n$ denote the critical values of $f$. Then over $\Sigma \setminus \{ z_1 , \ldots , z_n \}$ there is a vector bundle whose fiber over $z$ is a maximal positive subspace of $H^{2}(f^{-1}(z) ;\mathbb{R})$. Since the monodromy around a critical value is a Dehn twist on a $(-2)$--sphere, then this monodromy is supported in a negative-definite domain in $H^2(X;\R)$. From this, it follows that the previously defined vector bundle has a canonical extension to a vector bundle $H^+ (f) \to \Sigma$ (see \S\ref{section: Framings of H plus} for details).

The spin number $\Delta (S_1 , \ldots , S_n )$ discussed earlier has a simple interpretation in terms of Lefschetz fibrations. Let $f: E \to S^2$ be a smooth Lefschetz fibration of a closed $6$-manifold with regular fiber $X$. Let $S_1 , \ldots , S_n$ be any distinguished basis of vanishing spheres in the fiber $X$. Then the composition $\tau_{S_1} \cdots \tau_{S_n}$ is smoothly isotopic to the identity, so in particular (\ref{identitycoho}) holds. The spin number $\Delta (S_1 , \ldots , S_n ) \in \pi_1 SO(b^+ (X)) \cong \pi_2 BSO(b^+ (X) )$ corresponds to the \textit{classifying map} of the vector bundle $H^+ (f) \to S^2$ (Proposition \ref{proposition:extension}); in particular $\Delta (S_1 , \ldots , S_n )$ agrees mod $2$ with the characteristic class $w_2 ( H^+ (f) )$. In fact, we will see that Theorem \ref{thm: main generalized} is a generalization of Theorem \ref{thm: main}. Theorem \ref{thm: main generalized} is also a generalization of a constraint on smooth fiber bundles with 4-manifold fiber given in \cite[Corollary 1.3]{baraglia-konno} to the setting of Lefschetz fibrations. 

We conclude with another application of Theorem \ref{thm: main generalized}. Holomorphic Lefschetz fibrations are a well-known tool for analysing the topology of complex algebraic varieties. In particular, holomorphic Lefschetz fibrations with $K3$ surface fibers are relevant in the study of Calabi--Yau $3$-folds. In the smooth category, we will establish using Theorem \ref{thm: main generalized} the following:
\begin{corollary}\label{cor: K3}
Let $f:E\to S^2$ be a smooth Lefschetz fibration with fiber $X = K3$ and vanishing cycles $S_1,\cdots, S_n$. Then the $6$-manifold $E$ is spin if and only if $\Delta(S_1,\cdots, S_n) = 0$. 
\end{corollary}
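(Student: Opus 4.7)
The plan is to apply Theorem~\ref{thm: main generalized} to a spin-c structure $\mathfrak{s}_{E}$ on $E$ whose restriction to a regular fiber is the unique spin structure $\mathfrak{s}_{0}$ on $K3$, and then to translate the resulting parity of $\ind(D^{+})$ into a statement about $w_{2}(E)$.

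First I would check the hypotheses of Theorem~\ref{thm: main generalized} for $X=K3$: one has $b_{1}(X)=0$ and $b^{+}(X)=3\equiv 3 \mod 4$, while $\mathfrak{s}_{0}$ satisfies $c_{1}(\mathfrak{s}_{0})=0$, $d(\mathfrak{s}_{0})= \frac{1}{4}(0 - 2\cdot 24 - 3\cdot(-16)) = 0$, and $\operatorname{SW}(K3,\mathfrak{s}_{0}) = \pm 1$ is odd. Thus the theorem will apply to any spin-c structure $\mathfrak{s}_{E}$ on $E$ with $c_{1}(\mathfrak{s}_{E})|_{X}=0$.

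Next I would compute the kernel of the restriction $H^{2}(E;A)\to H^{2}(X;A)$. Writing $S^{2} = D_{+}\cup D_{-}$ with $D_{+}$ a disk of regular values and $D_{-}$ a disk containing all critical values, we have $E = U \cup V$ where $U = f^{-1}(D_{+}) \simeq X$ and $V = f^{-1}(D_{-})$ is obtained from $X\times D^{2}$ by attaching one Lefschetz $3$-handle along each vanishing sphere $S_{i}$; in particular $H^{2}(V;A) = \{a \in H^{2}(X;A): a\cdot S_{i} = 0 \text{ for all }i\}$. Combined with $H^{1}(K3;A) = 0$, a Mayer--Vietoris computation yields the short exact sequence
\[
0\lto A\cdot f^{*}\omega \lto H^{2}(E;A) \lto \{a \in H^{2}(K3;A): a\cdot S_{i}=0\ \forall i\} \lto 0
\]
for $A = \Z$ or $\Z/2$, where $\omega$ generates $H^{2}(S^{2})$ and the right-hand map is restriction to a regular fiber. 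Since the normal bundle of a regular fiber in $E$ is trivial, $w_{2}(E)|_{X}=w_{2}(TK3)=0$, so $w_{2}(E) = \epsilon\cdot f^{*}\omega$ for a unique $\epsilon\in\{0,1\}$, and $E$ is spin if and only if $\epsilon=0$. The class $c_{1}(\mathfrak{s}_{E}) := \epsilon\cdot f^{*}\omega \in H^{2}(E;\Z)$ then lifts $w_{2}(E)$ and defines a spin-c structure with $c_{1}|_{X}=0$, as required.

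Finally I would compute the Dirac index of $\mathfrak{s}_{E}$. Using $(f^{*}\omega)^{2} = f^{*}(\omega^{2}) = 0$, one has $c_{1}(\mathfrak{s}_{E})^{3} = 0$. Since $f^{*}\omega$ is Poincar\'e dual to the fiber class $[X]$ and $TE|_{X} = TX \oplus \R^{2}$, we find
\[
p_{1}(E)\cdot f^{*}\omega\cdot [E]\;=\;\int_{X} p_{1}(TK3)\;=\;3\sigma(K3)\;=\;-48.
\]
The index formula from Theorem~\ref{thm: main generalized} therefore gives $\ind(D^{+}(E, \mathfrak{s}_{E})) = \frac{1}{48}(-48\epsilon - 0) = -\epsilon$, and the theorem then yields
\[
-\epsilon \;\equiv\; w_{2}(H^{+}(f))\cdot [S^{2}]\;\equiv\;\Delta(S_{1},\ldots,S_{n}) \mod 2.
\]
Since $b^{+}(K3)=3>2$ we have $\Delta \in \pi_{1}SO(3) = \Z/2$, so $\epsilon = \Delta$, and hence $E$ is spin iff $\Delta(S_{1},\ldots,S_{n}) = 0$. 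The main obstacle is the middle step --- correctly identifying the kernel of the restriction for a Lefschetz fibration rather than a smooth fiber bundle --- which ultimately rests on the vanishing of $H^{1}$ for both smooth and nodal $K3$ fibers.
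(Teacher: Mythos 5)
Your proposal is correct, and its overall skeleton is the same as the paper's: apply Theorem~\ref{thm: main generalized} to a spin-c structure $\mathfrak{s}_E$ with $c_1(\mathfrak{s}_E)$ a multiple of $f^*\omega$, compute $\operatorname{ind}(D^+(E,\mathfrak{s}_E))$ from $p_1(E)\cdot f^*\omega\cdot[E]=3\sigma(K3)=-48$, and identify $w_2(H^+(f))\cdot[S^2]$ with $\Delta$ via Proposition~\ref{proposition:extension}. Where you diverge is in the supporting cohomology and in the direction of constructing $\mathfrak{s}_E$: the paper first extends the fiberwise spin structure using Lemma~\ref{lem: spin-c extension} and then shows $c_1(\mathfrak{s}_E)=f^*(a)$ via the exact sequence of Lemma~\ref{lem: cohomology of E} (long exact sequence of a triple, excision, and a Serre spectral sequence), whereas you prove the needed exact sequence directly by decomposing $S^2$ into a regular disk and a disk containing all critical values, describing $f^{-1}(D_-)$ as $X\times D^2$ with $6$-dimensional $3$-handles attached along the vanishing spheres, and running Mayer--Vietoris; you then define $\mathfrak{s}_E$ as an integral lift of $w_2(E)=\epsilon f^*\omega$ rather than extending the fiber spin structure. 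Both routes are sound; yours is more self-contained and has the small advantage of making explicit that $f^*\omega\neq 0$ in $H^2(E;\mathbb{Z}/2)$ (via injectivity of the Mayer--Vietoris connecting map), which is what justifies ``$E$ is spin iff $\epsilon=0$'' --- a point the paper's phrase ``$E$ is spin if and only if $a$ is even'' leaves implicit. One step you use silently but should state: since $H^2(K3;\mathbb{Z})$ is torsion-free, $c_1(\mathfrak{s}_E)|_X=0$ forces $\mathfrak{s}_E|_X$ to be the spin structure, which is what guarantees $\operatorname{SW}(X,\mathfrak{s}_E|_X)$ is odd and $d=0$ as required by Theorem~\ref{thm: main generalized}.
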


\noindent In particular, it follows that $E$ is Calabi--Yau only if $\Delta(S_1,\cdots, S_n)=0$. On the other hand, we can give examples of smooth Lefschetz fibrations with $K3$ fibers and non-spin total space:

\begin{example}
The Milnor fiber $M = M(2,3,7)$ has a smooth embedding into $K3$ (\cite[\S 8]{gompfstipsicz}), and the authors showed that the composition of Dehn twists $(\tau_{S_1}\cdots \tau_{S_\mu})^h$, for the configuration of spheres in (\ref{spheres}), is smoothly trivial in $K3$ (\cite[Proposition 2.25]{KLMME}). Since $\Delta \neq 0 $ for this configuration, by Corollary \ref{cor: K3} this yields examples of smooth Lefschetz fibrations $f : E\to S^2$ with $K3$ fibers and non-spin total space $E$.
\end{example}
\subsection{Outline and Comments}
We give an outline of the proofs of Theorems \ref{thm: main}-\ref{thm: main generalized}.\\

We sketch the proof of Theorem \ref{thm: main generalized}. Removing tubular neighborhoods of singular fibers of $f:E\to \Sigma$, we obtain a smooth bundle $f_0:E_0\to \Sigma_0$ over the punctured surface $\Sigma_0$, whose restriction to $\partial \Sigma_0 = \partial_1 \Sigma_0 \sqcup \ldots \sqcup \partial_n \Sigma_0$ is isomorphic to $\sqcup^{n}_{i=1}T(\tau_{S_i})$. Here $T(\tau_{S_i})\to S^1$ denotes the mapping torus of the Dehn twist $\tau_{S_i}$, regarded as smooth bundle over $S^1$ with fiber $X$. The theorem is proved by analyzing $\mathcal{M}T(\tau_{S_i})$, the moduli space of the family Seiberg-Witten equations on $T(\tau_{S_i}) \to S^1$. 

Note that $\mathcal{M}T(\tau_{S_i})$ is one-dimensional --- so counting points on the moduli space won't yield interesting invariants. Instead, we study the \textit{framed bordism class} of $\mathcal{M}T(\tau_{S_i})$ --- which defines an element in the framed bordism group $\Omega_{1}^{\mathrm{fr}} \cong \mathbb{Z}/2$ --- for suitable stable framings on this moduli space. A stable framing on $\mathcal{M}T(\tau_{S_i})$ can be specified by a framing $\xi_{d}$ of the bundle $H^{+}(T(\tau_{S_{i}})) \to S^1$ and a framing $\xi_{D}$ on $\Det(\widetilde{D}^{+}(T(\tau_{S_i}))) \to S^1$, the determinant line bundle for the family Dirac operator. Under the Pontryagin--Thom correspondence, this bordism class corresponds to the \textit{family Bauer--Furuta invariant}. Hence we use $\FBafu(T(\tau_{S_i}),\xi_{d},\xi_{D})\in \mathbb{Z}/2$ to denote the framed bordism class of $\mathcal{M}T(\tau_{S_i})$. 

The Dehn twist $\tau_{S_i}$ is supported in a tubular neighborhood $\nu (S_i )$ of $S_i$, and $\nu(S_i)$ is negative definite. From this we can obtain a canonical choice for the framing $\xi_{d}$, denoted by $\xi^{S_i}_{d}$. On the other hand, the spin-c structure $\mathfrak{s}$ is spin when restricted to $\nu(S_i)$, and this provides a canonical choice for $\xi_{D}$, denoted by $\xi^{S_i}_{D}$, using the quaternion-linear structure of the spin Dirac operators. We refer to $\xi_{d}^{S_i} , \xi_{D}^{S_i}$ as the \textit{Dehn twist framings}. Using excision properties of the family Bauer--Furuta invariant and computations in $\operatorname{Pin}(2)$-equivariant stable homotopy theory, we establish the following vanishing result for the Dehn twist framings (Proposition \ref{proposition: FBF vanishing original}):
\begin{align}
\FBafu(T(\tau_{S_i}),\xi^{S_i}_{d},\xi^{S_i}_{D})=0.\label{intro:vanishing1}
\end{align}

On the other hand, the bundles $T(\tau_{S_i})\to S^1$ together bound the bundle $E_0\to \Sigma_0$. We use this to obtain framings $\xi^{\partial_i}_{d}$ and $\xi^{\partial_i}_{D}$ from a choice of corresponding framings for the bundle $E_0\to \Sigma_0$. For the framings $\xi_{d}^{\partial_i }, \xi_{D}^{\partial_i }$ there is another vanishing property (Proposition \ref{prop: FBF of boundary vanishing}): 
\begin{align}
\sum^{n}_{i=1}\FBafu(T(\tau_{S_i}),\xi^{\partial_i}_{d},\xi^{\partial_i}_{D})=0.\label{intro:vanishing2}
\end{align}
In particular, (\ref{intro:vanishing1}-\ref{intro:vanishing2}) imply: 
\begin{equation}\label{eq: Bafu sum equal}
\sum^{n}_{i=1}\FBafu(T(\tau_{S_i}),\xi^{\partial_i}_{d},\xi^{\partial_i}_{D})=\sum^{n}_{i=1}\FBafu(T(\tau_{S_i}),\xi^{S_i}_{d},\xi^{S_i}_{D}).        
    \end{equation}
    
In the remainder of the argument, we analyze the dependence of the Bauer--Furuta invariant $\FBafu(T(\tau_{S_i}),\xi_{d},\xi_{D})$ on the choice of framings $\xi_{d}$ and $\xi_{D}$ and deduce a \textit{change-of-framing formula} for this invariant (Proposition \ref{prop: change of framing formula}). By this formula, and using the condition that $\operatorname{SW}(X,\mathfrak{s})$ is odd, we shall deduce from (\ref{eq: Bafu sum equal}) that 
    \begin{align}
    \sum^{n}_{i=1}(\xi^{\partial_i}_{D}-\xi^{S_i}_{D})\equiv     \sum^{n}_{i=1}(\xi^{\partial_i}_{d}-\xi^{S_i}_{d})\mod 2. \label{framings=}
    \end{align}

By definition, the difference between the Dehn twist framings $\xi_d^{S_i} , \xi_{D}^{S_i}$ and the framings $\xi_d^{\partial_i} , \xi_{D}^{\partial_i}$ is computed in terms of characteristic classes:
\begin{align*}
\sum^{n}_{i=1}(\xi^{\partial_i}_{d}-\xi^{S_i}_{d})& =w_{2}(H^{+}(Tf))\cdot [\Sigma] \in \mathbb{Z}/2 \\
\sum^{n}_{i=1}(\xi^{\partial_i}_{D}-\xi^{S_i}_{D})& =
    c_{1}(\widetilde{D}^{+}(E_0),\xi^{S_{1}}_{D},\cdots,\xi^{S_{n}}_{D})\cdot [\Sigma_0]\in \mathbb{Z}
\end{align*}
Here $c_{1}(\Det(\widetilde{D}^{+}(E_0)),\xi^{S_{1}}_{D},\cdots,\xi^{S_{n}}_{D})\cdot [\Sigma_0]$ denotes the relative Chern number of the determinant line bundle for the family Dirac operator over $E_0\to \Sigma_0$, with respect to the Dehn twist framings on the boundary of $\Sigma_0$. We compute this quantity using the \textit{local index theorem} by Bismut--Freed and we show that it equals $\operatorname{ind}(D^{+}(E,\mathfrak{s}_{E}))$, the numerical index of the $6$-dimensional Dirac operator over $E$ (Proposition \ref{prop: relative Chern class equals index}). From this and (\ref{framings=}), the proof of Theorem \ref{thm: main generalized} will be concluded.
    
The hypothesis of Theorem \ref{thm: main} ensure that the index of the $6$-dimensional Dirac operator $\mathrm{ind}D^+(E,\mathfrak{s}_{E})$ is even; so Theorem \ref{thm: main} will be a consequence of Theorem \ref{thm: main generalized}.\\

The paper is organized as follows. In Section~\ref{section: Framings of H plus}, we study the spin number $\Delta(S_1,\cdots, S_n)$ and interpret it as a difference of framings on $H^{+}$ and in terms of Lefschetz fibrations. We also show that $\Delta$ is non-vanishing on the configuration (\ref{spheres}) coming from vanishing cycles of the exceptional unimodal singularities. In Section~\ref{section: familyBF}, we interpret the family Bauer--Furuta invariant as the framed bordism class of the Seiberg--Witten moduli space. We also show that the family Bauer--Furuta invariant for the Dehn twist vanishes for the Dehn twist framing. The proofs of the main theorems are discussed in Section~\ref{sectionProof of the main theorem}. In Section~\ref{section Examples}, we construct several examples to which our theorems apply. 

\begin{acknowledgement}
We would like to thank Simon Donaldson, John Etnyre, Søren Galatius, Tian-Jun Li and Zoltán Szabó for enlightening discussions.
HK is partially supported by JSPS KAKENHI Grant Numbers 25K00908, 25H00586.  JL is partially supported by NSFC 12271281.
AM is partially supported by NSF grant- DMS 2405270. 

\end{acknowledgement}

\section{The spin number of configurations of spheres}
\label{section: Framings of H plus}

This section discusses in detail the spin number $\Delta (S_1 , \ldots , S_n )$ introduced in \S \ref{subsubsection:productsofref}. We give two interpretation of the spin number: as a difference of two framings (Proposition \ref{prop: difference of xi_d}), and as an invariant of a Lefschetz fibration (Corollary \ref{corollary:DeltaLefschetz}, Proposition \ref{proposition:extension}). We also discuss examples of configurations of spheres with non-vanishing spin number, arising from vanishing cycles of exceptional unimodal singularities (Proposition \ref{prop: Milnor fiber of unimodal singularity}).

\subsection{The Dehn twist on a $(-2)$--sphere.} We begin by recalling the construction of the Dehn twist $\tau_S \in \pi_0 \mathrm{Diff}(X)$ on a $(-2)$--sphere $S\subset X$. Since $S \cdot S = -2$, then after fixing a framing $S \cong S^2$ a tubular neighborhood of $S \subset X$ becomes identified (in a homotopically canonical fashion) with the cotangent bundle $T^*S^2$ with its symplectic orientation. The antipodal map $a$ on $S^2$ induces a diffeomorphism $a^\ast$ of $T^*S^2$ with non-compact support, which can be cut off near the zero section to obtain $\tau_S$: since $a^\ast = \phi_\pi$, where $\phi_t$ is the normalized geodesic flow on $T^*S^2 \setminus S^2$ for the standard round metric, we may set $\tau_S(q,p) = \phi_{\pi \beta(|p|)}(q,p)$, where $\beta (t)$ is a smooth bump function equal to $1$ near $t = 0$. 
(It can be shown that $\tau_S \in \pi_0 \mathrm{Diff}(X)$ is independent of all choices made; in particular of the framing $S \cong S^2$).

\subsection{Construction of framings of $H^+$}

Let $X$ be a compact oriented and connected $4$-manifold. If $\partial X \neq \emptyset$ then we suppose that $\partial X$ is a rational homology $3$-sphere, so that the intersection pairing on $H^2 (X, \mathbb{R})$ is non-degenerate. Throughout we equip $H^+ (X)$ with an orientation.

\subsubsection{Framing the $H^+$--bundle associated to a mapping torus}\label{framingH+mapping}

Let $f \in \pi_0 \mathrm{Diff}(X)$ be a diffeomorphism. Then the \textit{mapping torus} of $f$ is the smooth fiber bundle with fiber $X$ and monodromy $f$ explicitly defined as
\begin{align}
T(f) := \frac{X \times [0,1]}{(x,1) \sim (f(x), 0 ) \, , \, \forall x \in X} .\label{mappingtorus}
\end{align}

The cohomology groups of the fibers of $T(f) \to S^1$ naturally assemble into a local system of real vector spaces over $S^1$ (or flat vector bundle), which we denote $H^2 (f)\to S^1$. By (\ref{mappingtorus}), this is just is the vector bundle over $S^1$ obtained as the mapping torus of the linear isomorphism $(f^\ast)^{-1} : H^2 (X , \mathbb{R} ) \to H^2 (X, \mathbb{R})$,
\begin{align}
H^2 (f) := \frac{H^2 (X,\mathbb{R})\times [0,1]}{(\alpha, 0 ) \sim (f^\ast \alpha , 1) \, , \, \forall \alpha \in H^2 (X, \mathbb{R}) .}\label{mappingtorusH2}
\end{align}

There is a vector subbundle $H^+ (f) \subset H^2 (f)$ whose fibers are maximal positive subspaces in the fibers of $H^2 (f) \to S^1$, and which is defined up to homotopically-canonical isomorphism: indeed, such a subbundle corresponds to a section of the Grassmannian bundle of maximal positive subspaces, which has contractible fibers. 

From now on, we suppose that the vector bundle $H^+ (f) \to S^1$ is orientable, and hence can be given a \textit{framing} (i.e. a global trivialisation). In this section, our goal is to compare framings of  $H^+ (f) \to S^1$ arising in various natural ways. We shall denote by $\mathrm{Fr}(H^+ (f)) $ the set of homotopy-classes of framings of $H^+ (t)$ compatible with the fixed orientation of the fixed subspace $H^+ (X) \subset H^2 (X, \mathbb{R})$. Thus $\mathrm{Fr}(H^+ (f))$ is a torsor over $ [S^1 , SO(b^+(X) )] = H_1 ( SO(b^+ (X)) , \mathbb{Z} )$.

Using (\ref{mappingtorusH2}), a choice of subbundle $H^+ (f) \subset H^2 (f)$ can be understood plainly as a continuous path $H^+ (t)$, $0 \leq t \leq 1$, of maximal positive subspaces in the fixed vector space $H^2 (X, \mathbb{R} )$, such that 
\[
f^\ast H^+ (0) = H^+ (1) .
\]
A framing can similarly be regarded as a path $e (t) : \mathbb{R}^{b^+(X)} \hookrightarrow H^2 (X, \mathbb{R} )$ of linear embeddings such that $\mathrm{Im}\  e (t)= H^+ (t)$ and 
\[
f^\ast ( e(0) ) = e(1).
\]

\subsubsection{Gluing framings}

For each $i = 1, \ldots , n$, let $f_i \in \pi_0 \mathrm{Diff}(X)$ be a diffeomorphism with $H^+ (f_i ) \to S^1$ orientable. Let $f = f_n \circ \cdots \circ f_1$ be their composition. We now discuss how to \textit{glue} given framings of the bundles $H^+ (f_i )\to S^1$ to obtain a framing of $H^+ (f) \to S^1$, a construction that we shall repeatedly use.

Observe that there is a natural vector bundle isomorphism
\begin{align}
\Big(  \bigcup_{i =1}^{n} H^2 (X,\mathbb{R}) \times [0,1 ] \Big)/ \sim \quad \xrightarrow{\cong} H^2 (f) \label{mappingtoruscomposition}
\end{align}
where $\sim$ identifies, for each $i \in \{ 1, \ldots , n \}$, the point $(\alpha,0)$ in the $(i+1)$th component $H^2 (X,\mathbb{R}) \times [0,1]$ with the point $(f_i^\ast (\alpha) , 1 )$ in the $i$th component, with $i$ understood cyclically (i.e. this glues the $1$st component to the $n$th component by $(0 , \alpha ) \sim  (1 , f_n^\ast \alpha )$). Indeed, the isomorphism (\ref{mappingtoruscomposition}) is given by mapping the $1$st component $H^2 (X, \mathbb{R} ) \times [0,1]$ into $H^2 (f)$ (as in (\ref{mappingtorusH2})) by the identity map, and the $i$th component by the map $f_1^\ast \circ \cdots \circ f_{i-1}^\ast$ for $i = 2 , \ldots , n$.

Given framings of each $H^+ (f_i ) \to S^1$, each understood as a path of framed maximal positive subspaces $e_i (t) : \mathbb{R}^{b^+ (X)} \xrightarrow{\cong} H_i ^+ (t) \subset H^2 (X, \mathbb{R} ) $ satisfying $f_i^\ast H_i^+ (0) = H_{i}^+ (1)$ and $f_i^\ast e_i (0) = e_i (1)$, to obtain a framing of the left-hand side in (\ref{mappingtoruscomposition}) by concatenating the paths $H_{i}^+ (t)$, and therefore of $H^+ (f)$, these must satisfy a consistency condition: for $i$ running cyclically from $1$ through $n$, 
\begin{align}
H_{i}^+ (1) = f_i^\ast H^+_{i+1}(0) \quad \text{and} \quad e_i (1) = f_i^\ast e_{i+1}(0) .\label{consistency}
\end{align}

Suppose further that for a fixed framing $e$ of a fixed maximal positive subpace $H^+ (X) \subset H^2 (X,\mathbb{R} )$, the framings above satisfy $H_i^+ (0) = H^+$ and $e_i (0) = e$. Then (\ref{consistency}) is satisfied and these framings can be glued. Thus, (\ref{mappingtoruscomposition}) induces a \textit{based gluing map}
\begin{align}
\mathrm{Fr}_\ast (H^+ (f_1 )) \times \cdots \times \mathrm{Fr}_\ast (H^+ (f_n ) ) \to \mathrm{Fr}_\ast (H^+ (f) )\label{gluingmapbased}
\end{align}
where, for a diffeomorphism $g$ with $H^+ (g)$ orientable, $\mathrm{Fr}_\ast (H^+ (g))$ stands for the set of homotopy-classes of `based' framings of $H^+ (g)$: framings $e(t)$ of $H^+ (g)$ such that $e(0)$ agrees with the fixed framing $e$ of the fixed subspace $H^+$. The set $\mathrm{Fr}_\ast (H^+ (g))$ is now a torsor over the group $\pi_1 SO(b^+(X))$. But since $\pi_1 \mathrm{SO}(b^+ (X)) = H_1 (SO(b^+ (X)) )$, it follows that the natural map $\mathrm{Fr}_\ast (H^+ (g)) \to \mathrm{Fr}(H^+ (g))$ is an isomorphism of torsors. Thus, the based gluing map induces a \textit{gluing map} which is well-defined (independent of the fixed subspace $H^+(X)$ and framing $e$):
\begin{align}
\mathrm{Fr} (H^+ (f_1 )) \times \cdots \times \mathrm{Fr} (H^+ (f_n ) ) \to \mathrm{Fr} (H^+ (f) ).\label{gluingmap}
\end{align}

\subsubsection{The canonical framing}

A canonical framing of $H^+ (f)\to S^1$ arises in the situation when $f \in \pi_0 \mathrm{Diff}(X)$ is \textit{homologically-trivial}, i.e. $f^\ast $ acts on $H^2 (X, \mathbb{R} )$ as the identity:

\begin{definition}\label{definition:canonicalframingd}
    Suppose that $f \in \pi_0  \mathrm{Diff}(X)$ is a homologically-trivial diffeomorphism. Then $H^2 (f,\mathbb{R}) \to S^1$ is canonically identified with the trivial local system $H^2 (X, \mathbb{R}) \times S^1$, and one may then take $H^+ (f)$ to be the product bundle $ H^+(X) \times S^1$. Given a framing $e$ of the vector space $H^+ (X)$ compatible with the given orientation (such a choice is unique up to homotopy) 
    can thus be propagated trivially to a framing of the product bundle $H^+ (f)$. We call this the \textit{canonical framing} of $H^+ (f) \to S^1$, and we denote it by $\xi_{d}^0 \in \mathrm{Fr}(H^+ (f))$.    
    
\end{definition}

\subsubsection{The Dehn twist framing}\label{subsubsection:reflectionframing}
We now consider a diffeomorphism $f \in \pi_0 \mathrm{Diff}(X)$ of the form
\begin{align}
f = \tau_{S_n}\cdots \tau_{S_1} \label{factorisation}
\end{align}
where each $S_i$ is a smoothly embedded spheres in $X$ with self-intersection $S_i \cdot S_i = -2$ (`$-2$--spheres'), which we assume is disjoint from $\partial X$, and $\tau_{S_i} \in \pi_0 \mathrm{Diff}(X)$ denotes the Dehn twist on $S_i$. We shall now describe a framing of $H^+ (f) \to S^1$ arising from the factorisation (\ref{factorisation}).

This will be obtained by first framing each $H^+ (\tau_{S_i} ) \to S^1$, as follows. 
Each $\tau_{S_i}$ is supported in a tubular neighborhood $\nu (S_i ) \subset X$ of the sphere $S_i$, whose boundary is diffeomorphic to $\mathbb{R}P^3$. Thus, there is a canonical decomposition 
\[
H^2 (X,\mathbb{R} ) = H^2 ( \nu (S_i ) , \mathbb{R} ) \oplus H^2 (X \setminus \nu (S_i ) , \mathbb{R} )
\]
which is furthermore preserved by $\tau_{S_i}^\ast$, with 
$\tau_{S_i}^\ast$ acting as the identity on the summand $H^2 (X \setminus \nu (S_i ) , \mathbb{R} )$. It follows that $H^2 ( \tau_{S_i}|_{X \setminus \nu (S_i )}  ) \subset H^2 (\tau_{S_i}  )$ is a trivial local sub-system, and thus $H^+ (\tau_{S_i}|_{X \setminus \nu (S_i )} )$ is identified with a product bundle. On the other hand, because $S_i^2 < 0$ then we obtain a canonical isomorphism $H^+ (\tau_{S_i} )\cong H^+ (\tau_{S_i}|_{X\setminus \nu (S_i )}) $. All combined, this yields a framing of $H^+ (\tau_{S_i} ) \to S^1$, called the \textit{Dehn twist framing} of $H^+ (\tau_{S_i})$, and denote it $\xi_{d}^{S_i} \in \mathrm{Fr}(H^+ (\tau_{S_i} )$ or simply $\xi_{d}^i$. More generally:



\begin{definition}\label{definition:reflectionframingd}
Let $f = \tau_{S_1} \cdots \tau_{S_n} \in \pi_0 \mathrm{Diff}(X)$. The \textit{Dehn twist framing} of $H^+ (f) \to S^1$ associated to the given factorization of $f$ as the product of Dehn twists $\tau_{S_n} \cdots \tau_{S_1}$ is the framing obtained by gluing the Dehn twist framings on the bundles $H^+ (\tau_{S_i}) \to S^1$ using (\ref{gluingmap}). We denote this framing by $\xi_d^{S_1} \cdots \xi_{d}^{S_n}$ or simply $\xi_d^1 \cdots \xi_d^n$.
\end{definition}

\subsection{Comparison of framings}

In what follows, we make the following assumption:
\begin{align}
\text{the diffeomorphism } f:= \tau_{S_1} \cdots \tau_{S_n} \text{ acts trivially on } H^2 (X, \mathbb{Z} ) \label{assumption:trivialaction}
\end{align}
where the $S_i$ are smoothly embedded $(-2)$-spheres in $X$, disjoint from $\partial X$. There are then two framings of $H^+(f) \to S^1$: the canonical framing $\xi_{d}^0$ (Definition \ref{definition:canonicalframingd}) and the Dehn twist framing $\xi_{d}^1  \cdots  \xi_{d}^n$ (Definition \ref{definition:reflectionframingd}). The goal of this subsection is to describe the \textit{difference} of these two framings:
\[
 \xi_{d}^1  \cdots  \xi_{d}^n - \xi_{d}^0 \in [S^1 , SO(b^+ (X))] = \pi_1 SO(b^+(X))   \cong \begin{cases} \{ 1 \} & \text{ if } b^+(X) < 2\\
\mathbb{Z} & \text{if } b^+ (X) = 2\\
\mathbb{Z}/2 & \text{if } b^+(X) > 2 
\end{cases}.
\]

We shall describe the construction of an explicit loop $\Delta (S_1 , \ldots , S_n )\in \pi_1 SO(b^+ (X))$, and then establish that this represents the difference of the two framings above.



\subsubsection{The space of maximal positive embeddings}

Let $\mathcal{E}(H^2 (X,\mathbb{R}))$ be the space of linear embeddings $e : \mathbb{R}^{b^+ (X)} \hookrightarrow H^2 (X,\mathbb{R} )$ such that $\mathrm{Im}(e)$ is a \textit{positive} linear subspace with respect to the intersection form on $H^2 (X,\mathbb{R} )$, topologised as an open subset of the vector space $\mathrm{Hom}(\mathbb{R}^{b^+ (X)} , H^2 (X , \mathbb{R} ) )$. Thus, $\mathrm{Im}e \subset H^2 (X, \mathbb{R} )$ is a maximal positive subspace for the intersection form. Since the space of maximal positive subspaces of $H^2 (X, \mathbb{Z})$ is contractible, it follows that reparametrisation of a fixed embedding $e_0 \in \mathcal{E}(H^2 (X, \mathbb{R}))$ induces a homotopy-equivalence
\begin{align}
SO(b^+ (X) ) \xrightarrow{\simeq} \mathcal{E}(H^2 (X, \mathbb{R})), \quad R \mapsto e_0 \circ R .\label{homotopyeqE}
\end{align}

The Dehn twists $\tau_{S_i}$ act on $H^2 (X,\mathbb{R} )$ by pullback $\tau_{S_i}^\ast$. Recall this action is given by the \textit{Picard--Lefschetz formula}:
\begin{align}
\tau_{S_i}^\ast (\alpha ) = \alpha + \langle \alpha , [S_i]\rangle \cdot \mathrm{PD}([S_i]), \label{PLformula}
\end{align}
and hence $\tau_{S_i}^\ast$ is an involution. Because $\tau_{S_i}^\ast$ preserves the intersection form then it induces an involution $\tau_{S_i}^\vee$ of $\mathcal{E}(X)$ by
\[
\tau_{S_i}^\vee ( e ) := \tau_{S_i}^\ast \circ e : \mathbb{R}^{b^+ (X)}\hookrightarrow H^2 (X, \mathbb{R} ).
\]

By (\ref{PLformula}), the locus of fixed points of the action of $\tau_{S_i}^\ast$ on $H^2 (X, \mathbb{R})$ is the hyperplane $H_i = \{ \alpha \in H^2 (X, \mathbb{R} ) \, | \, \langle \alpha , [S_i]\rangle = 0\}$. The vector space $H_i$ inherits a non-degenerate bilinear form by restriction for which the dimension of a maximal positive subspace is also $ b^+(X) $ (since $S_i \cdot S_i < 0$). Thus the locus of fixed points of $\tau_{S_i}^\vee$ acting on $\mathcal{E}(X)$ is given by 
\[
\mathrm{Fix}(\tau^\vee_{S_i}) = \mathcal{E}(H_i ) \subset \mathcal{E}(H^2 (X, \mathbb{R} )).
\]
In particular, the space $\mathcal{E}(H^2 (X, \mathbb{R} ))$ deformation retracts onto $\mathrm{Fix}(\tau^\vee_{S_i})  $.

\subsubsection{The loop $\Delta(S_1 , \ldots , S_n )$}\label{subsubsection:loop}

The loop $\Delta(S_1 , \ldots , S_n )$ is constructed using the following auxiliary data:
\begin{itemize}
\item A `basepoint' embedding $e_0 \in \mathcal{E}(H^2 (X, \mathbb{R} ))$.
\item For each $i = 1, \ldots , n $, a path $\gamma_i$ in the space $\mathcal{E}(H^2 (X, \mathbb{R} ))$ which starts at the embedding $e_{i-1} := \tau_{i-1}^\vee \cdots \tau_{1}^\vee \cdot e_0 $ and ends at an embedding contained in the locus $\mathrm{Fix}(\tau_{S_i}^\vee )$.
\end{itemize}
For each $i= 1, \ldots, n$, we then obtain a path $\eta_i$ from $e_{i-1}$ to $e_i$ by concatenating $\gamma_i$ with the reversal of the reflected path $\tau_{S_i}^\vee (\gamma_i )$, i.e.
\[
\eta_i := \overline{\tau_{S_i}^\vee (\gamma_i )}\circ \gamma_i.
\]


\begin{definition}\label{definition:loop}
   Let \[
   \Delta(S_1 , \ldots , S_n ) \in \pi_1 \big( \mathcal{E}(H^2 (X, \mathbb{R}), e_0 \big) = \pi_1 SO(b^+ (X)) \quad \text{(cf. (\ref{homotopyeqE}))}
   \]
   be the homotopy-class of the loop in $\mathcal{E}(H^2 (X, \mathbb{R} ))$ based at $e_0$ constructed by concatenating the paths $\eta_i$ for $i = 1, \ldots , n$: 
   \[
   \Delta(S_1 , \ldots , S_n ) = [ \eta_n \circ \cdots \circ \eta_1 ] \, , 
   \]
   which is a loop because $e_n = e_0$ by (\ref{assumption:trivialaction}). See Figure \ref{fig:spinnumber} for a schematic depiction of this construction.
\end{definition}

\begin{lemma}\label{lemma:indep_choices}
The element $\Delta(S_1 , \ldots , S_n )$ is independent of the auxiliary choices made (that is, $e_0 , \gamma_1 , \ldots , \gamma_n$).
\end{lemma}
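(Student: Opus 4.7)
The plan is to verify independence in two separate stages: first the independence of the paths $\gamma_1, \ldots, \gamma_n$ with $e_0$ held fixed, and then the independence of $e_0$. Both stages rely on the fact, noted in the paragraph preceding Definition \ref{definition:loop}, that $\mathcal{E}(H^2(X,\mathbb{R}))$ deformation retracts onto each $\mathrm{Fix}(\tau_{S_i}^\vee)$, so the inclusion $\mathrm{Fix}(\tau_{S_i}^\vee) \hookrightarrow \mathcal{E}(H^2(X,\mathbb{R}))$ is a homotopy equivalence.

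For the first stage, it suffices to vary a single $\gamma_j$ at a time, since each intermediate point $e_{j-1} = \tau_{S_{j-1}}^\vee \cdots \tau_{S_1}^\vee e_0$ depends only on $e_0$ and the Dehn twists. Given two admissible choices $\gamma_j, \gamma_j'$ from $e_{j-1}$ into $\mathrm{Fix}(\tau_{S_j}^\vee)$, I would first produce a homotopy $H : I \times I \to \mathcal{E}(H^2(X,\mathbb{R}))$ with $H(0,\cdot) = \gamma_j$, $H(1,\cdot) = \gamma_j'$, $H(\cdot,0) \equiv e_{j-1}$, and $H(s,1) \in \mathrm{Fix}(\tau_{S_j}^\vee)$ for all $s$. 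Existence of such $H$ follows by considering the fibration $\Omega \mathcal{E} \to P \to \mathrm{Fix}(\tau_{S_j}^\vee)$, where $P$ is the space of admissible paths equipped with endpoint evaluation; its long exact sequence, combined with the isomorphism $\pi_1 \mathrm{Fix}(\tau_{S_j}^\vee) \cong \pi_1 \mathcal{E}$, gives $\pi_0(P) = 0$. Given $H$, the family $H'_s := \overline{\tau_{S_j}^\vee H_s} \circ H_s$ is a homotopy rel endpoints from $\eta_j$ to $\eta_j'$ in $\mathcal{E}$, since $\tau_{S_j}^\vee H_s(1) = H_s(1)$ ensures $H_s'$ keeps its endpoints $e_{j-1}$ and $e_j$ constant in $s$. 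Concatenating with the unchanged $\eta_i$'s preserves $[\Delta] \in \pi_1(\mathcal{E}(H^2(X,\mathbb{R})), e_0)$.

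For the second stage, let $e_0' \in \mathcal{E}(H^2(X,\mathbb{R}))$ be another basepoint, pick a path $\alpha_0$ from $e_0$ to $e_0'$, and define $\alpha_i := (\tau_{S_i}^\vee \cdots \tau_{S_1}^\vee) \circ \alpha_0$, a path from $e_i$ to $e_i'$. Then $\gamma_i' := \gamma_i \circ \overline{\alpha_{i-1}}$ is admissible at the new basepoint, and a direct computation using $\tau_{S_i}^\vee \alpha_{i-1} = \alpha_i$ yields $\eta_i' = \alpha_i \circ \eta_i \circ \overline{\alpha_{i-1}}$. Concatenating over $i$, the internal pairs $\overline{\alpha_{i-1}} \circ \alpha_{i-1}$ cancel up to homotopy, giving $\Delta' \simeq \alpha_n \circ \Delta \circ \overline{\alpha_0}$. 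Assumption (\ref{assumption:trivialaction}) forces $\tau_{S_n}^\vee \cdots \tau_{S_1}^\vee$ to be the identity on $\mathcal{E}$, hence $\alpha_n = \alpha_0$, and $[\Delta']$ is the image of $[\Delta]$ under the change-of-basepoint map $\pi_1(\mathcal{E}, e_0) \to \pi_1(\mathcal{E}, e_0')$ induced by $\alpha_0$. Since $\pi_1 SO(b^+(X))$ is abelian (namely $\mathbb{Z}/2$, $\mathbb{Z}$, or trivial), this change-of-basepoint isomorphism is independent of $\alpha_0$ and provides a canonical identification under which $[\Delta']$ and $[\Delta]$ correspond. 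The main subtlety is the fibration argument in the first stage; the rest reduces to bookkeeping with path concatenation.
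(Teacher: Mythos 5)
Your proof is correct, but it runs on different machinery than the paper's own argument, even though both ultimately rest on the same two inputs (the deformation retraction of $\mathcal{E}(H^2(X,\mathbb{R}))$ onto $\mathrm{Fix}(\tau_{S_j}^\vee)$ and the abelianness of $\pi_1$). For the independence of the $\gamma_j$'s, the paper argues homologically: it forms the loop $\overline{\eta_j'}\circ\eta_j$, exhibits it as the difference of a cycle and its image under $\tau_{S_j}^\vee$, concludes it is null-homologous because $\tau_{S_j}^\vee$ acts trivially on $H_*(\mathcal{E})$, and then upgrades to null-homotopic using that $\pi_1(\mathcal{E})$ is abelian. You instead prove the vanishing of $\pi_0$ of the space of admissible paths (equivalently $\pi_1(\mathcal{E},\mathrm{Fix}(\tau_{S_j}^\vee))=0$, which one can also read off directly from the deformation retraction rather than via the fibration sequence), and then apply $\tau_{S_j}^\vee$ to the resulting homotopy $H$ to produce an explicit homotopy of $\eta_j$ to $\eta_j'$ rel endpoints; this is a slightly stronger conclusion and does not use abelianness at this stage. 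For basepoint independence, the paper moves $e_0$ along a continuous path and drags the whole loop with it, concluding invariance in $H_1$ and hence in $\pi_1$; you instead conjugate, via the telescoping identity $\eta_i'=\alpha_i\circ\eta_i\circ\overline{\alpha_{i-1}}$ and the fact that assumption (\ref{assumption:trivialaction}) forces $\alpha_n=\alpha_0$, so that $\Delta'$ is the change-of-basepoint image of $\Delta$, with abelianness removing the dependence on $\alpha_0$. Two small points you leave implicit, at about the same level of informality as the paper: the connectedness of $\mathrm{Fix}(\tau_{S_j}^\vee)$ (needed to get $\pi_0(P)$ trivial from the exact sequence, and used explicitly by the paper), and the compatibility of the reparametrisation identifications $\pi_1(\mathcal{E},e_0)\cong\pi_1 SO(b^+(X))\cong\pi_1(\mathcal{E},e_0')$ with the change-of-basepoint isomorphism, which follows since the maps $R\mapsto e_0\circ R$ and $R\mapsto e_0'\circ R$ are homotopic along a path of basepoints. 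Neither is a gap.
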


\begin{proof}
We first address the independence from the auxiliary choices of paths $\gamma_i$. Let $\gamma_i^\prime$ be another choice of path in $\mathcal{E}(H^2 (X, \mathbb{R} ))$ from $e_{i-1}$ to $\mathrm{Fix}(\tau_{S_i}^\vee )$. We show that $\eta_i = \tau_{S_i}^\vee (\overline{\gamma_i} )\circ \gamma_i$ is homotopic to $\eta_{i}^\prime = \overline{\tau_{S_i}^\vee (\gamma_i^\prime )}\circ \gamma_i^\prime$ as a path from $e_{i-1}$ to $e_i$, from which the desired independence follows. It is equivalent to show that the loop $\overline{\eta_{i}^\prime }\circ \eta_i $ in $\mathcal{E}(H^2 (X, \mathbb{R}))$ based at $e_{i-1}$ is null-homotopic. Since $\mathcal{E}(H^2 (X, \mathbb{R})) \simeq SO(b^+ (X))$ then $\pi_1 ( \mathcal{E}(H^2 (X, \mathbb{R})), e_{i-1})$ is abelian; hence it suffices to show that the loop $\overline{\eta_{i}^\prime }\circ \eta_i $ is null-\textit{homologous}. Choose any path $\kappa$ in $\mathrm{Fix}(\tau_{S_i}^\vee)$ from $\gamma_i (1)$ to $\gamma_i^\prime (1)$ (this is possible since $\mathrm{Fix}(\tau_{S_i}^\vee ) \simeq SO(b^+(X))$ is connected), and form the loop $\overline{\gamma_{i}^\prime} \circ \kappa \circ \gamma_i$ based at $e_{i-1}$. Clearly, the homology class given by the difference of the cycles $\overline{\gamma_{i}^\prime} \circ \kappa \circ \gamma_i$ and $\tau_{S_i}^\vee (\overline{\gamma_{i}^\prime} ) \circ \kappa \circ \tau_{S_i}^\vee (\gamma_i )$ is represented by the loop $\overline{\eta_{i}^\prime }\circ \eta_i $. But since $\mathcal{E}(H^2 (X, \mathbb{R} ))$ deformation retracts onto $\mathrm{Fix}(\tau_{S_i}^\vee )$, then the automorphism of the homology of $\mathcal{E}(H^2 (X, \mathbb{R} ))$ induced by $\tau^\vee_{S_i}$ is the identity; and thus the cycles $\overline{\gamma_{i}^\prime } \circ \kappa \circ \gamma_i$ and $ \tau_{S_i}^\vee (\overline{\gamma_{i}^\prime} \circ \kappa \circ \gamma_i) =  \tau_{S_i}^\vee (\overline{\gamma_{i}^\prime} ) \circ \kappa \circ \tau_{S_i}^\vee (\gamma_i )$ are homologous, so their difference is null-homologous. Thus, $\overline{\eta_{i}^\prime }\circ \eta_i $ is null-homologous, as required.

Finally, we discuss the independence of $e_0$. For this, note that fixing $\gamma_1 , \ldots , \gamma_n$ and varying the basepoint $e_0$ in a continuous path $e_0 (t)$, $0 \leq t \leq 1$, induces a corresponding path of loops $\eta_n (t) \circ \cdots \circ \eta_1 (t)$ based at $e_0 (t)$, as follows. 
For $i = 1 , \ldots , n$, let $e_i (t)$ be the path $\tau_{S_i}^\vee \circ \cdots \circ \tau_{S_1}^\vee (e_0 (t))$, which satisfies $e_0 (t) = e_n (t)$. Let $\gamma_i (t)$ be the path of paths from $e_{i-1}(t)$ to $\mathrm{Fix}(\tau_{S_i}^\vee )$ given by first travelling $e_{i-1}(s)$ from $s = t$ to $s = 0$, then $\gamma_i$, and then reparametrising to unit length. The path $\eta_i (t)$ from $e_{i-1} (t)$ to $e_{i} (t)$ is then constructed using $\gamma_i (t)$, as before. It follows from this that $\Delta (S_1 , \ldots , S_n )$ is independent of choices as an element in the first homology of $\mathcal{E}(H^2 (X, \mathbb{R} )) \simeq SO(b^+ (X) )$; and since this space has Abelian fundamental group then also in $\pi_1 SO(b^+ (X) )$, as required.
\end{proof}

\begin{proposition}\label{prop: difference of xi_d}Let $f = \tau_{S_n} \cdots \tau_{S_1}$ be as above. Then the difference between the Dehn twist framing and the canonical framing of $H^+ (f)\to S^1$ agrees with minus $\Delta (S_1 , \ldots, S_n )$: 
\[
-\Delta (S_1 , \ldots , S_n ) = \xi_{d}^{1}\cdots \xi_{d}^n - \xi_d^0 \in \pi_1 SO(b^+ (X)).
\]
\end{proposition}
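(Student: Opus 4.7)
The plan is to express both framings explicitly as loops in $\mathcal{E} := \mathcal{E}(H^2(X,\mathbb{R}))$ based at $e_0$, and identify their difference in $\pi_1 SO(b^+(X))$ with (minus) the loop $\Delta(S_1,\ldots,S_n)$ of Definition~\ref{definition:loop}.

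Under the standing assumption (\ref{assumption:trivialaction}) that $f^\ast = \mathrm{id}$ on $H^2(X,\mathbb{R})$, the identification $(\alpha,0)\sim (f^\ast\alpha,1)$ in (\ref{mappingtorusH2}) becomes $(\alpha,0)\sim (\alpha,1)$, so the local system $H^2(f)\to S^1$ trivialises canonically as the product bundle. A framing of $H^+(f)$ corresponds, under this trivialisation and after basing at $e_0$, to a loop in $\mathcal{E}$ based at $e_0$, yielding an identification $\mathrm{Fr}(H^+(f))\cong \pi_1(\mathcal{E},e_0)\cong \pi_1 SO(b^+(X))$ via (\ref{homotopyeqE}). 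The canonical framing $\xi_d^0$ corresponds, by Definition~\ref{definition:canonicalframingd}, to the constant loop at $e_0$.

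The central step is to represent $\xi_d^1\cdots \xi_d^n$ explicitly as the loop $\eta_n\circ \cdots \circ \eta_1$ from Definition~\ref{definition:loop}. Using the based gluing map (\ref{gluingmapbased}) together with the bundle isomorphism (\ref{mappingtoruscomposition}), for each $i$ one selects a based representative of $\xi_d^{S_i}$. The natural choice is a constant path at any $\tilde e_i \in \mathcal{E}(H_i) = \mathrm{Fix}(\tau_{S_i}^\vee)$, which represents $\xi_d^{S_i}$ unambiguously because $H^+(\tau_{S_i})$ trivialises over the $\tau_{S_i}^\ast$-fixed hyperplane $H_i$ (the very definition of the Dehn twist framing). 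To match the starting point $e_{i-1}$ prescribed by the transport in (\ref{mappingtoruscomposition}), one picks an auxiliary path $\gamma_i$ from $e_{i-1}$ to $\tilde e_i$ and modifies the constant path to $\eta_i := \overline{\tau_{S_i}^\vee\gamma_i}\circ \gamma_i$, a path from $e_{i-1}$ to $e_i = \tau_{S_i}^\vee e_{i-1}$. That $\eta_i$ still represents $\xi_d^{S_i}$ follows from a ``basepoint-change'' homotopy through paths $\alpha$ satisfying the equivariance constraint $\alpha(1)=\tau_{S_i}^\vee\alpha(0)$: simultaneously shrinking $\gamma_i$ to the constant at $\tilde e_i$ slides the endpoints of $\alpha$ from $(e_{i-1},e_i)$ to $(\tilde e_i,\tilde e_i)$ while preserving the equivariance constraint throughout.

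Concatenating the paths $\eta_i$ across the arcs of $S^1$ via (\ref{mappingtoruscomposition}) thereby realises $\xi_d^1\cdots \xi_d^n$ as the loop $\eta_n\circ \cdots \circ \eta_1$, which by Definition~\ref{definition:loop} represents $\Delta(S_1,\ldots,S_n)$. Since $\xi_d^0$ is the constant loop, the difference $\xi_d^1\cdots \xi_d^n - \xi_d^0$ in $\pi_1(\mathcal{E},e_0)$ is (a signed representative of) this loop, and tracing through the orientation conventions in the identification (\ref{homotopyeqE}) yields the sign $-\Delta$ asserted in the statement. The principal technical obstacle I anticipate is the verification that the ``basepoint-change'' homotopy interacts cleanly with the transport map $\tau_{S_{i-1}}^\vee\circ\cdots\circ\tau_{S_1}^\vee$ coming from (\ref{mappingtoruscomposition}), so that the concatenated loop indeed coincides (up to homotopy) with $\eta_n\circ\cdots\circ\eta_1$; this verification parallels the choice-independence argument in the proof of Lemma~\ref{lemma:indep_choices}, exploiting the deformation retract $\mathcal{E}\simeq \mathrm{Fix}(\tau_{S_i}^\vee)$ and abelianness of $\pi_1 \mathcal{E}$.
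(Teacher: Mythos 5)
Your reduction of both framings to loops in $\mathcal{E}(H^2(X,\mathbb{R}))$ based at $e_0$, the identification of $\xi_d^0$ with the constant loop, and the observation that each $\eta_i=\overline{\tau_{S_i}^\vee\gamma_i}\circ\gamma_i$ represents the Dehn twist framing of the single bundle $H^+(\tau_{S_i})$ (by shrinking $\gamma_i$ through paths satisfying the equivariance constraint) are all fine. The gap is in the gluing step. The glued framing $\xi_d^1\cdots\xi_d^n$ is defined through the based gluing map (\ref{gluingmapbased}): the consistency condition (\ref{consistency}) forces all chosen representatives to start at one and the same frame (for representatives of the $H^+(\tau_{S_i})$ it reads $\tau_{S_i}^\ast e_i(0)=\tau_{S_i}^\ast e_{i+1}(0)$, i.e.\ common starting points), and the identification (\ref{mappingtoruscomposition}) then transports the $i$th representative by $\tau_{S_1}^\ast\cdots\tau_{S_{i-1}}^\ast$. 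Your paths $\eta_i$, which start at $e_{i-1}$, violate (\ref{consistency}) (one gets $\eta_i(1)=e_i$ while $\tau_{S_i}^\ast\eta_{i+1}(0)=e_{i-1}$), so ``concatenating the $\eta_i$ via (\ref{mappingtoruscomposition})'' does not produce the glued framing; moreover the starting point actually prescribed by the transport is $\tau_{S_1}^\ast\cdots\tau_{S_{i-1}}^\ast(e_0)$, which is \emph{not} $e_{i-1}=\tau_{S_{i-1}}^\ast\cdots\tau_{S_1}^\ast(e_0)$, because the reflections do not commute. What the gluing really yields is the concatenation of the paths $\tau_{S_1}^\ast\cdots\tau_{S_{i-1}}^\ast\rho_t^i(e_0)$ — the loop (\ref{paths}) in the paper's proof — and not $\eta_n\circ\cdots\circ\eta_1$.

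Comparing that loop with the representative of $\Delta$ from Proposition \ref{proposition:representative} is exactly where the content of the proof lies: one needs a chain of homotopies with fixed starting point whose terminal points sweep out a loop representing $2\Delta$ (via $L(s)^2e_0\simeq$ two copies of $L(s)e_0$), giving $\xi_d^1\cdots\xi_d^n-\xi_d^0=\Delta-2\Delta=-\Delta$. This correction cannot be recovered by ``tracing through the orientation conventions'' in (\ref{homotopyeqE}): that identification is fixed once and used both for the framing difference and for $\Delta$, so there is no conventional slack, and for $b^+(X)=2$, where $\pi_1 SO(2)\cong\mathbb{Z}$, your argument as written outputs $+\Delta$, which is the wrong sign. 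The appeal to Lemma \ref{lemma:indep_choices} and the retraction onto $\mathrm{Fix}(\tau_{S_i}^\vee)$ only establishes that $\Delta$ is well defined; it does not bridge the order-reversal discrepancy between the transported based representatives and the paths $\rho_t^i(e_{i-1})$, which is precisely the step your proposal is missing.
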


Before proving Proposition \ref{prop: difference of xi_d} we need to discuss some preliminary results. 

\subsubsection{Constructing an explicit representative of $\Delta (S_1 , \ldots , S_n )$}

We now give a construction of an explicit representative for $\Delta (S_1 , \ldots , S_n ) \in \pi_1 SO(b^+ (X))$, by describing a canonical choice of paths $\gamma_i$ in Definition \ref{definition:loop}. This construction will be used both when calculating $\Delta (S_1 , \ldots , S_n )$ in examples and also in the proof of Proposition \ref{prop: difference of xi_d}.

For each $i =1 , \ldots , n$, consider the following path of linear maps: for $0 \leq t\leq 1$
\begin{align}
\rho^{i}_t : H^2 (X, \mathbb{R} )&  \to H^2 (X , \mathbb{R} ) \label{pathrho}\\
\alpha & \mapsto \alpha + t \langle \alpha , [S_i ]\rangle \mathrm{PD}([S_i]).\nonumber
\end{align}
The path $\rho_{i}^t$ interpolates between the identity $\rho_{0}^i = \mathrm{Id}$ and the Dehn twist $\rho_{1}^0 = \tau_{S_i}^\ast$. At $t =1/2$, we have $\rho^{i}_{1/2} = \Pi_i$, where $\Pi_i$ denotes the orthogonal projection onto the orthogonal complement of $\mathrm{PD}([S_i])$.

\begin{lemma}\label{lemma:rho1} If $e  \in \mathcal{E}(H^2 (X , \mathbb{R} ))$, then $\rho_{t}^i \circ e \in \mathcal{E}(H^2 (X, \mathbb{R} ))$ for all $t \in [0,1]$.
\end{lemma}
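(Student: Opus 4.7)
The plan is a direct computation of the intersection form on the image of $\rho_t^i \circ e$, which will simultaneously establish both injectivity and positivity.

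First I would write $v := \mathrm{PD}([S_i])$, noting $v \cdot v = -2 < 0$, so that $\rho_t^i(\alpha) = \alpha + t\langle \alpha , v\rangle v$. For any $\alpha \in H^2(X,\mathbb{R})$, I would compute the self-pairing
\[
\rho_t^i(\alpha) \cdot \rho_t^i(\alpha) = \alpha \cdot \alpha + 2t \langle\alpha , v\rangle (\alpha \cdot v) + t^2 \langle \alpha , v\rangle^2 (v\cdot v) = \alpha \cdot \alpha + 2t(1-t)\langle\alpha , v\rangle^2,
\]
using $v\cdot v = -2$. The key observation is that $2t(1-t) \ge 0$ for $t \in [0,1]$, so this is a sum of two non-negative terms whenever $\alpha$ lies in a positive subspace.

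Next I would exploit this identity in two ways. For any $\alpha \in \mathrm{Im}(e) \setminus \{0\}$, positivity of $\mathrm{Im}(e)$ gives $\alpha \cdot \alpha > 0$, and the identity then yields $\rho_t^i(\alpha) \cdot \rho_t^i(\alpha) > 0$. This has two consequences: (i) $\rho_t^i(\alpha) \neq 0$, so $\rho_t^i \circ e$ is injective; and (ii) every nonzero vector in $\mathrm{Im}(\rho_t^i \circ e) = \rho_t^i(\mathrm{Im}(e))$ has strictly positive self-pairing, so $\mathrm{Im}(\rho_t^i \circ e)$ is a positive subspace. Since $\rho_t^i \circ e$ is an injective linear map from $\mathbb{R}^{b^+(X)}$ to $H^2(X,\mathbb{R})$ whose image is positive (and therefore necessarily of dimension equal to $b^+(X)$, the maximal possible), we conclude $\rho_t^i \circ e \in \mathcal{E}(H^2(X,\mathbb{R}))$.

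There is essentially no obstacle here; the whole content of the lemma is the algebraic identity $\rho_t^i(\alpha)\cdot \rho_t^i(\alpha) = \alpha\cdot\alpha + 2t(1-t)\langle\alpha,v\rangle^2$, whose positivity follows from combining $S_i \cdot S_i = -2 < 0$ with the fact that $t(1-t) \ge 0$ on the unit interval. This is exactly the geometric statement that the path $\rho_t^i$ stays in the group of maps preserving positivity on maximal positive subspaces, even though it passes through the rank-deficient projection $\Pi_i$ at $t = 1/2$: the direction collapsed by $\Pi_i$ is precisely the negative direction $v$, which no positive subspace can contain.
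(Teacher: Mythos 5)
Your proof is correct and follows essentially the same route as the paper: the core is the identity $\rho_t^i(\alpha)\cdot\rho_t^i(\alpha) = \alpha\cdot\alpha + 2t(1-t)\langle\alpha,[S_i]\rangle^2$, exactly as in the paper's computation using $S_i\cdot S_i = -2$. Your additional remarks on injectivity and maximality of the image just make explicit what the paper leaves implicit; there is no substantive difference.
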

\begin{proof}
It suffices to check that if $\alpha \in H^2 ( X , \mathbb{R} )$ has $\alpha \cdot \alpha >0$, then $\rho_{t}^i (\alpha ) \cdot \rho_{t}^i (\alpha ) > 0$ for $t \in [0,1]$. For this we compute: using $S_i\cdot S_i = -2$,
\begin{align*}
\rho_{t}^i (\alpha ) \cdot \rho_{t}^i (\alpha ) &= (\alpha + t \langle \alpha , [S_i ]\rangle \mathrm{PD}([S_i]) ) \cdot (\alpha + t \langle \alpha , [S_i ]\rangle \mathrm{PD}([S_i]) )\\
& = \alpha\cdot \alpha + 2t( 1-t) \langle \alpha ,  \mathrm{PD}([S_i])\rangle ^2 \\
& > 0 .
\end{align*}
\end{proof}

\begin{lemma}\label{lemma:rho2}
For all $t \in [0,1]$, $\rho_{t}^i \circ \tau_{S_i}^\ast = \tau_{S_i}^\ast \circ \rho_{t}^i = \rho_{1-t}^i$.
\end{lemma}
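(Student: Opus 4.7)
The plan is a direct algebraic verification from the definition of $\rho_t^i$ in~(\ref{pathrho}) and the Picard--Lefschetz formula~(\ref{PLformula}). Both identities ultimately reduce to the single numerical input
\[
\langle \mathrm{PD}([S_i]),\, [S_i]\rangle = S_i \cdot S_i = -2,
\]
which is what makes the one-parameter family $\rho_t^i$ behave like a reflection in the parameter $t \leftrightarrow 1-t$ upon composition with $\tau_{S_i}^*$.

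First I would establish $\rho_t^i \circ \tau_{S_i}^* = \rho_{1-t}^i$. Applied to a class $\alpha$, the left-hand side equals
\[
\rho_t^i\bigl(\alpha + \langle \alpha, [S_i]\rangle\, \mathrm{PD}([S_i])\bigr) = \alpha + \langle \alpha, [S_i]\rangle\,\mathrm{PD}([S_i]) + t\,\bigl\langle \alpha + \langle \alpha, [S_i]\rangle\,\mathrm{PD}([S_i]),\, [S_i]\bigr\rangle\,\mathrm{PD}([S_i]).
\]
Using $\langle \mathrm{PD}([S_i]), [S_i]\rangle = -2$, the inner bracket simplifies to $\langle\alpha,[S_i]\rangle - 2\langle\alpha,[S_i]\rangle = -\langle\alpha,[S_i]\rangle$, and one collects the coefficient of $\mathrm{PD}([S_i])$ to obtain $(1-t)\langle\alpha,[S_i]\rangle\,\mathrm{PD}([S_i])$, i.e.\ exactly $\rho_{1-t}^i(\alpha)$. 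The identity $\tau_{S_i}^* \circ \rho_t^i = \rho_{1-t}^i$ is verified by the symmetric computation, again invoking $S_i\cdot S_i = -2$ to cancel the cross term.

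There is no real obstacle here: the lemma is a two-line linear-algebra check, and the only subtlety is bookkeeping the coefficient of $\mathrm{PD}([S_i])$ after using the self-intersection $-2$. If preferred, one can streamline the argument by noting that $\rho_t^i = \mathrm{Id} + t\,(\tau_{S_i}^* - \mathrm{Id})$ and that $\tau_{S_i}^*$ is an involution, so that $\rho_t^i \circ \tau_{S_i}^* = \tau_{S_i}^* + t(\mathrm{Id}-\tau_{S_i}^*) = \mathrm{Id} + (1-t)(\tau_{S_i}^* - \mathrm{Id}) = \rho_{1-t}^i$, and symmetrically on the other side. This conceptual form makes the reflection symmetry $t \leftrightarrow 1-t$ transparent.
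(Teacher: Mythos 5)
Your proof is correct and follows essentially the same route as the paper: a direct computation with the Picard--Lefschetz formula using $S_i \cdot S_i = -2$ (the paper verifies $\tau_{S_i}^\ast \circ \rho_t^i = \rho_{1-t}^i$ explicitly and notes the other composition follows similarly). The closing observation that $\rho_t^i = \mathrm{Id} + t(\tau_{S_i}^\ast - \mathrm{Id})$ with $\tau_{S_i}^\ast$ an involution is a clean equivalent packaging of the same check.
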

\begin{proof}
Using (\ref{PLformula}) we compute
\begin{align*}
(\tau_{S_i}^\ast \circ \rho_{t}^i ) (\alpha ) &= \tau_{S_i}^\ast (\alpha + t \langle \alpha , [S_i]\rangle \mathrm{PD}([S_i]) ) \\
& = \alpha + \langle \alpha , [S_i]\rangle \mathrm{PD}([S_i]) - t \langle \alpha , [S_i]\rangle \mathrm{PD}([S_i])\\
& = \rho_{1-t}^i (\alpha ).
\end{align*}
The identity $\rho_{t}^i \circ \tau_{S_i}^\ast = \rho_{1-t}^i$ follows similarly.
\end{proof}

Fix now a basepoint $e_0 \in \mathcal{E}(H^2 (X , \mathbb{R} ))$. We now discuss how to make canonical choices for the paths $\gamma_i$, $i =1 , \ldots , n$, in Definition \ref{definition:loop}. Let $e_i = \tau_{S_i}^\ast \cdots \tau_{S_1}^\ast e_0$, as before. From Lemma \ref{lemma:rho1}, we have that $\rho_{t}^i \circ e_{i-1}$, traveled from $t = 0$ to $t =1/2$, is a path in $\mathcal{E}(H^2 (X , \mathbb{R} ))$ connecting $e_{i-1}$ to $\Pi_i (e_{i-1}) \in \mathrm{Fix}(\tau_{S_i}^\vee )$, and we set $\gamma_i$ equal to this path. By Lemma \ref{lemma:rho2}, the reflected path $\overline{\tau_{S_i}^\vee (\gamma_i )}$ is just $\rho_{t}^i \circ e_{i-1}$ traveled from $t = 1/2$ to $t =1$. Thus, we have shown:

\begin{proposition}\label{proposition:representative}
    $\Delta (S_1 , \ldots , S_n ) \in \pi_1 \mathcal{E}(H^2(X, \mathbb{R} ),e_0 )$ is the loop obtained by concatenating the following $n$ paths:
    \[
\rho_{t}^{i}\circ e_{i-1} \quad , \quad 0 \leq t \leq 1 \, , \, i = 1 , \ldots , n 
    \]
    where $e_{i-1} = \tau_{S_{i-1}}^\ast \cdots \tau_{S_1}^\ast e_0$.
\end{proposition}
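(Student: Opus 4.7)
The plan is to simply unpack the definition of $\Delta(S_1, \ldots, S_n)$ using the canonical choice of paths $\gamma_i(t) := \rho_t^i \circ e_{i-1}$ for $t \in [0, 1/2]$, and verify that the concatenation $\eta_n \circ \cdots \circ \eta_1$ simplifies to the claimed loop. Since $\Delta$ is independent of the auxiliary choices by Lemma \ref{lemma:indep_choices}, it suffices to exhibit \emph{one} representative arising from valid choices of $\gamma_i$'s.

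First I would verify that the $\gamma_i$ just described are valid choices in the sense of \S\ref{subsubsection:loop}. That $\gamma_i$ is a path in $\mathcal{E}(H^2(X,\mathbb{R}))$ starting at $e_{i-1}$ is immediate from Lemma \ref{lemma:rho1} together with $\rho_0^i = \mathrm{Id}$. That its endpoint $\gamma_i(1/2) = \Pi_i \circ e_{i-1}$ lies in $\mathrm{Fix}(\tau_{S_i}^\vee) = \mathcal{E}(H_i)$ is because $\Pi_i$ has image in $H_i$, the fixed hyperplane of $\tau_{S_i}^\ast$; equivalently, this follows from Lemma \ref{lemma:rho2} applied at $t = 1/2$, which gives $\tau_{S_i}^\ast \circ \rho_{1/2}^i = \rho_{1/2}^i$.

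The key step is computing the reflected reversed path $\overline{\tau_{S_i}^\vee(\gamma_i)}$. Applying Lemma \ref{lemma:rho2}, one has $\tau_{S_i}^\vee(\gamma_i)(t) = \tau_{S_i}^\ast \circ \rho_t^i \circ e_{i-1} = \rho_{1-t}^i \circ e_{i-1}$ for $t \in [0, 1/2]$. Reversing, the path $\overline{\tau_{S_i}^\vee(\gamma_i)}$ is precisely $s \mapsto \rho_s^i \circ e_{i-1}$ for $s$ running from $1/2$ to $1$. Concatenating with $\gamma_i$ (which covers $s \in [0, 1/2]$), the path $\eta_i = \overline{\tau_{S_i}^\vee(\gamma_i)} \circ \gamma_i$ becomes the single continuous path $s \mapsto \rho_s^i \circ e_{i-1}$, $s \in [0,1]$. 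Its endpoint is $\rho_1^i \circ e_{i-1} = \tau_{S_i}^\ast \circ e_{i-1} = e_i$, as required for the concatenation to compose.

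Finally, concatenating $\eta_1, \ldots, \eta_n$ produces the loop described in the proposition, which closes up at $e_0 = e_n$ by the hypothesis (\ref{assumption:trivialaction}). There is no genuine obstacle here: the only mildly delicate point is keeping track of orientations when reversing $\tau_{S_i}^\vee(\gamma_i)$ and recognising via Lemma \ref{lemma:rho2} that the reversal continues $\gamma_i$ rather than doubling back along it; once this symmetry $\tau_{S_i}^\ast \circ \rho_t^i = \rho_{1-t}^i$ is invoked, the proposition follows immediately.
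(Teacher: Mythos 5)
Your proof is correct and is essentially the argument the paper gives: you make the same canonical choice $\gamma_i(t) = \rho_t^i \circ e_{i-1}$ on $[0,1/2]$, justify it with Lemma \ref{lemma:rho1}, and use the symmetry $\tau_{S_i}^\ast \circ \rho_t^i = \rho_{1-t}^i$ of Lemma \ref{lemma:rho2} to identify $\overline{\tau_{S_i}^\vee(\gamma_i)}$ with the continuation of $\gamma_i$ from $t=1/2$ to $t=1$, so that $\eta_i$ becomes $\rho_t^i \circ e_{i-1}$ on $[0,1]$ and the concatenation is the claimed loop.
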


\subsubsection{Proof of Proposition \ref{prop: difference of xi_d}}

The Dehn twist framing $\xi_{d}^i$ of $H^+ (\tau_{S_i} )$ is represented by the constant path of framings based at a framing $e_{i}^{\mathrm{fix}} \in \mathcal{E}(H^2 (X , \mathbb{R} ))$ whose image is a maximal positive subspace contained in $\mathrm{Fix}(\tau_{S_i}^\ast ) = \langle \mathrm{PD}([S_i ])\rangle^{\perp}$. In order to describe the glued framing (cf. (\ref{gluingmap})), we first want to homotope this framing of $H^+ (\tau_{S_i} )$ to a based framing. Fix a maximal positive subspace $H^+ (X ) \subset H^2 (X , \mathbb{R} )$ with a framing $e_0 $ (compatible with the given orientation of $H^+ (X)$). Then the path $\rho_{t}^i ( e_0 )$, $0 \leq t \leq 1$, represents a framing based at $e_0$. This framing is homotopic (through framings) to the Dehn twist framing. Indeed, choosing a path $\beta_i (s)$ in $\mathcal{E}(H^2 (X , \mathbb{R}))$ from $e_i^{\mathrm{fix}}$ to $e_0$, we obtain a homotopy of framings $\rho_{t}^i ( \beta_i (s))$ (by Lemma \ref{lemma:rho1}) from the Dehn twist framing to the new based framing. In summary, we have shown that the path $\rho_{t}^i ( e_0 )$ in $\mathcal{E}(H^2 (X , \mathbb{R}))$ based at $e_0$ represents the Dehn twist framing $\xi_{d}^i$ of $H^+ (\tau_{S_i } )$.

Gluing the based framings $\rho_{t}^i ( e_0 )$ using the based gluing map (\ref{gluingmapbased}) we obtain a framing of $H^+ (f)$ where $f = \tau_{S_n} \circ \cdots \tau_{S_1}$. By the identification (\ref{mappingtoruscomposition}), this framing of $H^+ (f)$ is represented by the loop in $\mathcal{E}(H^2 (X , \mathbb{R} ))$ based at $e_0$ obtained by concatenating the following $n$ paths:
\begin{align}
\tau_{S_1}^\ast \cdots \tau_{S_{i-1}}^\ast \rho_{t}^i (e_0  ) \quad , \quad 0 \leq t \leq 1 \, , \, i = 1 , \ldots  , n . \label{paths}
\end{align}

The following steps show that the loop obtained by concatenating the paths in  (\ref{paths}) represents $- \Delta (S_1 , \ldots , S_n )$.\\ 

\textit{Connecting the paths $i =1$.} The path $i =1$ in (\ref{paths}) agrees with the path $i =1$ in Proposition \ref{proposition:representative}. This is clear. \\

\textit{Connecting the paths $i =2$.} The path $i =2$ from (\ref{paths}) is $\tau_{S_1}^\ast \rho_{t}^2 (e_0 )$, which connects $\tau_{S_1}^\ast e_0 = e_1$ (at $t = 0$) to $\tau_{S_1}^\ast \tau_{S_2}^\ast e_0$ (at $t =1$). From Lemma \ref{lemma:rho2}, we have the following homotopy of this path:
\begin{align}
\rho_{1-s}^1 \rho_{t}^2 \rho_{s}^1 (e_0 ) \quad , \quad 0 \leq s \leq 1 .\label{homotopy:i=2}
\end{align}
At $s = 0$, this agrees with $\tau_{S_1}^\ast \rho_{t}^2 (e_0 )$. At $s =1$, we have the path $\rho_{t}^2 \tau_{S_1}^\ast (e_0 ) = \rho_{t}^2 (e_1 )$, which is the path $i =2$ from Proposition \ref{proposition:representative}. 

However, note that both starting point ($t = 0$) and the ending point ($t =1$) of each path in the homotopy (\ref{homotopy:i=2}) do \textit{not} remain constant. Still, we can modify the homotopy (\ref{homotopy:i=2}) so that the starting point remains constant at $\tau_{S_1}^\ast (e_0 )$. For this, note that the starting point is $\rho_{1-s}^1 \rho_{s}^1 (e_0 ) = \tau_{S_1}^\ast \rho_{s}^1 \rho_{s}^1 (e_0 )$ (by Lemma \ref{lemma:rho2}), which describes a loop based at $\tau_{S_1}^\ast (e_0 )$ as $s$ goes from $0$ to $1$. We have the following identity, which follows easily from (\ref{PLformula}) and $S_1\cdot S_1 = -2$,
\[
\rho_{s}^1 \rho_{s}^1  = \rho_{2s (1-s)}^1
\]
and hence the loop $\tau_{S_1}^\ast \rho_{s}^1  \rho_{s}^1 (e_0 )$ can be homotoped to the constant loop at $\tau_{S_1}^\ast e_0 = e_1$ through the following based homotopy (using also Lemma \ref{lemma:rho1}):
\begin{align}
\tau_{S_1}^\ast \rho_{2sr (1-s)}^1 \quad , \quad 0 \leq r \leq 1 .\label{homotopystarti=2}
\end{align}

In conclusion, by applying the homotopy (\ref{homotopy:i=2}) and modifying its starting point using the homotopy (\ref{homotopystarti=2}), we have homotoped the path $i =2$ in (\ref{paths}) to the path $i =2$ in Proposition \ref{proposition:representative} through paths which remain fixed at the starting point $\tau_{S_1}^\ast e_0 = e_1$. Gluing this homotopy with the paths from the step $i =1$ above yields a homotopy $H_{s}^{2}(t)$ of paths from the concatenation of the paths $i =1,2$ in (\ref{paths}) to the concatenation of the paths $i =1,2$ in Proposition \ref{proposition:representative}, which stays constant at the starting point $e_0$ but possibly varies the endpoint. \\

\textit{Connecting the paths $i =3$.} The path $i =3$ from (\ref{paths}) is now $\tau_{S_1}^\ast \tau_{S_2}^\ast \rho_{t}^3 (e_0 )$. We consider the following two homotopies:
\begin{align}
\rho_{1-s}^1 \tau_{S_2}^\ast \rho_{t}^3 \rho_{s}^1 (e_0 ) \quad , \quad 0 \leq s \leq 1 \label{homotopyi=3,1}\\
\rho_{1-s}^2\rho_{t}^3 \rho_{s}^2 \tau_{S_1}^\ast (e_0 )\quad , \quad 0 \leq s \leq 1 .\label{homotopyi=3,2}
\end{align}
The homotopy (\ref{homotopyi=3,1}) interpolates from the path $i =3$ in (\ref{paths}) (i.e. $\tau_{S_1}^\ast \tau_{S_2}^\ast \rho_{t}^3 (e_0 )$) to the path $\tau_{2}^\ast \rho_{t}^3 \tau_{S_1}^\ast (e_0 )$, and the homotopy (\ref{homotopyi=3,2}) interpolates from the latter path to the path $i =3$ in Proposition \ref{proposition:representative} (i.e. $\rho_{t}^3 (e_2 )$ where $e_2 = \tau_{S_2}^\ast \tau_{S_1}^\ast e_0 $).

The starting points of the paths in the homotopy (\ref{homotopyi=3,1}) are given by
\[
\rho_{1-s}^1 \tau_{S_2}^\ast \rho_{s}^1 (e_0 )
\]
which coincide with the ending points of the homotopy (\ref{homotopy:i=2}) from the previous step.

We would like the starting point of the paths in the homotopy (\ref{homotopyi=3,2}) to remain fixed at $e_2 = \tau_{S_2}^\ast \tau_{S_1}^\ast e_0 $. However, this is not the case. But the starting points are given by the loop
\[
\rho_{1-s}^2 \rho_{s}^2 \tau_{S_1}^\ast (e_0 ) = \tau_{S_2}^\ast \rho_{s}^2 \rho_{s}^2 \tau_{S_1}^\ast (e_0 )
\]
which can be deformed to the constant path at $e_2$ through a based homotopy constructed in a similar way as in the previous step.

Thus, we can glue the homotopy $H^{2}_s (t)$ constructed in the previous step with the homotopies (\ref{homotopyi=3,1}-\ref{homotopyi=3,2}) after making the starting point in the homotopy (\ref{homotopyi=3,2}) constant, as explained above. The new homotopy thus obtained, denoted $H_{s}^3 (t)$ interpolates from the concatenation of the paths $i =1,2,3$ in (\ref{paths}) to the concatenation of the paths $i =1,2,3$ in Proposition \ref{proposition:representative}. As before, the starting point of the paths in the homotopy $H_{s}^3 (t)$ remain fixed at $e_0$ but the endpoints are varying possibly.\\

Clearly, the procedure described in the previous steps can be carried on for $i =1 , 2 , 3, \ldots , n$. It results in a homotopy $H_{s}^n (t)$ from the loop obtained by the concatenation of the paths (\ref{paths}) to the loop from Proposition \ref{proposition:representative}. This homotopy is through paths with fixed starting point, but the endpoint possibly varying. The endpoints form a loop $H_{s}^n (1)$ based at $e_0$, and we now describe this loop:\\

\textit{Claim: the loop $H_{s}^n (1)$ represents $2\Delta (S_1 , \ldots , S_n ) \in \pi_1 \mathcal{E}(H^2 (X, \mathbb{R} ))$.}\\

Explicitly, the loop $H_{s}^n (1)$ is given by the concatenation of the following $n$ paths:
\begin{align*}
\rho_{1-s}^i \tau_{S_{i+1}}^\ast \cdots \tau_{S_n}^\ast  \rho_{s}^i \tau_{S_{i-1}}^\ast \cdots \tau_{S_1}^\ast e_0 \quad , \quad i = 1 , \ldots , n .
\end{align*}

Using $\rho_{1-s}^i = \rho_{s}^i \circ \tau_{S_i}^\ast$ (Lemma \ref{lemma:rho2}) and the identity $\tau_{S_{i}}^\ast \cdots \tau_{S_n}^\ast  = \tau_{S_{i-1}}^\ast \cdots \tau_{S_1}^\ast$ (coming from the fact that $\tau_{S_n}^\ast \cdots \tau_{S_1}^\ast = \mathrm{Id}$ and $(\tau_{S_i}^\ast)^2 =\mathrm{Id}$), we can rewrite the above $n$ paths as
\begin{align*}
(\rho_{s}^i \tau_{S_{i-1}}^\ast \cdots \tau_{S_1}^\ast )^2 e_0 \quad , \quad i = 1 , \ldots , n .
\end{align*}

The concatenation of the paths of linear maps $\rho_{s}^i \tau_{S_{i-1}}^\ast \cdots \tau_{S_1}^\ast $ for $i = 1 , \ldots , n$ gives a loop $L(s)$ of linear maps based at the identity. The loop $H^n_s(1)$ from above is $L(s)^2 e_0$. In the same vein of the proof that the fundamental group of a topological group is abelian, one can show that the loop $L(s)^2 e_0$ is based homotopic to the twice concatenation of $L(s)e_0$. Namely, one exhibits the following based homotopy between the two:
\[
K(s,r) = \begin{cases} L\big( \frac{2s}{1+r}\big) L(rs) &  s\in [0 , \frac{t+1}{2}]\\
L\big( (s-\frac{r+1}{2} )(2-r) + sr \big) L\big(\frac{2s}{1+r} + (\frac{1+r}{2} -s )(2 - 2r ) \big) & s\in [\frac{r+1}{2} , 1]
\end{cases}.
\]
On the other hand, $L(s)e_0$ is the loop representing $\Delta (S_1 , \ldots , S_n )$ given in Proposition \ref{proposition:representative}. This concludes the proof of the Claim.\\

From this the proof of Proposition \ref{prop: difference of xi_d} is completed, as putting all together shows:
\[
\xi_{d}^{1}\cdots \xi_{d}^n - \xi_d^0 = -2\Delta (S_1 , \ldots , S_n ) + \Delta(S_1 ,\ldots , S_n ) = -\Delta(S_1 , \ldots , S_n ). \qed
\]

\subsection{Calculations of $\Delta(S_1 , \ldots , S_n )$}

In this subsection we discuss how to compute the element $\Delta (S_1 , \ldots , S_n )$ in concrete examples.\\

First, we make some preliminary remarks. If we make a fixed choice of maximal positive subspace $H^+ (X) \subset H^2 (X, \mathbb{R} )$ with an orientation. Then there is a canonical orthogonal projection map
\[
\Pi : H^2 (X, \mathbb{R} ) \to H^+ (X).
\]
Orthogonal projection induces a well-defined map
\begin{align}
\Pi : \mathcal{E}(H^2 (X,\mathbb{R} )) \to \mathcal{E} ( H^+ (X) ) \quad , \quad e \mapsto \Pi \circ e . \label{projH+}
\end{align}
Here, $\mathcal{E} (H^+ (X))$ is simply the set of orientation-preserving linear isomorphisms $\mathbb{R}^{b^+ (X)} \xrightarrow{\cong} H^+ (X)$, and is therefore homeomorphic to the group $GL_{+}(b^+(X) , \mathbb{R} )$. Furthermore, (\ref{projH+}) is a homotopy-equivalence, and in what follows we describe how to compute $\Pi (\Delta(S_1 , \ldots , S_n )) \in \pi_1 \mathcal{E}(H^+ (X))$ when $b^+ (X) =2$. 

\subsubsection{Computing $\Delta (S_1 , \ldots , S_n )$ in the case $b^+  =2$}\label{subsubsection:algorithm}

The following describes an `algorithm' for calculating $\Delta (S_1 , \ldots , S_n )$ in a special situation. The input of this algorithm is the following:
\begin{itemize}
\item A compact smooth $4$-manifold $M$ with $b^+ (M) =2$ and with rational homology sphere boundary (possibly empty)
\item  A finite collection of smoothly embedded $(-2)$--spheres  $S_1 , \ldots , S_n $  in $M$ (disjoint from $\partial M$)
\item A maximal positive subspace $H^+ (M) \subset H^2 (M, \mathbb{R} )$ together with a framing $e_0 : \mathbb{R}^{b^+(M)} \xrightarrow{\cong} H^+ (M)$. Since $b^+ (M) =2$, then this framing corresponds to a choice of two linearly independent vectors $a ,b \in H^+ (M)$.
\end{itemize}

The framing $a,b$ of $H^+ (M)$ identifies $\mathcal{E} (H^+ (M)) = GL_{+}(2,\mathbb{R} )$. Let $p : GL_+ (2,\mathbb{R} ) \to \mathbb{R}^2 \setminus 0$ be the map which projects a matrix to its first column. Then, with $\Pi$ as defined in (\ref{projH+}), we obtain a homotopy-equivalence
\[
\pi := p \circ \Pi : \mathcal{E}(H^2 (M, \mathbb{R} )) \xrightarrow{\simeq} \mathbb{R}^2 \setminus 0 .
\]

\begin{lemma}\label{lemma:computing}
For $i =0 , \ldots , n$, let $v_i  = \pi ( \tau_{S_i}^\ast \cdots \tau_{S_1}^\ast a ) \in \mathbb{R}^2 \setminus 0$, and note that these satisfy $v_0 = v_n = (1,0)$. Let $\eta$ be the loop in $\mathbb{R}^2 \setminus 0$ based at $v_0 = (1,0)$ obtained by concatenating the straight line segment from $v_{i-1}$ to $v_i$ for $i =1 , \ldots , n$. Then $\eta$ represents the element 
\[
\pi_\ast \Delta (S_1 , \ldots , S_n ) \in \pi_1 ( \mathbb{R}^2 \setminus 0 , v_0 ) = \mathbb{Z}.
\]
\end{lemma}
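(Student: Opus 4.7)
By Proposition \ref{proposition:representative}, the element $\Delta(S_1,\ldots,S_n)$ is represented by the loop in $\mathcal{E}(H^2(M,\mathbb{R}))$ obtained by concatenating the paths $t\mapsto \rho_t^{i}\circ e_{i-1}$ for $i=1,\ldots,n$, where $e_{i-1} = \tau_{S_{i-1}}^{\ast}\cdots\tau_{S_1}^{\ast}e_0$. The strategy is simply to apply the homotopy equivalence $\pi = p\circ \Pi$ to this concatenation, identify the resulting path for each $i$ as a straight line segment from $v_{i-1}$ to $v_i$, and verify that no such segment passes through the origin. Concatenation will then give exactly the loop $\eta$.

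First, I would unpack $\pi$ on framings: for any $e\in\mathcal{E}(H^2(M,\mathbb{R}))$ one has $\pi(e) = p(\Pi\circ e) = (\Pi\circ e)(\mathbf{e}_1) = \Pi(e(\mathbf{e}_1))$. Setting $\alpha_{i-1} := \tau_{S_{i-1}}^{\ast}\cdots\tau_{S_1}^{\ast}a$, and using $e_0(\mathbf{e}_1) = a$, this gives $\pi(e_{i-1}) = \Pi(\alpha_{i-1}) = v_{i-1}$, with the endpoint identifications $v_0 = \Pi(a) = a = (1,0)$ and $v_n = (1,0)$ (the latter using the standing assumption that $\tau_{S_n}^{\ast}\cdots\tau_{S_1}^{\ast} = \mathrm{Id}$).

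Next, I would compute $\pi$ applied to the $i$th path. By the very definition of $\rho_t^i$ in (\ref{pathrho}) and by linearity of $\Pi$,
\[
\pi(\rho_t^{i}\circ e_{i-1}) \;=\; \Pi\!\left(\alpha_{i-1} + t\,\langle\alpha_{i-1},[S_i]\rangle\,\mathrm{PD}([S_i])\right) \;=\; \Pi(\alpha_{i-1}) + t\,\langle\alpha_{i-1},[S_i]\rangle\,\Pi(\mathrm{PD}([S_i])),
\]
which is an affine (straight-line) path in $H^+(M)\cong\mathbb{R}^2$. At $t=0$ it equals $v_{i-1}$, and at $t=1$ it equals $\Pi(\rho_1^i(\alpha_{i-1})) = \Pi(\tau_{S_i}^{\ast}\alpha_{i-1}) = \Pi(\alpha_i) = v_i$. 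Hence this is precisely the straight segment from $v_{i-1}$ to $v_i$.

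Finally, I must verify that these segments lie in $\mathbb{R}^2\setminus 0$, so that concatenation really yields a loop in $\mathbb{R}^2\setminus 0$ (rather than just a loop in $\mathbb{R}^2$). This is the only place where something non-formal intervenes, and it follows from Lemma \ref{lemma:rho1}: for every $t\in[0,1]$ the map $\rho_t^i\circ e_{i-1}$ lies in $\mathcal{E}(H^2(M,\mathbb{R}))$, i.e.\ its image is a maximal positive subspace. Since the orthogonal projection $\Pi\colon H^2(M,\mathbb{R})\to H^+(M)$ restricts to a linear isomorphism on any such maximal positive subspace (standard for indefinite forms), the composition $\Pi\circ\rho_t^i\circ e_{i-1}$ is an isomorphism $\mathbb{R}^2\xrightarrow{\cong} H^+(M)$, and in particular its first column $\pi(\rho_t^i\circ e_{i-1})$ is nonzero for all $t$. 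Concatenating over $i=1,\ldots,n$ produces exactly the loop $\eta$, and under the homotopy equivalence $\pi$ this represents $\pi_{\ast}\Delta(S_1,\ldots,S_n)\in\pi_1(\mathbb{R}^2\setminus 0,v_0)=\mathbb{Z}$, as claimed. I do not expect any serious obstacle here; the content is entirely the affine formula for $\rho_t^i$ combined with Lemma \ref{lemma:rho1}.
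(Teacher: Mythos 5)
Your proof is correct and takes essentially the same approach as the paper: the paper's own argument simply invokes Proposition \ref{proposition:representative} and notes that each path $\rho_t^i\circ e_{i-1}$ projects under $\pi$ to the straight segment from $v_{i-1}$ to $v_i$. Your extra check that the segments avoid the origin (via Lemma \ref{lemma:rho1} and the fact that $\Pi$ is injective on maximal positive subspaces) is a detail the paper leaves implicit, and it is correctly handled.
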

\begin{proof}
In the representative of $\Delta(S_1 , \ldots , S_n )$ constructed in Proposition \ref{proposition:representative}, it is clear that the $i$th path contained in it projects to a straight line segment in $\mathbb{R}^2$ under $\pi$.
\end{proof}

Since $\tau_{S_i}^\ast$ is given by the Picard--Lefschetz formula (\ref{PLformula}), Lemma \ref{lemma:computing} gives an algorithm for computing $\Delta (S_1 , \ldots , S_n ) \in \pi_1 \mathcal{E}(H^2 (M, \mathbb{R}))$ which can easily be implemented with a computer. 

In our case of interest we will have a closed $4$-manifold $X$ with $b^+ (X) >2$, but the spheres $S_1 , \ldots , S_n$ will all be contained in the interior of a compact $4$-dimensional submanifold $M \subset X$ with rational homology sphere boundary and $b^+ (M) = 2$. By pushforward we obtain an embedding $H^2 (M,\mathbb{R}) \subset H^2 (X,\mathbb{R})$, and an associated orthogonal decomposition $H^2 (X, \mathbb{R} ) = H^2 (M , \mathbb{R} ) \oplus H^2 (M , \mathbb{R} )^{\perp}$. If $H^+ (M) \subset H^2 (M, \mathbb{R} )$ and $V \subset H^2 (M,\mathbb{R})^{\perp}$ are maximal positive subspaces then so is their sum in $H^2 (X, \mathbb{R} )$. Thus, a choice of framed subspace $V$ yields a stabilization map
\[
s_V :  \mathcal{E}(H^2 (M,\mathbb{R} )) \to  \mathcal{E} (H^2 (X, \mathbb{R})) \quad , \quad e \mapsto e \oplus V .
\] 
The induced map
\[
(s_{V})_\ast : \pi_1 \mathcal{E} ( H^2 (M , \mathbb{R} )) \to \pi_1 \mathcal{E} (H^2 (X, \mathbb{R}))
\]
is the unique surjection between these two groups. Hence, we can use Lemma \ref{lemma:computing} to compute $\Delta (S_1 , \ldots , S_n )\in \pi_1 \mathcal{E}(H^2 (X, \mathbb{R} )) $ in this situation also: the element $\Delta (S_1 , \ldots , S_n ) $ is non-trivial in $\pi_1 \mathcal{E}(H^2 ( X, \mathbb{R} )) \cong \mathbb{Z}/2$ if and only if the loop $\eta \in \pi_1 (\mathbb{R}^2 \setminus 0 )$ from Lemma \ref{lemma:computing} is an \textit{odd} multiple of the standard generator. \\

Next, we describe explicit calculations in a class of examples using the above algorithm.

\subsubsection{Configurations of spheres from exceptional unimodal singularities}
\label{subsection: Configurations of spheres from exceptional unimodal singularities}

We now calculate the element $\Delta $ for certain configurations of $(-2)$--spheres arising from vanishing cycles.\\

First, we briefly recall the notion of distinguished basis of vanishing spheres associated to an isolated hypersurface singularity (see \cite{arnold,ebeling-book} for details). 
Let $f : (\mathbb{C}^{3} , 0 ) \to (\mathbb{C},0) $ be the germ of a complex-analytic function with an isolated singular point at $0 \in \mathbb{C}^3$. By Milnor's Fibration Theorem \cite{milnor}, there exists $\varepsilon_0 >0$ such that for each $0 < \varepsilon \leq \varepsilon_0$ there exists $\delta = \delta (\varepsilon )>0$ for which the mapping
\begin{align}
f : B_{\varepsilon}(0) \cap f^{-1} (B_\delta (0) ) \subset \mathbb{C}^3 \to B_\delta (0) \subset \mathbb{C}\label{milnorfibration1}
\end{align}
is a smooth fibration over the complement of $0 \in B_\delta (0)$ with fibers given by compact $4$-manifolds-with-boundary $M$ --- the `Milnor fibers'. 

Let $\mu = \mu (f) = b^2 (M)$ denote the Milnor number of $f$. For small generic parameters $a,b,c \in \mathbb{C}$, the perturbation $\widetilde{f} = f + ax + by +cz$ has only non-degenerate (i.e. Morse) critical points in $B_\varepsilon (0)$, and has exactly $\mu $ of them, with pairwise distinct critical values all contained in the interior of $B_\delta (0) \subset \mathbb{C}$. Such an $\widetilde{f}$ is called a Morsification of $f$, and the mapping 
\begin{align}
\widetilde{f} : B_{\varepsilon}(0) \cap \widetilde{f}^{-1} (B_\delta (0) ) \subset \mathbb{C}^3 \to B_\delta (0)\subset \mathbb{C} \label{milnorfibration2}
\end{align}
is a smooth fibration over the complement of the $\mu$ critical values. Fixing a point $z_0 \in \partial B_{\delta}(0)$, we may identify the fiber $\widetilde{f}^{-1}(z_0 )$ of (\ref{milnorfibration2}) with the fiber $M$ of the fibration (\ref{milnorfibration1}), and the monodromy $\psi \in \pi_0 \mathrm{Diff}(M )$ along the boundary circle $\partial B_\delta (0)$ is the same for both fibrations (\ref{milnorfibration1}-\ref{milnorfibration2}).

A \textit{distinguished basis of vanishing paths} $\gamma_1 , \ldots , \gamma_\mu$ for the Morsification $\widetilde{f}$ consists of an ordered collection of smoothly embedded paths in $B_\delta (0)$ such that:
\begin{itemize}
\item $\gamma_i (0) = z_0$, and for each critical value $z$ of $\widetilde{f}$ there is a (unique) $i$ with $\gamma_i (1) = z$
\item two different paths $\gamma_i , \gamma_j$ meet only at $z_0$
\item the derivatives $\gamma_{1}^\prime (0), \ldots , \gamma_{\mu}^\prime (0)$ are pairwise distinct, and the ordering of the paths $\gamma_1 , \ldots , \gamma_\mu$ is by clockwise outgoing order from $z_0$.
\end{itemize}
For $i = 1 , \ldots , \mu$, let $z_i := \gamma_i (1)$ and let $p_i \in \widetilde{f}^{-1}(z_i )$ be the unique critical point over $z_i$. Associated to the path $\gamma_i$ from $z_0$ to the critical value $z_i$, there is an associated smoothly embedded sphere $S_i \subset M = \widetilde{f}^{-1} (z_0 )$ with $S_i \cdot S_i =-2$ called the \textit{vanishing sphere} of $\gamma_i$, and well-defined up to isotopy: in the local model for a non-degenerate critical point, namely $x^2 + y^2 +z^2 : (\mathbb{C}^3 , 0 ) \to (\mathbb{C}, 0 )$, the non-singular fibers are diffeomorphic to $T^\ast S^2$ and the vanishing sphere is given by the zero section in this cotangent bundle; in general, one uses parallel transport along the path $\gamma_i$ to transport the vanishing sphere from the local model to $M = \widetilde{f}^{-1} (z_0 )$.
The \textit{distinguished basis of vanishing spheres} associated to a distinguished basis of vanishing paths $\gamma_1 , \ldots , \gamma_\mu$ of $\widetilde{f}$ is the collection of $(-2)$--spheres $S_1, \ldots , S_\mu$ smoothly embedded in $M = \widetilde{f}^{-1} (z_0)$, each well-defined up to isotopy, where $S_i$ is the vanishing sphere of $\gamma_i$. The \textit{Dynkin diagram} of a distinguished basis of vanishing paths $\gamma_1 , \ldots , \gamma_\mu$ is the graph with vertices labelled $i =1 , \ldots , \mu$, with an edge connecting two different $i$ and $j$ whenever $S_i$ and $S_j$ have non-trivial homological intersection $S_i \cdot S_j$, in which case the edge is weighted by the integer $S_i \cdot S_j$.\\ 

In Arnold's classification of the unimodal isolated singularities \cite{arnold-normalforms}, he identified a subclass of these consisting of 14 families of singularities $f_\lambda$ known as the \textit{exceptional unimodal singularities}, listed in Table \ref{table:unimodal}. Here $f_\lambda : (\mathbb{C}^3 , 0 ) \to (\mathbb{C} , 0 )$ is a family of isolated singularity germs indexed by a parameter $\lambda \in \mathbb{C}$, such that the Milnor number of $f_\lambda$ stays constant in $\lambda$. 

\begin{table}[h!] 
\centering
\begin{tabular}{|c|c|c|c|}
\hline
\textbf{Name} & \textbf{Equation $f_\lambda$} & \textbf{Gabrielov numbers $(p,q,r)$} & \textbf{Monodromy order $h$} \\
\hline
$E_{12}$ & $x^3 + y^7 + z^2 + \lambda xy^5 $ & (2,3,7) & 42 \\
\hline
$E_{13}$ & $x^3 + x y^5 + z^2 + \lambda y^8 $ & (2,3,8) & 30 \\
\hline
$E_{14}$ & $x^3 + y^8 + z^2 + \lambda xy^6 $ & (2,3,9) & 24 \\
\hline
$Z_{11}$ & $x^3 y + y^5 + z^2 + \lambda xy^4 $ & (2,4,5) & 30 \\
\hline
$Z_{12}$ & $x^3 y + xy^4 + z^2 + \lambda y^6 $ & (2,4,6) & 22 \\
\hline
$Z_{13}$ & $x^3 y + y^6 + z^2 + \lambda xy^5 $ & (2,4,7) & 18 \\
\hline
$Q_{10}$ & $x^2 z + y^3 + z^4 + \lambda yz^3 $ & (3,3,4) & 24 \\
\hline
$Q_{11}$ & $x^2 z + y^3 + yz^3 + \lambda z^5 $ & (3,3,5) & 18 \\
\hline
$Q_{12}$ & $x^2 y + y^3 + z^5 + \lambda yz^4 $ & (3,3,6) & 15 \\
\hline
$W_{12}$ & $x^4 + y^5 + z^2 + \lambda x^2 y^3 $ & (2,5,5) & 20 \\
\hline
$W_{13}$ & $x^4 + x y^4 + z^2 + \lambda y^6 $ & (2,5,6) & 16 \\
\hline
$S_{11}$ & $x^4 + y^2 z + xz^2 + \lambda x^3 z $ & (3,4,4) & 16 \\
\hline
$S_{12}$ & $x^2 y + y^2 z + xz^3 + \lambda z^5 $ & (3,4,5) & 13 \\
\hline
$U_{12}$ & $x^3 + y^3 + z^4 + \lambda xyz^4 $ & (4,4,4) & 12 \\
\hline
\end{tabular}
\caption{The exceptional unimodal hypersurface singularities, with their Gabrielov numbers and order of homological monodromy}
\label{table:unimodal}
\end{table}

For each of the exceptional unimodal singularities $f_\lambda$, by work of Gabrielov \cite{gabrielov-unimodal} there exists a distinguished basis of vanishing spheres in the corresponding Milnor fiber $M$ such that the Dynkin diagram is given by Figure \ref{fig:dynkin}, where $(p,q,r)$ in that Figure is a triple of integers known as the \textit{Gabrielov numbers} of $f_\lambda$ (see Table \ref{table:unimodal} for the list of Gabrielov numbers). From the Dynkin diagram in Figure \ref{fig:dynkin}, we have that the Milnor number of $f_\lambda$ is $\mu = p+q+r$. One can also see from the Dynkin diagram that $b^+(M) =2$ and that $\partial M$ is a rational homology sphere for all the exceptional unimodal singularities (see e.g. \cite[\S 5.47]{ebeling-book}).

\begin{figure}[htbp]
    \centering    \includegraphics[width=0.7\textwidth]{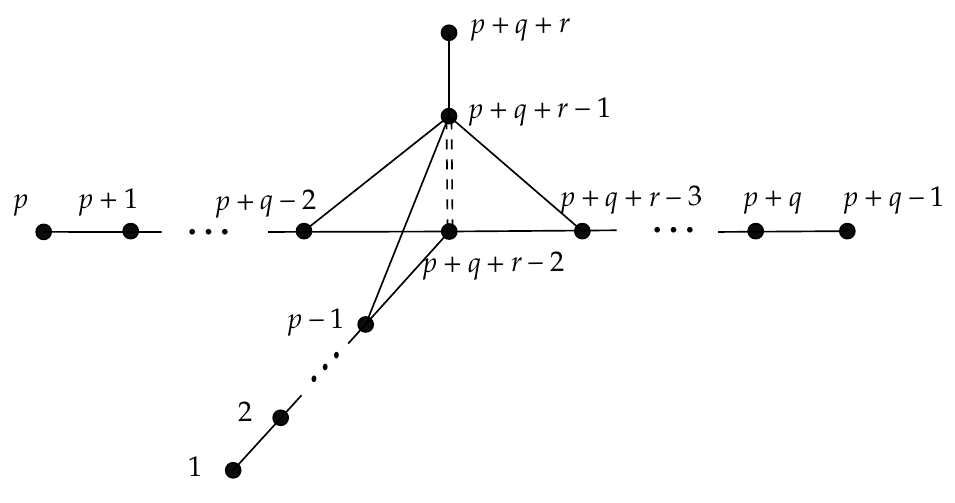}
    \caption{Gabrielov's Dynkin diagram for the exceptional unimodal singularities.}
    \label{fig:dynkin}
\end{figure}

The singularities $f_\lambda$ are all weighted-homogeneous when $\lambda = 0$. From this one sees that the monodromy $\psi$ over $\partial B_\delta (0)$ in (\ref{milnorfibration1}-\ref{milnorfibration2}) induces an automorphism $\psi^\ast \in \mathrm{Aut}H^2 (M,\mathbb{R})$ of the intersection form with finite order \cite{milnor}; the order, which we denote by $h$, is just the weighted-degree of the polynomial $f_0$ (see Table \ref{table:unimodal} for the list of orders $h$). Furthermore, since the family $f_\lambda$ has constant Milnor number, $\psi^\ast$ will have order $h$ in $\mathrm{Aut}H^2 (M, \mathbb{R})$ even when $\lambda \neq 0$ (\cite[Lemma 2.4]{KLMME2}).\\

We now describe a collection of spheres for which we compute the element $\Delta$. Fix any exceptional unimodal singularity $f_\lambda$, and let $S_1 , \ldots , S_\mu$ be a distinguished configuration of vanishing spheres, associated to a distinguished basis of vanishing paths of a Morsification of $f_\lambda$ with the Gabrielov Dynkin diagram from Figure \ref{fig:dynkin}. It is well-known that the symplectic monodromy around the loop based at $z_0$ and encircling $z_i$ using the simple loop determined by the path $\gamma_i$ is given by the Dehn twist $\tau_{S_i}$ \cite{seidel:exactseq}. Thus, the monodromy $\psi$ factors (both in the smooth and the symplectic mapping class group) as a product of Dehn twists on the spheres $S_i$:
\[
\psi = \tau_{S_1} \cdots  \tau_{S_\mu}.
\]
The order of $\psi$ when acting on $H^2 (M , \mathbb{R} )$ is given by $h$ (see Table \ref{table:unimodal} for the list of orders). Thus, the following product of Dehn twists on spheres is homologically trivial:
\begin{align}
\label{eq: power Dehn twist}
\psi^h = (\tau_{S_1}\cdots \tau_{S_\mu})^h .
\end{align}
Furthermore, $\psi^h \in \mathrm{Diff}(M)$ agrees with (the inverse of) the \textit{boundary Seifert-fibered Dehn twist} on the Milnor fiber $M$ \cite[Proposition 2.14]{KLMME}.
\begin{definition}\label{definition:Sexceptional}
Let $\mathcal{S}$ be the ordered collection of $(-2)$--spheres in $M$ given by $\mu h$ spheres:
    \begin{align*}
    \underbrace{S_\mu , \ldots , S_1}_{h \text{ times}}.
    \end{align*}
 \end{definition}

Let $\delta_1 , \ldots , \delta_\mu \in H^2 (M , \mathbb{R})$ denote the Poincaré duals of the fundamental classes of the spheres $S_1 , \ldots , S_\mu$, and then $H^2 (M, \mathbb{R})$ has a basis given by the $\delta_{i}$'s. From the Dynkin diagram, we find a nice choice of maximal positive subspace $H^+ (M) \subset H^2 (M, \mathbb{R} )$. Namely, take $H^+ (M) $ to be the span of $a , b \in H^2 (M , \mathbb{R} )$ where
\begin{align*}
a :=& 2 \delta_{\mu-2} - 2 \delta_{\mu-1} - \delta_\mu \\
b := & \frac{1}{p}\delta_1 + \frac{2}{p}\delta_2 + \cdots + \frac{p-1}{p}\delta_{p-1}\\
+ &  \frac{1}{q}\delta_p + \frac{2}{q}\delta_{p+1} + \cdots + \frac{q-1}{q}\delta_{p+q-2}\\
+ & \frac{1}{r}\delta_{p+q-1} + \frac{2}{r}\delta_{p+q} + \cdots + \frac{r-1}{r}\delta_{\mu-3}\\
+ &  \delta_{\mu -2 }.
\end{align*}
Indeed, one easily checks $a^2 >0$ and $b^2 >0$. Furthermore, $a\cdot b = 0$. 

\begin{figure}[htbp] 
    \centering
    \begin{subfigure}[b]{0.45\textwidth}
        \includegraphics[width=\textwidth]{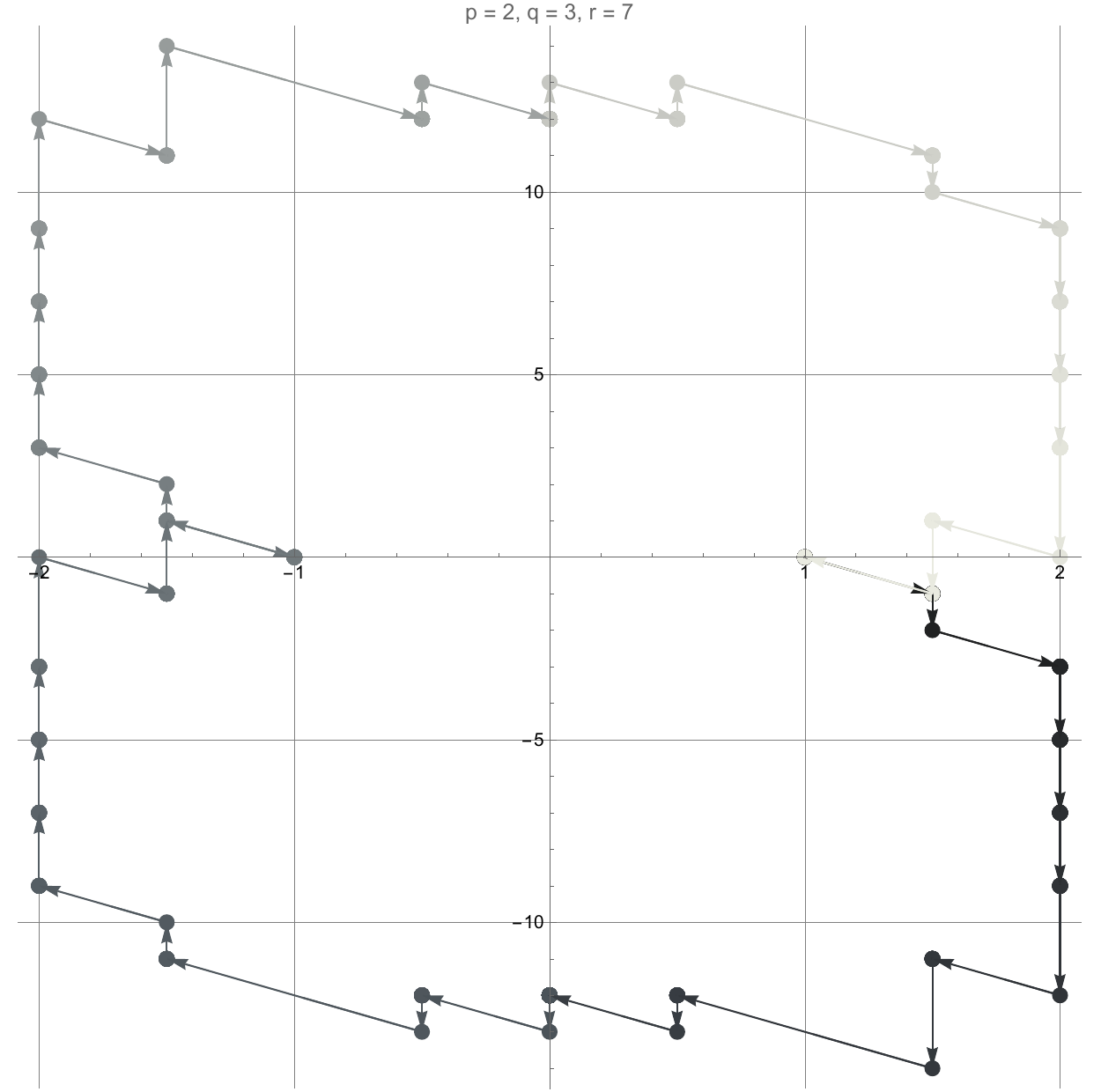}
    \end{subfigure}
    \hfill
    \begin{subfigure}[b]{0.45\textwidth}
        \includegraphics[width=\textwidth]{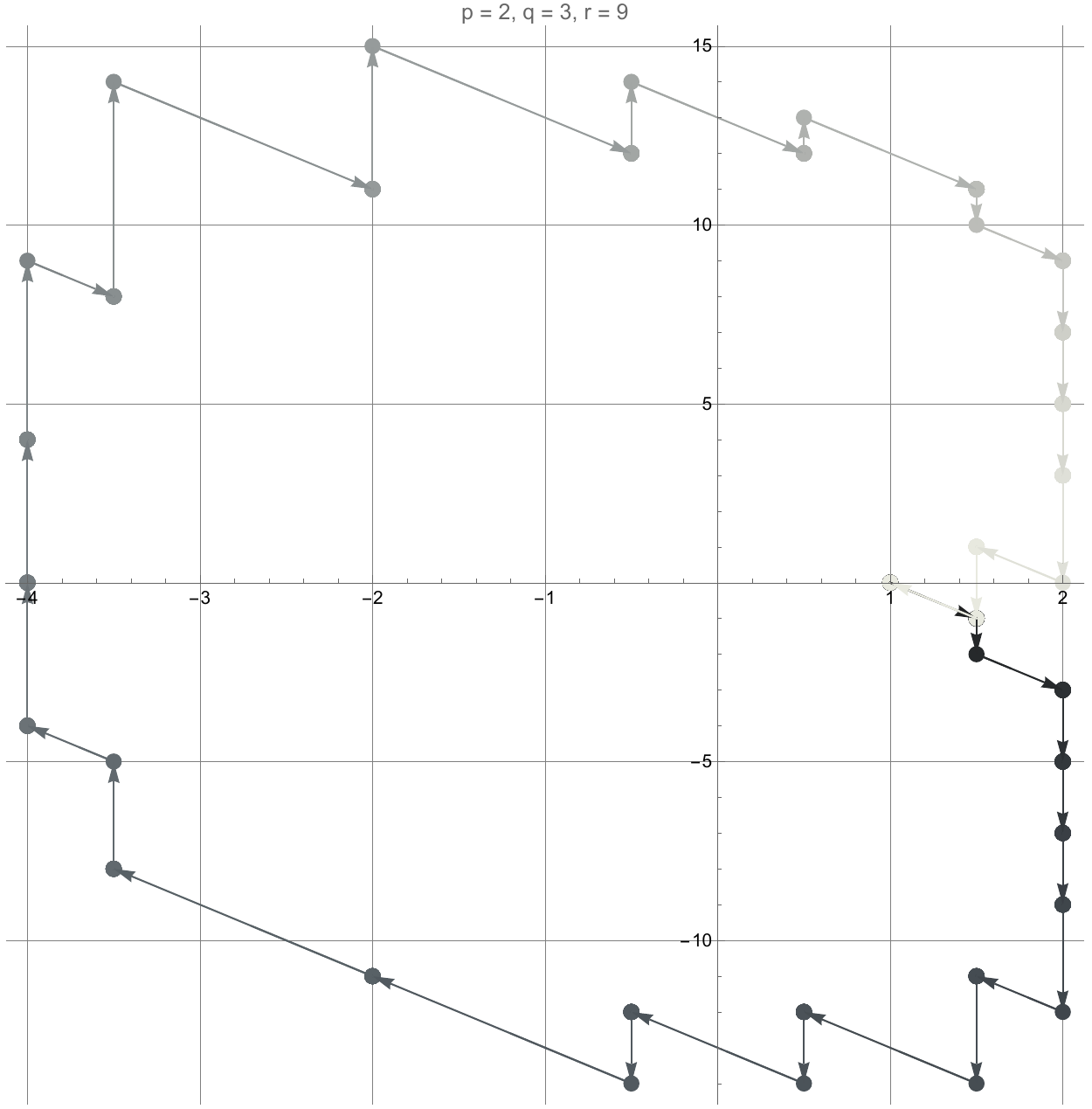}
    \end{subfigure}

    \caption{$\Delta = [\eta ]$ for the exceptional unimodal singularities $E_{12}$ (e.g. $x^2 + y^3 + y^7 = 0$) and $E_{14}$ (e.g. $x^2 + y^3 + z^8 = 0$).}
    \label{figure:237}
    \end{figure}

We can then apply the algorithm from \S \ref{subsubsection:algorithm} to the collection of spheres $\mathcal{S}$ and the aforementioned choice of $H^+ (M,\mathbb{R})$ subspace equipped with the framing given by the basis $a,b$. In Appendix \ref{appendix:code} we include code for \textit{Mathematica}~\cite{Mathematica} which implements this for all the exceptional unimodal singularities, producing a plot of the loop $\eta \in \pi_1 (\mathbb{R}^2\setminus \{0\} , v_0 )$ described in Lemma \ref{lemma:computing}. The plot for the singularities $x^2 + y^3 +z^7 =0$ and $x^2 + y^3 +z^8 = 0$ is shown in Figure \ref{figure:237}, and the plots for the remaining exceptional unimodal singularities are shown in Appendix \ref{appendix:figures}. From these plots, we find that the winding number of the loop $\eta$ is $-1$ for each exceptional unimodal singularity. In particular, we deduce:

\begin{proposition}
\label{prop: Milnor fiber of unimodal singularity}
Let $X$ is a closed $4$-manifold with a smooth embedding $M \subset X$ of the Milnor fiber of an exceptional unimodal singularity. Let $\mathcal{S}$ be the configuration of spheres in $M$ from Definition \ref{definition:Sexceptional}, which we regard as spheres in $X$. Then the element $\Delta (\mathcal{S} )$ (cf. Definition \ref{definition:loop}) is a generator of the group $\pi_1 SO(b^+ (X)) \, (\cong \mathbb{Z} \text{  or  } \mathbb{Z}/2 )$, and in particular it is a non-trivial element.
\end{proposition}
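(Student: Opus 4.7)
The plan is to verify the claim by explicit finite computation in each of the fourteen cases of Table~\ref{table:unimodal}, after first reducing from $X$ to the submanifold $M$.

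First, I would handle the passage from $X$ to $M$. Since all spheres of $\mathcal{S}$ lie in $M$ and $\partial M$ is a rational homology sphere, the pushforward $H^2(M, \mathbb{R}) \hookrightarrow H^2(X, \mathbb{R})$ is an orthogonal summand, and by the discussion preceding the proposition a choice of framed maximal positive subspace $V$ of $H^2(M,\mathbb{R})^\perp$ gives a stabilization map $s_V : \mathcal{E}(H^2(M, \mathbb{R})) \to \mathcal{E}(H^2(X, \mathbb{R}))$ inducing the unique surjection on $\pi_1$. Because the element $\Delta(\mathcal{S})$ is built purely from the Picard--Lefschetz involutions $\tau_{S_i}^\ast$, which depend only on intersection data among the $S_i$, the class $\Delta(\mathcal{S})$ computed in $M$ is mapped to $\Delta(\mathcal{S})$ computed in $X$ under $(s_V)_\ast$. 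Thus it suffices to show that $\Delta(\mathcal{S}) \in \pi_1 \mathcal{E}(H^2(M,\mathbb{R})) \cong \mathbb{Z}$ is an odd integer, since odd integers map to the generator in both $\mathbb{Z}$ and $\mathbb{Z}/2$.

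Second, I would apply Lemma~\ref{lemma:computing} with the maximal positive subspace $H^+(M) = \langle a, b \rangle$ and initial framing $(a,b)$ specified above. This identifies $\Delta(\mathcal{S})$ with the winding number of the piecewise-linear loop $\eta \subset \mathbb{R}^2 \setminus 0$ obtained by concatenating the straight segments between the iterates
\[
v_i \;=\; \pi\bigl(\tau_{S_{j_i}}^\ast \cdots \tau_{S_{j_1}}^\ast a\bigr),
\]
where $(S_{j_1}, \ldots, S_{j_{\mu h}})$ is the ordered list of spheres in $\mathcal{S}$ (Definition~\ref{definition:Sexceptional}). The ingredients for this computation are entirely finite and explicit: the Picard--Lefschetz formula~(\ref{PLformula}), the intersection numbers $S_i \cdot S_j$ read off from the Gabrielov Dynkin diagram of Figure~\ref{fig:dynkin}, the expression of $a$ and $b$ in the basis $\{\delta_1, \ldots, \delta_\mu\}$, and the orthogonal projection $\Pi$ onto $H^+(M)$, which is determined by the Gram matrix of $(a,b)$ together with the pairings $\langle a, \delta_i\rangle$ and $\langle b, \delta_i\rangle$.

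Finally, I would implement the above as a symbolic computation case by case (for each of the $14$ singularities in Table~\ref{table:unimodal}, with sphere count $\mu h$ bounded above by $504$ for the worst case $E_{12}$). The Mathematica script in Appendix~\ref{appendix:code} executes this algorithm and plots the resulting loop $\eta$; direct inspection of the outputs in Figure~\ref{figure:237} and Appendix~\ref{appendix:figures} reveals that $\eta$ has winding number $-1$ about the origin in every case. In particular $\pi_\ast \Delta(\mathcal{S}) = -1 \in \pi_1(\mathbb{R}^2 \setminus 0)$, so $\Delta(\mathcal{S})$ is a generator of $\pi_1 \mathcal{E}(H^2(M, \mathbb{R})) = \mathbb{Z}$, and hence of $\pi_1 SO(b^+(X))$ after stabilization. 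The main obstacle is not conceptual but organizational: one must carefully match conventions (orderings, signs, the backward cyclic ordering of $\mathcal{S}$) and trust a computer verification, since there is no a priori reason why the $14$ independent Gabrielov diagrams should all produce winding number $\pm 1$ rather than some other odd or even integer --- a uniform conceptual explanation (perhaps via weighted-homogeneity of $f_0$ and the structure of $\psi^h$ as a boundary Seifert Dehn twist) would be desirable but is not needed for the proof.
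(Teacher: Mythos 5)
Your proposal is correct and follows essentially the same route as the paper: reduce to the $b^+(M)=2$ computation inside $M$ via the stabilization map, apply the algorithm of Lemma~\ref{lemma:computing} with the explicit framing $(a,b)$ of $H^+(M)$, and invoke the Mathematica computation of Appendix~\ref{appendix:code} showing the loop $\eta$ has winding number $-1$ in all fourteen cases. The paper's own argument is exactly this (the proposition is deduced from the preceding discussion and the plots in Figure~\ref{figure:237} and Appendix~\ref{appendix:figures}), so there is nothing substantive to add.
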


\subsection{Interpretation of $\Delta (S_1 , \ldots , S_n )$ in terms of Lefschetz fibrations}

We now discuss a geometric viewpoint on $\Delta (S_1 , \ldots , S_n )$, from the point of view of Picard Lefschetz theory. Throughout, let $X$ be a closed, oriented $4$-manifold with a collection of smoothly embedded $2$-spheres $S_1, \ldots , S_n$ each with self-intersection $-2$. We will often assume the condition that
\begin{align}
\tau_{S_n}^\ast \cdots \tau_{S_1}^\ast =1 \in \mathrm{Aut}H^2 (X, \mathbb{R}) . \label{trivialhomology}
\end{align}
We also fix an orientation of $H^+ (X)$. 

\subsubsection{Invariance of $\Delta$ under mutations}

For each $1\leq j < n$, consider modifying the collection $(S_1 , \ldots , S_n )$ by the following two operations:
\begin{align*}
\alpha_j : (S_1 , \ldots , S_n ) &\mapsto (S_1 , \ldots , S_{j-1} , \tau_{S_j} (S_{j+1} ) , S_j , S_{j+2} , \ldots , S_n )\\
\beta_j : (S_1 , \ldots , S_n ) &\mapsto (S_1 , \ldots , S_{j-1}, S_{j+1} , \tau_{S_{j+1}}^{-1} (S_j ) , S_{j+2} , \ldots , S_n ).
\end{align*}
One can easily check the following properties:
\begin{enumerate}
\item $\alpha_j $ and $\beta_j$ are inverses to each other.
\item If $1\leq i , j < n$ and and $|i - j|>1$ then $\alpha_i \circ \alpha_j = \alpha_j \circ \alpha_i$.
\item If $1 \leq j < n-1$ then $\alpha_j \circ \alpha_{j+1}\circ \alpha_j = \alpha_{j+1}\circ \alpha_j \circ \alpha_{j+1}$.
\end{enumerate}
By (2) and (3), the operations $\alpha_j$, $1 \leq j < n$, define an action of the $n$-strand Braid group $B_n$ on the set of collections of spheres $(S_1 , \ldots , S_n )$. The operations $\alpha_j, \beta_j =\alpha_j^{-1}$ are referred to as \textit{mutations} of the collection of spheres $(S_1 , \ldots , S_n )$ (\cite{arnold,ebeling-book}). Note that the condition (\ref{trivialhomology}) is invariant under mutations of $(S_1 , \ldots , S_n )$.

\begin{proposition}\label{proposition:mutation}
Suppose the spheres $S_1, \ldots , S_n $ satisfy (\ref{trivialhomology}). Then the element $\Delta (S_1 , \ldots , S_n ) \in \pi_1 SO(b^+ (X))$ is invariant under mutations of $(S_1 , \ldots , S_n )$.
\end{proposition}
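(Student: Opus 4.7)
The plan is to construct an explicit rel-endpoints homotopy in $\mathcal{E} := \mathcal{E}(H^2(X, \mathbb{R}))$ between the subpaths of the representing loops of $\Delta(S_1, \ldots, S_n)$ and $\Delta(\alpha_j(S_1, \ldots, S_n))$ corresponding to positions $j, j+1$; since the remaining $n-2$ segments agree on the nose, the mutation invariance of $\Delta$ then follows. First I verify that $\alpha_j$ preserves the diffeomorphism $f = \tau_{S_1}\cdots\tau_{S_n}$ via the braid identity $\tau_{\tau_S(T)} = \tau_S\tau_T\tau_S^{-1}$, which yields $\tau_{\tau_{S_j}(S_{j+1})}\tau_{S_j} = \tau_{S_j}\tau_{S_{j+1}}$; hence condition (\ref{assumption:trivialaction}) persists and $\Delta(\alpha_j(S_1,\ldots, S_n))$ is well-defined.

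Writing $(S_1', \ldots, S_n') := \alpha_j(S_1, \ldots, S_n)$ and setting $A := \tau_{S_j}^\ast$, $B := \tau_{S_{j+1}}^\ast$, I represent both loops using Proposition \ref{proposition:representative}. The key algebraic input is the identity
\[
\rho_t^{j'} \;=\; A\circ \rho_t^{j+1}\circ A \qquad (j' := \tau_{S_j}(S_{j+1})),
\]
which I will verify from $\mathrm{PD}([\tau_{S_j}(S_{j+1})]) = A\,\mathrm{PD}([S_{j+1}])$ (a consequence of the Picard--Lefschetz formula applied on homology, together with $\tau_{S_j}$ being orientation-preserving) and the self-adjointness of the orthogonal involution $A$ under the intersection form. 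Using this identity one computes $e_i' = e_i$ for $i\neq j$ and $e_j' = ABA\cdot e_{j-1}$, so the two loops coincide off segments $j, j+1$. It therefore suffices to produce a rel-endpoints homotopy in $\mathcal{E}$ between the original subpath ($\rho_t^j(e_{j-1})$ followed by $\rho_t^{j+1}(e_j)$, going $e_{j-1}\to e_j \to e_{j+1}$) and the mutated subpath ($A\rho_t^{j+1}A(e_{j-1})$ followed by $\rho_t^j(e_j')$, going $e_{j-1}\to e_j'\to e_{j+1}$).

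The required homotopy is the continuous map
\[
G\colon [0, 1]^2 \to \mathcal{E}, \qquad G(s, t) := \rho_s^j\circ A\circ \rho_t^{j+1}\circ A \circ e_{j-1}.
\]
Iterated application of Lemma \ref{lemma:rho1}, combined with invariance of $\mathcal{E}$ under $A$, guarantees $G(s, t)\in\mathcal{E}$ for all $(s, t)$. A direct calculation of the four boundary edges shows that the ``bottom-then-right'' boundary ($G(s, 0) = \rho_s^j(e_{j-1})$ followed by $G(1, t) = \rho_t^{j+1}(e_j)$) traces the original subpath, while the ``left-then-top'' boundary ($G(0, t) = \rho_t^{j'}(e_{j-1})$ followed by $G(s, 1) = \rho_s^j(e_j')$) traces the mutated subpath. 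This exhibits the desired rel-endpoints homotopy in $\mathcal{E}$ and forces $\Delta(S_1, \ldots, S_n) = \Delta(\alpha_j(S_1, \ldots, S_n))$ in $\pi_1 SO(b^+(X))$. The main technical content is isolating the correct square $G$; once the conjugation identity $\rho_t^{j'} = A\rho_t^{j+1}A$ is noted, the positivity check and the boundary identifications are immediate.
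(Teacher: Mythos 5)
Your proof is correct: the conjugation identity $\rho_t^{j'} = \tau_{S_j}^\ast\circ\rho_t^{j+1}\circ\tau_{S_j}^\ast$ does hold (since $\mathrm{PD}\big((\tau_{S_j})_\ast[S_{j+1}]\big)=\tau_{S_j}^\ast\,\mathrm{PD}([S_{j+1}])$ and $\tau_{S_j}^\ast$ is an isometric involution), the bookkeeping $e_i'=e_i$ for $i\neq j$, $e_j'=\tau_{S_j}^\ast\tau_{S_{j+1}}^\ast\tau_{S_j}^\ast e_{j-1}$ is right, your square $G(s,t)$ lands in $\mathcal{E}$ by Lemma~\ref{lemma:rho1}, and its four edges are exactly the two subpaths in question, so the standard square-lemma homotopy rel endpoints, together with Proposition~\ref{proposition:representative} (and well-definedness, Lemma~\ref{lemma:indep_choices}), gives invariance under $\alpha_j$, with $\beta_j=\alpha_j^{-1}$ following formally. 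Your route differs from the paper's in execution rather than strategy: both localize to the segments $j,j+1$ and both ultimately rest on the conjugation relation $\tau_{\tau_{S_j}(S_{j+1})}=\tau_{S_j}\tau_{S_{j+1}}\tau_{S_j}^{-1}$, but the paper argues directly with the defining data of Definition~\ref{definition:loop}, choosing for the mutated tuple the particular auxiliary paths $\widetilde{\gamma}_j=\tau_{S_j}^\vee(\gamma_{j+1})$ and $\widetilde{\gamma}_{j+1}=\gamma_j\circ\tau_{S_j}^\vee\overline{\eta}_{j+1}$ (legitimate by Lemma~\ref{lemma:indep_choices}, using $\tau_{S_j}^\vee\cdot\mathrm{Fix}(\tau_{S_{j+1}}^\vee)=\mathrm{Fix}(\tau_{\widetilde{S}_j}^\vee)$) and then cancels subpaths of the form $\overline{p}\circ p$ to get $\widetilde{\eta}_{j+1}\circ\widetilde{\eta}_j\simeq\eta_{j+1}\circ\eta_j$, whereas you specialize to the canonical Picard--Lefschetz representative of Proposition~\ref{proposition:representative} and exhibit one explicit two-parameter homotopy. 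The paper's version works at the level of arbitrary choices of $\gamma_i$ and needs no explicit representative; yours is shorter and purely formula-level, with the only inputs beyond Lemma~\ref{lemma:rho1} being the naturality identity $\rho_t^{j'}=\tau_{S_j}^\ast\rho_t^{j+1}\tau_{S_j}^\ast$ and the square lemma, at the (harmless) cost of tying the argument to the canonical representative.
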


\begin{proof}
We show that $\Delta (S_1 , \ldots , S_n ) = \Delta (\alpha_j (S_1 , \ldots , S_n ))=: (\widetilde{S}_1 , \ldots , \widetilde{S}_n)$. We use the terminology of \S \ref{subsubsection:loop}. In particular, we have the endpoints $e_i = \tau_{S_{i-1}}^\vee \cdots \tau_{S_1}^\vee (e_0 ) \in \mathcal{E}(H^2 (X, \mathbb{R})$ and the paths $\gamma_i$ from $e_{i-1}$ to $\mathrm{Fix}(\tau_{S_i}^\vee) \subset \mathcal{E}(H^2 (X, \mathbb{R}))$, from which we obtain the paths $\eta_i = \overline{\tau_{S_j}^\vee (\gamma_i )} \circ \gamma_i$ from $e_{i-1}$ to $e_i$, which make up $\Delta(S_1 , \ldots , S_n ) = [\eta_n \circ \cdots \circ \eta_1 ]$. 

We set $\widetilde{e}_{0} = e_0$, from which we obtain corresponding endpoints $\widetilde{e}_i$, $i =1 , \ldots , n$, for $\Delta (\widetilde{S}_1 , \ldots , \widetilde{S}_n )$. Observe that $\widetilde{e}_i = e_i$ for $i \neq j$. We now build corresponding paths $\widetilde{\gamma_i}$ and $\widetilde{\eta_i}$, $i =1, \ldots , n$ for $\Delta (\widetilde{S}_1 , \ldots , \widetilde{S}_n )$. If $i \neq j , j+1$ then we set $\widetilde{\gamma_i} = \gamma_i$. We set
\[
\widetilde{\gamma}_j := \tau_{S_j}^\vee ( \gamma_{j+1}) 
\]
which is a path from $ \tau_{S_j}^\vee e_j = e_{j-1} = \widetilde{e}_{j-1}$ to $\tau_{S_j}^\vee \cdot \mathrm{Fix} (  \tau_{S_{j+1}}^\vee  ) = \mathrm{Fix}(\tau_{\widetilde{S}_j}^\vee )$. We set 
\[
\widetilde{\gamma}_{j+1} := \gamma_j \circ \tau_{S_j}^\vee \overline{\eta}_{j+1}
= \gamma_j \circ \tau_{S_j}^\vee \big( \overline{\gamma}_{j+1} \circ \tau_{S_{j+1}}^\vee \gamma_{j+1} \big) 
\]
which is a path from $\tau_{S_j}^\vee \tau_{S_{j+1}}^\vee e_j =  \tau_{S_j}^\vee \tau_{S_{j+1}}^\vee \tau_{S_j}^\vee e_{j-1} = \widetilde{e}_j$ to $\mathrm{Fix}(\tau_{S_j}^\vee ) = \mathrm{Fix}(\tau_{\widetilde{S}_{j+1}}^\vee )$. 

Finally, we show that the paths $\widetilde{\eta}_{j+1} \circ \widetilde{\eta}_j $ and $\eta_{j+1} \circ \eta_j$, both going from $e_{j-1} = \widetilde{e}_{j-1}$ to $e_{j+1} = \widetilde{e}_{j+1}$, are homotopic relative to the endpoints, from which the required result will follow. For this, note that
\begin{align*}
\widetilde{\eta_j} & = \tau_{\widetilde{S}_j} \overline{\widetilde{\gamma}_j} \circ \widetilde{\gamma}_j  = \tau_{S_j}^\vee \tau_{S_{j+1}}^\vee \underbrace{\tau_{S_j}^\vee \tau_{S_j}^\vee }_{=1}\overline{\gamma_{j+1}} \circ \tau_{S_j}^\vee \gamma_{j+1}\\
& = \tau_{S_j}^\vee \tau_{S_{j+1}}^\vee \overline{\gamma_{j+1}} \circ \tau_{S_j}^\vee \gamma_{j+1}\\
\widetilde{\eta}_{j+1} &= \tau_{\widetilde{S}_{j+1}}^\vee \overline{\widetilde{\gamma}_{j+1}} \circ \widetilde{\gamma}_{j+1} = \underbrace{\tau_{S_{j+1}}^\vee \overline{\gamma_{j+1}}  \circ \gamma_{j+1} }_{\eta_{j+1}} \circ \underbrace{\tau_{S_j}^\vee \overline{\gamma_j} \circ \gamma_j}_{\eta_j} \circ \tau_{S_j}^\vee \overline{\gamma_{j+1}} \circ \tau_{S_j}^\vee \tau_{S_{j+1}}^\vee \gamma_{j+1}\\
& = \eta_{j+1} \circ \eta_j \circ \tau_{S_j}^\vee \overline{\gamma_{j+1}} \circ \tau_{S_j}^\vee \tau_{S_{j+1}}^\vee \gamma_{j+1}.
\end{align*}
Hence, we have
\begin{align*}
\widetilde{\eta}_{j+1} \circ \widetilde{\eta}_j & = \eta_{j+1} \circ \eta_j \circ \tau_{S_j}^\vee \overline{\gamma_{j+1}} \circ \underbrace{\tau_{S_j}^\vee \tau_{S_{j+1}}^\vee \gamma_{j+1} \circ \tau_{S_j}^\vee \tau_{S_{j+1}}^\vee \overline{\gamma_{j+1}}}_{\simeq \ast} \circ \tau_{S_j}^\vee \gamma_{j+1} \\
& = \eta_{j+1} \circ \eta_j \circ \underbrace{\tau_{S_j}^\vee \overline{\gamma_{j+1}} \circ \tau_{S_j}^\vee \gamma_{j+1}}_{\simeq \ast}\\
& \simeq \eta_{j+1} \circ \eta_j .
 \end{align*}
\end{proof}

We recall the following definition (\cite{donaldson-lefschetz-mapping}):

\begin{definition}
Let $X$ be a closed oriented smooth $4$-manifold. A \textit{smooth Lefschetz fibration} with fiber $X$ consists of data $(E, \Sigma , f , z_0 , z_1 , \ldots , z_n , \phi )$ where $E$ is a compact oriented smooth $6$-manifold-with-boundary $E$, $\Sigma$ is an compact oriented connected surface-with-boundary, $f: E \to \Sigma$ is a smooth map with $f ( \partial E) = \partial \Sigma$, $z_0 \in \Sigma$ is a regular value of $f$,  $ z_1 , \ldots , z_n  \in \Sigma\setminus \partial \Sigma$ is an ordered collection of distinct points comprising the set of critical values of $f$, and $\phi : X \cong f^{-1} (z_0 )$ is an orientation-preserving diffeomorphism, such that for each $i =1, \ldots ,n$:
\begin{enumerate}
\item $f^{-1} (z_i )$ contains a unique critical point of $f$, denoted $p_i$
\item there exists oriented smooth charts on $E$ at $p_i$ (with coordinates denoted in complex notation by $x,y,z$) and $\Sigma$ at $z_i$, such that in those coordinates we have $\pi = x^2 + y^2 + z^2$. 
\end{enumerate}
Two smooth Lefschetz fibrations $(E,\Sigma , f, z_0 , z_1 , \ldots , z_n , \phi)$ and $(E^\prime , \Sigma^\prime , f^\prime , z_{0}^\prime , z_{1}^\prime , \ldots , z_{n}^\prime , \phi^\prime)$ with fiber $X$ are \textit{equivalent} if there exists orientation-preserving diffeomorphisms $E \cong E^\prime$ and $\Sigma \cong \Sigma^\prime$ sending $z_i$ to $z_{i}^\prime$ for each $i = 0,1, \ldots , n$ and compatible with the projections $f , f^\prime$ and the diffeomorphisms $\phi, \phi^\prime$. In what follows we denote a smooth Lefschetz fibration plainly as $f : E\to \Sigma$.
\end{definition}

Proposition \ref{proposition:mutation} gives an interpretation of $\Delta(S_1 , \ldots , S_n )$ in terms of smooth Lefschetz fibrations. Indeed, it is a well-known fact (see e.g. \cite{donaldson-lefschetz-mapping}) that there is a one-to-one correspondence between:
\begin{itemize}
 \item Smooth Lefschetz fibrations $f:  E \to D^2$ with fiber $X$ over a disk $D^2$, up to equivalence
 \item Ordered collections $(S_1 , \ldots , S_n )$ of isotopy classes of smoothly embedded $(-2)$--spheres in $X$, up to mutation.
 \end{itemize}
Namely, to a Lefschetz fibration $E \to D^2$ we associate the distinguished basis of vanishing spheres $S_1 , \ldots , S_n $ in $X = f^{-1}(z_0 )$ obtained from a choice of distinguished basis of vanishing paths $\gamma_1 , \ldots , \gamma_n$ in $D^2$ from $z_0$ to the critical values $z_1 , \ldots , z_n$  (this is defined similarly as in \S \ref{subsection: Configurations of spheres from exceptional unimodal singularities}). Each $\gamma_i$ determines a simple loop based at $z_0$ travelling around the critical value $z_i$ counterclockwise once, and the monodromy of this loop is the Dehn twist $\tau_{S_i}$ on the sphere $S_i \subset X = f^{-1} (z_0 )$. In particular, the boundary monodromy of $f : E \to D^2$ is given by $\tau_{S_1} \cdots \tau_{S_n}$. Thus, under the above correspondence, those Lefschetz fibrations over $D^2$ whose boundary monodromy acts as the identity on $H^2 (X, \mathbb{R} )$ correspond to configurations $(S_1, \ldots , S_n )$ satisfying (\ref{trivialhomology}). Thus, by Proposition \ref{proposition:mutation}, we deduce:
\begin{corollary}\label{corollary:DeltaLefschetz}
Let $f : E \to D^2$ be a smooth Lefschetz fibration with fiber $X$ and boundary monodromy acting as the identity on $H^2 (X, \mathbb{R})$. The element $\Delta (S_1 , \ldots , S_n ) \in \pi_1 SO(b^+ (X))$, where $S_1 , \ldots, S_n$ is any choice of distinguished basis of vanishing spheres in $f^{-1} (z_0 ) =X$, is an invariant of the Lefschetz fibration $f : E \to D^2$. We denote it by $\Delta (f: E\to D^2) \in \pi_1 SO(b^+ (X))$.
\end{corollary}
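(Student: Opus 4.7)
The plan is to deduce the corollary as a direct packaging of Proposition~\ref{proposition:mutation}, together with the classical Picard--Lefschetz dictionary between Lefschetz fibrations $f:E\to D^2$ (up to equivalence) and ordered configurations of vanishing $(-2)$--spheres in the fiber (up to mutation). Granting this dictionary, the only thing that needs to be checked is that any two distinguished bases of vanishing spheres associated to a \emph{single} Lefschetz fibration differ by a finite sequence of the mutations $\alpha_j^{\pm 1}$; mutation-invariance of $\Delta$ then finishes the job.

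First, I would recall the auxiliary fact that any two distinguished bases of vanishing paths $\gamma_1,\ldots,\gamma_n$ in $D^2\setminus\{z_1,\ldots,z_n\}$ based at $z_0$ are related by the natural action of the braid group $B_n$ on the punctured disk (pinned at $z_0$), and that the standard generator of $B_n$ which interchanges $\gamma_j$ and $\gamma_{j+1}$ by a counterclockwise half--twist acts on the associated vanishing sphere tuple exactly as the mutation $\alpha_j$. Concretely, pushing the endpoint of $\gamma_{j+1}$ around $z_j$ along $\gamma_j$ transports the vanishing sphere $S_{j+1}$ by the monodromy $\tau_{S_j}$, producing the new tuple $(S_1,\ldots,S_{j-1},\tau_{S_j}(S_{j+1}),S_j,S_{j+2},\ldots,S_n)$. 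Since the set of isotopy classes of distinguished bases of vanishing paths is a torsor over $B_n$, any two distinguished bases of vanishing spheres are mutation-equivalent.

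Second, I observe that the hypothesis that the boundary monodromy $\tau_{S_1}\cdots\tau_{S_n}$ acts as the identity on $H^2(X,\mathbb{R})$ is precisely condition~(\ref{trivialhomology}), and this condition is preserved under mutations. Hence $\Delta(S_1,\ldots,S_n)\in\pi_1 SO(b^+(X))$ is defined (using Lemma~\ref{lemma:indep_choices}) for every distinguished basis arising from $f:E\to D^2$, and by Proposition~\ref{proposition:mutation} it is constant on the mutation orbit. Finally, any equivalence $(E,f)\simeq(E',f')$ of Lefschetz fibrations induces a fiber diffeomorphism $f^{-1}(z_0)\cong (f')^{-1}(z_0')$ that carries one distinguished basis to another and is compatible with all the cohomological data entering the definition of $\Delta$; consequently $\Delta$ descends to a well-defined invariant of the equivalence class of $f:E\to D^2$, which we denote by $\Delta(f:E\to D^2)$.

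There is no genuine obstacle in this argument since essentially all the substantive content has already been packed into Proposition~\ref{proposition:mutation}. The one piece requiring a pointer to the literature is the braid group action on distinguished bases of vanishing paths and its effect on vanishing spheres, but this is standard material in singularity theory and is established exactly as in the treatments already cited in the paper.
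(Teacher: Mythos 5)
Your proposal is correct and follows the same route as the paper: the paper likewise deduces the corollary from Proposition~\ref{proposition:mutation} combined with the standard correspondence between Lefschetz fibrations over $D^2$ up to equivalence and ordered tuples of vanishing $(-2)$--spheres up to mutation (the braid group acting on distinguished bases of vanishing paths, with generators realizing the mutations $\alpha_j$). Your additional remarks --- that condition~(\ref{trivialhomology}) is mutation-invariant and that an equivalence of fibrations carries one distinguished basis to another compatibly with the cohomological data --- are exactly the points implicitly used in the paper, so nothing is missing.
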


\subsubsection{$\Delta (S_1 , \ldots , S_n )$ as a characteristic class}

We now interpret $\Delta$ as a suitable characteristic class associated to Lefschetz fibration $E \to D^2$ with fiber $X$ and boundary monodromy acting as the identity on $H^2 (X, \mathbb{R} )$.\\ 

Let $f : E \to D^2$ be a smooth Lefschetz fibration with fiber $X = f^{-1} (z_0 )$ and critical values $z_1 , \ldots , z_n \in D^2 \setminus \partial D^2$. Similarly as in \S \ref{framingH+mapping}, we have an oriented vector bundle denoted
\[
H^+ (f) \to D^2 \setminus \{z_1 , \ldots , z_n \}
\]
whose fiber over $z \in D^2 \setminus \{z_1 , \ldots , z_n \}$ is a maximal positive subspace in $H^2 (f^{-1} (z) , \mathbb{R} )$, and this bundle is unique up to canonical isomorphisms. The monodromy around a small circle around the critical value $z_i$ is given by a Dehn twist on the vanishing cycle, hence using the Dehn twist framing (\S \ref{subsubsection:reflectionframing}) we obtain an extension of the bundle $H^+ (f) \to D^2 \setminus \{ z_1 , \ldots , z_n \}$ over to the critical values, and this extension is well-defined up to isomorphisms. We denote this oriented vector bundle plainly as
\begin{align}
H^+ (f) \to D^2 .\label{canonicalextension1}
\end{align}

Suppose further that the boundary monodromy of $f: E\to D^2$ acts as the identity on $H^2 (X, \mathbb{R})$. Then we have the canonical framing (Definition \ref{definition:canonicalframingd}) of the restriction of $H^+ (f) $ to $\partial D^2$. From this we obtain a canonical (up to isomorphism) extension of (\ref{canonicalextension1}) to an oriented vector bundle
\begin{align}
H^+ (f) \to S^2 = D^2 / \partial D^2 . \label{canonicalextension2}
\end{align}

\begin{proposition}\label{proposition:extension}
Let $f : E \to D^2$ be a smooth Lefschetz fibration with fiber $X$ and boundary monodromy acting as the identity on $H^2 (X, \mathbb{R})$. The invariant $\Delta (f : E \to D^2 )$ (cf. Corollary \ref{corollary:DeltaLefschetz}) agrees with the element in $\pi_1 SO(b^+ (X) ) \cong \pi_2 BSO( b^+ (X))$ represented by the classifying map of the oriented vector bundle (\ref{canonicalextension2}). 
\end{proposition}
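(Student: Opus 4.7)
The plan is to reduce the computation of the classifying map of the extended bundle $H^+(f) \to S^2$ to a clutching construction, and then identify that clutching datum with the Dehn twist framing analyzed in Proposition \ref{prop: difference of xi_d}. Recall that if $V \to S^2$ is any oriented rank-$k$ vector bundle, then choosing a trivialization over the two hemispheres produces a clutching function $\partial D^2 \to SO(k)$ whose homotopy class is the element of $\pi_1 SO(k)\cong \pi_2 BSO(k)$ corresponding to the classifying map of $V$. In our situation, the bundle $H^+(f) \to S^2$ is obtained from $H^+(f) \to D^2$ by collapsing $\partial D^2$ via the canonical framing $\xi_d^0$ coming from trivial monodromy on $H^2(X,\mathbb{R})$. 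Thus, picking any trivialization of $H^+(f) \to D^2$ and comparing its restriction to $\partial D^2$ with the canonical framing $\xi_d^0$ yields a loop in $SO(b^+(X))$ representing the classifying map.

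The next step is to produce a convenient trivialization of $H^+(f)$ over $D^2$ and identify its restriction to $\partial D^2$ with the Dehn twist framing $\xi_d^{S_1}\cdots \xi_d^{S_n}$ of Definition \ref{definition:reflectionframingd}. To do this, fix a distinguished basis of vanishing paths $\gamma_1, \ldots, \gamma_n$ from $z_0$ to the critical values $z_1, \ldots, z_n$; these decompose $D^2$ into a union of $n$ closed bigons $B_1, \ldots, B_n$ sharing the point $z_0$, where $B_i$ is bounded by $\gamma_i$, a small arc on $\partial D^2$, and $\gamma_{i+1}$ (cyclically). Over each bigon $B_i$, the restriction of $H^+(f)$ is isomorphic to a bundle obtained by taking the mapping cylinder of the Dehn twist $\tau_{S_i}$ and collapsing it to a disk using the Dehn twist extension across the critical point $z_i$. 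The bigon $B_i$ is contractible, so $H^+(f)|_{B_i}$ admits a trivialization, and one checks that the difference between the resulting trivializations on adjacent sides of $\gamma_i$ and $\gamma_{i+1}$ (which both restrict to framings over $z_0$) is, by definition, the local Dehn twist framing $\xi_d^{S_i}$ on the arc of $\partial D^2$ lying in $B_i$. Gluing these local trivializations along the paths $\gamma_i$ using the parallel transport of the basepoint framing $e_0$ at $z_0$, the induced trivialization restricts to the concatenation of the $\xi_d^{S_i}$'s on $\partial D^2$, which is precisely the glued Dehn twist framing $\xi_d^{S_1}\cdots \xi_d^{S_n}$ under the identification (\ref{mappingtoruscomposition}).

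With these two identifications in place, the classifying map of $H^+(f) \to S^2$ as an element of $\pi_1 SO(b^+(X))$ equals the difference $\xi_d^{S_1}\cdots \xi_d^{S_n} - \xi_d^0$, which by Proposition \ref{prop: difference of xi_d} is $-\Delta(S_1, \ldots, S_n)$. Up to the usual sign ambiguity in identifying $\pi_2 BSO \cong \pi_1 SO$ (which depends on conventions for the clutching/classifying correspondence and which is absorbed when $b^+(X) > 2$ since the target is $\mathbb{Z}/2$), this yields the asserted identification.

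I anticipate that the main technical obstacle is the second step: carefully verifying that the trivialization one builds piece by piece over the bigons $B_i$ really does glue to a globally defined trivialization of $H^+(f) \to D^2$ whose restriction to $\partial D^2$ matches the Dehn twist framing on the nose (and not up to some further correction). The delicate point is to track the parallel transport along the vanishing paths $\gamma_i$ of the basepoint data $(H^+(X), e_0)$, and to check that the transition between adjacent bigons is compatible with the gluing map (\ref{gluingmap}). Beyond that, verifying the sign/orientation conventions in matching the clutching data with the difference of framings is routine but requires care.
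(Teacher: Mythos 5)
Your proposal is correct and takes essentially the same approach as the paper: trivialize $H^+(f)$ over the region of the sphere containing the critical values via the Dehn twist framings (the paper uses a single disk neighborhood $D_1$ of the union of vanishing paths where you use a bigon decomposition, but the content is identical), trivialize the complementary disk via the canonical framing coming from the trivial boundary monodromy, read off the clutching function as the difference of the two framings over the separating circle, and conclude by Proposition \ref{prop: difference of xi_d}. Your explicit remark on the sign from the clutching/classifying convention is consistent with — and slightly more careful than — the paper's own treatment, which likewise absorbs the sign from Proposition \ref{prop: difference of xi_d} into the identification $\pi_1 SO(b^+(X)) \cong \pi_2 BSO(b^+(X))$ (and in the applications only the mod $2$ class matters).
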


\begin{proof}
Make a choice of distinguished basis of vanishing paths $\gamma_1 , \ldots , \gamma_n$ in $D^2$ from $z_0$ to the critical values $z_1 , \ldots , z_n$, and let $S_1 , \ldots , S_n $ be the corresponding vanishing cycles in $f^{-1} (z_0 ) = X$. A neighborhood $D_1$ of $\bigcup_{i =1}^n \gamma_i \subset D^2$ is homeomorphic to a disk, and we thus obtain a corresponding decomposition of $S^2 = D^2 /\partial D^2$ as the union of two disks $D_1 \cup D_2$ along their common boundary $S^1 := \partial D_1 = \partial D_2$. The Dehn twist framings induce a canonical identification of (\ref{canonicalextension2}) over $D_1$ with the product bundle $H^+ (X) \times D_1$. On the other hand, the canonical framing (given by the fact that the boundary monodromy acts as the identity on $H^2 (X, \mathbb{R})$) induces a similar identification with a product bundle over $D_2$. The map $S^1 \to SO(b^+ (X))$ given by the difference of the two trivialisations over $S^1$ coincides under $\pi_1 SO(b^+ (X)) \cong \pi_2 BSO(b^+ (X))$ with the element representing the classifying map of the vector bundle (\ref{canonicalextension2}). On the other hand, this framing difference was shown to agree with the element $\Delta (S_1 , \ldots , S_n )$ in Proposition \ref{prop: difference of xi_d}.
\end{proof}

\section{The family Bauer-Furuta invariant}\label{section: familyBF}
\subsection{The approximated Seiberg-Witten map} We start with some notation. Let $X\hookrightarrow E\to B$ be a smooth bundle whose base $B$ is a smooth, compact manifold (possibly with boundary) and whose fiber is a closed 4-manifold $X$. We assume $b_1 (X) =0$ and $b^{+}(X)>\operatorname{dim}(B)$.  We use $X_{b}$ to denote the fiber over $b\in B$. Pick a base point $b_0$ and fix a diffeomorphism $X_{b_0}\cong X$. We also fix a homological orientation on $X$ (i.e. an orientation of $H^+ (X)$). $\widetilde{\mathfrak{s}}$ be a family spin-c structure: i.e. a spin-c structure on $T^{v}E:=\ker (TE\to TB)$. We assume that the restriction of $\mathfrak{s}$ to a fiber $X$ is a spin-c structure $\mathfrak{s}$ that satisfies 
\[
d(\mathfrak{s}):=\frac{c^2_{1}(\mathfrak{s})-2\chi(X)+3\sigma(X)}{4}=0.
\]

Let $\widetilde{A}_{0}=\{A_{0,b}\}_{b \in B}$ be a family spin-c connection. Then we have the family Dirac operator 
\[
\widetilde{D}^{+}_{\widetilde{A}_0}(E)=\{D^+_{A_{0,b}}: \Gamma(S^{+}_{b})\to \Gamma(S^{-}_{b})\}_{b\in B}
\]
When $E$ and $\widetilde{A}_0$ are obvious from the context, we just write $\widetilde{D}^{+}$ instead of $\widetilde{D}^{+}_{\widetilde{A}_0}(E)$. We use $\ind(\widetilde{D}^{+}_{\widetilde{A}_0}(E))$ to denote the (complex) index bundle of  $\widetilde{D}^{+}_{\widetilde{A}_0}(E)$ and use $\Det(\widetilde{D}^{+}_{\widetilde{A}_0}(E))$ to denote the determinant line bundle of $\ind(\widetilde{D}^{+}_{\widetilde{A}_0}(E))$. 

Associated to the family $E\to B$, we also have the family operator 
\[
\widetilde{d}=\widetilde{d}(E):=\{(d^+,d^*):\Omega^{1}(X_{b})\to \Omega^2_{+}(X_{b})\oplus\Omega^{0}_0(X_{b})\}_{b\in E}. 
\]
The index bundle $\ind(\widetilde{d}(E))$ is exactly the bundle $H^{+}(E)$. 
 Consider the family Seiberg-Witten map 
\[
\operatorname{SW}: \mathcal{U}^{+}\oplus \mathcal{V}^{+}\to \mathcal{U}^{-}\oplus \mathcal{V}^{-}.
\]
Here $\mathcal{U}^{\pm}$ are complex Hilbert spaces over $B$. And $\mathcal{V}^{\pm}$ are real Hilbert spaces over $B$. After taking finite dimensional approximations, we obtain the approximated Seiberg-Witten map 
\begin{equation}\label{eq: sw-apr}
\operatorname{SW_{apr}}:U^{+}\oplus V^{+}\to U^{-}\oplus V^{-}
\end{equation}
Here $U^{\pm}$ are finite dimensional complex vector bundles over $B$ that satisfies $U^{+}-U^{-}= \ind(\widetilde{D})\in K(B)$. And $V^{\pm}$ are real vector bundles over $B$ with that satisfy $V^{-}\cong V^{+}\oplus H^{+}$. This map is $S^1$-equivariant, where $S^1$ acts as scalar multiplication on $U^{\pm}$ and acts trivially on $V^{\pm}$. The map $\operatorname{SW_{apr}}$ satisfies the following additional properties:
\begin{enumerate}
    \item The restriction $\operatorname{SW_{apr}}|_{V^{+}}$ is the standard inclusion $V^{+}\hookrightarrow V^{-}$.
    \item There exists large $R$ and small $\epsilon$ such that 
    \begin{equation}\label{eq: bounded}
    \operatorname{SW_{apr}}(S_{R}(U^{+}\oplus V^{+}))\cap D_{\epsilon}(U^{-}\oplus V^{-})=\emptyset.    
    \end{equation}
    Here $S_{R}(-)$ denotes the sphere bundle of radius $R$ and $D_{\epsilon}(-)$ denotes the disk bundle of radius $\epsilon$.
    \item There exists a section $\mathfrak{p}: B\to  D_{\epsilon}(V^{-}\setminus V^{+})$ such that the $\mathfrak{p}$-perturbed family Seiberg-Witten equations satisfy the transversality condition. After a finite dimensional approximation, this implies that $\mathfrak{p}(B)$ is transverse to $\operatorname{SW_{apr}}|_{D_{R}(U^{+}\oplus V^{+})}$. 
\end{enumerate}    
Take the Thom spaces 
\[
\operatorname{Th}(U^{+}\oplus V^{+})=D_{R}(U^{+}\oplus V^{+}) / S_R( U^+\oplus V^+)\]
and 
\[\operatorname{Th}(U^{-}\oplus V^{-})=(U^{-}\oplus V^{-})/((U^{-}\oplus V^{-})\setminus \mathring{D}_{\epsilon}(U^{-}\oplus V^{-}))
\]
By the boundedness condition (\ref{eq: bounded}), the map (\ref{eq: sw-apr}) induces an $S^1$-equivariant map 
\begin{equation}\label{eq: swapr+}
\swapr: \operatorname{Th}(U^{+}\oplus V^{+})\to \operatorname{Th}(U^{-}\oplus V^{-})
\end{equation}
All our invariants will be extracted from this approximated Seiberg-Witten map.

\subsection{The family Bauer-Furuta invariant of a diffeomorphism}
Now we let $E=T(f)\to B=S^1$ for some diffeomorphism $f:X\to X$. We assume $f$ fix a spin-c structure $\mathfrak{s}$ and the homological orientation. We also assume $b^+_{2}(X)\equiv 3\mod 4$. There is a unique family spin-c structure $\widetilde{s}$ that restricts to $\mathfrak{s}$ to fibers. To define the family Bauer-Furuta invariant, we pick a framing $\xi_{D}$ on the complex line bundle $\Det(\widetilde{D}^{+}_{\widetilde{A}_0}(E))$ and a framing $\xi_{d}$ on the vector bundle $H^{+}(E)$. Consider the approximated Seiberg-Witten map (\ref{eq: swapr+}). We pick trivializations of $U^{\pm}$ and $V^{\pm}$ that are compatible with $\xi_{D}$ and $\xi_{d}$ (up to homotopy). Such trivializations induce the identification 
\[
\operatorname{Th}(U^{+}\oplus V^{+})\cong S^1_{+}\wedge S^{(M+2k+2)\mathbb{C}+N\mathbb{R}}, \quad \operatorname{Th}(U^{-}\oplus V^{-})\cong S^1_{+}\wedge S^{M\mathbb{C}+(N+4k+3)\mathbb{R}}
\]
Here we use $S^V$ and $S(V)$ to denote the representation sphere and the unit sphere of a representation space $V$ and $b^+ = 4k+3$.
Now we define the family Bauer-Furuta invariant \[\FBafu(f,\mathfrak{s},\xi_{D},\xi_{d})\in \mathbb{Z}/2.\]

Consider the composition 
\[
S^1_{+}\wedge S^{(M+2k+2)\mathbb{C}+N\mathbb{R}}\xrightarrow{\swapr} S^1_{+}\wedge S^{M\mathbb{C}+(N+4k+3)\mathbb{R}}\xrightarrow{\operatorname{pj}} S^{M\mathbb{C}+(N+4k+3)\mathbb{R}},
\]
where $\operatorname{pj}$ denotes the projection to the second component. This map represents an element in the $S^1$-equivariant stable homotopy group 
\[
[\operatorname{pj}\circ \swapr]\in [S^{(2k+2)\mathbb{C}}\wedge B_{+}, S^{(4k+3)\mathbb{R}}]^{S^1}
\]
\begin{lemma} We have a canonical isomorphism  
\begin{equation}\label{eq: computation stable homotopy group}
 [S^{(2k+2)\mathbb{C}}\wedge B_{+}, S^{(4k+3)\mathbb{R}}]^{S^1}\cong \mathbb{Z}\oplus \mathbb{Z}/2    
\end{equation}
\end{lemma}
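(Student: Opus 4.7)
The plan is to compute by stably splitting the base $B_+ = S^1_+$ and then evaluating each summand using isotropy separation and the Adams isomorphism. Since the Seiberg--Witten symmetry $S^1$ acts trivially on the base $B = S^1$, we have the stable equivalence $B_+ \simeq S^0 \vee S^1$ of $S^1$-spectra. Writing $V = (2k+2)\mathbb{C}$ (with $S^1$ acting by weight one on each factor) and $W = (4k+3)\mathbb{R}$ (trivial action), this gives
\[
[S^V \wedge B_+,\, S^W]^{S^1} \;\cong\; G_0 \oplus G_1, \qquad G_j := [\Sigma^j S^V,\, S^W]^{S^1}, \quad j = 0,1.
\]
It therefore suffices to show $G_0 \cong \mathbb{Z}$ and $G_1 \cong \mathbb{Z}/2$.

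For each $G_j$, I would smash the isotropy-separation cofiber sequence $ES^1_+ \to S^0 \to \widetilde{ES^1}$ with $S^W$ and apply $[\Sigma^j S^V, -]^{S^1}$. By the standard geometric-fixed-point identification,
\[
[\Sigma^j S^V,\, \widetilde{ES^1} \wedge S^W]^{S^1} \;\cong\; [S^{V^{S^1}+j},\, S^{W^{S^1}}] \;=\; \pi^s_{j-|W|}(S^0),
\]
which vanishes for $j = 0,1$ because $V^{S^1} = 0$ (the weight-one representation is free away from the origin) and $|W| = 4k+3 \ge 3 > j$. The long exact sequence then identifies $G_j$ with the Borel-type piece $[\Sigma^j S^V,\, ES^1_+ \wedge S^W]^{S^1}$.

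Next, I would apply the Adams isomorphism for free $S^1$-spectra, $\pi_n^{S^1}(Z) \cong \pi^s_{n-1}(Z/S^1)$ (with the shift $-\dim S^1 = -1$), to rewrite this as a nonequivariant stable-homotopy group of a Thom spectrum:
\[
G_j \;\cong\; \pi^s_{j-1-|W|}\bigl((\mathbb{C}P^\infty)^{-(2k+2)L}\bigr),
\]
where $L \to \mathbb{C}P^\infty = BS^1$ is the tautological complex line bundle (so that the orbit bundle of $V$ is $(2k+2)L$). The Thom spectrum $(\mathbb{C}P^\infty)^{-(2k+2)L}$ has cells in degrees $-4k-4,\, -4k-2,\, -4k,\dots$, so in the two target degrees $-4k-4$ (for $G_0$) and $-4k-3$ (for $G_1$) only the bottom cell contributes in the Atiyah--Hirzebruch spectral sequence. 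These contributions are $\pi^s_0(S^0) = \mathbb{Z}$ and $\pi^s_1(S^0) = \mathbb{Z}/2$ respectively, yielding $G_0 \oplus G_1 \cong \mathbb{Z} \oplus \mathbb{Z}/2$.

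I expect the main obstacle to be fixing the precise normalization of the Adams isomorphism in this $RO(S^1)$-graded setting (in particular the correct $\dim S^1 = 1$ shift), and ensuring that no Atiyah--Hirzebruch differentials or extension problems perturb the bottom-cell contributions in the two target degrees---both follow from the complex orientability of the Thom spectrum and the isolation of the relevant cells, but require some care. Geometrically, the two summands match the expected splitting of the family Bauer--Furuta invariant into the classical (integer-valued) Bauer--Furuta invariant of a fiber and a relative framed-bordism class valued in $\Omega_1^{\mathrm{fr}} = \mathbb{Z}/2$, which can serve as a consistency check.
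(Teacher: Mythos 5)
Your computation is correct and arrives at the same splitting the paper uses (the $\mathbb{Z}$ on the $S^0$-summand, the $\mathbb{Z}/2$ on the $S^1$-summand), but by a recognizably different and more machinery-heavy route. The paper also starts from $B_+\simeq S^0\vee S^1$, but then stays with finite complexes: the cofiber sequence $S((2k+2)\mathbb{C})_+\to S^0\to S^{(2k+2)\mathbb{C}}$ together with the vanishing of the equivariant stems $[S^j,S^{a\mathbb{R}}]^{S^1}$ for $a>j$ reduces each summand to $[S((2k+2)\mathbb{C})_+,S^{(4k+2-j)\mathbb{R}}]^{S^1}$, which by freeness of the action (induction adjunction, with no dimension shift) equals $[\mathbb{CP}^{2k+1}_+,S^{4k+2-j}]$, and this is computed by hand via $\mathbb{CP}^{2k+1}/\mathbb{CP}^{2k-1}\simeq S^{4k}\vee S^{4k+2}$, the splitting holding because the stable attaching map is an even multiple of $\eta$. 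Your version — isotropy separation, the Adams isomorphism (your $-\dim S^1$ shift is the correct normalization), and the Atiyah--Hirzebruch spectral sequence for the Thom spectrum $(\mathbb{CP}^\infty)^{-(2k+2)L}$ in degrees $-4k-4$ and $-4k-3$ — is essentially the James-dual formulation of the same calculation; it is more general and systematic, at the cost of invoking genuine-equivariant machinery the paper avoids.

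Two steps need repair, though neither is fatal. First, the identification $[\Sigma^jS^V,\widetilde{ES^1}\wedge S^W]^{S^1}\cong[S^{V^{S^1}+j},S^{W^{S^1}}]$ is not the standard geometric-fixed-point statement for $G=S^1$: $\widetilde{ES^1}$ is built from the free family, so the left-hand side a priori records all finite isotropy, whereas $\Phi^{S^1}$ is computed with $\widetilde{E\mathcal{F}}$ for the family $\mathcal{F}$ of finite subgroups. The vanishing you need is still true: since $S(V)$ is $S^1$-free and $S(V)_+$ is finite, the cofiber sequence $S(V)_+\to S^0\to S^V$ reduces the group to $\pi^{S^1}_{j-4k-3}(\widetilde{ES^1})$, which vanishes in negative degrees (negative equivariant stems vanish, and $\pi^{S^1}_m(ES^1_+)\cong\pi^s_{m-1}(BS^1_+)=0$ for $m\le 0$); argue it that way rather than by quoting geometric fixed points. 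Second, in the AHSS step the $\mathbb{Z}/2$ in total degree $-4k-3$, while isolated on the $E_2$-page, is a potential target of $d_2$ from $H_{-4k-2}\bigl((\mathbb{CP}^\infty)^{-(2k+2)L};\pi^s_0\bigr)$; this $d_2$ is dual to $\mathrm{Sq}^2$ on the Thom class, and it vanishes because $w_2(-(2k+2)L)=(2k+2)c_1(L)\equiv 0\bmod 2$. The evenness of $2k+2$ (i.e.\ $b^+\equiv 3\bmod 4$) is essential here and is exactly the paper's observation that the stunted projective space splits; ``complex orientability of the Thom spectrum'' is not the right reason, since the target is the sphere spectrum. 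With these two points made explicit, your argument is complete, and your closing identification of the two summands with $\operatorname{SW}(X,\mathfrak{s})$ and the framed-bordism class matches how the lemma is used in the rest of the paper.
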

\begin{proof}
We have a natural inclusion map $S^0\hookrightarrow B_{+}$ and a natural projection map $B_{+}\to S^0$, which are stably homotopy inverse to each other. So they induce a splitting $B_{+}\cong S^0\vee S^1$. 
This gives  a canonical isomorphism 
\[
[S^{(2k+2)\mathbb{C}}\wedge B_{+}, S^{(4k+3)\mathbb{R}}]^{S^1}\cong [S^{(2k+2)\mathbb{C}}, S^{(4k+3)\mathbb{R}}]^{S^1}\oplus [S^{(2k+2)\mathbb{C}}, S^{(4k+2)\mathbb{R}}]^{S^1}
\]
By the equivariant Hopf theorem, we have $[S^0,S^{a}]^{S^1}=0$ for any $a>0$. By the long exact sequence of stable cohomotopy groups induced by the cofiber sequences \[S^0\to S^{(2k+2)\mathbb{C}}\to S^1\wedge S((2k+2)\mathbb{C})_{+},\] we have  
\[
[S^{(2k+2)\mathbb{C}}, S^{(4k+3)\mathbb{R}}]^{S^1}\cong [S((2k+2)\mathbb{C})_{+}, S^{(4k+2)\mathbb{R}}]^{S^1}
\]
and 
\[
[S^{(2k+2)\mathbb{C}}, S^{(4k+2)\mathbb{R}}]^{S^1}\cong [S((2k+2)\mathbb{C})_{+}, S^{(4k+1)\mathbb{R}}]^{S^1}
\]
Note that the $S^1$-action on $S((2k+2)\mathbb{C})_{+}$ as complex multiplication and trivial on $S^{(4k+1)\mathbb{R}}$, so we have
\[
[S((2k+2)\mathbb{C})_{+}, S^{(4k+1)\mathbb{R}}]^{S^1}\cong [\mathbb{CP}^{2k+1}_{+}, S^{(4k+1)\mathbb{R}}]\]
By the CW approximation theorem, we have 
\[
[\mathbb{CP}^{2k+1}_{+}, S^{(4k+1)\mathbb{R}}]\cong [\mathbb{CP}^{2k+1}/\mathbb{CP}^{2k-1}, S^{4k+1}]\cong [S^{4k+2}\vee S^{4k}, S^{4k+1}]\cong \mathbb{Z}/2.
\]
Similarly, we have 
\[
[S((2k+2)\mathbb{C})_{+}, S^{(4k+2)\mathbb{R}}]^{S^1}\cong \mathbb{Z}.
\]\end{proof}

\begin{lemma}
Under the decomposition (\ref{eq: computation stable homotopy group}), the first component of $[\operatorname{pj}\circ \swapr]$ equals the Seiberg-Witten invariant $\operatorname{SW}(X,\mathfrak{s})$.    
\end{lemma}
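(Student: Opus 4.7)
The plan is to identify the first summand of the decomposition \eqref{eq: computation stable homotopy group} with the class obtained by restricting the family to a single fiber, and then to invoke the classical identification of the ordinary Bauer--Furuta invariant of a closed spin-c $4$-manifold with its Seiberg--Witten invariant.

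First, I would unwind the splitting $S^1_+ \simeq S^0 \vee S^1$ used in the preceding lemma. This is realized by the basepoint inclusion $\iota \colon S^0 \hookrightarrow S^1_+$, sending the non-basepoint of $S^0$ to the chosen point $b_0 \in S^1$, together with the retraction $S^1_+ \to S^0$ collapsing $S^1$ onto $b_0$. The projection of $[\operatorname{pj}\circ \swapr]$ onto the first summand of \eqref{eq: computation stable homotopy group} is therefore given by precomposition with $\iota \wedge \operatorname{id}_{S^{(2k+2)\mathbb{C}}}$, which geometrically is nothing but the restriction $\swapr|_{X_{b_0}}$ of the approximated family Seiberg--Witten map to the fiber over $b_0$. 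By construction, this restriction is precisely the approximated Seiberg--Witten map for the closed spin-c $4$-manifold $(X, \mathfrak{s})$, with the trivializations of $U^{\pm}|_{b_0}$ and $V^{\pm}|_{b_0}$ induced from the framings $\xi_{D}$ and $\xi_{d}$. Consequently, the first component of $[\operatorname{pj}\circ \swapr]$ in \eqref{eq: computation stable homotopy group} is exactly the ordinary (non-family) Bauer--Furuta invariant $\Bafu(X, \mathfrak{s}) \in [S^{(2k+2)\mathbb{C}}, S^{(4k+3)\mathbb{R}}]^{S^1} \cong \mathbb{Z}$.

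It then remains to invoke the theorem of Bauer--Furuta that, under the identification with $\mathbb{Z}$ derived in the preceding lemma via the cofiber sequence $S^0 \to S^{(2k+2)\mathbb{C}} \to \Sigma S((2k+2)\mathbb{C})_{+}$ and the reduction $[S((2k+2)\mathbb{C})_{+}, S^{(4k+2)\mathbb{R}}]^{S^1} \cong [\mathbb{CP}^{2k+1}_{+}, S^{4k+2}] \cong \mathbb{Z}$, the class $\Bafu(X, \mathfrak{s})$ corresponds to $\operatorname{SW}(X, \mathfrak{s})$. Concretely, $\Bafu(X, \mathfrak{s})$ is computed by the degree on the top cell of the map $\mathbb{CP}^{2k+1} \to S^{4k+2}$ obtained by taking $S^1$-quotients of the approximated Seiberg--Witten map on a slice transverse to the reducible locus; when $d(\mathfrak{s}) = 0$, this degree equals the signed count of irreducible Seiberg--Witten solutions, which is $\operatorname{SW}(X, \mathfrak{s})$. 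The main subtlety I expect is a bookkeeping issue: matching the sign and normalization conventions for the generator of $\mathbb{Z}$ against the fixed homological orientation on $X$ and the orientations induced by our framings $\xi_{D}, \xi_{d}$. This is not a conceptual obstacle, but simply requires carefully tracking the orientations of $\Det(\widetilde{D}^{+})$ and of $H^{+}$ in the reduction through the above chain of isomorphisms.
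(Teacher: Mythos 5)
Your proposal is correct and follows essentially the same route as the paper: the paper's proof likewise observes that projecting onto the first summand amounts to restricting the approximated family map to a single fiber, recovering the ordinary Bauer--Furuta invariant of $(X,\mathfrak{s})$, which equals $\operatorname{SW}(X,\mathfrak{s})$ since $d(\mathfrak{s})=0$. Your version merely spells out the splitting $S^1_+\simeq S^0\vee S^1$ and the degree identification in more detail.
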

\begin{proof}
If we restrict (\ref{eq: sw-apr}) to a single point in $B$, we recover the approximated Seiberg-Witten map for $(X,\mathfrak{s})$. Therefore, the first component    $[\operatorname{pj}\circ\swapr]$ represents the Bauer-Furuta invairant of $(X,\mathfrak{s})$, which is equivalent to the Seiberg-Witten invariant $\operatorname{SW}(X,\mathfrak{s})$ because $d(\mathfrak{s})=0$. 
\end{proof}
\begin{definition}
The family Bauer-Furuta invariant $\FBafu(f,\mathfrak{s},\xi_{D},\xi_{d})\in \mathbb{Z}/2$ is defined as the second component of $[\operatorname{pj}\circ\swapr]$ under the decomposition (\ref{eq: computation stable homotopy group}). 
\end{definition}

Via the classical Pontryagin-Thom construction, we can translate the  $\FBafu(f,\mathfrak{s},\xi_{D},\xi_{d})$ in terms of the framed cobordism class of the 1-dimensional Seiberg-Witten moduli space. Consider the vector bundle $\pi:\widetilde{W}\to W$, where 
\[
\widetilde{W}=((D_{R}(U^{+}\oplus V^{+})\setminus( \{0\}\times V^+))\times_{B} (U^{-}\oplus V^{-}))/S^{1}
\]

and \[W=(D_{R}(U^{+}\oplus V^{+})\setminus (\{0\}\times V^{+}))/S^1.  \] 
Then the sections of $\pi$ are one-to-one corresponding to $S^1$-equivariant maps. \[D_{R}(U^{+}\oplus V^{+})\setminus (\{0\}\times V^+)\to U^{-}\oplus V^{-}\]
that cover the identity map on $B$. 
In particular, we have a section $s_{sw}:W\to \widetilde{W}$ that corresponds to the map $\operatorname{SW_{apr}}$ and a section $s_{\mathfrak{p}}:W\to \widetilde{W}$ that corresponds to the perturbation $\mathfrak{p}$. By our choice of $\mathfrak{p}$, these two sections intersect transversely. The transverse intersection $\mathcal{M}_{sw}:=s_{sw}(W)\pitchfork s_{\mathfrak{p}}(W)$ is an embedded 1-dimensional submanifold of $\widetilde{W}$. The manifold $\mathcal{M}_{sw}$ is compact because \[(0,\mathfrak{p}(B))\notin \operatorname{SW_{apr}}((\{0\}\times V^{+})\cup S_{R}(U^{+}\oplus V^{+})).\]
Furthermore, note the isomorphisms 
\[
Ns_{sw}(W)\cong (\pi|_{s_{sw}(W)})^{*}(\widetilde{W})\text{ and } Ns_{\mathfrak{p}}(W)\cong (\pi|_{s_{\mathfrak{p}}(W)})^{*}(\widetilde{W}).
\]
So we have a canonical isomorphism 
\[
N\mathcal{M}_{sw}\cong Ns_{sw}(W)|_{\mathcal{M}_{sw}}\oplus Ns_{\mathfrak{p}}(W)|_{\mathcal{M}_{sw}}\cong (\pi|_{\mathcal{M}_{sw}})^{*}(\widetilde{W})\oplus (\pi|_{\mathcal{M}_{sw}})^{*}(\widetilde{W}). 
\]
So up to homotopy, $N\mathcal{M}_{sw}$ has a canonical trivialization $\xi_{c}$ . Note that $\widetilde{W}$ is canonically oriented as a manifold by the homological orientation (and the orientation of $B$). So $\xi$ induces canonical orientation on $\mathcal{M}_{sw}$. Thus, we obtain an element $[\mathcal{M}_{sw},\xi_{c}]\in \Omega_{1}^{\operatorname{fr}}(\widetilde{W})$, the one-dimensional framed bordism group of $\widetilde{W}$. 

Now, we pick trivializations of $U^{\pm},V^{\pm}$ that are compatible with $\xi_{D}$ and $\xi_{d}$. Such trivializations will induce a homeomorphism 
\begin{equation}\label{eq: trivialization of tildeW}
\widetilde{W}\cong \widetilde{W}':=S^1\times \mathbb{R}^{2N+4k+3}\times  (0,R)\times (S(\mathbb{C}^{M+2k+2})\times \mathbb{C}^{M})/S^1    
\end{equation}
\begin{lemma}\label{lem: cobordism computation} We have a canonical isomorphism
\begin{equation}\label{eq: cobordism group}
\Omega_{1}^{\operatorname{fr}}(\widetilde{W}')\cong \mathbb{Z}\oplus\mathbb{Z}/2. 
\end{equation}
\end{lemma}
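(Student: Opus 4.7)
The plan is to deduce the cobordism computation from a standard homotopy-type analysis of $\widetilde{W}'$, and then to apply the classical Pontryagin--Thom isomorphism $\Omega_1^{\mathrm{fr}}(Y) \cong \pi_1^s(Y_+)$.

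First I would simplify the homotopy type. The factor $\mathbb{R}^{2N+4k+3}\times (0,R)$ is contractible, so it can be ignored. The piece $(S(\mathbb{C}^{M+2k+2})\times \mathbb{C}^{M})/S^1$ is the total space of a complex vector bundle (with fiber $\mathbb{C}^M$) over the base $S(\mathbb{C}^{M+2k+2})/S^1=\mathbb{CP}^{M+2k+1}$, and therefore deformation retracts onto $\mathbb{CP}^{M+2k+1}$. Setting $N:=M+2k+1$, this gives a homotopy equivalence
\[
\widetilde{W}'\ \simeq\ S^1\times \mathbb{CP}^{N}.
\]

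Next I would apply Pontryagin--Thom to rewrite
\[
\Omega_{1}^{\mathrm{fr}}(\widetilde{W}')\cong \pi_1^s\bigl((S^1\times \mathbb{CP}^N)_+\bigr)\cong \pi_1^s\bigl(S^1_+\wedge \mathbb{CP}^{N}_+\bigr).
\]
Using the canonical stable splitting $S^1_+\simeq S^0\vee S^1$ (induced by the inclusion of the basepoint and the projection to $S^0$), this becomes
\[
\pi_1^s(\mathbb{CP}^{N}_+)\ \oplus\ \pi_1^s(S^1\wedge \mathbb{CP}^{N}_+)\ \cong\ \pi_1^s(\mathbb{CP}^{N}_+)\ \oplus\ \pi_0^s(\mathbb{CP}^{N}_+).
\]
Splitting off the disjoint basepoint, $\mathbb{CP}^{N}_+\simeq S^0\vee \mathbb{CP}^N$, so each of these summands further decomposes into a stable group of $S^0$ plus the reduced stable group of $\mathbb{CP}^N$.

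Finally I would carry out the reduced stable homotopy calculation for $\mathbb{CP}^N$. The minimal cell structure has cells only in even dimensions $\geq 2$, so a straightforward induction on $N$ using the cofiber sequence $\mathbb{CP}^{N-1}\hookrightarrow\mathbb{CP}^N\to S^{2N}$ and the vanishing of $\pi_j^s(S^{2k})$ for $j<2k$ shows $\pi_0^s(\mathbb{CP}^N)=\pi_1^s(\mathbb{CP}^N)=0$. Hence $\pi_0^s(\mathbb{CP}^N_+)=\pi_0^s=\mathbb{Z}$ and $\pi_1^s(\mathbb{CP}^N_+)=\pi_1^s=\mathbb{Z}/2$, yielding
\[
\Omega_{1}^{\mathrm{fr}}(\widetilde{W}')\ \cong\ \mathbb{Z}/2\ \oplus\ \mathbb{Z}.
\]

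I expect no serious obstacle here; the argument is essentially a homotopy-theoretic bookkeeping exercise, and in fact the answer mirrors the equivariant computation \eqref{eq: computation stable homotopy group} exactly as one would anticipate from Pontryagin--Thom (the $S^1$-quotient replaces the $S^1$-action, and $S((2k+2)\mathbb{C})/S^1=\mathbb{CP}^{2k+1}$ gets replaced by the stabilized $\mathbb{CP}^{M+2k+1}$, which does not affect $\pi_0^s$ or $\pi_1^s$ by the cell-dimension argument above). The only minor care needed is to confirm that the canonical framing $\xi_c$ and orientation on $\mathcal{M}_{sw}$ described in the construction agree with the framings used to identify $\Omega_1^{\mathrm{fr}}$ with $\pi_1^s$ of $(\widetilde{W}')_+$; this follows from the compatibility of Pontryagin--Thom with the chosen trivialisations in \eqref{eq: trivialization of tildeW}.
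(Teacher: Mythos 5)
Your homotopy-theoretic bookkeeping is internally consistent, but it computes the wrong group unless you add the step that is actually the heart of the matter. The group $\Omega_1^{\operatorname{fr}}(\widetilde{W}')$ in this paper is, by construction, the bordism group of \emph{embedded} $1$-manifolds in $\widetilde{W}'$ equipped with a framing of their \emph{normal} bundle (that is what $[\mathcal{M}_{sw},\xi_c]$ is), with bordisms given by embedded surfaces $F\hookrightarrow\widetilde{W}'$ carrying compatible normal framings. The classical Pontryagin--Thom isomorphism you invoke, $\Omega_1^{\mathrm{fr}}(Y)\cong\pi_1^s(Y_+)$, concerns tangentially stably framed $1$-manifolds with a map to $Y$; the two notions coincide only if one can convert normal framings in $\widetilde{W}'$ into stable tangential framings, i.e.\ only given a stable trivialization of $T\widetilde{W}'$ along the submanifolds \emph{and along the bordisms}. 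Since $\widetilde{W}'$ is homotopy equivalent to the total space of $\oplus_M\mathcal{O}(1)\to\mathbb{CP}^{M+2k+1}$, its tangent bundle is not stably trivial, so this is a genuine issue: the embedded normally framed bordism group is (by Atiyah duality) the stable homotopy of the Thom spectrum of $-T\widetilde{W}'$, not of $\widetilde{W}'_+$, and in degree $1$ the two agree exactly when $w_1(T\widetilde{W}')=0$ and $\langle w_2(T\widetilde{W}'),\,\cdot\,\rangle$ vanishes on $H_2$. If $w_2$ paired nontrivially with $[\mathbb{CP}^1]$, the $\mathbb{Z}/2$ summand would be killed and the answer would be $\mathbb{Z}$, not $\mathbb{Z}\oplus\mathbb{Z}/2$; so the computation $w_2(T\widetilde{W}')=w_2(T\mathbb{CP}^{M+2k+1})+Mw_2(\mathcal{O}(1))\equiv (M+2k+2)+M\equiv 0 \pmod 2$ is not a formality you may skip --- it is precisely the content of the paper's proof, and it is absent from your proposal (your final paragraph defers it as ``minor care'').

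The paper's argument is also more elementary and, importantly, produces the \emph{specific} isomorphism that later lemmas use: given $[Y,\xi]$ one records $m=[Y]\in H_1(\widetilde{W}')\cong\mathbb{Z}$, chooses reference circles $S^1\times\{p_i\}$ with their product framings $\xi_i$, picks an embedded cobordism $F$ to $\cup Y(p_i)$, and takes the relative class $w_2(NF,\xi,\cup\xi_i)\in\mathbb{Z}/2$; well-definedness (independence of $F$) is exactly where $w_2(T\widetilde{W}')=0$ enters. This explicit description is what makes the isomorphism canonical and is what Lemma \ref{lem: change of framing} and Proposition \ref{prop: change of framing formula} act on. Your route, even after fixing the gap above, would require choosing a stable trivialization of $T\widetilde{W}'$ over the $2$-skeleton (in effect a spin structure together with reference framings); different choices shear the $\mathbb{Z}/2$-coordinate by the $\mathbb{Z}$-coordinate, exactly the ambiguity of Lemma \ref{lem: change of framing}, so canonicity would still need the concrete normalization the paper builds in. In short: the abstract answer $\mathbb{Z}\oplus\mathbb{Z}/2$ is right, but your identification of the geometric bordism group with $\pi_1^s(\widetilde{W}'_+)$ is unjustified as stated, and justifying it amounts to redoing the paper's $w_2$ computation.
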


\begin{proof} Take any element $[Y,\xi] \in \Omega^{\mathrm{fr}}_1 (\widetilde{W}^\prime )$, and let $[Y]=m \in  \mathbb{Z} =  H_{1}(\widetilde{W}')$. 
We take $|m|$ disjoint points $p_{1},\cdots, p_{m}\in \mathbb{R}^{2N+4k+3}\times (0,R)\times (S(\mathbb{C}^{M+2k+2})\times \mathbb{C}^{M})/S^1$ and consider the submanifolds $Y(p_i)=S^1\times  \{p_{i}\}$ for $1\leq i\leq m$, oriented such that $[Y]=\sum^{|m|}_{i=1}[Y(p_i)]$. The manifold $Y(p_{i})$ has a canonical framing $\xi_{i}$, obtained by pulling back a trivilization of $T_{p_{i}}(\mathbb{R}^{2N+4k+3}\times(0,R)\times (S(\mathbb{C}^{M+2k+2})\times \mathbb{C}^{M})/S^1)$. Let $F\hookrightarrow \widetilde{W}'$ be any cobordism from $Y$ to $\cup Y(p_i)$. Then $\xi\cup (\cup \xi_{i})$ can be extended to $F$ if and only if
the relative Stiefel-Whitney class 
\[
w_{2}(NF, \xi,\cup \xi_{i})\in \mathbb{Z}/2\cong H^{2}(F, \partial F;\mathbb{Z}/2).
\]
vanishes. 

Note that $\widetilde{W}'$ is homotopy equivalent to $S^1\times \oplus_{M}O(1)$, where $O(1)$ denotes the dual of the tautological line bundle over $\mathbb{CP}^{2M+2k+1}$. Moreover, 
\[
w_{2}(T(\oplus_{M}O(1)))=w_{2}(T\mathbb{CP}^{2M+2k+1})+Mw_{2}(O(1))=2M+2k+2\equiv 0\mod 2.
\]
So $w_{2}(T\widetilde{W}')=0$. From this, it follows easily that the number $w_{2}(NF, \xi,\cup \xi_{i})$ is independent of the chosen cobordism $F$. The desired isomorphism is given by 
\[
[Y,\xi]\mapsto (m, w_{2}(NF, \xi,\cup \xi_{i}))\in \mathbb{Z}\oplus \mathbb{Z}/2.
\]
\end{proof}

Via the homeomorphism (\ref{eq: trivialization of tildeW}) and the isomorphism (\ref{eq: cobordism group}), we can treat the framed moduli space as \[[\mathcal{M}_{sw},\xi_{c}]\in \mathbb{Z}\oplus \mathbb{Z}/2.\]
By the Pontryagin-Thom correspondence, this is exactly the element $[\operatorname{pj}\circ \swapr]$ coming from (\ref{eq: computation stable homotopy group}). Thus, the second component of $[\mathcal{M}_{sw} , \xi_c ]$ equals the family Bauer--Furuta invariant $\FBafu(f,\mathfrak{s},\xi_{D},\xi_{d})$.\\

Now we study the dependence of the family Bauer--Furuta invariant from the choice of framings $\xi_{D}$ and $\xi_{d}$. First, we can express a point in $\widetilde{W}'$ as $(\theta, r, w, [u,v])$, where  $\theta\in S^1$, $r\in (0,R)$, $w\in \mathbb{R}^{2N+4k+3}$, $u\in S(\mathbb{C}^{M+2k+2})$ and $v\in \mathbb{C}^{M}$. Given a loop $\gamma_{1}$ in $SO(2N+4k+3)$ and a loop $\gamma_2$ in $U(M)$, we can define a homeomorphism 
$f_{\gamma_1,\gamma_2}: \widetilde{W}'\to \widetilde{W}'$
by 
\[
f_{\gamma_1,\gamma_2}(\theta, r, w, [u,v]):=(\theta, r, \gamma_{1}(\theta)w, [u,\gamma_{2}(\theta)v]).
\]
\begin{lemma}\label{lem: change of framing}
Assume $[\gamma_1]=a\in \mathbb{Z}/2 \cong \pi_{1}(SO(2N+4k+3))$ and $[\gamma_2]=b\in \mathbb{Z}\cong \pi_{1}(U(M))$. Then under the isomorphism (\ref{eq: cobordism group}), the induced map $f^*_{\gamma_1,\gamma_2}: \Omega_{1}^{\operatorname{fr}}(\widetilde{W}')\to \Omega_{1}^{\operatorname{fr}}(\widetilde{W}')$ is given by 
\[f^{*}_{\gamma_1,\gamma_2}(x,y)=(x,y+x(a+b))\in \mathbb{Z}\oplus \mathbb{Z}/2.\] 
\end{lemma}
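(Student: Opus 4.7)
The plan is to represent classes by the normal form from the proof of Lemma \ref{lem: cobordism computation} and then analyze separately how $f_{\gamma_1,\gamma_2}$ affects the two components. First, I would note that the fiber $F := \mathbb{R}^{2N+4k+3} \times (0,R) \times (S(\mathbb{C}^{M+2k+2}) \times \mathbb{C}^M)/S^1$ of the projection $\widetilde{W}' \to S^1$ is simply connected, since the last factor is a rank-$2M$ real vector bundle over $\mathbb{CP}^{2M+2k+1}$. Consequently $H_1(\widetilde{W}') = H_1(S^1) = \mathbb{Z}$, and $f_{\gamma_1,\gamma_2}$ acts trivially on this group; so the first component $x$ is automatically preserved.

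Next, given a class $(x,y)$ with $x = m$, I would take reference representatives $Y(p_i) = S^1 \times \{p_i\}$, $i = 1, \ldots, |m|$, as in the proof of Lemma \ref{lem: cobordism computation}, each equipped with the canonical framing $\xi_i$ arising from a fixed trivialization of $T_{p_i} F$. The image $f_{\gamma_1,\gamma_2}(Y(p_i))$ is the loop $\theta \mapsto (\theta,\, \gamma_1(\theta) w_i,\, [u_i, \gamma_2(\theta) v_i])$, which projects to a nullhomotopic loop in $F$ and is therefore homologous to $Y(p_i)$ by a cylindrical cobordism. After straightening via such a homotopy, the pushforward framing $(f_{\gamma_1,\gamma_2})_* \xi_i$ differs from $\xi_i$ by a loop $\Gamma_i$ in the structure group of the normal bundle, and by construction $\Gamma_i(\theta)$ is precisely the action of $\gamma_1(\theta) \oplus \gamma_2(\theta)$ on the fixed tangent space $T_{p_i} F$.

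Finally, I would identify $[\Gamma_i] \in \pi_1 SO \cong \mathbb{Z}/2$. The factor $\gamma_1$ contributes $a$ via the stabilization $SO(2N+4k+3) \hookrightarrow SO$. For $\gamma_2$, I would use that for $M \ge 2$ the composition $\pi_1 U(M) \to \pi_1 SO(2M) \to \pi_1 SO$ sends a generator of $\pi_1 U(M) = \mathbb{Z}$ to the nontrivial element of $\mathbb{Z}/2$: the loop $e^{i\theta} \in U(1) \subset U(M)$ descends to rotation by $2\pi$ in an $SO(2)$-factor, which generates $\pi_1 SO(2)$ and hence maps to the generator of $\pi_1 SO(2M)$. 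Hence each circle contributes $a+b \pmod 2$ to the relative $w_2$ obstruction, giving a total correction of $m(a+b) = x(a+b)$ and thus $f^*_{\gamma_1,\gamma_2}(x,y) = (x,\, y + x(a+b))$. The main point requiring care will be verifying that the framing comparison is indeed captured pointwise by $\gamma_1 \oplus \gamma_2$ acting on $T_{p_i}F$ together with the mod-$2$ reduction for $\pi_1 U(M) \to \pi_1 SO(2M)$; both are standard, but must be matched carefully with the conventions fixed in Lemma \ref{lem: cobordism computation}.
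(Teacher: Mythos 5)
Your proof is correct and follows essentially the same route as the paper: represent classes by the product reference circles, observe that $f_{\gamma_1,\gamma_2}$ (essentially) fixes them and twists the canonical framing by the loop $\gamma_1(\theta)\oplus\gamma_2(\theta)$, whose stabilized class in $\pi_1 SO\cong\mathbb{Z}/2$ is $a+b$ (using that $\pi_1 U(M)\to\pi_1 SO$ is reduction mod $2$), so each circle shifts the second component by $a+b$ while the first component and the relative $w_2$ over the cobordism are unchanged. The only difference is cosmetic: the paper chooses the reference circle with $w=0$, $v=0$, which is fixed pointwise by $f_{\gamma_1,\gamma_2}$, so the framing comparison is read off directly from the differential and your straightening homotopy for general basepoints becomes unnecessary.
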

\begin{proof}
Let $Y=S^1\hookrightarrow \widetilde{W}'$ be the submanifold defined by $\theta\mapsto (\theta,r,0,[u,0])$ for fixed $r,u$. Then we have a canonical framing $\xi_{c}$ on $Y$, pulled back from a trivialization of $T_{[u,0]}(S(\mathbb{C}^{M+2k+2})\times \mathbb{C}^{M})/S^1)$. Let $\xi'_{c}$ be the other framing on $Y$. Then under the isomorphism (\ref{eq: cobordism group}), we have 
\[
[Y,\xi_{c}]=(1,0) \quad \text{and} \quad [Y,\xi'_{c}]=(1,1).
\]
Note that the homeomorphism $f_{\gamma_{1},\gamma_{2}}$ fixes $Y$ pointwisely. It is straightforward to check that the differential of $f_{\gamma_{1},\gamma_{2}}$ preserves $\xi_{c}$ if and only if $a+b$ is even, which finishes the proof.
\end{proof}
Up to homotopy, any two framings of $\Det(\widetilde{D})$ differ by an integer $b\in \mathbb{Z}$, and any two framings of $\Det(\widetilde{d})$ differ by an element $a\in \mathbb{Z}/2$. So it makes sense to write $\xi_{d}+a$ and $\xi_{D}+b$.
\begin{proposition}\label{prop: change of framing formula} For any $a,b$, one has 
\[
\FBafu(f,\mathfrak{s},\xi_{D}+b,\xi_{d}+a)\equiv \FBafu(f,\mathfrak{s},\xi_{D},\xi_{d})+(a+b)\cdot \operatorname{SW}(X,\mathfrak{s})\mod 2.
\]
\end{proposition}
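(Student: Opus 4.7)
The plan is to reduce the formula to a direct application of Lemma~\ref{lem: change of framing} by recognising that the change of framings $\xi_D\mapsto \xi_D+b$, $\xi_d\mapsto \xi_d+a$ precisely corresponds to altering the identification $\widetilde{W}\cong\widetilde{W}'$ by the self-homeomorphism $f_{\gamma_1,\gamma_2}$ for loops $\gamma_1,\gamma_2$ representing $a$ and $b$.

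First I would recall that $\FBafu(f,\mathfrak{s},\xi_D,\xi_d)$ is, by construction, the second component of the framed bordism class $[\mathcal{M}_{sw},\xi_c]\in \Omega_1^{\mathrm{fr}}(\widetilde{W}')\cong \mathbb{Z}\oplus\mathbb{Z}/2$ under the identification of Lemma~\ref{lem: cobordism computation}, while the first component equals $\operatorname{SW}(X,\mathfrak{s})$. Crucially, the intrinsic framed bordism class $[\mathcal{M}_{sw},\xi_c]\in \Omega_1^{\mathrm{fr}}(\widetilde{W})$ in the bundle $\widetilde{W}\to W$ is independent of any further choices: it only depends on the Seiberg--Witten data of the family. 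What depends on the framings is the identification $\widetilde{W}\cong\widetilde{W}'$ of equation~(\ref{eq: trivialization of tildeW}), which is pinned down by a choice of trivializations of $U^\pm,V^\pm$ compatible with $\xi_D,\xi_d$ up to homotopy; two such compatible trivializations differ by null-homotopic loops in $SO(2N+4k+3)$ and $U(M)$, hence yield the same class in $\Omega_1^{\mathrm{fr}}(\widetilde{W}')$.

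Next I would set up the dictionary between framings of $H^+(E),\Det(\widetilde D^+)$ and loops in $SO$ and $U$. A change of $\xi_d$ by $a\in \pi_1 SO(b^+)=\mathbb{Z}/2$ changes any compatible trivialization of $V^\pm$ (after stabilization) by a loop $\gamma_1$ of class $a\in\pi_1 SO(2N+4k+3)$. Similarly, since $\Det(\widetilde D^+)=\det(U^+)\otimes\det(U^-)^*$, shifting $\xi_D$ by $b\in\mathbb{Z}$ is realized by a loop $\gamma_2$ in $U(M)$ of class $b\in\pi_1 U(M)=\mathbb{Z}$ (acting on the $U^-$-factor; any loop in $U(M)$ with $\det$ of winding number $b$ works, and modifications that shift $U^+$ can be absorbed). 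Therefore, the identification $\widetilde{W}\cong\widetilde{W}'$ adapted to $(\xi_D+b,\xi_d+a)$ is the composition of the identification adapted to $(\xi_D,\xi_d)$ with the self-homeomorphism $f_{\gamma_1,\gamma_2}$ of $\widetilde{W}'$ constructed in Lemma~\ref{lem: change of framing}.

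Combining these two points, the bordism class representing the Bauer--Furuta invariant with the shifted framings is obtained from the class with the original framings by applying $(f_{\gamma_1,\gamma_2})_*$ on $\Omega_1^{\mathrm{fr}}(\widetilde{W}')$. Lemma~\ref{lem: change of framing} evaluates this action as $(x,y)\mapsto (x,y+x(a+b))$. Specializing to $x=\operatorname{SW}(X,\mathfrak{s})$ and reading off the second component gives
\[
\FBafu(f,\mathfrak{s},\xi_D+b,\xi_d+a)\equiv \FBafu(f,\mathfrak{s},\xi_D,\xi_d)+(a+b)\cdot \operatorname{SW}(X,\mathfrak{s})\mod 2,
\]
which is the desired formula. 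The only genuinely delicate step is the translation dictionary between shifts of the framings $\xi_D,\xi_d$ and the homotopy classes of $\gamma_1,\gamma_2$; once that bookkeeping is in place, the result is an immediate corollary of Lemma~\ref{lem: change of framing}.
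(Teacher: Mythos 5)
Your proposal is correct and follows essentially the same route as the paper: the paper's proof likewise observes that changing $(\xi_d,\xi_D)$ to $(\xi_d+a,\xi_D+b)$ amounts to composing the identification (\ref{eq: trivialization of tildeW}) with $f_{\gamma_1,\gamma_2}$ for $[\gamma_1]=a$, $[\gamma_2]=b$, and then invokes Lemma \ref{lem: change of framing} together with the fact that the first component of the bordism class is $\operatorname{SW}(X,\mathfrak{s})$. Your additional bookkeeping about the intrinsic class in $\Omega_1^{\mathrm{fr}}(\widetilde{W})$ and the dictionary between framing shifts and loops in $SO$ and $U(M)$ is just a more explicit spelling-out of what the paper leaves implicit.
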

\begin{proof}
If we change the framing from $(\xi_{d},\xi_{D})$ to $(\xi_{d}+a,\xi_{D}+b)$, we need to compose the homeomorphism (\ref{eq: trivialization of tildeW}) by $f_{\gamma_{1},\gamma_{2}}$ for $[\gamma_{1}]=a$ and $[\gamma_{2}]=b$. So the lemma follows directly from  Lemma \ref{lem: change of framing}.
\end{proof}

The following vanishing theorem will be useful later.

\begin{proposition}\label{prop: FBF of boundary vanishing}
Let $X\hookrightarrow E_0\to \Sigma_0$ be a smooth bundle over a compact oriented surface $\Sigma_0$ with boundary components $\partial_{1}\Sigma_0,\cdots,\partial_{n}\Sigma_0$. Suppose $E_0|_{\partial_{i} \Sigma_0}$ is isomorphic to $T(f_{i})\to S^1$ as a smooth bundle. And suppose the family spin-c structure $\widetilde{\mathfrak{s}}_{i}$ and the framings $\xi^{i}_{d},\xi^{i}_{D}$ on $T(f_{i})$ can be extended a family spin-c structure $\widehat{\mathfrak{s}}$ and the framings $\widehat{\xi}_{d},\widehat{\xi}_{D}$ on $E_0$. Then one has 
\[
\sum^n_{i=1}\FBafu(f_{i},\mathfrak{s}_{i},\xi^{i}_{D},\xi^{i}_{d})=0.
\]
\end{proposition}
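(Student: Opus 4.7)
The plan is to realize the sum $\sum_i \FBafu(f_i,\mathfrak{s}_i,\xi_D^i,\xi_d^i)$ as the framed cobordism class of the boundary of a $2$-dimensional framed moduli space over $\Sigma_0$, and then invoke the fact that a boundary is null-bordant in $\Omega_1^{\mathrm{fr}}$.

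First I would run the finite-dimensional-approximation construction of \S\ref{section: familyBF} verbatim, but with base $\Sigma_0$ in place of $S^1$ and with the extended data $(\widehat{\mathfrak{s}}, \widehat{\xi}_d, \widehat{\xi}_D)$. This produces an $S^1$-equivariant approximated map $\operatorname{SW}_{apr}^{\Sigma_0}: U^+ \oplus V^+ \to U^- \oplus V^-$ of vector bundles over $\Sigma_0$, together with a generic perturbation $\widehat{\mathfrak{p}}$ making the Seiberg--Witten section and the perturbation section transverse. The zero locus now has relative dimension one, so the cut-down moduli space $\mathcal{M}_{sw}(E_0) \subset \widehat{W}$ is a compact $2$-manifold with boundary, and its normal bundle inside $\widehat{W}$ carries a canonical framing $\widehat{\xi}_c$ (defined by the same splitting of the normal bundle into Seiberg--Witten and perturbation directions). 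The compactness is automatic from the bound in (\ref{eq: bounded}), which extends over $\Sigma_0$ by the same argument.

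Next I would identify the restriction of this picture to $\partial \Sigma_0$. Since $E_0|_{\partial_i \Sigma_0} \cong T(f_i)$ and the family spin-c structure and framings extend $\widetilde{\mathfrak{s}}_i, \xi_D^i, \xi_d^i$, after choosing the trivialisations of $U^\pm, V^\pm$ compatibly, one gets an identification $\widehat{W}|_{\partial_i \Sigma_0} \cong \widetilde{W}'$ (the model space appearing in (\ref{eq: trivialization of tildeW})). By construction, after possibly a further generic perturbation of $\widehat{\mathfrak{p}}$ on a collar of $\partial\Sigma_0$, one arranges $\mathcal{M}_{sw}(E_0)$ to be transverse to $\partial \widehat{W}$, so that
\[
\partial \mathcal{M}_{sw}(E_0) = \bigsqcup_{i=1}^{n} \mathcal{M}_{sw}(T(f_i)),
\]
as framed $1$-manifolds (the framing $\widehat{\xi}_c$ restricting to $\xi_c^i$ on each boundary component, because the canonical normal-bundle splitting is natural under restriction).

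Therefore $(\mathcal{M}_{sw}(E_0), \widehat{\xi}_c)$ is a framed null-cobordism of $\bigsqcup_i (\mathcal{M}_{sw}(T(f_i)), \xi_c^i)$ inside the ambient total space $\widehat{W}$ (and hence, by restriction/projection, inside the respective models $\widetilde{W}'$). By Pontryagin--Thom, passage to the $\mathbb{Z}/2$-component of $\Omega_1^{\mathrm{fr}}(\widetilde{W}') \cong \mathbb{Z} \oplus \mathbb{Z}/2$ in Lemma~\ref{lem: cobordism computation} is additive under disjoint union, so
\[
\sum_{i=1}^{n} \FBafu(f_i,\mathfrak{s}_i,\xi_D^i,\xi_d^i) \,\equiv\, 0 \pmod 2,
\]
which is the claimed identity.

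The step requiring most care is the boundary compatibility: one must confirm that the canonical normal-bundle framing $\widehat{\xi}_c$ on $\mathcal{M}_{sw}(E_0)$ restricts on each $\partial_i \Sigma_0$ to the framing $\xi_c^i$ that computes $\FBafu(f_i,\mathfrak{s}_i,\xi_D^i,\xi_d^i)$, rather than to some twisted version of it. This amounts to checking that the two identifications $\widetilde{W}'|_{\partial_i \Sigma_0} \cong \widetilde{W}'$ induced by (i) the trivialisations used to define the boundary invariants and (ii) the restriction of the global trivialisations over $\Sigma_0$ coming from $\widehat{\xi}_d, \widehat{\xi}_D$, are homotopic as framings. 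This is precisely the hypothesis that the framings $\xi_d^i, \xi_D^i$ extend; so the check reduces to bookkeeping rather than a new argument, but one should verify it by tracking the splitting $B_+ \simeq S^0 \vee S^1$ used in (\ref{eq: computation stable homotopy group}) in the family over $\Sigma_0$ relative to its boundary.
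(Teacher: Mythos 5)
Your first two steps coincide with the paper's own argument: run the finite--dimensional approximation over $\Sigma_0$ with the extended data, obtain a compact two--dimensional moduli space $\widehat{\mathcal{M}}_{sw}\subset\widehat{W}'$ carrying a canonical normal framing, and identify its framed boundary with $\bigsqcup_i(\mathcal{M}_{sw}(T(f_i)),\xi_c^i)$ using the hypothesis that $\widehat{\xi}_d,\widehat{\xi}_D$ extend. The gap is in your final deduction. The $\mathbb{Z}/2$-component of $\Omega_1^{\mathrm{fr}}(\widetilde{W}')$ in Lemma~\ref{lem: cobordism computation} is not an intrinsic bordism-theoretic quantity that transports for free into a larger ambient space: it is defined \emph{relative} to the reference circles $Y(p_j)$ with their constant framings, via the relative class $w_2(NF,\xi_c,\cup\xi_j)$ of an embedded cobordism $F\subset\widetilde{W}'$, and even its well-definedness uses $w_2(T\widetilde{W}')=0$. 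A framed null-cobordism of the total boundary inside $\widehat{W}'$ (which sits over all of $\Sigma_0$ and cannot be ``projected'' into a single boundary model $\widetilde{W}'$, contrary to your parenthetical remark) does not formally imply that the sum of the $\mathbb{Z}/2$-components vanishes, and ``additivity under disjoint union'' is not the relevant mechanism: the $n$ classes live in different boundary models and are compared only through $\widehat{W}'$, where the splitting of Lemma~\ref{lem: cobordism computation} has no a priori extension.

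What is needed, and what the paper's proof supplies, is the following chain: (i) express each $\FBafu(f_i,\mathfrak{s}_i,\xi^i_D,\xi^i_d)$ as the relative class $w_2(NF_i,\xi_c^i,\cup\xi_j)$ for a cobordism $F_i\subset\widetilde{W}'$ from $\mathcal{M}_{sw}(T(f_i))$ to standard circles $Y(p_j)$; (ii) cap these standard circles off by the product surfaces $\Sigma_0\times\{p_j\}$ with their constant framings --- possible because every boundary moduli space has the same degree $\operatorname{SW}(X,\mathfrak{s})$ in $H_1$ and $\partial\Sigma_0$ bounds $\Sigma_0$; (iii) glue the $F_i$, these caps, and $\widehat{\mathcal{M}}_{sw}$ into a closed oriented surface $\widehat{F}\subset\widehat{W}'$, so that $\sum_i\FBafu(f_i,\mathfrak{s}_i,\xi^i_D,\xi^i_d)=\langle w_2(N\widehat{F}),[\widehat{F}]\rangle$; and (iv) conclude that this vanishes from $w_2(T\widehat{W}')=0$ together with $w_2(T\widehat{F})=0$ --- here the extension hypothesis enters again, since it is what makes $\widehat{W}'$ an honest product $\Sigma_0\times(\cdots)$ with the same spin fiber factor as in Lemma~\ref{lem: cobordism computation}. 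Your proposal never invokes $w_2(T\widehat{W}')=0$ nor caps the reference circles, yet without these the conclusion does not follow: the quantity being computed is an evaluation of a second Stiefel--Whitney class on a closed surface, not a formal consequence of being a boundary in framed bordism.
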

\begin{proof} To simplify the notation, we focus on the case $n=1$ and use $f,\mathfrak{s},\xi_{D},\xi_{d}$ to denote $f_1,\mathfrak{s}_1,\xi_{D}^1,\xi^1_{d}$. The general case is similar.

By repeating our constructions of $[\mathcal{M}_{sw},\xi_{c}]\in \Omega^{\operatorname{fr}}_{1}(\widetilde{W}')$, we see that the Seiberg-Witten moduli space for the family $E_0\to \Sigma_0$, denoted by $\widehat{\mathcal{M}}_{sw}$, is
an embedded submanifold of 
\[
\widehat{W}':=\Sigma_0\times \mathbb{R}^{2N+4k+3}\times  (0,R)\times (S(\mathbb{C}^{M+2k+2})\times \mathbb{C}^{M})/S^1   
\]

bounded by $\mathcal{M}_{sw}\hookrightarrow \partial \widehat{W}'=\widetilde{W}'$.  
To compute $\FBafu(f,\mathfrak{s},\xi_{D},\xi_{d})$, we repeat the construction in the proof of Lemma \ref{lem: cobordism computation}. Consider the embedded cobordism $F\hookrightarrow \widetilde{W}'$ from $\mathcal{M}_{sw}$ to $\sqcup Y(p_{i})$. 
Then we have 
\[
\FBafu(f,\mathfrak{s},\xi_{D},\xi_{d})=\langle w_{2}(NF, \xi_{c},\cup \xi_{i})\rangle.\]
Let $\widehat{Y}(p_{i})=F\times \{p_{i}\}$. Then we have a closed, oriented surface 
\[
\widehat{F}:=F\cup \widehat{\mathcal{M}}_{sw}\cup (\sqcup_{i} \widehat{Y}(p_{i}))\hookrightarrow \widehat{W}'.
\]
The canonical framing $\xi_{c}$ on $\mathcal{M}_{sw}$ can be extended to a canonical framing  $\widehat{\xi}_{c}$ on $\widehat{\mathcal{M}}_{sw}$. And its straightforward to see that the framing $\xi_{i}$ on $Y(p_{i})$ extends over $\widehat{Y}(p_{i})$. So we have 
\[
\langle w_{2}(NF, \xi_{c},\cup \xi_{i}), [F]\rangle =\langle w_{2}(N\widehat{F}),[\widehat{F}]\rangle\in \mathbb{Z}/2.
\]
On the other hand $w_{2}(T\widehat{W}')=w_{2}(T\widehat{F})=0$. So $ w_{2}(N\widehat{\Sigma})=0$ and the proof is finished.
\end{proof}

\subsection{The family Bauer-Furuta invariant of $\tau_{S}$}\label{section: FBF of tau} Let $S$ be a $(-2)$-sphere that pairs trivially with $c_{1}(\mathfrak{s})$, i.e. $\langle c_1(\mathfrak{s}), [S]\rangle= 0$. In this subsection, we study the family Bauer-Furuta invariant of the Dehn twist $\tau=\tau_S$. Consider 
\[
X\hookrightarrow E=T(\tau)\to S^1
\]
We now define canonical framings, denoted $\xi_{D}^S$ and $\xi_{D}^d$, on $\mathrm{det} (\widetilde{D}_{\widetilde{A}_0}^+ (E))$ and $H^+ (E)$.

Then we have a decomposition $E=E_1\cup E_2$ as families over $S^1$, where $E_1=S^1\times (X\setminus \nu(S))$ and $E_2=T(\tau|_{\nu(S)})$. We pick a family metric $\widetilde{g}$ that is trivial on $E_1$. 
We pick a family spin-c connection $\widetilde{A}_0$ that is constant on $E_1$ and spin on $E_2$. 

Consider the family Dirac operators $\widetilde{D}^+|_{E_1}$ and $\widetilde{D}^+|_{E_2}$, both equipped with Atiyah--Patodi--Singer (APS) boundary conditions. Then we have an isomorphism (natural up to homotopy):
\[
\Det(\widetilde{D}^{+})\cong \Det(\widetilde{D}^{+}|_{E_1})\otimes_{\mathbb{C}}  \Det(\widetilde{D}^{+}|_{E_2})
\]
Note that $\widetilde{D}^{+}|_{E_1}$ is a constant family of operators, so the index bundle has a canonical trivialization. On the other hand, there is a unique family spin structure on $E_{2}$ whose restriction on $\partial E_2$ is pulled back from $\partial \nu(S)$. This family spin structure induces $\mathfrak{s}|_{\nu(S)}$ on the fiber. Hence the family operator $\widetilde{D}^+|_{E_2}$ is canonically a family of quaternionic linear operators, so its index bundle  $\ind(\widetilde{D}^{+}|_{E_2})$ has structure group $\operatorname{Sp}(n)$. Since $\pi_{1}(\operatorname{Sp}(n))=0$,   the bundle $\ind(\widetilde{D}^+|_{E_2})$ also has a canonical trivialization up to homotopy.
Combining these two trivializations together, we obtain the canonical framing $\xi^{S}_{D}$.

To define the canonical framing $\xi^{S}_{d}$, we consider the family operators $\widetilde{d}|_{E_1}$ and $\widetilde{d}|_{E_2}$ and the isomorphism 
\[
\Det(\widetilde{d})\cong \Det(\widetilde{d}|_{E_1})\otimes  \Det(\widetilde{d}|_{E_2}).
\]
Again, $\widetilde{d}|_{E_1}$ is a constant family so its index bundle has a canonical trivialization. On the other hand, since $b^{+}_{2}(\nu(S))=b_{1}(\nu(S))=0$, the operator $(d^*,d^{+})$, with the APS boundary condition is invertible. So $\Det(\widetilde{d}|_{E_2})$ has a canonical trivialization. These two trivializations together give $\xi^{S}_{d}$. Of course, this is just an index-theoretic interpretation of the Dehn twist framing constructed in Definition \ref{definition:reflectionframingd}.\\

The aim of this subsection is to prove the following:

\begin{proposition}
\label{proposition: FBF vanishing original}
We have   $\FBafu(\tau_S,\mathfrak{s},\xi_{D}^S,\xi_{d}^S)=0$. 
\end{proposition}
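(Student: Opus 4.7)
The plan is to combine the framed-bordism interpretation of the family Bauer--Furuta invariant with the local decomposition $E = E_1 \cup E_2$, where $E_1 = S^1 \times (X \setminus \nu(S))$ is a trivial family and $E_2 = T(\tau_S|_{\nu(S)})$ is the mapping torus of the local model of the Dehn twist.

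First I would choose the family metric, spin-c connection, and transverse perturbation all to be constant on $E_1$. The canonical framings $\xi_D^S$ and $\xi_d^S$ are themselves built via the APS tensor decompositions $\Det(\widetilde{D}^+) \cong \Det(\widetilde{D}^+|_{E_1}) \otimes \Det(\widetilde{D}^+|_{E_2})$ and its analogue for $\Det(\widetilde{d})$, using a product trivialization on $E_1$ together with an index-theoretic trivialization on $E_2$ (on the Dirac side from the quaternionic structure on $\ind(\widetilde{D}^+|_{E_2})$ together with $\pi_1 \mathrm{Sp}(n)=0$, and on the $d$ side from the invertibility of $(d^+,d^*)|_{E_2}$ under APS boundary conditions). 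With these choices the family Seiberg--Witten moduli space $\mathcal{M}_{sw}$ can be identified with $S^1 \times \mathcal{M}_{SW}(X,\mathfrak{s})$, a disjoint union of $|\mathrm{SW}(X,\mathfrak{s})|$ embedded circles.

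The goal of step two is to check that the framing on each of these circles induced by $\xi_D^S$ and $\xi_d^S$ is the \emph{bounding} framing on $S^1$, so that $[\mathcal{M}_{sw}, \xi_c]$ lies in the $\mathbb{Z}$-summand of the isomorphism $\Omega_1^{\mathrm{fr}}(\widetilde{W}') \cong \mathbb{Z} \oplus \mathbb{Z}/2$ from Lemma~\ref{lem: cobordism computation}, and hence its projection to the $\mathbb{Z}/2$-summand --- which computes $\FBafu$ --- vanishes. Equivalently, the spin structure on $\nu(S)$ promotes the approximated Seiberg--Witten map restricted over $E_2$ to a $\mathrm{Pin}(2)$-equivariant stable map, and a short computation in the relevant $\mathrm{Pin}(2)$-equivariant stable stem shows its class in the $\mathbb{Z}/2$-component is zero.

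The hard part will be step two: showing that the trivialization of $\ind(\widetilde{D}^+|_{E_2})$ furnished abstractly by $\pi_1\mathrm{Sp}(n)=0$ genuinely agrees with the trivialization arising from an explicit nullhomotopy of the $S^1$-family of quaternionic-linear operators, once one carefully tracks the APS matching at the cylindrical ends. A clean route is to exploit that $(\tau_S|_{\nu(S)})^2$ is smoothly isotopic to the identity relative to the boundary, so that $E_2$ is a $\mathbb{Z}/2$-quotient of a bounding trivial family; the framing comparison then reduces to an explicit calculation on the $\mathbb{Z}/2$-cover, which together with the product structure of $E_1$ yields the desired vanishing.
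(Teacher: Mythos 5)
Your overall strategy---split $T(\tau_S)$ into the trivial piece $E_1=S^1\times(X\setminus\nu(S))$ and the local piece $E_2=T(\tau_S|_{\nu(S)})$, and use the extra $\Pin(2)$-symmetry coming from the spin structure on $\nu(S)$---is the right circle of ideas, but two of your steps have genuine gaps. First, the identification $\mathcal{M}_{sw}\cong S^1\times\mathcal{M}_{SW}(X,\mathfrak{s})$ does not follow from choosing the metric, connection and perturbation constant on $E_1$: the family data over $E_2$ genuinely varies with the base parameter (that is what the mapping torus of $\tau_S|_{\nu(S)}$ is), the Seiberg--Witten equations and the global slice couple the two regions, and solutions on the fibers have no reason to be constant in the family. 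Likewise, ``the approximated Seiberg--Witten map restricted over $E_2$'' is not a defined object for the closed family; to isolate a local contribution one needs a relative family Bauer--Furuta invariant for the piece $W=D(\nu(S))$ with its positive-scalar-curvature $\RP^3$ boundary, together with a family excision along $\RP^3$ showing that the invariant of $T(\tau_S)$ factors as the local relative invariant smashed with $\Bafu(X,\mathfrak{s})$. This is exactly the content of the paper's Lemma~\ref{lem: gluing} (built on Theorem~\ref{thm: family gluing} and the Iida-style weighted-Sobolev construction of the relative invariant), and your proposal in effect assumes its conclusion rather than proving it. Saying the circles ``carry the bounding framing'' is precisely the statement to be established, and the product identification by itself carries none of that framing information.

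Second, your proposed ``clean route'' for the hard step---passing to the double cover of the base, where $(\tau_S|_{\nu(S)})^2$ is isotopic to the identity rel boundary---cannot work for the quantity you need. The invariant $\FBafu$ and the framing discrepancy you must control are $\mathbb{Z}/2$-valued (this is the $\mathbb{Z}/2$ summand in Lemma~\ref{lem: cobordism computation}), and pulling back along a degree-two self-map of the base $S^1$ multiplies exactly this summand by $2=0$: whatever you compute on the $\mathbb{Z}/2$-cover gives no information about the mod~2 class downstairs. The paper does use the double-cover trick, but only in Lemma~\ref{lem: holonomy equals 0}, where the quantity being compared is the $\mathbb{R}$-valued holonomy of the Bismut connection and multiplication by $2$ is injective; for the mod~2 vanishing of the local piece it instead proves Lemma~\ref{lem: BF type vanishing}, using that the relative family invariant of $(W,\tau)$ itself (not of its double cover) is $\Pin(2)$-equivariant and of BF-type, and invoking the Lin--Mukherjee theorem that such a class restricts trivially to the $S^1$-equivariant stable homotopy set. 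So your instinct about the $\Pin(2)$-equivariant ``short computation'' points at the right mechanism, but it must be run on the relative invariant of $W$ produced by the excision step, and the double-cover reduction should be abandoned for the mod~2 part of the argument.
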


Proposition~\ref{proposition: FBF vanishing original} follows from a gluing result, for which we need some preliminaries.
Put $W = D(\nu(S))$, which is diffeomorphic to the disk bundle of the complex line bundle $\mathcal{O}(-2) \to \CP^1$ of degree $-2$.
By our assumption, the restriction of $\mathfrak{s}$ to $W$ is the (unique) spin structure on $W$.
We consider the family relative Bauer-Furuta invariant of $(W, \mathfrak{s}, \tau)$. 
First, we recall the ordinary (i.e.\ non-family) relative Bauer-Furuta invariant $\Bafu(W, \mathfrak{s})$ of $(W, \mathfrak{s})$ defined by Manolescu \cite{ManolescuStablehomotopytype}.
Recalling that $\sigma(W) = -1$ and $b^+(W) = 0$, this invariant is given by an $S^1$-equivariant stable map of the form
\begin{align}
\label{eq: relative BF}
\Bafu(W, \mathfrak{s}) : S^{{M+1/8}\bc} \wedge S^{N\R}
\to S^{M\bc} \wedge S^{N\R} \wedge \SWF(\RP^3, \mathfrak{s}),
\end{align}
for $M, N > 0$, where $\SWF(\RP^3, \mathfrak{s})$ denotes the Seiberg-Witten stable Floer homotopy type defined in \cite{ManolescuStablehomotopytype} of $\RP^3$ with the spin structure obtained by restricting $\mathfrak{s}$ (denoted by the same symbol).

In fact, the existence of a positive scalar curvature metric $g_{\RP^3}$ allows us to construct the relative Bauer-Furuta invariant rather directly, without using the Seiberg-Witten stable Floer homotopy type. 
We shall describe the construction in the next subsection. 
In this subsection, let us simply clarify which stable homotopy set the relative Bauer–Furuta invariant lies in, and prove Proposition~\ref{proposition: FBF vanishing original}, assuming a few formal properties of the relative Bauer–Furuta invariant.
First, it follows that the domain and codomain of the relative Bauer-Furuta invariant of $(W, \mathfrak{s})$ are representation spheres of the same dimension.
Namely, we have
\begin{align}
\label{eq: set BF lies in}
\Bafu(W, \mathfrak{s}) \in [S^{0}, S^{0}]^{S^1}.
\end{align}

\begin{remark}
For readers who are familiar with the definition of the relative Bauer–Furuta invariant given in \cite{Manolescugluing}, \eqref{eq: set BF lies in} can be verified as follows.
First, since $g_{\RP^3}$ is a positive scalar curvature metric, the Floer homotopy type is given by
\[
\SWF(\RP^3, \mathfrak{s}) = [(S^0, 0, n(\RP^3, \mathfrak{s}, g_{\RP^3}))]
\]
in the notation of \cite{ManolescuStablehomotopytype}.
Here $n(\RP^3, \mathfrak{s},g_{\RP^3}) \in \mathbb{Q}$ is a quantity defined in \cite[Equation (6)]{ManolescuStablehomotopytype}, which is given by 
\[
n(\RP^3, \mathfrak{s}, g_{\RP^3})=1/8
\]
as explained in \cite[Subsection 7.1]{Manolescugluing}.
(In the notation of \cite[Subsection 7.1]{Manolescugluing}, $(\RP^3, \mathfrak{s})$ corresponds to $n=2$ and $k=1$.)
Thus \eqref{eq: set BF lies in} follows from \eqref{eq: relative BF}.
\end{remark}

Now we consider the family version. 
First, note that $\tau$ has exactly two lifts to automorphisms of the spin 4-manifold $(W, \mathfrak{s})$. 
Among these, there is exactly one lift that restricts to the identity on $(\partial W, \mathfrak{s})$. 
We denote this lift by $\tilde{\tau}$.
Then the mapping torus $T\tilde{\tau} \to S^1$ is a family of spin 4-manifolds with fiber $(W, \mathfrak{s})$, and the restriction of this family to the fiberwise boundary is the trivialized family $(\partial W, \mathfrak{s}) \times S^1$.
Associated to this family, we obtain the family relative Bauer-Furuta invariant, formulated as an $S^1$-equivariant stable map
\[
\FBafu(W, \tau, \mathfrak{s}, \xi_{D}^S,\xi_{d}^S) \in [S^{0}\wedge B_+, S^{0}]^{S^1},
\]
where $B=S^1$ is the base circle.
Just as in the closed 4-manifold case, the framings $\xi_{D}^S$ and $\xi_{d}^S$ are needed to regard the family relative Bauer-Furuta invariant as a map between spheres that are already trivialized.

Using this invariant, we can formulate the following gluing formula.
To record which 4-manifold we consider, let us denote $\FBafu(\tau,\mathfrak{s},\xi_{D}^S,\xi_{d}^S)$ in Proposition~\ref{proposition: FBF vanishing original} by $\FBafu(X,\tau,\mathfrak{s},\xi_{D}^S,\xi_{d}^S)$.

\begin{lemma}
\label{lem: gluing}
We have
\[
\FBafu(X,\tau,\mathfrak{s},\xi_{D}^S,\xi_{d}^S) = 
\FBafu(W, \tau, \mathfrak{s}, \xi_{D}^S,\xi_{d}^S) \wedge \baf(X,\mathfrak{s}).
\]
\end{lemma}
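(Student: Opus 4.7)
The proof plan follows the classical gluing strategy for Bauer--Furuta invariants, adapted to the family setting. The key observation is that $\tau$ is supported in $W$, so the mapping torus decomposes as
\[
T\tau \;=\; T\tilde{\tau} \; \cup_{S^1 \times \RP^3} \; (S^1 \times Y), \quad \text{where } Y := X\setminus \mathrm{int}(W),
\]
along the trivialized family $S^1 \times \RP^3 \to S^1$. I would begin by arranging the family metric $\widetilde{g}$, spin-c connection $\widetilde{A}_0$ and finite-dimensional approximation data so that everything is pulled back from the basepoint on $Y$ and on a collar of $\partial W \subset W$, with the positive scalar curvature metric $g_{\RP^3}$ appearing on the separating hypersurface.

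Next, I would invoke a gluing theorem in families, extending Manolescu's non-family gluing theorem \cite{Manolescugluing}, to factor the approximated family Seiberg--Witten map of $X$ as
\[
\FBafu(X,\tau,\mathfrak{s},\xi_D^S,\xi_d^S) \;=\; \FBafu(W,\tau,\mathfrak{s},\xi_D^S,\xi_d^S) \wedge_{\SWF(\RP^3,\mathfrak{s})} \Bafu(Y,\mathfrak{s}).
\]
Since $\RP^3$ admits positive scalar curvature and $\SWF(\RP^3,\mathfrak{s}) \simeq S^0$, the smash product over $\SWF$ collapses to an ordinary $S^1$-equivariant smash product. In parallel, Manolescu's non-family gluing theorem applied to the decomposition $X = W \cup_{\RP^3} Y$ yields
\[
\baf(X,\mathfrak{s}) \;=\; \baf(W,\mathfrak{s}) \wedge \baf(Y,\mathfrak{s}).
\]
The relative invariant $\baf(W,\mathfrak{s}) \in [S^{0}, S^{0}]^{S^1}$ is the identity: $W = D(\mathcal{O}(-2))$ is spin with $b_1(W) = b^+(W) = 0$, and the canonical framings $\xi_D^S,\xi_d^S$ were built precisely from the quaternionic linearity of the spin Dirac operator on $W$ and the invertibility of $(d^+,d^*)$ with APS boundary conditions---so, after finite-dimensional approximation, $\baf(W,\mathfrak{s})$ is homotopic to the identity map on $S^{0}$. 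Thus $\baf(Y,\mathfrak{s}) = \baf(X,\mathfrak{s})$ as stable maps, and substitution into the family gluing formula gives the statement.

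The main obstacle is making the family gluing formula rigorous while keeping careful track of the framings. The ordinary smash product on the right-hand side lives in $[S^{0}\wedge B_+, S^{0}]^{S^1}$ only if the determinant line bundles of the family Dirac operator and of $(d^+,d^*)$ behave multiplicatively under the splitting, and if the framings $\xi_D^S, \xi_d^S$ on the $W$-side glue correctly with the trivial framings on the $Y$-side coming from the constant family structure. This multiplicativity is a consequence of the Bismut--Freed local index theorem, which identifies the change in determinant line bundles under gluing with integrals of local expressions, vanishing here because the family is constant on $Y$ and the boundary data are pulled back. Once this book-keeping is verified, the additivity of indices (using $d(\mathfrak{s}) = 0$ and the quaternionic structure on $W$) matches the stable dimensions on both sides and the lemma follows.
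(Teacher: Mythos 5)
Your overall strategy---cut $X$ along $\partial W = \RP^3$, factor the family invariant as $\FBafu(W,\tau)\wedge_{\SWF(\RP^3,\mathfrak{s})}\Bafu(X\setminus \mathrm{int}(W))$, collapse $\SWF(\RP^3,\mathfrak{s})$ using positive scalar curvature, and then trade $\Bafu(X\setminus \mathrm{int}(W))$ for $\baf(X,\mathfrak{s})$ via $\Bafu(W,\mathfrak{s}|_W)=[\mathrm{id}]$---is logically coherent, and the last step (our Lemma~\ref{lem: identity}) is indeed part of the paper's argument. But the central step is a genuine gap: you invoke ``a gluing theorem in families, extending Manolescu's non-family gluing theorem \cite{Manolescugluing}'' as a black box. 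No such family gluing theorem along a 3-manifold is available, and extending Manolescu's gluing (which already in the unparametrized case requires substantial Conley-index/Floer-spectrum analysis) to the parametrized setting, with control of the framings $\xi_D^S,\xi_d^S$, is precisely the hard content of the lemma; it cannot be dismissed as book-keeping, and the Bismut--Freed local index theorem does not by itself produce the required homotopy of finite-dimensional approximations (it only controls determinant lines, not the stable maps themselves).

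The paper avoids this difficulty by a different route: instead of a two-piece splitting formula, it uses Bauer's \emph{triple} excision along $\RP^3$ (Theorem~\ref{thm: Bauer gluing RP3}), whose proof is an explicit cut-and-paste homotopy of finite-dimensional approximations using a path in $SO(3)$ to the cyclic permutation of three summands, with estimates coming from positive scalar/Ricci curvature on the neck. Because that proof is entirely at the level of approximated Seiberg--Witten maps, it generalizes directly first to the relative setting (cylindrical ends with psc boundary, weighted Sobolev spaces in the style of Iida, Theorem~\ref{thm: Bauer gluing RP3 relative}) and then fiberwise over the compact base $B=S^1$ (Theorem~\ref{thm: family gluing}). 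Lemma~\ref{lem: gluing} then follows \emph{formally} by choosing the six pieces to be copies of $W$, $X\setminus\nu(S)$, and $W$ with the non-spin structure, and using the two identity computations $\Bafu(\#_2\overline{\CP}^2,\mathfrak{s}_0^W\#_P\mathfrak{s}_1^W)=[\mathrm{id}]$ and $\Bafu(W,\mathfrak{s}_0^W)=[\mathrm{id}]$ (Lemmas~\ref{lem: CPbar id} and \ref{lem: identity}). If you wish to salvage your outline, you would either have to prove the family relative gluing theorem you invoke, or replace it by this triple-excision argument; as written, the key factorization is assumed rather than established.
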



Another formal property is the following vanishing result:

\begin{lemma}
\label{lem: BF type vanishing}
We have $\FBafu(W, \mathfrak{s},\tau, \xi_{D}^S,\xi_{d}^S)=0$.    
\end{lemma}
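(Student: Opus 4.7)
The plan is to lift the family relative Bauer--Furuta invariant to a $\Pin(2)$-equivariant stable class and exploit the extra symmetry to force vanishing.

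First I would verify that, with the Dehn twist framings, the family relative Bauer--Furuta invariant naturally refines to a $\Pin(2)$-equivariant stable class. The family spin structure on $T\tilde{\tau}$ endows the family Dirac operator with a fiberwise quaternion-linear structure, and the framing $\xi_{D}^S$ was defined precisely via this quaternion structure (the decomposition $E = E_1 \cup E_2$ with $E_1$ product-trivial and the index bundle of $\widetilde{D}^+|_{E_2}$ having structure group $Sp(n)$). Hence $\xi_{D}^S$ is $\Pin(2)$-equivariant, where $j \in \Pin(2)$ acts by the quaternion unit. The framing $\xi_{d}^S$ is trivially $\Pin(2)$-equivariant because $b^{+}(W) = 0$ forces $\ind(\widetilde{d}|_{E_2}) = 0$. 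Consequently the approximated Seiberg--Witten map upgrades to a $\Pin(2)$-equivariant map of finite-dimensional approximations, landing in a codomain built from $\Pin(2)$-representation spheres smashed with a $\Pin(2)$-equivariant refinement of $\SWF(\RP^3, \mathfrak{s})$.

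Second, I would compute the resulting $\Pin(2)$-equivariant stable homotopy set and show the class is zero. Because $\RP^3$ carries the positive-scalar-curvature quotient metric from the round $S^3$, the relevant $\Pin(2)$-equivariant Floer spectrum $\SWF^{\Pin(2)}(\RP^3,\mathfrak{s})$ is a $\Pin(2)$-representation sphere up to a rational shift. Combined with the splitting $S^1_+ \simeq S^0 \vee S^1$, the invariant decomposes into a non-family part (the $\Pin(2)$-equivariant lift of the ordinary relative Bauer--Furuta invariant of $(W, \mathfrak{s})$) and a family part. I would then show that the $\Pin(2)$-equivariant stable homotopy set containing the lift is trivial in the summand that can possibly detect the class, using the tom Dieck splitting together with the known $\Pin(2)$-equivariant stable stems in the low-dimensional range relevant to our representation count; the forgetful map then kills the original $S^1$-equivariant class.

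The main obstacle is the last step, the explicit $\Pin(2)$-equivariant stable homotopy computation: one must carefully bookkeep the $\Pin(2)$-representations on $U^\pm, V^\pm$ and identify $\SWF^{\Pin(2)}(\RP^3, \mathfrak{s})$. A potentially cleaner alternative is geometric: because $\tilde{\tau}^2$ is isotopic to the identity through spin diffeomorphisms (lifting the geodesic-flow isotopy $\phi_t$, $0 \leq t \leq 2\pi$), the family $T\tilde{\tau}$ arises as the quotient of a trivial family by a free $\mathbb{Z}/2$-action covering the double cover of $S^1$. One could then analyze the $1$-dimensional family Seiberg--Witten moduli space directly, use its $\Pin(2)$-symmetry to exhibit an equivariantly framed null-cobordism, and conclude via the Pontryagin--Thom correspondence of the previous subsection.
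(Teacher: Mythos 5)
Your opening step is the same as the paper's: since the mapping torus $T\tilde{\tau}\to S^1$ is a family of \emph{spin} $4$-manifolds and the Dehn twist framings $\xi_D^S,\xi_d^S$ are defined through the quaternionic structure (and through $b^+(W)=0$), the relative family invariant refines to a $\Pin(2)$-equivariant class. The gap is in how you propose to finish. You want to conclude by computing the ambient $\Pin(2)$-equivariant stable homotopy set (tom Dieck splitting, low-dimensional stems) and showing that ``the summand that can possibly detect the class'' is trivial, so that the forgetful map kills the $S^1$-class. That summand is not trivial: after splitting $B_+\simeq S^0\vee S^1$, the family summand of $[S^0\wedge B_+,S^0]^{S^1}$, and likewise of its $\Pin(2)$-analogue, contains a copy of the nonequivariant stable stem $\pi_1^s\cong\mathbb{Z}/2$ (coming from the fixed-point part, exactly as in the paper's computation $[S^{(2k+2)\mathbb{C}}\wedge B_+,S^{(4k+3)\mathbb{R}}]^{S^1}\cong\mathbb{Z}\oplus\mathbb{Z}/2$ in the closed case). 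Indeed this $\mathbb{Z}/2$ is precisely what the whole paper uses to detect exotic Dehn twists, so no computation of the ambient group alone can force the class to die. Note also that the $\Pin(2)$-class itself is certainly nonzero: its restriction along $S^0\hookrightarrow B_+$ is the ordinary relative Bauer--Furuta invariant of $(W,\mathfrak{s}_0^W)$, which is the identity (Lemma~\ref{lem: identity}); what must be shown is only that the $S^1$-restriction carries no family information.

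What is actually required---and what the paper does---is an argument about the specific map rather than its ambient group: one verifies, by running the proof of \cite[Lemma 5.2]{LinMukherjee} in the relative setting, that the $\Pin(2)$-equivariant invariant is a map of BF-type, and then invokes the proof of Case (1) of \cite[Theorem 1.9]{LinMukherjee}, which shows that a BF-type element of $[S^0\wedge B_+,S^0]^{\Pin(2)}$ restricts to the trivial element of $[S^0\wedge B_+,S^0]^{S^1}$. The vanishing comes from the interplay between the $j$-action and the behaviour of the map on $S^1$-fixed points, not from the vanishing of any equivariant stable stem, so your plan is missing the key idea. Your fallback suggestion (viewing $T\tilde{\tau}$ as a $\mathbb{Z}/2$-quotient of a trivialized family and producing an equivariantly framed null-cobordism of the $1$-dimensional moduli space) is too sketchy to assess and faces the same issue: it would still have to explain why the framed class is null even though the relevant framed bordism group is not zero.
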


\begin{proof}[Proof of Proposition~\ref{proposition: FBF vanishing original}]
This follows immediately from Lemmas~\ref{lem: gluing} and \ref{lem: BF type vanishing}.
\end{proof}

Thus, to establish Proposition~\ref{proposition: FBF vanishing original}, it remains to prove Lemmas~\ref{lem: gluing} and \ref{lem: BF type vanishing}.
We prove Lemma~\ref{lem: BF type vanishing} in the next subsection, and Lemma~\ref{lem: gluing} in Subsection~\ref{subsection Excision}.

\subsection{Relative Bauer-Furuta invariant for psc boundary}

Let $(Z, \mathfrak{s})$ be a compact smooth spin-c 4-manifold with $b_1(Z) = 0$ and $b_1(\del Z) = 0$, whose boundary $Y=\partial Z$ is equipped with a positive scalar curvature metric $g$. 
The relative Bauer-Furuta invariant for $(Z, \mathfrak{s})$ is then constructed by slightly modifying the construction of the Bauer-Furuta invariant for closed 4-manifolds \cite{BauerFurutaI}.
We describe the necessary modifications below. 
Our construction follows a common procedure for obtaining a finite-dimensional approximation on a non-compact 4-manifold, provided that the moduli space is compact. 
Specifically, we follow the construction of the Bauer-Furuta counterpart of Kronheimer--Mrowka's invariant for 4-manifolds with contact boundary, due to Iida \cite{Iida}. 
As in \cite{Iida}, we construct a finite-dimensional approximation following Furuta's argument~\cite{Furuta-10-8}.

Let $\hat{Z}$ be a cylindrical 4-manifold obtained from $Z$:
\[
\hat{Z} = Z \cup (Y \times [0,\infty)).
\]
Fix a metric on $\hat{Z}$ that restricts on the cylindrical end to the product of $g$ with the standard metric on $[0, \infty)$.
On $\hat{Z}$, rather than the ordinary Sobolev spaces, we work with weighted Sobolev spaces. 
This is to make the quadratic term in the Seiberg–Witten equations a compact operator (see \cite[Lemma 2.1]{Iida}), despite the absence of Rellich's theorem.
Take a smooth function $\sigma \colon \hat{Z} \to \R$ that restricts to $\sigma(y, t) = t$ on $Y \times [0, \infty)$.
Let $\alpha > 0$ be a real number such that there are no eigenvalues in $(0, \alpha)$ for the Dirac operator and the operator $d^\ast$ on $\partial Z$.
Fix $k>3$, and consider the weighted Sobolev space $L^2_{k,\alpha}(\hat{Z})$, defined as $e^{-\alpha \sigma} L^2_{k}(\hat{Z})$.
The Seiberg-Witten map in the weighted Sobolev setup is a map of the form
\[
SW : L^2_{k,\alpha}(\hat{Z};\Lambda^1\oplus S^+)
\to L^2_{k-1,\alpha}(\hat{Z};\Lambda^+\oplus S^-).
\]

Using the assumption that $g$ is a positive scalar curvature metric on $Y$, the Seiberg-Witten moduli space for $\hat{Z}$ under the $L^2$-decay condition is compact \cite[Corollary 4.4.16]{Nicolaescu-book}.
By a standard elliptic regularity argument, this implies that the moduli space defined in the weighted Sobolev setup is compact as well.

Also in the weighted Sobolev setup, we have the global slice for the based gauge group, given by
\[
\mathcal{W}^+ = 
\ker(d^{\ast,\alpha} : L^2_{k,\alpha}(\hat{Z};\Lambda^1) \to L^2_{k-1,\alpha}(\hat{Z};\Lambda^0)) \oplus L^2_{k,\alpha}(S^+),
\]
just as in \cite[Proposition 3.5]{Iida}, where $d^{\ast,\alpha}$ is the adjoint of $d$ with respect to $L^2_{\alpha}$.
Consider the Seiberg-Witten map restricted to this global slice.
The zero set $(SW|_{\mathcal{W}^+})^{-1}(0)$, i.e. the framed moduli space, is compact, thanks to the compactness of the moduli space mentioned above.

By this compactness, there exists $R > 0$ such that
\[
(SW|_{\mathcal{W}^+})^{-1}(0) \subset B_R(\mathcal{W}^+),
\]
where $B_R(\mathcal{W}^+)$ denotes the ball in $\mathcal{W}^+$ of radius $R$ centered at the origin in $\mathcal{W}^+$.
Set
\[
\mathcal{W}^- = L^2_{k-1,\alpha}(\hat{Z}; \Lambda^+ \oplus S^-)
\]
and denote by $S_R(\mathcal{W}^+)$ the sphere of radius $R$ centered at the origin in $\mathcal{W}^+$.
Then we have:

\begin{lemma}
\label{lem: does not hit the ball}
There exists a small $\epsilon>0$ such that
$SW(S_R(\mathcal{W}^+)) \cap B_\epsilon(\mathcal{W}^-) = \emptyset$.
\end{lemma}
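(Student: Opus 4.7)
I would prove this by contradiction, following the standard Furuta-type argument adapted to the weighted Sobolev setting already in force. Suppose no such $\epsilon$ exists. Then one obtains a sequence $x_n \in S_R(\mathcal{W}^+)$ with $SW(x_n)\to 0$ in $\mathcal{W}^-$. Decompose the Seiberg--Witten map as
\[
SW = L + Q,
\]
where $L$ is the linear part consisting of the spin-c Dirac operator $D^+_{A_0}$ together with $d^+$ (acting on the slice $\ker d^{*,\alpha}$), and $Q$ is the pointwise quadratic term coming from $\rho(\Phi\Phi^*)_0$. The key input I would invoke is that, in the weighted Sobolev setup on the cylindrical-end manifold $\hat{Z}$, the quadratic term
\[
Q : \mathcal{W}^+ \to \mathcal{W}^-
\]
is a \emph{compact} nonlinear map, exactly as established in \cite[Lemma 2.1]{Iida} (this is the whole point of introducing the weights $e^{-\alpha \sigma}$: the weight gives the multiplication map $L^2_{k,\alpha}\times L^2_{k,\alpha}\to L^2_{k-1,\alpha}$ the necessary smoothing behavior in the cylindrical region).

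With compactness of $Q$ in hand, the argument proceeds in the familiar way. After passing to a subsequence, $Q(x_n)$ converges strongly in $\mathcal{W}^-$, and therefore so does $L(x_n) = SW(x_n) - Q(x_n)$. I would then use that $L$ is a Fredholm operator between the weighted Sobolev spaces: the choice of $\alpha$ above avoids spectrum of the boundary operators, so the APS/cylindrical-end Fredholm theory (applied to $D^+_{A_0}$ and to $d^++d^{\ast,\alpha}$, using that $g$ has positive scalar curvature and that $H^1(\partial Z;\R)=0$) ensures $\ker L$ is finite-dimensional and $L$ has closed range. Writing $x_n = x_n' + x_n''$ with $x_n'\in\ker L$ and $x_n''\in(\ker L)^\perp$, the sequence $x_n''$ converges by the Fredholm estimate $\|x_n''\|\le C\|L(x_n)\|+$ compact, while $\{x_n'\}$ lies in a finite-dimensional unit-ball region and hence also subconverges. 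Thus $x_n\to x_\infty$ strongly, and $\|x_\infty\| = R$.

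By continuity, $SW(x_\infty) = 0$, so $x_\infty$ lies in the framed moduli space $(SW|_{\mathcal{W}^+})^{-1}(0)$. But by the choice of $R$ this moduli space is contained in $B_R(\mathcal{W}^+)$, contradicting $\|x_\infty\|=R$. The only step requiring real care is the first one --- verifying that the quadratic term is genuinely compact in the weighted setting and that $L$ is Fredholm under the chosen weight $\alpha$ --- but both reduce to citations of \cite[Lemma 2.1]{Iida} and the APS-type theory on manifolds with cylindrical ends of positive scalar curvature, so the proof amounts to assembling these standard facts in the correct order.
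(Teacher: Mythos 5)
Your proposal is correct and takes essentially the same route as the paper, which simply defers to the proof of \cite[Proposition 3.12]{Iida}: that argument is exactly your contradiction scheme, using the compactness of the quadratic part (so that $Q(x_n)$ subconverges strongly, hence $L(x_n)$ does) together with the Fredholm property of $L$ in the weighted spaces to extract a strong limit $x_\infty$ on the sphere with $SW(x_\infty)=0$, contradicting the choice of $R$. The only (trivial) point to make explicit is that, by compactness of the framed moduli space, $R$ can be chosen so that the zero set lies in the \emph{open} ball of radius $R$, so a zero of norm exactly $R$ is indeed impossible; note also that you correctly avoid the pitfall flagged in the paper's footnote by never claiming $Q(x_n)\to Q(x_\infty)$, only that $Q(x_n)$ subconverges.
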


\begin{proof}
The proof is completely analogous to that of \cite[Proposition 3.12]{Iida}.
The fact that the quadratic part of the Seiberg–Witten map is a compact operator is used in the proof of this lemma.
\end{proof}

Let $L : \mathcal{W}^+ \to \mathcal{W}^-$ denote the linear Fredholm operator given by the linear part of the map $SW|_{\mathcal{W}^+}$, and let $C = SW|_{\mathcal{W}^+} - L$ be the quadratic part.
Let $\{W_n\}_n$ be an increasing sequence
\[
W_1 \subset W_2 \subset \cdots \subset \mathcal{W}^-
\]
of finite-dimensional subspaces of $\mathcal{W}^-$ with $(\mathrm{Im}(L))^{\perp} \subset W_n$, where $(\mathrm{Im}(L))^{\perp}$ denotes the orthogonal complement of $\mathrm{Im}(L)$ in $\mathcal{W}^-$ with respect to the $L^2_{k-1,\alpha}$-inner product.
For each $n$, let $p_n \colon \mathcal{W}^- \to W_n$ denote the $L^2_{k-1,\alpha}$-orthogonal projection.

\begin{lemma}
\label{lem: quad term}
Assume that 
$p_n$ regarded as maps $p_n :  \mathcal{W}^- \to \mathcal{W}^-$ converge to the identity map on $\mathcal{W}^-$ in the strong operator topology as $n \to +\infty$.
Then there exists $N>0$ such that, for every $n \geq N$, we have 
\[
\|(\id - p_n)(C(v))\|_{L^2_{k-1,\alpha}} < \epsilon
\]
for any $v \in S_{R}(\mathcal{W}^+)$.
\end{lemma}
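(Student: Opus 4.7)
The plan is to combine two facts: the quadratic part $C$ is a \emph{compact} nonlinear map (i.e., it sends bounded sets to relatively compact sets in $\mathcal{W}^-$), and a uniformly bounded family of operators converging to the identity in the strong operator topology converges to the identity uniformly on compact sets. These two ingredients fit together in a direct way.

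\textbf{Step 1.} I would first reduce to the claim that $C(S_R(\mathcal{W}^+))$ has compact closure in $\mathcal{W}^-$. This is the content of the compactness of the quadratic part of the Seiberg--Witten map in the weighted Sobolev setting, which is exactly the fact invoked just before Lemma~\ref{lem: does not hit the ball} (and referenced there via \cite[Lemma 2.1]{Iida}). Concretely, $C$ is built from pointwise bilinear expressions in the spinor and connection variables, and the product is continuous $L^2_{k,\alpha} \times L^2_{k,\alpha} \to L^2_{k-1,\alpha - \alpha'}$ for a small loss $\alpha'>0$; the inclusion $L^2_{k-1,\alpha - \alpha'}(\hat Z) \hookrightarrow L^2_{k-1,\alpha}(\hat Z)$ is compact precisely because of the exponential weight. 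Set
\[
K := \overline{C(S_R(\mathcal{W}^+))} \subset \mathcal{W}^-.
\]
Then $K$ is compact.

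\textbf{Step 2.} The operators $p_n : \mathcal{W}^- \to \mathcal{W}^-$ are $L^2_{k-1,\alpha}$--orthogonal projections, so $\|p_n\|_{\mathrm{op}} \le 1$ and consequently $\|\id - p_n\|_{\mathrm{op}} \le 2$ for all $n$. By assumption, $p_n \to \id$ in the strong operator topology, hence $\id - p_n \to 0$ strongly. I then invoke the standard fact that a uniformly bounded sequence of operators converging strongly to zero converges to zero uniformly on any compact subset: given any compact $K \subset \mathcal{W}^-$ and $\epsilon > 0$, cover $K$ by finitely many balls of radius $\epsilon/4$ centered at points $w_1,\dots,w_\ell$, choose $N$ so that $\|(\id - p_n)(w_j)\|_{L^2_{k-1,\alpha}} < \epsilon/2$ for all $j$ and all $n \ge N$, and use the uniform bound $\|\id - p_n\|_{\mathrm{op}}\le 2$ to control the remainder.

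\textbf{Step 3.} Applying Step~2 to the compact set $K$ from Step~1 yields $N>0$ such that, for all $n \ge N$,
\[
\|(\id - p_n)(w)\|_{L^2_{k-1,\alpha}} < \epsilon \quad \text{for every } w \in K.
\]
Since $C(v) \in K$ for every $v \in S_R(\mathcal{W}^+)$, this gives the desired estimate and completes the proof.

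The only genuinely non-trivial input is the compactness in Step~1, which is the whole reason for working in the weighted space $L^2_{k,\alpha}$ rather than the ordinary Sobolev space on the non-compact end; everything else is a routine pairing of strong operator convergence with compactness. I expect this compactness verification (or, equivalently, the citation to \cite[Lemma 2.1]{Iida}) to be the main point to be careful about, while the $\epsilon$-net argument in Step~2 is entirely standard.
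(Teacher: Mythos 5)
Your proof is correct and takes essentially the same route as the paper, which simply defers to Iida's Proposition 3.13: the content there is exactly your pairing of the compactness of $C$ (so that $C(S_R(\mathcal{W}^+))$ has compact closure in $\mathcal{W}^-$) with strong convergence of the uniformly bounded projections $p_n$, and the paper's footnote addresses precisely the subtlety you sidestep by working with the compact closure of the image rather than with weak limits of $C(v_n)$. One cosmetic slip in the aside of your Step 1: the quadratic term \emph{gains} weight (it lands in roughly $L^2_{k,2\alpha}$, not $L^2_{k-1,\alpha-\alpha'}$), and the compact inclusion into $L^2_{k-1,\alpha}$ uses this weight gain together with the loss of one derivative; this does not affect your argument, since the compactness of $C$ is exactly the fact already cited from Iida's Lemma 2.1.
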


\begin{proof}
The proof is completely analogous to that of \cite[Proposition 3.13]{Iida}\footnote{In the proof of \cite[Proposition 3.13]{Iida}, for a sequence $\{v_n\}_n$ in the sphere in the domain of the Seiberg–Witten map, it is asserted that $C(v_n)$ converges strongly to $C(v_\infty)$ after passing to a subsequence, using the weak convergence of $\{v_n\}$. 
This does not follow in general for a \emph{non-linear} compact operator $C$. 
However, the compactness of $C$ and the boundedness of $\{v_n\}_n$ do imply that, after passing to a subsequence, $C(v_n)$ converges to some element in the codomain of the Seiberg–Witten map. 
The proof of \cite[Proposition 3.13]{Iida} relies only on this fact, so the argument is correct once this modification is made.}
The fact that $C$ is a compact operator is used in the proof of this lemma as well.
\end{proof}

By Lemmas~\ref{lem: does not hit the ball} and \ref{lem: quad term}, we can repeat the construction of a finite-dimensional approximation as in the closed case \cite{Furuta-10-8}:
for a sufficiently large finite-dimensional subspace $W_n \subset \mathcal{W}^-$ discussed in Lemma~\ref{lem: quad term}, it follows from Lemmas~\ref{lem: does not hit the ball} and \ref{lem: quad term} that
\[
(L+p_nC)(S_R(L^{-1}(W_n))) \neq 0.
\]
Thus, we obtain a map of pairs
\[
L+p_nC : (B_R(L^{-1}(W_n)), S_R(L^{-1}(W_n))) \to (W_n,W_n \setminus \{0\}),
\]
which is equivalent (up to homotopy) to a based map
\[
S^{L^{-1}(W_n)} \to S^{W_n}
\]
between representation spheres.
The stable homotopy class of this map is the relative Bauer-Furuta invariant $\Bafu(Z, \mathfrak{s})$ of $(Z,\mathfrak{s})$.
By computing the index of $L$,  we have that this invariant lies in the following stable homotopy set:
\begin{align*}
\Bafu(Z, \mathfrak{s}) \in [S^{\frac{c_1(\mathfrak{s}^2)-\sigma(Z)}{8}\bc},\, S^{b^+(Z) \R} \wedge S^{n(Y, \mathfrak{s}, g)\bc}]^{S^1},
\end{align*}
where $n(Y, \mathfrak{s},g) \in \mathbb{Q}$ is a quantity that appears in \cite[Equation (6)]{ManolescuStablehomotopytype}.

\begin{remark}
We need this relative invariant for a gluing along $\RP^3$ equipped with the standard positive scalar curvature metric. 
Therefore, we do not need the independence of $\Bafu(Z, \mathfrak{s})$ with respect to the boundary metric $g$, and it suffices to treat $\Bafu(Z, \mathfrak{s})$ as an invariant of the triple $(Z, \mathfrak{s}, g)$.
In this case, the proof of the invariance of $\Bafu(Z, \mathfrak{s})$ (i.e. the independence of the choice of metric on $Z$ extending $g$ and of finite-dimensional approximation) is completely analogous to the closed 4-manifold case \cite{BauerFurutaI}.
\end{remark}

Given a diffeomorphism $f \colon Z \to Z$ with $f|_{\partial Z} = \id$ and $f^\ast \mathfrak{s} = \mathfrak{s}$, if we pick framings $\xi_D$ and $\xi_d$ for the mapping torus $Tf \to S^1$, it is evident that the family version of the relative Bauer-Furuta invariant 
\[
\FBafu(Z, f, \mathfrak{s}, \xi_D, \xi_d)
\in [S^{\frac{c_1(\mathfrak{s})^2 - \sigma(Z)}{8}\bc} \wedge B_+,\, S^{b^+(Z)\R} \wedge S^{n(Y, \mathfrak{s}, g)\bc}]^{S^1}
\]
is defined in the same way as in the closed 4-manifold case, where $B = S^1$.

Now we give the proof of the vanishing result, Lemma~\ref{lem: BF type vanishing}: 

\begin{proof}[Proof of Lemma~\ref{lem: BF type vanishing}]
Recall that $\FBafu(W, \mathfrak{s}, \tau, \xi_{D}^S, \xi_{d}^S)$ is the invariant associated to the spin family $T\tilde{\tau} \to S^1$. 
Thus, the family relative Bauer-Furuta invariant is in fact $\Pin(2)$-equivariant, not just $S^1$-equivariant.
For brevity and distinction, let
\[
\Psi^{S^1} \in [S^0 \wedge B_+, S^0]^{S^1}
\]
and
\[
\Psi^{\Pin(2)} \in [S^0 \wedge B_+, S^0]^{\Pin(2)}
\]
denote the $S^1$- and $\Pin(2)$-equivariant family relative Bauer-Furuta invariants of $(W, \mathfrak{s},\tau, \xi_{D}^S,\xi_{d}^S)$, respectively.
We see that $\Psi^{\Pin(2)}$ is of BF-type in the terminology of \cite{LinMukherjee} by repeating the proof of \cite[Lemma 5.2]{LinMukherjee} in the relative setup.
The proof of Case (1) of \cite[Theorem 1.9]{LinMukherjee} shows that a map of BF-type lying in $[S^0\wedge B_+,S^0]^{\Pin(2)}$ restricts to the trivial element in $[S^0\wedge B_+,S^0]^{S^1}$, showing that $\Psi^{S^1} = 0$ in $[S^0 \wedge B_+, S^0]^{S^1}$.
This proves the lemma.
\end{proof}

\subsection{Excision along $\RP^3$}
\label{subsection Excision}

To prove the desired gluing result, Lemma~\ref{lem: gluing}, we need to consider a gluing along $\RP^3$ in the family and relative setting.
It is a straightforward generalization of a gluing (or excision) result along $\RP^3$ due to Bauer~\cite{Bauer-survey} in the unparameterized and closed setting, which is a variant of his connected sum formula~\cite{Bauer-gluing} for the Bauer-Furuta invariant.

First, we review Bauer's excision.
Let $Z_0, Z_1$ be compact oriented smooth 4-manifolds with boundary $\del Z_0 \cong \del Z_1 \cong \RP^3$ as oriented manifolds.
By using an orientation-reversing diffeomorphism $\varphi : \RP^3 \to \RP^3$, one can glue $Z_0$ and $Z_1$ along $\RP^3$. 
Let $Z_0 \#_{P} Z_1$ denote the resulting 4-manifold.

An important example of such $Z_i$ is the disk bundle $W=D(\mathcal{O}(-2))$ of the complex line bundle $\mathcal{O}(-2) \to \CP^1$ of degree $-2$.
Note that $\RP^3$ admits two spin structures, $\mathfrak{t}_0$ and $\mathfrak{t}_1$. 
One of them, say $\mathfrak{t}_0$, extends to a spin structure on $W$, while the other, $\mathfrak{t}_1$, does not extend to a spin structure on $W$. 
The orientation-reversing diffeomorphism $\varphi$ interchanges these two spin structures. 
In fact, the manifold $W \#_{P} W$ is not spin and is diffeomorphic to $\#_2 \overline{\CP}^2$.

Assume that spin-c structures $\mathfrak{s}_i$ are given on $Z_i$, and suppose that $\mathfrak{s}_i|_{\del Z_i} = \mathfrak{t}_i$.
Then we can glue $(Z_0,\mathfrak{s}_0)$ with $(Z_1,\mathfrak{s}_1)$ using $\varphi$, and obtain a closed spin-c 4-manifold
$(Z_0\#_{P} Z_1, \mathfrak{s}_0\#_P\mathfrak{s}_1)$.
Now suppose further that we are given similar tuples 
\[
Z_0',\ Z_1',\ \mathfrak{s}_0',\ \mathfrak{s}_1'
\]
and 
\[
Z_0'',\ Z_1'',\ \mathfrak{s}_0'',\ \mathfrak{s}_1''
\]
as in the version without primes.
Under this setup, the excision along $\RP^3$ can be stated as follows:

\begin{theorem}[{\cite[Proof of Theorem 8.4]{Bauer-survey}}]
\label{thm: Bauer gluing RP3}
We have 
\begin{align*}
&\Bafu(Z_0\#_{P} Z_1, \mathfrak{s}_0\#_P\mathfrak{s}_1)
\wedge \Bafu(Z_0'\#_{P} Z_1', \mathfrak{s}_0'\#_P\mathfrak{s}_1')
\wedge \Bafu(Z_0''\#_{P} Z_1'', \mathfrak{s}_0''\#_P\mathfrak{s}_1'')\\
=&
\Bafu(Z_0\#_{P} Z_1', \mathfrak{s}_0\#_P\mathfrak{s}_1')
\wedge \Bafu(Z_0'\#_{P} Z_1'', \mathfrak{s}_0'\#_P\mathfrak{s}_1'')
\wedge \Bafu(Z_0''\#_{P} Z_1, \mathfrak{s}_0''\#_P\mathfrak{s}_1).
\end{align*}
\end{theorem}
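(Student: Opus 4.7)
The plan is to follow the strategy of Bauer~\cite{Bauer-survey}: both sides of the identity are expressed as the \emph{same} six-fold smash of relative Bauer--Furuta invariants, composed with three copies of a canonical Spanier--Whitehead pairing along $\RP^3$, and then the two expressions are shown to differ only by a cyclic permutation of smash factors that is null-homotopic in the equivariant stable category.

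\emph{Step 1: Gluing along $\RP^3$.} First, I would establish a gluing formula along $(\RP^3, \mathfrak{t}_0) \sqcup (\RP^3, \mathfrak{t}_1)$ for Bauer--Furuta invariants. Because $\RP^3$ admits a positive scalar curvature metric, the Floer homotopy types $\SWF(\RP^3, \mathfrak{t}_0)$ and $\SWF(\RP^3, \mathfrak{t}_1)$ reduce to representation spheres with explicit rational grading shifts (as in the discussion around~\eqref{eq: set BF lies in}), and Spanier--Whitehead duality furnishes a canonical $S^1$-equivariant pairing
\[
\epsilon \colon \SWF(\RP^3,\mathfrak{t}_0)\wedge \SWF(\RP^3,\mathfrak{t}_1) \longrightarrow S^V
\]
into a suitable $S^1$-representation sphere. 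A standard neck-stretching argument, comparing finite-dimensional approximations on the two sides of the stretched neck (as in \cite{Manolescugluing,Bauer-gluing}, and using the same psc-driven compactness already exploited in the proof of Lemma~\ref{lem: does not hit the ball}), then yields, for any compatibly glued spin-c pair $(Y_0,\mathfrak{r}_0),(Y_1,\mathfrak{r}_1)$, the $S^1$-equivariant stable identity
\[
\Bafu(Y_0 \#_P Y_1,\ \mathfrak{r}_0 \#_P \mathfrak{r}_1)\;\simeq\;\epsilon \circ \bigl(\Bafu(Y_0,\mathfrak{r}_0)\wedge \Bafu(Y_1,\mathfrak{r}_1)\bigr).
\]

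\emph{Step 2: Factor both sides of the theorem.} Applying this gluing identity to each of the six closed manifolds on both sides of the theorem and collecting terms, both the left- and the right-hand sides take the form
\[
\epsilon^{\wedge 3} \circ \sigma_{\bullet} \circ \bigl(\Bafu(Z_0)\wedge\Bafu(Z_0')\wedge\Bafu(Z_0'')\wedge\Bafu(Z_1)\wedge\Bafu(Z_1')\wedge\Bafu(Z_1'')\bigr),
\]
where $\sigma_{L}$ (for the LHS) and $\sigma_{R}$ (for the RHS) are two permutations of the six smash factors that regroup them into three $(\mathfrak{t}_0,\mathfrak{t}_1)$-paired slots on which the three copies of $\epsilon$ act. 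A direct inspection shows that $\sigma_L$ and $\sigma_R$ differ precisely by the 3-cycle $\Bafu(Z_1)\mapsto\Bafu(Z_1')\mapsto\Bafu(Z_1'')\mapsto\Bafu(Z_1)$ in the last three slots.

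\emph{Step 3: Rearrangement.} Since the smash product of $S^1$-equivariant representation sphere spectra is commutative and associative up to canonical equivariant homotopy, and since a 3-cycle is an even permutation (the two underlying transposition signs cancel), the two permutations $\sigma_L$ and $\sigma_R$ are canonically equivariantly homotopic. Hence the two composites agree, which proves the theorem.

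The main technical obstacle is Step~1: one has to carry out the neck-stretching rigorously, match the two finite-dimensional approximations across the neck, and verify that the pairing $\epsilon$ is canonically defined up to $S^1$-equivariant stable homotopy (tracking all rational grading shifts from the $\RP^3$ Floer types). In the family relative setting needed for Lemma~\ref{lem: gluing}, the same argument goes through with the parameter space $B$ carried along passively, so no new conceptual ingredient is required once the unparameterized gluing formula is in place.
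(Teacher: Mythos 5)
Your route is genuinely different from the one the paper (following Bauer) takes, and as written it has a gap. The paper's proof never introduces relative invariants or a duality pairing at all: it works directly with the six \emph{closed} manifolds, embeds three isometric copies of a neck $\RP^3\times[-L,L]$ into the three sums on each side, and builds an explicit $S^1$-equivariant homotopy between finite-dimensional approximations of the Seiberg--Witten maps for the two disjoint unions by rotating the three \emph{identical} summands of neck configuration data through a path $\psi\colon[0,1]\to SO(3)$ from the identity to the cyclic permutation (this is where evenness of the $3$-cycle enters: it lies in $SO(3)$), with positive scalar and Ricci curvature of $\RP^3$ supplying the estimates. Your Step 1, by contrast, is essentially Manolescu's gluing theorem along a rational homology sphere; it is a much heavier input than the excision statement itself (Bauer's argument is designed precisely to avoid it), and "a standard neck-stretching argument" considerably understates what is needed, although you do flag it as the main obstacle and it could in principle be cited.

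The genuine gap is Step 3. The three factors you cyclically permute are the relative invariants of $Z_1,Z_1',Z_1''$, which are maps between representation spheres of generally \emph{different} dimensions (different $b^+$, different Dirac indices). Permuting smash factors and then re-identifying with the standard ordered sphere introduces Koszul signs $(-1)^{d_id_j}$ governed by these dimensions; the parity of the permutation alone does not make the rearrangement canonically homotopic to the identity. Evenness kills the sign only when the permuted blocks have equal (or suitably matched) dimensions, which is exactly the situation Bauer engineers: he rotates only the three \emph{identical} copies of the neck data, where the $3$-cycle genuinely sits inside $SO(3)$ acting on three copies of one and the same space, so no sign or coherence issue can arise. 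To complete your argument you would have to carry out this sign bookkeeping (separating the complex, $S^1$-acted parts, where unitary permutations are connected to the identity, from the real $b^+$-parts, where they need not be) and show the total sign is $+1$ for arbitrary $Z_i$; as stated, "a 3-cycle is even, so the two composites agree" does not establish the theorem in the generality claimed.
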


For readers' convenience, we briefly review the proof of Theorem~\ref{thm: Bauer gluing RP3}.
The central step is to construct an ($S^1$-equivariant) homotopy from a finite-dimensional approximation $\operatorname{SW_{apr}^+}$ for
\begin{align}
\label{eq: sums 1}
(Z_0\#_{P} Z_1, \mathfrak{s}_0\#_P\mathfrak{s}_1)
\sqcup (Z_0'\#_{P} Z_1', \mathfrak{s}_0'\#_P\mathfrak{s}_1')
\sqcup (Z_0''\#_{P} Z_1'', \mathfrak{s}_0''\#_P\mathfrak{s}_1'')
\end{align}
to a finite-dimensional approximation for
\begin{align}
\label{eq: sums 2}
(Z_0\#_{P} Z_1', \mathfrak{s}_0\#_P\mathfrak{s}_1')
\sqcup (Z_0'\#_{P} Z_1'', \mathfrak{s}_0'\#_P\mathfrak{s}_1'')
\sqcup (Z_0''\#_{P} Z_1, \mathfrak{s}_0''\#_P\mathfrak{s}_1).
\end{align}
The components are switched by the cyclic permutation $\sigma$ of order 3.
This homotopy is constructed by cutting and pasting the configurations (i.e. differential forms and spinors) using a cut-off function and a path in $SO(3)$ from the identity to $\sigma$, regarded as an element of $SO(3)$.
(Note that $\sigma$ is an even permutation, so it lies in $SO(3)$.)
Precisely, we isometrically embed a neck $\RP^3 \times [-L,L]$ for $L>0$ into each sum along $\RP^3$, and let $r : \RP^3 \times [-L,L] \to [0,1]$ be a smooth function with
\begin{align*}
r|_{\RP^3 \times [-L,-1]}\equiv 0,\quad r|_{\RP^3 \times [1,L]}\equiv 1.
\end{align*}
Let $\psi : [0,1] \to SO(3)$ be a path from the identity to the permutation $\sigma$.
For $\vec{e} = (e_1, e_2, e_3) \in \bigoplus_3 \Gamma(\RP^3; \Lambda^\ast T\RP^3 \oplus S)$, where $S$ is the spinor bundle, set
\[
\vec{e^\sigma} = (\psi \circ r) \cdot \vec{e}.
\]
Applying this construction to the configurations on the cylinder $\RP^3 \times [-L, L]$ while keeping the other parts unchanged, we obtain an isomorphism from the configuration space for \eqref{eq: sums 1} to the configuration space for \eqref{eq: sums 2}.
The main assertion of the excision is that this isomorphism induces an identification of the Bauer-Furuta invariant for \eqref{eq: sums 1} with that for \eqref{eq: sums 2}, which is proved by making explicit homotopies.
Positivity of scalar and Ricci curvature of $\RP^3$ along the neck provides the necessary estimates during the homotopy.

From Theorem~\ref{thm: Bauer gluing RP3}, Bauer deduced a sum formula for the Bauer-Furuta invariant along $\RP^3$ \cite[Theorem 8.4]{Bauer-survey}.
In that deduction, the following fact is used, which is easily deduced from $b^+(\#_2\overline{\CP}^2)=0$ together with a homotopy-theoretic lemma \cite[Lemma 3.8]{BauerFurutaI} that determines the homotopy class of an $S^1$-equivariant map from the $S^1$-invariant-part map in this setting.
Let $\mathfrak{s}_i^W$ be spin-c structures that are extensions of $\mathfrak{t}_i$ to $W$ respectively, with $\mathfrak{s}_0^W$ spin and  $\mathfrak{s}_1^W$ non-spin.
As we noted, $W\#_PW \cong \#_2\overline{\CP}^2$.

\begin{lemma}
\label{lem: CPbar id}
We have
\[
\Bafu(\#_2\overline{\CP}^2, \mathfrak{s}_0^W\#_P \mathfrak{s}_1^W)
=[\mathrm{id}].
\]
\end{lemma}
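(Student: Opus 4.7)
The plan is to exploit the vanishing $b^+(\#_2\overline{\CP}^2) = 0$, together with a sharp index computation for the glued spin-c structure, to reduce the Bauer--Furuta invariant to a map $S^0 \to S^0$ that is then pinned down by its $S^1$-fixed-point restriction.

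First I would compute the stable homotopy set in which the invariant lives. Since $\mathfrak{s}_i^W$ extends $\mathfrak{t}_i$ and $\mathfrak{t}_0 \neq \mathfrak{t}_1$, the glued spin-c structure $\mathfrak{s} := \mathfrak{s}_0^W \#_P \mathfrak{s}_1^W$ has characteristic vector $c_1(\mathfrak{s})$ on $\#_2\overline{\CP}^2$ satisfying $c_1(\mathfrak{s})^2 = -2 = \sigma(\#_2\overline{\CP}^2)$, so $(c_1(\mathfrak{s})^2-\sigma)/8 = 0$. Combined with $b^+ = 0$, this puts $\Bafu(\#_2\overline{\CP}^2, \mathfrak{s})$ in the set $[S^0, S^0]^{S^1}$ (with no residual real or complex representation parts).

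Second, I would invoke \cite[Lemma 3.8]{BauerFurutaI}, which in this low-dimensional setting amounts to an equivariant Hopf-type statement: restriction to the $S^1$-fixed points induces an injection $[S^0, S^0]^{S^1} \hookrightarrow [S^0, S^0]$. It therefore suffices to identify the $S^1$-fixed part of the invariant. I would then compute that fixed part directly: the $S^1$-fixed locus of the global Coulomb slice is the pure $1$-form part (zero spinor), on which the Seiberg--Witten map reduces to the linear elliptic operator $d^+ \oplus d^* : \Omega^1 \to \Omega^+ \oplus \Omega^0$. Since $b^+(\#_2\overline{\CP}^2) = b_1(\#_2\overline{\CP}^2) = 0$, this operator is a Fredholm isomorphism on the relevant Sobolev completions; after finite-dimensional approximation the $S^1$-fixed restriction is the one-point compactification of a linear isomorphism between real vector spaces of equal dimension, representing $\pm[\mathrm{id}]: S^0 \to S^0$.

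Finally, I would audit signs to upgrade $\pm[\mathrm{id}]$ to $[\mathrm{id}]$. The sign is dictated by the standard homological-orientation conventions for the Bauer--Furuta invariant on negative-definite 4-manifolds: the orientation of the Fredholm determinant of $d^+\oplus d^*$ must be compared with the complex orientation of the Dirac index and the fixed homological orientation of $\#_2\overline{\CP}^2$. The main (essentially bookkeeping) obstacle is this sign audit; modulo that, the chain of reductions yields $\Bafu(\#_2\overline{\CP}^2, \mathfrak{s}_0^W\#_P\mathfrak{s}_1^W) = [\mathrm{id}]$.
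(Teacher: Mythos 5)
Your argument is essentially the paper's: the paper likewise deduces this lemma from $b^+(\#_2\overline{\CP}^2)=0$ (so the $S^1$-fixed-point map, the compactified $d^+\oplus d^*$, is homotopic to the identity) together with \cite[Lemma 3.8]{BauerFurutaI}, which recovers the $S^1$-equivariant class from the fixed-point map in this setting where the invariant lies in $[S^0,S^0]^{S^1}$. Your index computation placing the invariant in that set and the sign bookkeeping via the homological orientation are consistent with the paper's (sketched) treatment, so there is nothing substantively different to flag.
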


There is also a relative version of Lemma~\ref{lem: CPbar id}:

\begin{lemma}
\label{lem: identity}
We have 
\[
\Bafu(W, \mathfrak{s}_0^W) =[\mathrm{id}].
\]
\end{lemma}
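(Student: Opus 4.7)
The plan is to prove this as a direct relative analog of Lemma~\ref{lem: CPbar id}. First, one checks that the domain and codomain of a finite-dimensional approximation are $S^1$-representation spheres of the same type, so the assertion ``equals $[\mathrm{id}]$'' makes sense: since $\mathfrak{s}_0^W$ is spin one has $c_1(\mathfrak{s}_0^W)=0$, and combined with $\sigma(W)=-1$, $b^+(W)=0$ and $n(\RP^3,\mathfrak{t}_0,g_{\RP^3})=1/8$, the invariant lies in
\[
\Bafu(W,\mathfrak{s}_0^W)\in[S^{\tfrac{1}{8}\bc},\,S^{\tfrac{1}{8}\bc}]^{S^1}.
\]

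Next, I would reduce the computation to the $S^1$-fixed part of the approximated map, using the homotopy-theoretic lemma \cite[Lemma 3.8]{BauerFurutaI} that determines an $S^1$-equivariant stable map between representation spheres of this shape from its restriction to the fixed locus. This is exactly the input that drives the proof of Lemma~\ref{lem: CPbar id}, and the hypothesis $b^+(W)=0$ is what keeps us within its range of applicability.

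Then I would compute the $S^1$-fixed part directly. The $S^1$-fixed locus of the weighted global slice $\mathcal{W}^+$ consists of configurations with vanishing spinor, and on it the Seiberg--Witten map collapses to the linear operator $(d^+,d^{\ast,\alpha})$ on $\ker(d^{\ast,\alpha})\subset L^2_{k,\alpha}(\hat W;\Lambda^1)$ with values in $L^2_{k-1,\alpha}(\hat W;\Lambda^+)$. Because $b^+(W)=0$ and $b_1(W)=0$, this operator is an isomorphism in the weighted Sobolev setup for any admissibly small weight $\alpha$. A finite-dimensional approximation of an isomorphism is still a linear isomorphism between spaces of equal dimension, giving a degree $\pm 1$ map of spheres, and so $\Bafu(W,\mathfrak{s}_0^W)=\pm[\mathrm{id}]$.

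The hard part will be pinning down the sign. My preferred route is to use the canonical orientations built into the construction: the codomain is oriented by the homological orientation of $W$ and the complex orientation of the Dirac index, the domain inherits the compatible orientation through the Fredholm index, and $(d^+,d^{\ast,\alpha})$ preserves these by construction. As a cross-check robust to orientation bookkeeping, the relative gluing formula along $\RP^3$ (a direct relative analog of Theorem~\ref{thm: Bauer gluing RP3}) yields
\[
\Bafu(W,\mathfrak{s}_0^W)\wedge\Bafu(W,\mathfrak{s}_1^W)=\Bafu(\#_2\CPbar^2,\mathfrak{s}_0^W\#_P\mathfrak{s}_1^W)=[\mathrm{id}]
\]
by Lemma~\ref{lem: CPbar id}; the same fixed-point argument applied to $\mathfrak{s}_1^W$ produces a factor $\pm[\mathrm{id}]$ with the same sign, forcing both factors to equal $[\mathrm{id}]$.
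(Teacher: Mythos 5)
Your proposal follows essentially the same route as the paper: the invariant lies in $[S^0,S^0]^{S^1}$, its $S^1$-fixed-point map is (a finite-dimensional approximation of) the isomorphism $(d^+,d^{\ast,\alpha})$, hence the identity because $b^+(W)=b_1(W)=0$, and \cite[Lemma 3.8]{BauerFurutaI} then determines the equivariant class --- which is exactly the paper's one-line proof. One caveat: your gluing cross-check cannot actually pin down the sign, since $(-[\mathrm{id}])\wedge(-[\mathrm{id}])=[\mathrm{id}]$ as well, so the sign must come from the orientation conventions built into the finite-dimensional approximation, as in your primary argument (and as implicitly assumed in the paper).
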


\begin{proof}
This follows from the fact that the $S^1$-invariant-part map $\Bafu(W, \mathfrak{s}_0^W)^{S^1}$ is represented by the identity map since $b^+(W) = 0$, together with \cite[Lemma 3.8]{BauerFurutaI}.
\end{proof}

We need relative and family versions of Theorem~\ref{thm: Bauer gluing RP3}. 
Let us begin with the relative version, which is formulated as follows:

\begin{theorem}
\label{thm: Bauer gluing RP3 relative}
We have 
\begin{align*}
&\Bafu(Z_0, \mathfrak{s}_0)
\wedge \Bafu(Z_0'\#_{P} Z_1', \mathfrak{s}_0'\#_P\mathfrak{s}_1')
\wedge \Bafu(Z_0''\#_{P} Z_1'', \mathfrak{s}_0''\#_P\mathfrak{s}_1'')\\
=&
\Bafu(Z_0\#_{P} Z_1', \mathfrak{s}_0\#_P\mathfrak{s}_1')
\wedge \Bafu(Z_0'\#_{P} Z_1'', \mathfrak{s}_0'\#_P\mathfrak{s}_1'')
\wedge \Bafu(Z_0'', \mathfrak{s}_0'').
\end{align*}
\end{theorem}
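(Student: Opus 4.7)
The plan is to adapt Bauer's argument for Theorem~\ref{thm: Bauer gluing RP3} (i.e. \cite[Theorem~8.4]{Bauer-survey}) to the present relative setting, in which the first factor on the left and the third factor on the right are relative Bauer--Furuta invariants of 4-manifolds with $\RP^3$ boundary equipped with the standard positive scalar curvature metric $g_{\RP^3}$.

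First, I would construct a simultaneous finite-dimensional approximation of the Seiberg--Witten maps for the three 4-manifolds appearing on each side of the identity: for the four closed connected sums, the usual approximation on $L^2_k$-Sobolev spaces as in \cite{BauerFurutaI}; for the two relative pieces $Z_0$ and $Z_0''$, the weighted Sobolev approximation described earlier in this subsection, whose applicability rests on the compactness of the relative Seiberg--Witten moduli space guaranteed by the positive scalar curvature of $\RP^3$. Next, I would isometrically embed a long neck $\RP^3 \times [-L, L]$ into each of the six $\RP^3$-type regions occurring in the equation: one along each $\RP^3$-gluing locus of the four closed connected sums, and one deep inside the cylindrical end of each of $\hat Z_0$ and $\hat Z_0''$, say at parameter range $[T-L, T+L]$ for some large $T \gg 0$. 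For $T$ large enough, the exponential weight $e^{-\alpha \sigma}$ is essentially constant on these last two necks, so the weighted and unweighted $L^2_k$-norms are uniformly comparable there; this is what will allow Bauer's analytic estimates to transfer across the weighted/unweighted interface.

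With these necks in place, I would carry out Bauer's cyclic cut-and-paste construction: pick a smooth path $\psi : [0,1] \to SO(3)$ from the identity to the cyclic permutation $\sigma$ of the three factors (which lies in $SO(3)$ since $\sigma$ is an even permutation), together with a cut-off function $r$ on $\RP^3 \times [-L, L]$, and use the pair $(\psi, r)$ to relabel the configurations on the three necks cyclically, exactly as in the review preceding Theorem~\ref{thm: Bauer gluing RP3}. This yields a homeomorphism between the configuration space for the LHS disjoint union and that for the RHS disjoint union, intertwining their respective finite-dimensional Seiberg--Witten approximations up to $S^1$-equivariant homotopy, just as in \cite[Proof of Theorem~8.4]{Bauer-survey}. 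The resulting equality of $S^1$-equivariant stable maps translates directly into the required identity of smash products of (relative and closed) Bauer--Furuta invariants.

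The main obstacle will be verifying Bauer's analytic estimates uniformly across the weighted/unweighted interface: throughout the one-parameter homotopy defined by $\psi$, the cut-and-pasted approximated Seiberg--Witten map must continue to send the sphere of radius $R$ in the domain into the complement of the ball of radius $\epsilon$ around the origin in the codomain (the boundedness condition needed to produce a map of Thom spaces). These estimates ultimately depend, as in Bauer's argument, on the positivity of both the scalar and the Ricci curvatures of $\RP^3$ along each neck. The new ingredient is simply the observation that, for $T$ sufficiently large, the exponential weight $e^{-\alpha \sigma}$ contributes negligibly on $\RP^3 \times [T-L, T+L]$, so the estimates on the cylindrical-end necks are controlled by exactly the same bounds as those on the connected-sum necks. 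Once this is established, the remainder of Bauer's argument carries over unchanged to give the claimed equality.
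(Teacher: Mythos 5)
Your overall strategy --- adapting Bauer's cyclic cut-and-paste excision to the relative setting via the cylindrical-end manifolds $\hat{Z}_0$ and $\hat{Z}_0''$ --- is exactly the paper's. But the step you single out as the main new ingredient is where your argument breaks. You place the neck $\RP^3\times[-L,L]$ at depth $T\gg 0$ in the cylindrical end and assert that there the weight $e^{-\alpha\sigma}$ is ``essentially constant,'' so that weighted and unweighted $L^2_k$-norms are ``uniformly comparable.'' That is not correct: on $\RP^3\times[T-L,T+L]$ the weight has size $e^{-\alpha T}$, so the weighted norm of a configuration supported there is smaller than its unweighted norm by a factor that degenerates as $T\to\infty$; the variation of the weight across the neck is $e^{2\alpha L}$ for \emph{every} $T$, so taking $T$ large buys nothing while making the comparison constants between the two norms blow up. Worse, after the cyclic permutation the long stretch $\RP^3\times[0,T-L]$ lying between $Z_0$ and your neck sits inside the closed manifold $Z_0\#_P Z_1'$, whose natural configuration space is unweighted; on that stretch the weight ranges over $[e^{-\alpha(T-L)},1]$, so the identification of configuration spaces you need is not norm-controlled uniformly in $T$, and the boundedness property (the radius-$R$ sphere mapping off the $\epsilon$-ball throughout the homotopy defined by $\psi$) is not justified by ``the same bounds as on the connected-sum necks.''

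The fix is the paper's much simpler choice: embed the neck at the very beginning of the end, identifying $[-L,L]$ with $[0,2L]\subset[0,\infty)$. Then the region on the $Z_0$-side of the neck is just the compact piece $Z_0$, where the weight is bounded above and below by fixed constants, and the region beyond the neck is the half-infinite cylinder $\RP^3\times[2L,\infty)$, which under the cyclic permutation is only ever exchanged between the two weighted spaces of $\hat{Z}_0$ and $\hat{Z}_0''$; no weighted/unweighted interface with a large parameter ever appears, and no neck needs to be placed in the right-hand-side manifolds (they are produced by the cut-and-paste). With this placement Bauer's excision homotopies and curvature estimates apply verbatim, formally setting the configuration on the missing piece $Z_1=\emptyset$ to zero, which is all the paper's proof does.
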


\begin{proof}
Consider the cylindrical-end manifolds $\hat{Z}_0$ and $\hat{Z}_0''$ constructed from $Z_0$ and $Z_0''$, respectively.
We can regard the neck $\RP^3 \times [-L, L]$ as embedded into $\hat{Z}_0$ and $\hat{Z}_0''$ by identifying $[-L, L]$ with $[0, 2L] \subset [0, \infty)$.
Then the excision process used in the proof of Theorem~\ref{thm: Bauer gluing RP3} described above works without any change, formally by simply putting $0$ as a configuration for $Z_1 = \emptyset$.
Thus we obtain a homotopy from a finite-dimensional approximation for
\begin{align*}
(\hat{Z}_0, \mathfrak{s}_0)
\sqcup (Z_0'\#_{P} Z_1', \mathfrak{s}_0'\#_P\mathfrak{s}_1')
\sqcup (Z_0''\#_{P} Z_1'', \mathfrak{s}_0''\#_P\mathfrak{s}_1'')
\end{align*}
to a finite-dimensional approximation for
\begin{align*}
(Z_0\#_{P} Z_1', \mathfrak{s}_0\#_P\mathfrak{s}_1')
\sqcup (Z_0'\#_{P} Z_1'', \mathfrak{s}_0'\#_P\mathfrak{s}_1'')
\sqcup (\hat{Z}_0'', \mathfrak{s}_0'').
\end{align*}
This proves the assertion.
\end{proof}

Next, let us consider a family version of Theorem~\ref{thm: Bauer gluing RP3 relative}.
Given a diffeomorphism $f \colon Z_0 \to Z$ with $f|_{\partial Z_0} = \id$ and $f^\ast \mathfrak{s}_0 = \mathfrak{s}_0$, pick framings $\xi_D$ and $\xi_d$ for the mapping torus $Tf \to S^1$.

\begin{theorem}
\label{thm: family gluing}
We have 
\begin{align*}
&\FBafu(Z_0, f, \mathfrak{s}_0,\xi_{D},\xi_{d})
\wedge \Bafu(Z_0'\#_{P} Z_1', \mathfrak{s}_0'\#_P\mathfrak{s}_1')
\wedge \Bafu(Z_0''\#_{P} Z_1'', \mathfrak{s}_0''\#_P\mathfrak{s}_1'')\\
=&
\FBafu(Z_0\#_{P} Z_1', f, \mathfrak{s}_0\#_P\mathfrak{s}_1', \xi_{D},\xi_{d})
\wedge \Bafu(Z_0'\#_{P} Z_1'', \mathfrak{s}_0'\#_P\mathfrak{s}_1'')
\wedge \Bafu(Z_0'', \mathfrak{s}_0'').
\end{align*}
\end{theorem}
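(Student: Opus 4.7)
The plan is to adapt the proof of Theorem~\ref{thm: Bauer gluing RP3 relative} to the parametrized setting, exploiting the hypothesis $f|_{\partial Z_0} = \id$. Under this hypothesis, the mapping torus $Tf \to S^1$ restricts on a collar of $\partial Z_0 \cong \RP^3$ to the \emph{trivialized} family $\RP^3 \times [-L,L] \times S^1 \to S^1$. Consequently, the cut-and-paste operation along the neck $\RP^3 \times [-L,L]$ that drives Bauer's excision is \emph{constant} in the family parameter $s \in S^1$ and affects only the configurations, not the underlying geometry of the family.

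\textbf{Step 1.} I would first construct a family finite-dimensional approximation $\operatorname{SW_{apr}^+}$ for the parametrized Seiberg--Witten equations associated with the disjoint family
\[
\hat{T}f \;\sqcup\; \bigl((Z_0'\#_P Z_1') \times S^1\bigr) \;\sqcup\; \bigl((Z_0''\#_P Z_1'') \times S^1\bigr) \;\longrightarrow\; S^1,
\]
where $\hat{T}f$ denotes the fiberwise cylindrical-end completion of $Tf$. The required finite-dimensional subspaces of the codomain can be chosen so that they vary continuously over $S^1$; the estimates needed for a finite-dimensional approximation, analogous to Lemmas~\ref{lem: does not hit the ball} and \ref{lem: quad term}, hold uniformly in $s \in S^1$ by compactness of $S^1$ and the uniform boundedness of the operator norms across the family.

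\textbf{Step 2.} Next, I would fix the cut-off function $r$ and the path $\psi : [0,1] \to SO(3)$ from the identity to the cyclic permutation $\sigma \in SO(3)$ as in the proof of Theorem~\ref{thm: Bauer gluing RP3 relative}, and apply Bauer's cutting-and-pasting fiberwise over $S^1$. Because the family is trivial on each neck, the same $r$ and $\psi$ work for every $s$, and the induced configuration-space isomorphism assembles into a continuous family of isomorphisms, converting a finite-dimensional approximation for
\[
\hat{T}f \;\sqcup\; \bigl((Z_0'\#_P Z_1') \times S^1\bigr) \;\sqcup\; \bigl((Z_0''\#_P Z_1'') \times S^1\bigr)
\]
into one for
\[
T(f \#_P \id_{Z_1'}) \;\sqcup\; \bigl((Z_0'\#_P Z_1'') \times S^1\bigr) \;\sqcup\; (Z_0'' \times S^1).
\]
The positive scalar and Ricci curvature of $\RP^3$ provide the estimates in Bauer's homotopy uniformly in $s$. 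The framings $\xi_D$ and $\xi_d$ are preserved: since $\RP^3$ is a rational homology sphere, the neck contributes nothing to $H^+$, and the spin structure on $\RP^3$ together with the APS decomposition gives a canonical identification of $\Det(\widetilde{D}^+(Tf))$ with $\Det(\widetilde{D}^+(T(f\#_P\id_{Z_1'})))$ (after combining with the fixed contributions from the closed pieces used in the gluing).

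\textbf{Main obstacle.} The main technical point to verify will be the coherent continuity in the family parameter of the finite-dimensional approximations, in particular checking that the analogues of Lemmas~\ref{lem: does not hit the ball} and \ref{lem: quad term} hold uniformly in $s \in S^1$, and that the subspaces $W_n$ can be chosen to vary continuously. Once this is in place, Bauer's homotopy argument goes through verbatim in each fiber, with no dependence on $s$ in the cut-and-paste data, so the resulting identification of family finite-dimensional approximations yields the claimed identity of equivariant stable maps.
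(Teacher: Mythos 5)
Your proposal is correct and follows essentially the same route as the paper: the paper also defines the Seiberg--Witten map of the disjoint union as a fiber product over $B=S^1$ and applies Bauer's excision homotopy fiberwise, noting that all estimates from Bauer's gluing argument can be made uniform over the compact base (and, as you observe, the triviality of the family near the $\RP^3$ neck coming from $f|_{\partial Z_0}=\id$ is what lets the cut-and-paste data be chosen independently of the parameter). Your Steps 1--2 and the stated "main obstacle" are precisely the points the paper summarizes in its brief proof, with reference to the analogous families generalization of Bauer's connected sum formula.
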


\begin{proof}
As described in \cite[Proof of Proposition 5.1]{KM-dehn}, there is no difficulty to generalize Bauer's connected sum formula for a families setup.
Similarly, the proof of the assertion is a straightforward generalization of the proof of Theorem~\ref{thm: Bauer gluing RP3 relative}, so we just briefly summarize the argument following \cite[Proof of Proposition 5.1]{KM-dehn}.

For a disjoint union, the Seiberg-Witten map and its finite-dimensional approximation are defined to be the fiber product over $B=S^1$.
The homotopy between finite-dimensional approximations used in the proof of Theorem~\ref{thm: Bauer gluing RP3 relative} can be applied fiberwise over $B$, since all the estimates in \cite{Bauer-gluing} can be made uniformly over the compact base.
Thus, we get a proper homotopy between finite-dimensional approximations, regarded as bundle maps over $B$.
This gives the desired equality in the assertion. 
\end{proof}

Now we can deduce the desired gluing, Lemma~\ref{lem: gluing}:

\begin{proof}[Proof of Lemma~\ref{lem: gluing}]
Applying Theorem~\ref{thm: family gluing} to
\begin{align*}
&Z_0 = W,\quad \mathfrak{s}_0 = \mathfrak{s}_0^W,\quad f=\tau,\quad \xi_D=\xi_D^S,\quad \xi_d=\xi_d^S,\\
&Z_0' = W,\quad \mathfrak{s}_0 = \mathfrak{s}_0^W,\quad Z_1' = X \setminus \nu(S), \quad \mathfrak{s}_1' = \mathfrak{s}|_{Z_1'},\\
&Z_0'' = W,\quad \mathfrak{s}_0 = \mathfrak{s}_0^W,\quad Z_1'' = W, \quad \mathfrak{s}_1''=\mathfrak{s}_1^W,
\end{align*}
the assertion then follows immediately from Lemmas~\ref{lem: CPbar id} and \ref{lem: identity}.
\end{proof}

To this end, we have established Lemma~\ref{lem: gluing}, hence Proposition~\ref{proposition: FBF vanishing original}.


\section{Proof of the main theorem}
\label{sectionProof of the main theorem}
In this section, we prove the main Theorems (Theorem \ref{thm: main}, Theorem \ref{thm: main generalized}).\\

We start by fixing some geometric data. Let $f: E\to\Sigma$ be a smooth Lefschetz fibration over a closed, oriented surface $\Sigma$. For $b\in \Sigma$, we use $X_b$ to denote the fiber over $b$. We pick a regular value $b_0\in \Sigma$ and use $X$ to denote the fiber $X_{b_0}$. We use $p_{1},\cdots,p_{n}\in E$ to denote the singular points. We fix a path $\gamma_{i}:I\to \Sigma$ from $b_0$ to $f( p_{i} )$.

Under a local chart  $\psi: U_{i}\cong \{|z_{1}|^2+|z_{2}|^2 +|z_3|^2 <1\}$ near $p_i$ and a local chart $\phi:V_i\cong \{|z|<1\}$ near $f(p_i)$, the map $f$ can be written as $f(z_{1},z_2, z_3)=z^2_1+z^2_2 +z_{3}^2$. We pick small $\epsilon$ and let $\Sigma_{i}=\psi^{-1}(\{|z|<\epsilon\})\subset \Sigma$.  Then we have the decomposition 
\[
\Sigma=\Sigma_0\cup \Sigma_1\cup\cdots \cup \Sigma_n,
\]
where $\Sigma_0=\Sigma\setminus (\cup_{1\leq i\leq n}\mathring{\Sigma}_{i})$. For each $0\leq i\leq k$, we let $E_{i}=f^{-1}(\Sigma_i)$. 
We let $f_{i}:E_{i}\to \Sigma_i$ be the restriction of $f$. Then $f_0: E_0\to\Sigma_0$ is a smooth bundle with fiber $X$. 
For $1\leq i\leq n$, let $D_{i}=E_{i}\cap U_{i}$, $E^{\circ }_{i}=E_{i}\setminus \mathring{D}_{i}$ and $E^{\circ}=E\setminus \cup^{n}_{i=1}\mathring{D}_{i}$. Then $f_{i}|_{E^{\circ}_{i}}: E^{\circ }_i\to \Sigma_i$ is also a smooth bundle. Since $\Sigma_i$ is contractible, we have a trivialization 
\begin{equation}\label{eq: product Ei}
E^{\circ }_{i}\cong \Sigma_{i}\times X_i \end{equation}
Here $X_{i}=X\setminus \nu(S_{i})$, where $S_{i}\hookrightarrow X$ denotes the vanishing cycle for $p_{i}$ (along $\gamma_{i}$). 
Note that $E^{\circ}$ is a smooth manifold-with-corners, and $f|_{E^{\circ}}$ is a submersion. Let $T^{V}E^{\circ}:=\ker((f|_{E^{\circ}})_*: TE^{\circ}\to T\Sigma)$ be the vertical tangent bundle, and let $T^{H}E^{\circ}: =(f|_{E^{\circ}})^*(T\Sigma)$. We fix a splitting 
\begin{equation}\label{eq: splitting of tangent bundles}
\mathcal{H}_{E^{\circ}}:TE^{\circ}\xrightarrow{\cong} T^{V}E^{\circ}\oplus T^{H}E^{\circ}.    
\end{equation}
that is compatible with the trivialization (\ref{eq: product Ei}). Next, we pick a Riemmannian metric $g_E$ on $E$ that satisfies the following conditions: (i) $\mathcal{H}$ is an orthogonal decomposition. (ii) $g_E|_{T^{H}E^{\circ}}$ is pulled back from a metric on $\Sigma$. (iii) $g_E|_{E^{\circ}_{i}}$ is a product metric with respect to (\ref{eq: product Ei}). 

Next, we fix a spin structure $\mathfrak{s}_{\Sigma}$ on $\Sigma$ and a spin-c structure $\mathfrak{s}_{E}$ on $E$. 
Note the pull-back square 
\begin{equation}\label{diagram: spinc}
\xymatrix{Spin^{c}(4)\times Spin(2)\ar[d]\ar[rr] & & Spin^{c}(6)\ar[d]\\SO(4)\times Spin(2)\ar[r] & SO(4)\times SO(2)\ar[r] &SO(6)}.
\end{equation}
Thus, the spin structure on $T^{H}E^{\circ }$ given by pulling back $\mathfrak{s}_\Sigma$ together with the spin-c structure $\mathfrak{s}_{E^{\circ}}$ restricted from $\mathfrak{s}_{E}$ determine a spin-c structure $\widetilde{\mathfrak{s}}$ on $T^{V}E^{\circ}$. Let $S^{+}_{E}\to E$ be the spinor bundle over $E$. We fix a unitary connection $A^{t}_{E}$ on the line bundle $\operatorname{det}(S^+_{E})$. This induces a spin-c connection $A_{E}$ on $S^{+}_{E}$.
For various submanifolds $M\hookrightarrow E$, we use  $A^{t}_{M}$ to denote the restriction $A^{t}_{E}|_{M}$. We pick $A^{t}_{E}$
such that the following two conditions hold: (1) $A^{t}_{D_{i}}$ is flat for any $1\leq i\leq n$; (2) $A^{t}_{E^{\circ}_i}$ is pulled back from some connection on  $X_{i}$ under the decomposition (\ref{eq: product Ei}). 

For each $b\in \Sigma_0$, the spin-c structure $\widetilde{\mathfrak{s}}$ restricts to a spin-c structure $\mathfrak{s}_{b}$ on $X_b$. We denote the spinor bundle by $S^{+}_{X_b}$. We use $\mathfrak{s}_{X}$ to denote $\mathfrak{s}_{X_{b_0}}$. Since $\mathfrak{s}_{\Sigma}$ is spin, we have  have a canonical isomorphism $\operatorname{det}(S^{+}_{X_b})\cong \operatorname{det}(S^+_{E})|_{X_b}$. Hence the connection $A^{t}_{X_{b}}$ on $\operatorname{det}(S^+_{E})|_{X_b}$ can also be viewed as a connection on $\operatorname{det}(S^{+}_{X_b})$. It further induces a spin-c connection $A_{b}$ on $(X_{b},\mathfrak{s}_{b})$. Thus, we obtain a family of Dirac operators 
\[
\widetilde{D}^{+}(E_0)=\{D^{+}_{A_{b}}(X_b):\Gamma(S^{+}_{b})\to \Gamma(S^{-}_{b})\}_{b\in \Sigma_0}.
\]
We will be interested in its determinant line bundle:
\begin{equation}\label{eq: determinant bundle over Sigma0}
\Det(\widetilde{D}^{+}(E_0))\to \Sigma_0 .   
\end{equation}
 Note the decomposition \[\partial \Sigma_0=\bigsqcup_{1\leq i\leq n}\partial_{i} \Sigma_0,\]
where $\partial_{i}\Sigma_0=\partial \Sigma_i$. For each $1\leq k\leq n$, the restriction of the bundle $E_0\to \Sigma_0$ to $\partial_i \Sigma_0$ is isomorphic to the mapping torus $T(\tau_{S_i})\to \partial_{i}\Sigma_0$ of the Dehn twist $\tau_{S_i}$. So we also use $\Det(\widetilde{D}^{+}(T(\tau_{S_i})))$ to denote the restriction of the bundle (\ref{eq: determinant bundle over Sigma0}) to $\partial_i\Sigma_0$. By our discussion in Section \ref{section: FBF of tau}, the bundle $\Det(\widetilde{D}^{+}(T(\tau_{S_i})))$ has a canonical framing $\xi^{S_i}_{D}$. The following proposition is a key step in our proof.

\begin{proposition}\label{prop: relative Chern class equals index}
We have $\langle c_{1}(\operatorname{det}(\widetilde{D}^{+}(E_0)), \xi^{S_1}_{D},\cdots, \xi^{S_n}_{D}),[\Sigma_0]\rangle=\operatorname{ind}(D^{+}(E,\mathfrak{s}_E))$
\end{proposition}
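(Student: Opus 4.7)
The plan is to use the Bismut--Freed local index theorem for families to identify the LHS as an integral of a characteristic form over $E_0$, and then account for the boundary corrections coming from the Dehn twist framings $\xi^{S_i}_D$ via a compatible extension across each disk $\Sigma_i$, so that the total sum recovers the Atiyah--Singer formula for $\operatorname{ind}(D^+(E,\mathfrak{s}_E))$.

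First, I would apply the Bismut--Freed theorem to the smooth family $f_0\colon E_0\to\Sigma_0$ equipped with the vertical spin-c structure $\widetilde{\mathfrak{s}}$. This produces a canonical unitary connection $\nabla^{BF}$ on $\det(\widetilde{D}^+(E_0))$ whose Chern--Weil representative of $c_1$ is the fiberwise integral
\[
c_1(\det(\widetilde{D}^+(E_0)), \nabla^{BF}) = \left[\int_{E_0/\Sigma_0} \widehat{A}(T^V E_0)\, e^{c_1(\widetilde{\mathfrak{s}})/2}\right]_{(2)}.
\]
Integrating over $\Sigma_0$ and applying Fubini, together with $\widehat{A}(T^H E^\circ)=1$ (since $T^H E^\circ = f^\ast T\Sigma$ has real rank $2$) and the identity $c_1(\widetilde{\mathfrak{s}})=c_1(\mathfrak{s}_E)|_{E^\circ}$ from the pullback square~(\ref{diagram: spinc}), one obtains
\[
\int_{\Sigma_0} c_1(\nabla^{BF}) = \int_{E_0} \widehat{A}(TE)\, e^{c_1(\mathfrak{s}_E)/2}.
\]

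Second, the relative Chern number on the LHS differs from $\int_{\Sigma_0}c_1(\nabla^{BF})$ by the boundary corrections $-\sum_i\int_{\partial_i\Sigma_0}\alpha_i$, where $\alpha_i = \nabla^{BF} - \nabla^{\xi^{S_i}_D}$. I would compute each such correction by extending $\xi^{S_i}_D$ across the disk $\Sigma_i$. Recall from Section~\ref{section: FBF of tau} that $\xi^{S_i}_D$ arises from the decomposition of the boundary family $E_0|_{\partial_i\Sigma_0}$ into a constant-family piece $\partial_i\Sigma_0\times X_i$ and a spin family piece $T(\tau_{S_i}|_{\nu(S_i)})$ with quaternion-linear Dirac operator. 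Both pieces extend naturally over $\Sigma_i$: the constant part extends to the product $E^\circ_i=\Sigma_i\times X_i$, while the spin part extends to the APS family $D_i\to\Sigma_i$. Crucially, since $D_i$ deformation retracts to $p_i$ and so $H^2(D_i)=0$, the restriction $\mathfrak{s}_E|_{D_i}$ admits a (canonical up to homotopy) spin refinement, and the APS family Dirac operator over $\Sigma_i$ remains quaternion-linear; by $\pi_0\mathrm{Sp}(n)=0$ its determinant line bundle admits a canonical framing extending $\xi^{S_i}_D$.

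Third, applying the Bismut--Freed local index theorem with APS boundary conditions (in the form of Bismut--Cheeger / Dai--Zhang) to the extended family $D_i\to\Sigma_i$, and combining with the trivial contribution of $E^\circ_i$, the Chern--Weil form of the extended determinant line bundle is the fiber integral of $\widehat{A}(TE)\, e^{c_1(\mathfrak{s}_E)/2}$ plus a boundary $\eta$-form term that vanishes thanks to the quaternion-linearity of the boundary family. By Stokes' theorem on $\Sigma_i$,
\[
-\int_{\partial_i\Sigma_0}\alpha_i = \int_{\Sigma_i}c_1^{BF,\mathrm{ext}}_i = \int_{E_i}\widehat{A}(TE)\, e^{c_1(\mathfrak{s}_E)/2}.
\]
Summing over $i$ and combining with the first step,
\[
\langle c_1(\det(\widetilde{D}^+(E_0)), \xi^{S_1}_D,\ldots,\xi^{S_n}_D), [\Sigma_0]\rangle = \int_E\widehat{A}(TE)\, e^{c_1(\mathfrak{s}_E)/2} = \operatorname{ind}(D^+(E,\mathfrak{s}_E))
\]
by the Atiyah--Singer theorem for the closed spin-c $6$-manifold $E$.

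The main obstacle is the careful justification of Steps~2--3, particularly the APS family index theorem on $D_i\to\Sigma_i$ and the vanishing of its $\eta$-form boundary contribution. The simplifying feature making this work is the quaternion-linearity of the Dirac operator throughout the $D_i$-family, which forces the relevant spectral boundary invariants to be trivial (being pulled back from a canonically trivial $\mathrm{Sp}(n)$-bundle along its $\mathrm{U}(1)$-reduction).
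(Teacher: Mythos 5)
Your overall framework (relative Chern number $=$ Bismut--Freed curvature integral over $\Sigma_0$ plus boundary holonomy corrections measured against the Dehn twist framings) is sound, and Step~1 is fine since $p_1(T\Sigma)=0$ as a form. The genuine gap is in Steps~2--3: you compute the boundary correction by extending the data over the disk $\Sigma_i$ and invoking a families APS index theorem (Bismut--Cheeger/Dai--Zhang) for the ``family $D_i\to\Sigma_i$''. But $D_i\to\Sigma_i$ is \emph{not} a smooth fiber bundle: it contains the Lefschetz critical point $p_i$, and the fiber over the critical value is the singular quadric cone, so the family of APS Dirac operators simply does not exist over all of $\Sigma_i$ and no existing families index theorem applies to it as stated. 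Deleting the critical value puts you back where you started, and regularizing the degenerating family is precisely the hard analytic problem the argument is supposed to avoid. The subsidiary claim that the $\eta$-form contribution vanishes ``by quaternion-linearity, being pulled back from a canonically trivial $\mathrm{Sp}(n)$-bundle along its $\mathrm{U}(1)$-reduction'' is also not an argument; what one can actually say is that for a constant boundary family only the degree-$0$ part of the $\eta$-form survives, but this is moot as long as the interior family is singular.

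For comparison, the paper never extends across the critical values. It proves directly (Lemma~\ref{lem: holonomy equals 0}) that the Bismut connection has vanishing log-holonomy relative to $\xi^{S_i}_D$: one pulls back by a degree-$2$ map $S^1\to S^1$, so the monodromy becomes $\tau_{S_i}^2|_{\nu(S_i)}$, which is isotopic to the identity rel boundary, making the pulled-back family an honest product; one then compares with a constant family through a homotopy over $S^1\times I$, where the relative Chern class vanishes because the canonical framings (constant family on $X_i$, quaternionic family on $\nu(S_i)$ with APS conditions) extend over the homotopy, and the remaining curvature integral vanishes because the relevant forms are pulled back from the $4$-dimensional fiber. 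Finally, the identification with $\operatorname{ind}(D^+(E,\mathfrak{s}_E))$ is not obtained by attributing an index contribution $\int_{E_i}\widehat{A}\,e^{c_1/2}$ to each singular piece (as in your Step~3); instead the connection $A^t_E$ is chosen flat on $D_i$ and pulled back from $X_i$ on $E^\circ_i$, so that the index density vanishes identically on $E\setminus E_0$ and $\int_{E_0}=\int_E$. If you want to rescue your outline, you would need to replace the family APS step over $\Sigma_i$ by an argument of this kind (or by some other device that never requires a Dirac family over the singular fibers).
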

To prove Proposition \ref{prop: relative Chern class equals index}, we use the \textit{Local Index Theorem} proved by Bismut--Freed\cite{Bismut-Freed}. As discussed in \cite{freed1987determinant}, there is a canonical Hermitian metric on $\Det(\widetilde{D}^{+}(E_0))$. Furthermore, the splitting (\ref{eq: splitting of tangent bundles}) induces a canonically defined unitary connection $\nabla$ on $\Det(\widetilde{D}^{+}(E_0))$, called the \textit{Bismut connection}. The only property about the Bismut connection that we shall need is the following local index theorem.
Let 
\[c_1(\mathfrak{s}_{E_0})=\frac{i}{2\pi}F_{A^{t}_{E_0}}\in\Omega^{2}(E_0)\] be the Chern form of $\Det(S^{+}_{E_0})$ and let \[p_{1}(T^{V}E)=p_{1}(T^{V}E_0,g|_{T^{V}E_0})\in \Omega^4(E_0)\] be the Pontryagin form of the vertical tangent bundle. Then the local index formula states
\begin{equation}\label{eq: local index formula}
\begin{split}
\frac{i}{2\pi}F_{\nabla}&=\int_{E_0 /\Sigma_0}\hat{A}(T^{V}E_0)\cdot e^{\frac{c_1(\mathfrak{s}_{E_0})}{2}}\\&=\frac{1}{48}\int_{E_0/\Sigma_0}(p_{1}(T^{V}E)\wedge c_1(\mathfrak{s}_{E_0})-c_1(\mathfrak{s}_{E_0})\wedge c_1(\mathfrak{s}_{E_0})\wedge c_1(\mathfrak{s}_{E_0}))\in \Omega^2(\Sigma_0)    
\end{split}
\end{equation}
where integration is along the fibers of $E_0 \to \Sigma_0$.

For $1\leq i\leq n$, we use $\operatorname{hol}_{\nabla}(\xi^{S_i}_{D})\in \mathbb{R}$ to denote the holonomy of the Bismut connection $\nabla$ on  $\Det(\widetilde{D}^{+}(T(\tau_{S_i})))$, under the framing $\xi^{S_i}_{D}$.

\begin{lemma}\label{lem: holonomy equals 0} For $1\leq i \leq n$, we have $\operatorname{hol}_{\nabla}(\xi^{S_i}_{D})=0$.
\end{lemma}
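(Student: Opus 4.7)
My plan is to compute $\operatorname{hol}_\nabla(\xi^{S_i}_D)$ directly by making geometric choices well-adapted to the construction of the Dehn twist framing, and then reduce the problem to showing that a quaternion-linear lift of $\tau_{S_i}$ fixes a canonical unit vector in the complex determinant.

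First, I would refine the global choices of $g_E$ and $A^t_E$ on a collar neighborhood of $\partial_i\Sigma_0$ so that their restriction to $E_0|_{\partial_i\Sigma_0}=T(\tau_{S_i})$ agrees with the specific setup used to define the Dehn twist framing in Section~\ref{section: FBF of tau}: a family metric $\tilde g$ trivial on $E_1=S^1\times(X\setminus\nu(S_i))$ and a family spin-c connection $\tilde A_0$ constant on $E_1$ and spin on $E_2=T(\tau|_{\nu(S_i)})$. Moreover, I would arrange that both come from $\tau_{S_i}$-invariant data $(g,A^t)$ on $X$, which is possible since $\tau_{S_i}$ is supported in $\nu(S_i)\cong T^\ast S^2$ and can be taken as an isometry for the standard round-type metric (acting as the antipodal map on the zero section), while $\mathfrak{s}|_{\nu(S_i)}$ being spin allows a spin connection chosen $\tau_{S_i}$-equivariantly. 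Since the $\mathbb{R}$-valued holonomy in the framing $\xi^{S_i}_D$ is rigid under a continuous deformation of the geometric data that keeps the framing well-defined, this refinement does not change the quantity we wish to compute.

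Second, with these $\tau_{S_i}$-invariant choices, the family $T(\tau_{S_i})\to S^1$ is the quotient of the trivial family $\mathbb{R}\times(X,g,A^t)$ by the $\mathbb{Z}$-action generated by $\tau_{S_i}$. By naturality of the Bismut--Freed construction under isometric automorphisms preserving the spin-c connection, the Bismut connection on $\mathrm{Det}(\tilde D^+(T\tau_{S_i}))$ descends from the canonical trivial connection on $\mathrm{Det}(D_X)\times\mathbb{R}$. Under the tensor splitting
\[
\mathrm{Det}(D_X)\cong\mathrm{Det}(D_{X\setminus\nu(S_i)})\otimes_{\mathbb{C}}\mathrm{Det}(D_{\nu(S_i)})
\]
coming from APS boundary conditions on $\partial\nu(S_i)=\mathbb{RP}^3$, the Dehn twist framing $\xi^{S_i}_D$ is the tensor product of the canonical trivialization of the first factor (since $\tau_{S_i}$ acts as the identity on $X\setminus\nu(S_i)$) with the canonical quaternionic trivialization of the second factor (since the spin structure on $\nu(S_i)$ makes $D_{\nu(S_i)}$ quaternion-linear).

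Third, I would verify that the $\tau_{S_i}$-action on $\mathrm{Det}(D_X)$ fixes the Dehn twist framing: on the first factor it is the identity since $\tau_{S_i}|_{X\setminus\nu(S_i)}=\mathrm{id}$, and on the second factor $\tau_{S_i}|_{\nu(S_i)}$ lifts to a quaternion-linear automorphism of the spinor bundle that preserves the canonical quaternionic unit vector in $\mathrm{Det}(D_{\nu(S_i)})$ up to a positive real scalar, hence acts as $+1$ on the unit trivialization. Therefore the descended Bismut connection has trivial holonomy in the framing, giving $\operatorname{hol}_\nabla(\xi^{S_i}_D)=0\in\mathbb{R}$.

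The main obstacle is the naturality step in the second paragraph: making precise that the Bismut superconnection descends from the trivial family under the $\mathbb{Z}$-action, and that this descent respects the APS-type tensor splitting and the canonical quaternionic $U(1)$-trivialization. This should follow from the functoriality of Bismut--Freed theory for isometric actions preserving the spin-c connection, together with the relative (APS-type) version of the local index theorem for fibers with boundary; particular care is needed to ensure that the two trivializations entering the Dehn twist framing are compatible with the Bismut connection on the quotient and not off by a nontrivial phase.
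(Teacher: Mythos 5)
Your proposal has two genuine gaps, and they sit exactly where the analytic content of the lemma lies. First, the claim that the data can be made $\tau_{S_i}$-invariant is unjustified and, as stated, incorrect: the Dehn twist is the \emph{cut-off} geodesic flow $\tau_S(q,p)=\phi_{\pi\beta(|p|)}(q,p)$, which is not an isometry of the round/Sasaki metric on $T^\ast S^2$; the genuine isometry $a^\ast$ is not the identity near $\partial\nu(S_i)$ and so cannot be glued to $\mathrm{id}_{X\setminus\nu(S_i)}$ to give a representative of $\tau_{S_i}$ on $X$. Producing \emph{any} metric and spin-c connection on $X$ invariant under some diffeomorphism in the isotopy class of $\tau_{S_i}$, supported in $\nu(S_i)$, is a Nielsen-realization type problem that cannot be assumed --- indeed the whole point of the paper is that such mapping classes can be exotic. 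Second, the reduction to this special situation rests on the assertion that the $\mathbb{R}$-valued holonomy relative to $\xi^{S_i}_D$ is ``rigid'' under deformation of the geometric data. It is not: relative to a framing, the holonomy of the Bismut connection changes under a path of data by $\int_{S^1\times I}\tfrac{i}{2\pi}F_{\hat\nabla}$ (this is precisely the identity the paper exploits), and this transgression term is not zero in general. Showing that it vanishes in the relevant situation --- via the Bismut--Freed local index formula, the spin condition which kills $c_1$ over the twist region, and the fact that the remaining forms are pulled back from a $4$-dimensional fiber --- is the heart of the paper's proof, and your argument assumes it away. The third difficulty you flag yourself (compatibility of the Bismut connection with the APS tensor splitting of the determinant line, i.e.\ gluing anomalies) is also real and unresolved in your sketch.

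For comparison, the paper's proof avoids all three issues by a different mechanism: it pulls back along a degree-$2$ cover $S^1\to S^1$, so the relevant family becomes $T(\tau_{S_i}^2)$, and uses the $4$-dimensional fact that $\tau_{S_i}^2|_{\nu(S_i)}$ is isotopic to the identity rel $\partial\nu(S_i)$; hence the squared mapping torus is an honest product family, which can be compared to a constant family by the transgression/local-index computation above. Since the holonomy is a real number, $2\,\operatorname{hol}_{\nabla}(\xi^{S_i}_{D})=0$ gives the lemma. Your quaternionic-determinant observation (an $\mathbb{H}$-linear automorphism acts on the complex determinant by a positive real scalar) is a nice way to see why the quaternionic framing is canonical, but without the covering trick and without a substitute for the curvature computation, the proposal does not establish the lemma.
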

\begin{proof} We fix $i$ and use $W/S^1$ to denote the bundle $T(\tau_{S_i})/\partial_i\Sigma_0$. We pull back $W/ S^1$ via a degree-$2$ map $S^1\to S^1$. This gives  the family $W'/ S^1$ with $W'=T(\tau^2_{S_i})$. Note the decomposition 
$W'=W'_{1}\cup W'_{2}$, where $W'_{1}=S^1\times X_{i} $ and $W'_{2}=T(\tau^2_{S_i}|_{\nu(S_{i})})$. 

We pull back the following geometric data from $W\to S^1$ to $W'\to S^1$:
\begin{itemize}
    \item The metric $g$ on $T^{V}W$, restricted from the metric $g_E$ on $TE$.
    \item The splitting $\mathcal{H}:TW\cong T^{V}W\oplus T^{H}W$, induced by the splitting (\ref{eq: splitting of tangent bundles}).
    \item The family spin-c connections $\widetilde{A}=\{A_{b}\}_{b\in \partial_i\Sigma_0}$. By our choice of $A^{t}_{E}$, the restriction  $\widetilde{A}$ to ${T(\tau_{S_i}|_{\nu(S_i)})}$ is a family of spin connection. 
\end{itemize}
We denote the pulled back metric, splitting and connections by $g',\mathcal{H}'$ and $\widetilde{A}'$ respectively. The Dirac operator for the family $W'/S^1$ is pulled back from $W/S^1$. So the determinant line bundle $\Det(\widetilde{D}^{+}(W'))$ is the pull back of $\Det(\widetilde{D}^{+}(W))$. The Bismut connection $\nabla'$ on $\Det(\widetilde{D}^{+}(W'))$ is the pull back of $\nabla$. And the canonical framing $\xi'$ on $\Det(\widetilde{D}^{+}(W'))$ is also pulled back from the canonical framing $\xi^{S_i}_{D}$. As a result, we have $\operatorname{hol}_{\nabla'}(\xi')=2\operatorname{hol}_{\nabla}(\xi)$. It remains to show that $\operatorname{hol}_{\nabla'}(\xi')=0$.

Since $\tau^2_{S_i}|_{\nu(S_i)}$ is isotopic to the identity relative to $\partial \nu(S_i)$, we have $W'_{2}\cong S^1\times \nu(S_i)$. So $W'/S^1$ is really a product family obtained by gluing two product families $W'_1/S^1$ and $W'_{2}/S^1$ together.   Let $g^c, \widetilde{A}^c, \mathcal{H}^c$ be constant families on $W'$ that  equal $\widetilde{g}', \widetilde{A}', \mathcal{H}'$ on the product piece $W'_{1}$. We further assume that $\widetilde{A}^c|_{W'_2}$ is spin. Let $\nabla^c$ be the Bismut connection on  $\Det(\widetilde{D}^{+}(W'))$. Since $\widetilde{D}^{+}(W')$ is a constant family of operators, the bundle $\Det(\widetilde{D}^{+}_{\widetilde{A}^c}(W'))$ has a canonical framing $\xi^c$, which is parallel with respect to $\nabla^c$. In particular, we have $\operatorname{hol}_{\nabla^c}(\xi^c)=0$. 

To prove $\operatorname{hol}_{\nabla^c}(\xi^c)=\operatorname{hol}_{\nabla'}(\xi')$, we pick a homotopy $\hat{\mathcal{H}}$ from $\mathcal{H}'$ to $\mathcal{H}^c$, a homotopy $\hat{g}$ from $g'$ to $g^c$, and a homotopy $\hat{A}$ from $\widetilde{A}'$ to $\widetilde{A}^c$. We assume that $\hat{g}$ and $\hat{A}$ are constant on the product piece $W'_{1}$. We further assume that $\hat{A}$ is spin on $W'_2$. We treat them as family objects associated to the bundle  $(W\times I)/B$, where $B=S^1\times I$. Consider the family Dirac operator $\hat{D}^{+}:=\widetilde{D}^{+}_{\hat{A}}(W'\times I)$ over $B$. We use $\hat{\nabla}$ to denote the Bismut connection on its determinant line bundle $\Det(\hat{D}^{+})$.

Note the decomposition $W'\times I=(W'_{1}\times I)\cup (W'_{2}\times I)$ of families. As before, we can deform the family operator $\hat{D}^{+}$ into a direct sum $\hat{D}_{1}^{+}\oplus \hat{D}_2^{+}$. Here $\hat{D}^{+}_{1}=\widetilde{D}^{+}_{\hat{A}}(W'_{1}\times I)$ and $\hat{D}^{+}_{2}=\widetilde{D}^{+}_{\hat{A}}(W'_{2}\times I)$. Both are families over $B$ and both are regarded as Fredholm operators equipped with the Atiyah--Patodi--Singer boundary conditions. Note that $\Det(\hat{D}^{+}_{1})$ has a canonical framing $\hat{\xi}_{1}$ because $\hat{D}^{+}_{1}$ is a constant family. On the other hand, $\Det(\hat{D}^{+}_{2})$ has a canonical framing $\hat{\xi}_{2}$ because $\hat{D}^{+}_{2}$ is a quaternionic linear family. Under the deformation, the framing $\hat{\xi}_{1}\oplus \hat{\xi}_{2}$ induces a framing $\hat{\xi}$ on  $\Det(\hat{D}^{+})$. By its construction, we have 
\[
\xi'=\hat{\xi}|_{S^1\times \{0\}}, \xi^c=\hat{\xi}|_{S^1\times \{1\}}.
\]
So the relative Chern class $c_1 ( \Det(\hat{D}^{+}), \xi'\cup \xi^{c}) $ vanishes. This implies 

\[\operatorname{hol}_{\nabla^{c}}(\xi^c)-\operatorname{hol}_{\nabla'}(\xi')+ \int_{B}\frac{i}{2\pi}F_{\hat{\nabla}}=c_{1}(\Det(\hat{D}^{+}),\xi'\cup \xi^c)=0.\] So we have
\[
\operatorname{hol}_{\nabla'}(\xi')=\operatorname{hol}_{\nabla'}(\xi')-\operatorname{hol}_{\nabla''}(\xi^c)=\int_{B}\frac{i}{2\pi}F_{\hat{\nabla}}=\frac{1}{48}\int_{W'\times I}(p_{1}(T^{V}(W'\times I))\wedge c_1-c_1\wedge c_1\wedge c_1).
\]
Here $c_1=\frac{i}{2\pi}F_{\hat{A}^{t}}\in \Omega^{2}(W'\times I)$.
Since $\hat{A}$ is spin on $W'_2\times I$, we have $c'_1\equiv 0$ on $W'_2\times I$, so  
\[
\operatorname{hol}_{\nabla'}(\xi')=\int_{W'_{1}\times I}(p_{1}(T^{V}(W'_{1}\times I))\wedge c_1-c_1\wedge c_1\wedge c_1).
\]
On the other hand, both $p_{1}(T^{V}(W'_{1}\times I))$ and $c_1|_{W'_{1}\times I}$ are pulled back from the fiber $X_{i}$. So the integral equals $0$.
\end{proof}

\begin{proof}[Proof of Proposition \ref{prop: relative Chern class equals index}] By Lemma \ref{lem: holonomy equals 0}, we have 
\[
\langle c_{1}(\operatorname{det}(\widetilde{D}^{+}(E_0)), \xi^{S_1}_{D},\cdots, \xi^{S_n}_{D}),[\Sigma_0]\rangle=\frac{i}{2\pi}\int_{\Sigma_0}F_{\nabla}.
\]
By the local index theorem (\ref{eq: local index formula}), we have 
\begin{equation}\label{eq: curvature formula}
\frac{i}{2\pi}\int_{\Sigma_0}F_{\nabla}=\frac{1}{48}\int_{E_0}(p_{1}(T^{V}E_0)\wedge c_1(\mathfrak{s}_{E_0})-c_1(\mathfrak{s}_{E_0})\wedge c_1(\mathfrak{s}_{E_0})\wedge c_1(\mathfrak{s}_{E_0}))    
\end{equation}
Since $p_{1}(T\Sigma)=0\in \Omega^{4}(\Sigma)$, we have $p_{1}(T^{V}E_0)=p_{1}(T E_0)$. Note that $c_1(\mathfrak{s}_{E_0}))$ is the restriction of the closed form 
\[
c_{1}(\mathfrak{s}_{E}):=\frac{i}{2\pi}F_{A^{t}_{E}}\in \Omega^{2}(E).
\]
So we can rewrite (\ref{eq: curvature formula}) as 
\[
\frac{i}{2\pi}\int_{\Sigma_0}F_{\nabla}=\frac{1}{48}\int_{E_0}(p_{1}(TE)\wedge c_1(\mathfrak{s}_{E})-c_1(\mathfrak{s}_{E})\wedge c_1(\mathfrak{s}_{E})\wedge c_1(\mathfrak{s}_{E}))   
\]
By our choice of $A^{t}_{E}$, the form $c_{1}(\mathfrak{s}_{E})|_{D_i}$ equals $0$. And the form $c_{1}(\mathfrak{s}_{E})|_{E^{\circ}_{i}}$ is pulled back from the fiber $X_{i}$, just like the form $p_{1}(TE^{\circ}_{i})$. The upshot is that the differential form  
\[
p_{1}(TE)\wedge c_1(\mathfrak{s})-c_1(\mathfrak{s})\wedge c_1(\mathfrak{s})\wedge c_1(\mathfrak{s})
\]
is identically vanishing on $E\setminus E_0$. Thus, from the Index Theorem for the $6$-dimensional Dirac operator we obtain:
\[
\frac{i}{2\pi}\int_{\Sigma_0}F_{\nabla}=\frac{1}{48}\int_{E}(p_{1}(TE)\wedge c_1(\mathfrak{s}_{E})-c_1(\mathfrak{s}_{E})\wedge c_1(\mathfrak{s}_{E})\wedge c_1(\mathfrak{s}_{E}))=\operatorname{ind}(D^{+}(E,\mathfrak{s}_{E})).   
\]
\end{proof}

\begin{proof}[Proof of Theorem \ref{thm: main generalized}] We may assume that there exists at least one singular fiber because otherwise the result follows from \cite[Corollary 1.3]{baraglia-konno}. 

Consider the family $E_0/\Sigma_0$, whose restriction to $\partial \Sigma_0$ is isomorphic to the disjoint union of the mapping tori $T(\tau_{S_i})/S^1$ for $1\leq i\leq n$. Consider the family Dirac operators $\widetilde{D}^{+}(E_0)$. The bundle $\Det(\widetilde{D}^{+}(E_0))$ is trivial because it is a complex line bundle over the punctured surface $\Sigma_0$. We pick any trivialization of $\Det(\widetilde{D}^{+}(E_0))$ and restrict it to $\partial_{i}\Sigma_0$. This gives a framing $\xi^{\partial_{i}}_{D}$ on $\Det(\widetilde{D}^{+}(T(\tau_{s_i}))$. The two framings $\xi^{\partial_{i}}_{D}$ and $\xi^{S_{i}}_{D}$ differ by an integer. We have: 
\begin{equation}\label{eq: proof main eq 1}
\begin{split}
\sum^{n}_{i=1}(\xi^{S_{i}}_{D}-\xi^{\partial_{i}}_{D})&=c_{1}(\operatorname{det}(\widetilde{D}^{+}(E_0)),\xi^{S_{1}}_{D},\cdots, \xi^{S_{n}}_{D})-c_{1}(\operatorname{det}(\widetilde{D}^{+}(E_0)),\xi^{\partial_{1}}_{D},\cdots, \xi^{\partial_{n}}_{D})\\&=c_{1}(\operatorname{det}(\widetilde{D}^{+}(E_0)),\xi^{S_{1}}_{D},\cdots, \xi^{S_{n}}_{D})\\&=\operatorname{ind}(D^{+}(E,\mathfrak{s}_E))
\end{split}
\end{equation}
The last equality follows from Proposition \ref{prop: relative Chern class equals index}.

Now we consider the bundle $H^{+}(f|_{E_0})$ over $\Sigma_0$. Since the spin-c structure $\mathfrak{s}|_{X}$ is preserved by the monodromy of $E_0/\Sigma_0$ and since $\operatorname{SW}(X,\mathfrak{s}_0)\neq 0$, the monodromy of $E_0/\Sigma_0$ must preserve the homological orientation on $X$. Hence the bundle $H^{+}(f|_{E_0})$ is trivial. We pick a trivialization of $H^{+}(f|_{E_0})$ and restricts to a framing $\xi^{\partial_{i}}_{d}$ on $H^{+}(f|_{T(\tau_{S_i})})$. By our definition of $H^{+}(f)$, we have 
\begin{equation}\label{eq: proof main eq 2}
\sum^{n}_{i=1}(\xi^{S_{i}}_{d}-\xi^{\partial_{i}}_{d})=\langle w^{+}_{2}(H^{+}(f)),[\Sigma]\rangle.
\end{equation}
We have two vanishing results for the family Bauer--Furuta invariants: by Proposition \ref{proposition: FBF vanishing original} we have 
\begin{equation}\label{eq: proof main eq 3}
\FBafu(\tau_{S_{i}},\mathfrak{s}_{X},\xi^{S_i}_{D},\xi^{S_i}_{d})=0, \forall 1\leq i\leq n \quad , 
\end{equation}
and by Proposition \ref{prop: FBF of boundary vanishing} we have 
\begin{equation}\label{eq: proof main eq 4}
\sum^{n}_{i=1}\FBafu(\tau_{S_{i}},\mathfrak{s}_{X},\xi^{\partial_i}_{D},\xi^{\partial_i}_{d})=0.
\end{equation}
Since $\operatorname{SW}(X,\mathfrak{s}_{X})$ is odd, Proposition \ref{prop: change of framing formula} implies that 
\begin{equation}\label{eq: proof main eq 5}
\FBafu(\tau_{S_{i}},\mathfrak{s}_{X},\xi^{S_i}_{D},\xi^{S_i})-\FBafu(\tau_{S_{i}},\mathfrak{s}_{X},\xi^{\partial_i}_{D},\xi^{\partial_i})= (\xi^{S_i}_{D}-\xi^{\partial_i}_{D})+(\xi^{S_i}_{d}-\xi^{\partial_i}_{d}).
\end{equation}
Combining Equation (\ref{eq: proof main eq 3}), (\ref{eq: proof main eq 4}), (\ref{eq: proof main eq 5}), we obtain  that 
\[
\sum^{n}_{i=1}(\xi^{S_i}_{D}-\xi^{\partial_i}_{D})=\sum^{n}_{i=1}(\xi^{S_i}_{d}-\xi^{\partial_i}_{d}).
\]
By Equations (\ref{eq: proof main eq 1}) and (\ref{eq: proof main eq 2}), we have 
\begin{equation}
\langle w^{+}_{2}(H^{+}(f)),[\Sigma]\rangle=    \operatorname{ind}(D^{+}(E,\mathfrak{s}_E))
\end{equation}
\end{proof}

\begin{lemma}\label{lem: cohomology of E}
(1) We have an exact sequence \begin{equation}\label{eq: long exact sequence}
0\rightarrow H^2(S^2;\mathbb{Z})\xrightarrow{{f^*}} H^{2}(E;\mathbb{Z})\xrightarrow{j} \oplus^{n}_{i=1} H^{2}(X_{i},\partial X_{i};\mathbb{Z})\xrightarrow{\partial} H^{3}(E_0,\partial E_0;\mathbb{Z})\rightarrow \cdots.
\end{equation}

Here $j$ is induced by the inclusion of the singular fiber $X_{f(p_{i})}\hookrightarrow E$ and the homeomorphism $X_{i}/\partial X_{i}\cong X_{f(p_i)}$.\\
(2) Suppose $H_{1}(X)$ has no 2-torsion. Then $H^{3}(E_0,\partial E_0;\mathbb{Z})$ also has no 2-torsion.
\end{lemma}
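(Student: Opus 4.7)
The plan is to derive the exact sequence in (1) from the long exact sequence of the pair $(E, \bigsqcup_i F_i)$, where $F_i := X_{f(p_i)}$ denotes the singular fiber over the $i$-th critical value. Each $E_i$ deformation retracts onto $F_i$ (by contracting $\Sigma_i$ to $f(p_i)$ and lifting via $f$), so excision gives $H^*(E, \bigsqcup_i F_i) \cong H^*(E_0, \partial E_0)$. Combining this with the identification $H^n(F_i) \cong \widetilde{H}^n(X_i/\partial X_i) = H^n(X_i, \partial X_i)$ for $n \geq 1$, the pair LES becomes
\[
\cdots \to \bigoplus_i H^{n-1}(F_i) \to H^n(E_0, \partial E_0) \to H^n(E) \xrightarrow{j} \bigoplus_i H^n(X_i, \partial X_i) \xrightarrow{\partial} H^{n+1}(E_0, \partial E_0) \to \cdots.
\]

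For the desired five-term sequence at $n=2$, three facts must be verified. First, $H^1(F_i) = 0$: the Mayer--Vietoris decomposition $X = X_i \cup_{\RP^3} \nu(S_i)$, together with $H^1(X) = 0$ (from $b_1(X) = 0$) and $H^1(\RP^3) = 0$, gives $H^1(X_i) = 0$; then the pair sequence for $(X_i, \partial X_i)$ combined with the isomorphism $H^0(X_i) \xrightarrow{\cong} H^0(\partial X_i)$ yields $H^1(X_i, \partial X_i) = 0$. Second, $H^2(E_0, \partial E_0) \cong \Z$: by Poincaré--Lefschetz duality this equals $H_4(E_0)$, and the Serre spectral sequence for $f_0 : E_0 \to \Sigma_0$ (with monodromy-twisted integer coefficients) has only $E_\infty^{0,4} = H_4(X) = \Z$ surviving in total degree $4$, since $H_p(\Sigma_0) = 0$ for $p \geq 2$ and $H_3(X) = 0$ (by PD from $H^1(X) = 0$). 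Third, the resulting map $\Z \to H^2(E)$ coincides with $f^*$: via PD it becomes the pushforward $H_4(E_0) \to H_4(E)$, which sends the regular fiber class to $\mathrm{PD}(f^*[\mathrm{pt}])$.

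For part (2), Poincaré--Lefschetz duality gives $H^3(E_0, \partial E_0) \cong H_3(E_0)$, and the same Serre spectral sequence identifies $H_3(E_0) \cong E_\infty^{1,2} = H_1(\Sigma_0; H_2(X))$ with monodromy-twisted coefficients (all other terms of total degree $3$ vanish). Since $\Sigma_0$ is homotopy equivalent to $\bigvee_{i=1}^{n-1} S^1$ with $\pi_1(\Sigma_0) \cong F_{n-1}$, the standard free resolution $0 \to \Z[F_{n-1}]^{n-1} \to \Z[F_{n-1}] \to \Z \to 0$ yields $H_1(\Sigma_0; H_2(X)) \cong \ker(H_2(X)^{n-1} \to H_2(X))$, a subgroup of $H_2(X)^{n-1}$. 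By the Universal Coefficient Theorem on the closed $4$-manifold $X$, the torsion of $H_2(X)$ equals $\mathrm{Ext}(H_1(X), \Z)$, i.e.\ the torsion of $H_1(X)$; hence the no-$2$-torsion hypothesis on $H_1(X)$ transfers to $H_2(X)$, to $H_2(X)^{n-1}$, and finally to $H_3(E_0) \cong H^3(E_0, \partial E_0)$.

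The main obstacle is identification (iii) above, matching the LES map $H^2(E_0, \partial E_0) \to H^2(E)$ with $f^*$. This is essentially geometric, as both classes are Poincaré-dual to a regular fiber, but requires care in setting up Poincaré--Lefschetz duality on the manifold-with-boundary $E_0$ and matching it with PD on the closed manifold $E$ via the inclusion $E_0 \hookrightarrow E$. A secondary technical point is to verify that the twisted-coefficient Serre spectral sequence behaves as expected over the aspherical base $\Sigma_0$, and that the Fox-calculus presentation of $H_1(\Sigma_0; H_2(X))$ is compatible with the monodromy representation of the Lefschetz fibration.
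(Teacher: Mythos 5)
Your proposal is correct, and it reaches the stated sequence by a route that is partly different from the paper's. For part (1) the paper runs the long exact sequence of the triple $(E,\sqcup_i E_i,\sqcup_i D_i)$ together with excision, while you use the pair $(E,\sqcup_i F_i)$ with the singular fibers; since $E_i$ deformation retracts to $F_i$ and $D_i$ is contractible these are essentially interchangeable, the only extra cost on your side being the (easy) check $H^1(F_i)\cong H^1(X_i,\partial X_i)=0$, which you handle correctly. The genuine divergence is in identifying the first map with $f^*$: the paper computes $H^2(E_0,\partial E_0)$ via the Serre spectral sequence of the pair $(\Sigma_0,\partial\Sigma_0)$ with local coefficients and observes that the isomorphism $H^2(\Sigma_0,\partial\Sigma_0)\to H^2(E_0,\partial E_0)$ is \emph{induced by $f$ itself}, so the identification with $f^*\colon H^2(S^2)\to H^2(E)$ follows from naturality of the exact sequences under $f$ with no further work; you instead go through Poincar\'e--Lefschetz duality, the homological Serre spectral sequence for $H_4(E_0)$, and the fact that the regular fiber is Poincar\'e dual to $f^*$ of the point class. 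Your route works --- the compatibilities you flag (duality versus the inclusion-induced pushforward, the dual of the fiber class, the local-coefficient spectral sequence over the aspherical base $\Sigma_0$) are all standard --- but the step you call the main obstacle can simply be bypassed by the paper's naturality argument; note also that for the way the lemma is used later only $\ker j=\mathrm{im}\,f^*$ matters, so pinning down the image of the generator is enough. For part (2) your argument, namely $H^3(E_0,\partial E_0)\cong H_3(E_0)\cong H_1(\Sigma_0;\mathcal{H}_2(X))\subseteq H_2(X)^{n-1}$ followed by the universal coefficient theorem transferring the absence of $2$-torsion from $H_1(X)$ to $H_2(X)$, is complete and is in fact more careful than the paper's: the paper asserts a Lefschetz-duality identification with $H_1(E_0;\Z)$ (the dual group in dimension $6$ is $H_3(E_0)$, so this is a slip) and only sketches a Mayer--Vietoris argument for torsion-freeness, whereas your spectral-sequence computation delivers exactly the no-$2$-torsion statement that is needed.
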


\begin{proof}[Proof of Lemma \ref{lem: cohomology of E}] We consider the triple 
\[
(E,\sqcup^{n}_{i=1}E_i,\sqcup^{n}_{i=1}D_{i})
\]
and  the associated long exact sequence 
\[
H^{1}(E, \sqcup^{n}_{i=1}E_i)\to H^{2}(E, \sqcup^{n}_{i=1}D_{i}))\to 
H^{2}(\sqcup^{n}_{i=1}E_i,\sqcup^{n}_{i=1}D_{i})) \to H^{2}(E, \sqcup^{n}_{i=1}E_i) \rightarrow H^{3}(E, \sqcup^{n}_{i=1}D_{i})).
 \]
 By excision, we have \[
H^{2}(\sqcup^{n}_{i=1}E_i,\sqcup^{n}_{i=1}D_{i})\cong H^{*}(\sqcup^{n}_{i=1}E^{\circ}_{i},\sqcup^{n}_{i=1}(E^\circ_{i}\cap D_i))\cong \oplus^{n}_{i=1}H^*(X_{i},\partial X_{i})\]
 and 
 \[
 H^{*}(E,\sqcup^{n}_{i=1}E_i)\cong H^*(E_0,\partial E_0).
 \]
 This gives the exact sequence. 
\[0\rightarrow H^2(E_0,\partial E_0)\rightarrow H^{2}(E)\xrightarrow{j} \oplus^{n}_{i=1} H^{2}(X_{i},\partial X_{i})\xrightarrow{\partial} H^{3}(E_0,\partial E_0)\rightarrow \cdots.\]

Next, we claim that  $f:(E_0,\partial E_0)\to (\Sigma_0,\partial \Sigma_0)$ induces an isomorphism \[f^*:H^{2}(E_0,\partial E_0)\cong H^2(\Sigma_0,\partial \Sigma_0)\cong \mathbb{Z}.\]    
To see this, we consider the Serre spectral sequence that computes  $H^{2}(E_0,\partial E_0)$. The second page $E^{i,j}_{2}$ of this spectral sequence is $H^{i}(\Sigma_0,\partial \Sigma_0;H^{j}(X))$, the cohomology of the base with the cohomology of the fiber as local coefficient. Note that \[E_2^{0,2}=H^{0}(\Sigma_0,\partial \Sigma_0; H^{2}(X))=\ker(H^0(\Sigma_0;H^{2}(X))\to H^0(\partial \Sigma_0;H^{2}(X)))=0\] 
and note that $E^{1,1}_{2}=E^{0,1}_{2}=0$. And $E^{2,0}_{2}=H^{2}(\Sigma_0,\partial \Sigma_0)$. So the desired result follows.  

(2) By the Lefschetz duality, we have $H^{3}(E_0,\partial E_0;\mathbb{Z})\cong H_{1}(E_0;\mathbb{Z})$. A straightforward application via the Mayer--Vietoris sequence shows that this group is torsion free. 
\end{proof}

\begin{lemma}\label{lem: spin-c extension}
Let $\mathfrak{s}_{X}$ be a  spin-c structure on $X$ such that $\langle c_{1}(\mathfrak{s}_{X}),[S_i]\rangle=0$ for all $S_i$. Then there exists a spin-c structure $\mathfrak{s}_{E}$ on $E$ such that $\mathfrak{s}|_{X}=\mathfrak{s}_{X}$.
\end{lemma}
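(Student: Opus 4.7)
The plan is to construct $\mathfrak{s}_E$ by assembling local spin-c structures, leveraging the decomposition $E = E^{\circ} \cup \bigsqcup_{i=1}^{n} D_i$ from the Lefschetz fibration structure.

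First, I would verify that the spin-c structure $\mathfrak{s}_X$ is invariant under each Dehn twist $\tau_{S_i}$. By the Picard--Lefschetz formula,
\[
\tau_{S_i}^\ast c_1(\mathfrak{s}_X) = c_1(\mathfrak{s}_X) + \langle c_1(\mathfrak{s}_X), [S_i]\rangle \cdot \mathrm{PD}([S_i]) = c_1(\mathfrak{s}_X),
\]
by the hypothesis. In the paper's setting $H^2(X;\mathbb{Z})$ is torsion-free (e.g., from $b_1(X)=0$ together with simple-connectivity of $X$ in the main applications), so spin-c structures on $X$ are classified by their first Chern class, and hence $\tau_{S_i}^\ast \mathfrak{s}_X \cong \mathfrak{s}_X$.

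Next, I would use this monodromy invariance to construct a fiberwise spin-c structure $\widetilde{\mathfrak{s}}$ on the vertical tangent bundle $T^{V}E^{\circ}$ whose restriction to the fiber $X_{b_0}=X$ is $\mathfrak{s}_X$, by transporting $\mathfrak{s}_X$ along paths in the base and checking that the result is well-defined. Picking a spin structure $\mathfrak{s}_\Sigma$ on the closed oriented surface $\Sigma$ (which always exists) and pulling it back to $T^{H}E^{\circ} \cong (f|_{E^{\circ}})^\ast T\Sigma$, the diagram (\ref{diagram: spinc}) then produces a spin-c structure $\mathfrak{s}_{E^{\circ}}$ on $TE^{\circ}$ restricting to $\mathfrak{s}_X$ on $X$.

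Finally, I would extend $\mathfrak{s}_{E^{\circ}}$ across each neighborhood $D_i$ of the critical point $p_i$. In the local holomorphic chart, $D_i = \{|z|^2 \leq 1,\ |z_1^2 + z_2^2 + z_3^2| \leq \epsilon\}$ is star-shaped about the origin, hence contractible, and after smoothing corners is diffeomorphic to a closed 6-ball with boundary $\partial D_i \cong S^5$. Since $H^2(D_i;\mathbb{Z}) = H^3(D_i;\mathbb{Z}) = 0$ and similarly for $\partial D_i$, spin-c structures exist and are unique up to isomorphism on both, so the restriction $\mathfrak{s}_{E^{\circ}}|_{\partial D_i}$ extends uniquely over $D_i$. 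Gluing over each $\partial D_i$ yields the desired spin-c structure $\mathfrak{s}_E$ on $E$, with $\mathfrak{s}_E|_X = \mathfrak{s}_X$ by construction.

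The hard part will be the second stage, where I must verify that the monodromy of the $X$-bundle $f|_{E^{\circ}}:E^{\circ} \to \Sigma$ preserves $\mathfrak{s}_X$ in its entirety, not just around the critical values. For $\Sigma = S^2$ — the case of Theorem \ref{thm: main} — Picard--Lefschetz suffices, since $\pi_1$ of the punctured base is generated by loops around critical values. For higher-genus $\Sigma$, monodromy from the genus generators of $\pi_1(\Sigma)$ requires separate control; this should follow from $\operatorname{SW}(X,\mathfrak{s}_X)$ being preserved by the monodromy (forcing preservation of the homological orientation and of $c_1(\mathfrak{s}_X)$, and hence of $\mathfrak{s}_X$), but must be verified carefully.
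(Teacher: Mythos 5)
Your overall route is the same as the paper's: produce a spin-c structure on the vertical bundle $T^{V}E^{\circ}$ restricting to $\mathfrak{s}_X$ on a fiber, couple it with a spin structure on $\Sigma$ via the pull-back square (\ref{diagram: spinc}), and extend across the contractible pieces $D_i$ (for that last step contractibility plus Lefschetz duality already kills the relevant obstruction groups, so you do not even need $D_i$ to be a ball with $S^5$ boundary). The genuine gap is in your second stage. ``Transporting $\mathfrak{s}_X$ along paths in the base and checking well-definedness'' only controls the structure over the $1$-skeleton of the base; here $\Sigma$ is a closed surface and $f|_{E^{\circ}}$ is not even a fiber bundle near the critical values, so as written this is a gesture rather than a construction. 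The paper splits this into two steps: first over $E_0\to\Sigma_0$, where $\Sigma_0$ deformation retracts onto a wedge of circles, so invariance of the isomorphism class of $\mathfrak{s}_X$ under the monodromies is exactly what is needed; and then an extension of $\widetilde{\mathfrak{s}}_{E_0}$ across each $E^{\circ}_i\cong\Sigma_i\times X_i$ with $X_i=X\setminus\nu(S_i)$. This second extension is the step your proposal omits: one must check that the family spin-c structure already built on $f^{-1}(\partial_i\Sigma_0)\cong S^1\times X_i$ is, up to isomorphism, pulled back from $X_i$, since otherwise it does not extend over $D^2\times X_i$. The paper disposes of this via the surjectivity of $H^2(E_i,E_i\cap E_0;\mathbb{Z})\to H^2(E_i,E_i\cap E_0;\mathbb{F}_2)$; concretely, the possible discrepancy lives in $H^1(X_i;\mathbb{Z})$, which vanishes because $b_1(X)=0$ (Mayer--Vietoris with $\nu(S_i)$ and $\partial\nu(S_i)\cong\mathbb{RP}^3$). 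Relatedly, you misidentify the ``hard part'': in the lemma's actual uses (Theorem \ref{thm: main}, Corollary \ref{cor: K3}) the base is $S^2$, so there are no genus generators to worry about, and in any case invariance of $\operatorname{SW}(X,\mathfrak{s}_X)$ would not pin down $\mathfrak{s}_X$, since distinct spin-c structures can share a Seiberg--Witten value.

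A smaller point: your argument that $\tau_{S_i}^{\ast}\mathfrak{s}_X\cong\mathfrak{s}_X$ uses classification of spin-c structures by $c_1$, which requires $H^2(X;\mathbb{Z})$ torsion-free; the standing hypotheses in this section only give $b_1(X)=0$ (note Lemma \ref{lem: cohomology of E}(2) explicitly allows torsion in $H_1(X)$), so you should not smuggle in simple connectivity. This is easily repaired: $\tau_{S_i}$ is the identity on $X\setminus\nu(S_i)$, so $\tau_{S_i}^{\ast}\mathfrak{s}_X$ and $\mathfrak{s}_X$ are isomorphic there, hence their difference class lies in the image of $H^2(X,X\setminus\nu(S_i);\mathbb{Z})\to H^2(X;\mathbb{Z})$, which is the infinite cyclic subgroup generated by $\mathrm{PD}([S_i])$ (infinite order because $[S_i]\cdot[S_i]=-2\neq 0$); since the two structures have the same $c_1$, the difference class is $2$-torsion, hence zero. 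With these two repairs --- the explicit extension across the $E^{\circ}_i$ and the torsion-free issue --- your outline becomes essentially the paper's proof.
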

\begin{proof}
Since     $\langle c_{1}(\mathfrak{s}_{X}),[S_i]\rangle=0$, the isomorphic class of $\mathfrak{s}_{X}$ is preserved under the monodromy of the bundle $E_0\to \Sigma_0$. So there exists a spin-c structure $\widetilde{\mathfrak{s}}_{E_0}$ on $T^{V}E_0$ that restricts to $\mathfrak{s}_{X}$ on fibers. Note that for any $i\geq 1$, the map 
\[
H^{2}(E_{i},E_{i}\cap E_{0};\mathbb{Z})\to H^{2}(E_{i},E_{i}\cap E_{0};\mathbb{F}_2)
\]
is surjective. So there is no obstruction to extend $\widetilde{\mathfrak{s}}_{E_0}$ to a spin-c structure $\widetilde{\mathfrak{s}}_{E^{\circ}}$ on $T^{V}E^{\circ}$. Together with a spin structure on $T\Sigma$, $\widetilde{\mathfrak{s}}_{E^{\circ}}$ determines a spin-c structure $\mathfrak{s}_{E^\circ}$ on $E^{\circ}$. (See (\ref{diagram: spinc}).) And $\mathfrak{s}_{E^\circ}$ extends to a spin-c structure $\mathfrak{s}_{E}$ on $E$. 
\end{proof}

\begin{proof}[Proof of Corollary \ref{cor: K3}] By Lemma \ref{lem: spin-c extension}, there exists a spin-c structure $\mathfrak{s}_{E}$ on $E$ whose restriction to the fiber is the spin structure $\mathfrak{s}_0$ on $K3$. By Lemma \ref{lem: cohomology of E}, there exists $a\in \mathbb{Z}\cong H^{2}(S^2)$ such that $c_{1}(\mathfrak{s})=f^*(a)$. Since $w_{2}(TE)\equiv c_{1}(\mathfrak{s}_{E})\mod 2$, $E$ is spin if and only if $a$ is even. On the other hand, we have 
\[
\operatorname{ind}(D^{+}(E,\mathfrak{s}_{E}))=\frac{1}{48}\langle f^{*}(a)\cup p_{1}(TE),[E]\rangle = a \mod 2. 
\]
So by Theorem \ref{thm: main generalized}, the space $E$ is spin if and only if $w_{2}(H^{+}(f))=0$.    
\end{proof}

\begin{proof}[Proof of Theorem \ref{thm: main}] Suppose the composition $\tau_{S_1}\cdots \tau_{S_n}$ is smoothly isotopic to the identity. Then there exists a smooth Lefschetz fibration $f:E\to S^2$ with $X$ as the fiber and $S_{1},\cdots, S_{i}$ the vanishing cycles. By Lemma \ref{lem: spin-c extension}, there exists a spin-c structure $\mathfrak{s}_{E}$ on $E$ that restricts to $\mathfrak{s}$ on fibers. By Theorem \ref{thm: main generalized} and Proposition \ref{proposition:extension}, we have 
\[
\Delta(S_1,\cdots, S_n)=\langle w_{2}(H^{+}(f)),[S^2]\rangle \equiv \operatorname{ind}(D^{+}(E,\mathfrak{s}_{E})) \mod 2.
\]
So it suffices to show that the index 
\[\operatorname{ind}(D^{+}(E,\mathfrak{s}_{E}))=\frac{1}{48}\langle p_{1}(E)\cup c_{1}(\mathfrak{s}_{E})-c_{1}(\mathfrak{s}_{E})\cup c_{1}(\mathfrak{s}_{E})\cup c_{1}(\mathfrak{s}_{E}),[E]\rangle \]
is even. Consider the image of $c_{1}(\mathfrak{s}_{E})$ under the map $j$ in (\ref{eq: long exact sequence}). Since $c_{1}(\mathfrak{s}_{E})$ is divisible by $32$, there exits $b\in \oplus^{n}_{i=1}H^{2}(X_{i},\partial X_{i} )$ such that $j(c_{1}(\mathfrak{s}))=32 b$. In particular, $\partial (32b)=0\in H^{3}(E_0,\partial E_0;\mathbb{Z})$. By Lemma \ref{lem: cohomology of E}, $H^{3}(E_0,\partial E_0)$ has no 2-torsion, so $\partial b=0$. Hence there exists $h\in H^{2}(E;\mathbb{Z}) $  such that $j(h)=b$. Hence $c_{1}(\mathfrak{s})-32h\in \ker j=\image(f^{*})$. To this end, we see that $c_{1}(\mathfrak{s})=f^*(a)+32b$ for some $a\in H^{2}(S^2;\mathbb{Z})$. This implies that 
\[
\begin{split}
\langle p_{1}(E)\cup (f^*(a)+32b)-(f^*(a)+32b)^{3}\rangle &\equiv \langle p_{1}(E)\cup f^*(a), [E]\rangle \\ &\equiv a\cdot  \langle p_{1}(X),[X]\rangle \\ &\equiv a\cdot \sigma(X)\\ &\equiv 0 \mod 32.    
\end{split}
\]
So $\operatorname{ind}(D^{+}(E,\mathfrak{s}_{E}))$ is even. 
\end{proof}

\section{Examples}
\label{section Examples}

\subsection{Elliptic surfaces}\label{subsection:ellipticsurfaces}

We shall use the standard notation of elliptic surfaces, as in \cite{gompfstipsicz}.
For example, $E(n)$ denotes the simply-connected minimal elliptic surface with Euler characteristic $12n$ and no multiple fibers, and $E(n)_{p,q}$ denotes the elliptic surface obtained from $E(n)$ by performing logarithmic transformations of multiplicities $p$ and $q$ along two distinct regular fibers.
$N(n)_{p,q}$ denotes the the Gompf nucleus inside $E(n)_{p,q}$.

\begin{lemma}
\label{lem: fiber sum}
Let $n \geq 2$ and $p \geq q \geq 1$ with $p,q$ coprime. 
Then the elliptic surface $E(n)_{p,q}$ admits a symplectic structure $\omega$ for which $M(2,3,7)$ is embedded symplectically into $(E(n)_{p,q}, \omega)$ away from $N(n)_{p,q}$. 
\end{lemma}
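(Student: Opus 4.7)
The plan is to construct the required symplectic embedding by working first on $E(n)$ and then observing that the logarithmic transformations defining $E(n)_{p,q}$ are supported inside the Gompf nucleus. Concretely, since $p,q$ are coprime with $p \geq q \geq 1$ and $n \geq 2$, the elliptic surface $E(n)_{p,q}$ is a minimal Kähler surface of non-negative Kodaira dimension, so admits a symplectic structure $\omega$. The two log transformations may be performed on regular elliptic fibers chosen inside the Gompf nucleus $N(n) \subset E(n)$, so that the complement $E(n)_{p,q} \setminus N(n)_{p,q}$ is canonically diffeomorphic to $E(n) \setminus N(n)$; the latter can be equipped with the Kähler structure inherited from $E(n)$. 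It therefore suffices to exhibit a symplectic embedding of $(M(2,3,7), \omega_0)$ into $E(n) \setminus N(n)$.

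For the base case $n = 2$, I would use the following hypersurface model for $K3 = E(2)$. Consider the family of degree-$42$ hypersurfaces in the weighted projective space $\mathbb{P}(21, 14, 6, 1)$ given by
\[
V_t = \{x^2 + y^3 + z^7 + t \cdot g(x,y,z,w) = 0\},
\]
for small $t \neq 0$ and generic weighted-homogeneous $g$ of weighted degree $42$. The point $[0:0:0:1]$ is a smooth point of $\mathbb{P}(21,14,6,1)$ since $\gcd(21,14,6) = 1$; moreover $V_t$ is a smooth $K3$ surface for $t \neq 0$, while $V_0$ acquires an isolated $(2,3,7)$-Brieskorn singularity at $[0:0:0:1]$. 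Setting $w = 1$, the family agrees precisely with the standard Milnor family of $x^2 + y^3 + z^7$, so a small neighborhood of $[0:0:0:1]$ in $V_t$ furnishes a Kähler (hence symplectic) embedding $(M(2,3,7), \omega_0) \hookrightarrow V_t$. Using that $K3$ admits an elliptic fibration, a Gompf nucleus $N(2)$ can be isotoped to a regular neighborhood of a cusp fiber and a $(-2)$-section lying entirely in the complement of this small Milnor fiber region.

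For $n \geq 3$, I would combine the above with the symplectic fiber sum decomposition
\[
E(n) = K3 \#_F E(n-2),
\]
along a regular elliptic fiber $F$, which makes $E(n)$ symplectic by Gompf's symplectic sum construction. Taking the $K3$ side to be the hypersurface model above and choosing $F$ to be disjoint from the embedded Milnor fiber, we get $M(2,3,7) \hookrightarrow K3 \setminus \nu(F) \hookrightarrow E(n)$ as a symplectic submanifold. The $(-n)$-section of $E(n)$ can be realized by gluing a $(-2)$-section in $K3$ and a $(-(n-2))$-section in $E(n-2)$ along $F$; isotoping the $(-2)$-section to avoid the Milnor fiber region and placing the cusp fiber defining $N(n)$ on the $E(n-2)$ side yields $N(n) \cap M(2,3,7) = \emptyset$.

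The main technical obstacle I anticipate is arranging all of these structures (the weighted-hypersurface presentation of $K3$, the elliptic fibration used to define $N(n)$, and the symplectic fiber sum for $n \geq 3$) compatibly so that the disjointness conditions hold simultaneously. These choices are governed by genericity and standard isotopy arguments, but making them precise and verifying the compatibilities constitutes the main technical bookkeeping of the proof.
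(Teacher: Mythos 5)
The decisive gap is your reduction step. You claim that because $E(n)_{p,q}$ is K\"ahler and $E(n)_{p,q}\setminus N(n)_{p,q}$ is diffeomorphic to $E(n)\setminus N(n)$, it suffices to embed $(M(2,3,7),\omega_0)$ symplectically into $E(n)\setminus N(n)$ with the K\"ahler form inherited from $E(n)$. This conflates two unrelated symplectic structures: some K\"ahler form on the closed manifold $E(n)_{p,q}$, and the form on the open complement pulled back from $E(n)$. The lemma demands a single global form $\omega$ on $E(n)_{p,q}$ for which the embedded Milnor fiber is symplectic; a diffeomorphism of nucleus complements does not transport the form from $E(n)$, and you give no argument that $(M(2,3,7),\omega_0)$ is symplectically embedded for whatever K\"ahler form $E(n)_{p,q}$ carries. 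Supplying exactly this is the substance of the paper's proof: either the logarithmic transformations are performed \emph{symplectically} inside the nucleus, by writing each as $p-1$ blowups followed by a symplectic rational blowdown of a symplectic $C_p$ configuration built from a nodal sphere near the cusp fiber (Fintushel--Stern, Symington), or --- the paper's alternative route --- one puts a K\"ahler structure on $E(n-2)_{p,q}$ itself (possible since $b_1=0$ forces K\"ahlerness) and takes Gompf's symplectic fiber sum with the $K3$ that already contains $(M(2,3,7),\omega_0)$, the sum being local so the embedding survives. Your fiber sum $E(n)= K3\,\#_F\,E(n-2)$ is taken before the log transforms, so it does not repair this: after the sum you are still facing the same unproved passage from $E(n)$ to $E(n)_{p,q}$.

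Two further points in your $n=2$ model need attention. First, a general degree-$42$ hypersurface $V_t\subset\mathbb{P}(21,14,6,1)$ is not smooth: it meets the singular strata of the weighted projective space (all contained in $\{w=0\}$) and acquires cyclic quotient singularities there; this is repairable by minimal resolution, since the singular points lie away from the affine Milnor-fiber region --- this is precisely how the paper handles its own compactification in $\mathbb{P}(18n-3,12n-2,6,1)$. Second, and more seriously, ``K\"ahler (hence symplectic) embedding of $(M(2,3,7),\omega_0)$'' is not automatic: the ambient K\"ahler form restricted to the affine chart is not the standard form on $\mathbb{C}^3$, so the holomorphic inclusion of the Milnor fiber is not a priori a symplectomorphism from $(M(2,3,7),\omega_0)$ onto its image. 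The paper bridges this with the Eliashberg--Gromov uniqueness theorem for symplectic structures induced by strictly plurisubharmonic exhaustions of the same complex manifold; the specific form $\omega_0$ matters because the application to Donaldson's question requires the vanishing spheres to be Lagrangian in $(X,\omega)$.
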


\begin{proof} We begin by recalling certain compactifications of Milnor fibers in weighted projective spaces (\cite{dolgachev82}). Let $n \geq 2$, and consider the complex hypersurface $S = \{ x^2 + y^3 + z^{6n-1}+w^{36n-6}=0\}$ in the weighted projective $3$-space $\mathbb{P}:= \mathbb{P}(18n-3,12n-2,6,1)$. We regard both $\mathbb{P}$ and $S$ as singular varieties, whose singularities are isolated cyclic quotients. The variety $S$ can be regarded as compactification of the (open) Milnor fiber of the Brieskorn surface singularity $x^2 + y^3 + z^{6n-1}=0$, since in the affine locus $\{ w \neq 0\} \cong \mathbb{C}^3$ of $\mathbb{P}$, the hypersurface $S$ is described by $x^2 + y^3 + z^{6n-1} +1 =0$. As explained in \cite[\S 2.3.1]{KLMME}, $S$ is smooth away from $3$ isolated quotient singularities located on the divisor at infinity $C = S \cap \{w=0\}$. 

After minimally resolving the quotient singularities in $S$ we obtain a non-singular surface $\widetilde{S}$. The divisor at infinity in $\widetilde{S}$, i.e. the strict transform $\widetilde{C}$ of $C$, is the configuration of curves shown in \cite[Figure 2(B)]{KLMME}, which contains a $(-1)$--curve. As explained in \cite[\S 2.3.1]{KLMME}, successively blowing down $(-1)$--curves turns $\widetilde{S}$ into a minimal complex surface $X$ diffeomorphic to $E(n)$, which still contains an embedding of the compact Milnor fiber $M(2,3,6n-1)$, with complement $X \setminus M(2,3,6n-1)$ identified with the Gompf nucleus $N(n)$. Performing logarithmic transformations on $X=E(n)$ on two disjoint tori with self-intersection $0$ in the Gompf nucleus $N(n)$ leads to an embedding of $M(2,3,6n-1) \subset E(n)_{p,q}$ with complement $N(n)_{p,q}$.\\

With additional care, the embedding $M(2,3,6n-1) \subset E(n)_{p,q}$ just described can be made symplectic, for suitable symplectic form $\omega$ on $E(n)_{p,q}$, as we now explain.

Like ordinary projective space, the weighted projective space $\mathbb{P}$ carries a tautological sheaf $\mathcal{O}_{\mathbb{P}}(-1)$. By \cite[Theorem 4B.7]{beltrametti-robbiano} there exists an integer $k >0$ such that sheaf $\mathcal{O}_{\mathbb{P}}(k)$ is very ample. This induces an embedding of the weighted projective space inside some (ordinary) complex projective space. Using this, one can embed the smooth surface $\widetilde{S}$ in an ordinary projective space $\widetilde{S} \subset \mathbb{C}P^N$, in such a way that $\widetilde{S}\setminus \widetilde{C} $ is properly embedded in an affine piece $\mathbb{C}^N$.

We have a Kähler form $\omega$ on $\widetilde{S}$ by restriction of the Fubini--Study form in $\mathbb{C}P^N$. Consider the compact Milnor fiber of $x^2 +y^3 +z^{6n-1}=0$ given by 
\[
M(2,3,6n-1) := \{ (x,y,z) \in \mathbb{C}^3 \, | \, x^2 + y^3 + z^{6n-1} +1 =0 \, \text{and} \, |x|^2+|y|^2 +|z|^2 \leq 1 \} \quad , \quad r>0 
\]
and equipped with the symplectic form $\omega_0$ given by restriction of the standard form in $\mathbb{C}^3$ (this is the natural symplectic structure on the Milnor fiber). We want to symplectically embed $(M(2,3,6n-1) , \omega_0 )$ in $(\widetilde{S} , \omega)$. Of course, $M(2,3,6n-1)$ is naturally embedded in $\widetilde{S} \setminus \widetilde{C}$, but the symplectic forms $\omega_0$ and $\omega$ don't match. However, these two forms each arise from a strictly plurisubharmonic exhaustive function on the same complex manifold $\widetilde{S}\setminus \widetilde{C}$. Thus, by \cite[Theorem 1.4.A]{eliashberg-gromov}, there is a symplectomorphism $(\widetilde{S}\setminus \widetilde{C} , \widetilde{\omega}) \cong (\widetilde{S} \setminus \widetilde{C} , \omega_0 )$. This provides a symplectic embedding of $(M(2,3,6n-1) , \omega_0 ) $ in the Kähler surface $(\widetilde{S} , \omega)$.

The passage from $\widetilde{S}$ to $X = E(n)$ involves blowing down symplectic $(-1)$--spheres, which can be carried out symplectically, leading to a symplectic form on $X$, also denoted $\omega$, with a symplectic embedding of $(M(2,3,6n-1), \omega_0 ) \subset (X, \omega )$ disjoint from the Gompf nucleus $N(n)$. 
In addition, by \cite[\S 3]{rationalblowdown}, the logarithmic transformations of order $p$ in the Gompf nucleus $N(n) $ can be realised by a sequence of $p-1$ blowups and a rational blowdown of a $C_p$ configuration. The $p-1$ blowups can be performed symplectically, and by \cite{rationalblowdown-symplectic} the rational blowdown of $C_p$ can also be done symplectically since the $C_p$ configuration can be chosen to be symplectic: indeed, the configuration $C_p$ is obtained from a nodal sphere with self-intersection $0$ in the neighborhood of the cusp fiber in $N(n)$ by the procedure explained in \cite[\S 3]{rationalblowdown}, and since this nodal sphere can be chosen symplectic then so can $C_p$. Furthermore, since the neighborhood of the cusp in $N(n)$ contains two disjoint such nodal spheres, the logarithmic transformation can be done symplectically twice (with orders $p$ and $q$). This proves the existence of a symplectic form $\omega$ on $E(n)_{p,q}$ with a symplectic embedding of the Milnor fiber $(M(2,3,6n-1), \omega_0 )$ away from $N(n)_{p,q}$. \\

Finally, we note that for $n \geq 2$ the singularity $x^2 +y^3 + z^{6n-1}=0$ is adjacent to the singularity $x^2 + y^3 + z^7 =0$, and hence by \cite[Lemma 9.9]{keating-free} there is a symplectic embedding of their Milnor fibers $(M(2,3,7), \omega_0 ) \subset (M(2,3,6n-1), \omega_0 )$. Hence, by the above construction, $(M(2,3,7),\omega_0 )$ symplectically embeds in $(E(n)_{p,q}, \omega )$ away from $N(n)_{p,q}$. 
\end{proof}

If one does not insist on obtaining an explicit construction of the symplectic form, Lemma~\ref{lem: fiber sum} can also be proved in the following way using fiber sums.

\begin{proof}[Alternative proof of Lemma~\ref{lem: fiber sum}]
As seen above, on $E(2) = K3$, there exists a symplectic structure for which the Milnor fiber $M(2,3,7)$ is symplectically embedded. 
The complement of this embedding contains the Gompf nucleus $N(2)$. 
We fix a complex structure on $E(n)$ that makes $E(n)$ an elliptic fibration.
By construction, logarithmic transformations on $E(n)$ are operations that do not change the complex structure away from the regular fiber $F_0$ on which the operation is performed \cite{Kodaira-log-transform}.
Hence, another regular fiber $F$ away from $F_0$ is a complex submanifold of $E(n)_{p,q}$. 
Recall also that the Gompf nucleus $N(n)_{p,q}$ contains a regular fiber, so we can take $F$ inside $N(n)_{p,q}$. 
Recall also that every complex surface with even first Betti number admits a K{\"a}hler structure (see, for example, \cite{Buchdahl99,Lamari99}).
Since $b_1(E(n)_{p,q}) = 0$, as we have assumed $p$ and $q$ to be coprime, $E(n)_{p,q}$ admits a K{\"a}hler structure.
As we say, the fiber $F$ is a complex submanifold of $E(n)_{p,q}$, and hence a symplectic submanifold for any Kähler structure on $E(n)_{p,q}$.

Since Gompf’s fiber sum is a local operation that changes the symplectic structure only in neighborhoods of the symplectic submanifolds along which the sum is taken \cite{Gompf-fiber-sum}, by picking a K{\"a}hler structure on $E(n-2)_{p,q}$ and performing the symplectic sum along a regular fiber in $N(n-2)_{p,q}$ and a regular fiber in $N(2)$, we obtain a symplectic structure on $E(n)_{p,q}$ for which $M(2,3,7)$ is symplectically embedded.
\end{proof}

\begin{lemma}
\label{lem: BK mod 2 basic class}
Let $n \geq 1$ and $p \geq q \geq 1$. 
Suppose that $p$ and $q$ are odd, coprime integers, and that $(p, q)$ does not lie in the set
\begin{align}
\label{eq: exceptional p q}
\{(1,1), (1,3), (1,5), (1,7), (1,9), (3,5)\}.
\end{align}
Then $E(4n)_{p,q}$ admits a mod 2 basic class $\mathfrak{s}$ for which $c_1(\mathfrak{s})$ is divisible by 32.
\end{lemma}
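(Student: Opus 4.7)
The plan is to invoke the Fintushel--Stern product formula for the Seiberg--Witten invariants of elliptic surfaces with multiple fibers to enumerate the basic classes of $E(4n)_{p,q}$, and then to exhibit an explicit mod 2 basic class with $c_1$ divisible by $32$. By Fintushel--Stern, the SW basic classes of $E(4n)_{p,q}$ are parametrized by triples $(a,r,s)$ with $0\leq a\leq 4n-2$, $0\leq r\leq p-1$, $0\leq s\leq q-1$, with
\[
c_1(\mathfrak{s}_{a,r,s}) = (4n-2-2a)F + (p-1-2r)F_p + (q-1-2s)F_q,\quad \operatorname{SW}(\mathfrak{s}_{a,r,s}) = (-1)^a\binom{4n-2}{a}.
\]
Writing $f$ for the primitive fiber class (so $F=pqf$, $F_p=qf$, $F_q=pf$), the first Chern class becomes $c_1(\mathfrak{s}_{a,r,s}) = k_{a,r,s}\cdot f$ with $k_{a,r,s}=(4npq-p-q)-2(apq+rq+sp)$.

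Since $f$ is primitive in $H^2(E(4n)_{p,q};\mathbb{Z})$ and $4npq-p-q$ is even, the divisibility condition $32\mid c_1(\mathfrak{s}_{a,r,s})$ is equivalent to the linear congruence
\[
(\ast) \qquad apq+rq+sp \equiv 2npq - \tfrac{p+q}{2} \pmod{16}.
\]
By Lucas's theorem, $\binom{4n-2}{a}$ is odd precisely when the binary digits of $a$ form a submask of those of $4n-2$; in particular $a=0$ is always admissible. The proof will then reduce to finding an admissible $a$ together with $(r,s)\in [0,p-1]\times[0,q-1]$ solving $(\ast)$.

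Since $p$ and $q$ are odd and thus coprime to $16$, the assignment $(r,s)\mapsto rq+sp\bmod 16$ on $[0,p-1]\times[0,q-1]$ surjects onto $\mathbb{Z}/16$ whenever $\max(p,q)\geq 16$, and in that case $a=0$ directly solves $(\ast)$ for every $n$. For smaller $(p,q)$ the attainable set $R_{p,q}:=\{rq+sp\bmod 16\}$ may be a proper subset of $\mathbb{Z}/16$, and one will have to exploit the freedom in $a$: the set $\{apq\bmod 16 : a \text{ admissible}\}$ depends on the binary expansion of $4n-2$, and one looks for an admissible shift mapping the target $2npq-(p+q)/2$ into $R_{p,q}$. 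The hard part will be the finite combinatorial case analysis for small $(p,q)$: the exceptional set \eqref{eq: exceptional p q} consists of precisely those pairs for which, for some value of $n$, no admissible $a$ produces a solvable $(\ast)$ (the target residue $2npq-(p+q)/2$ shifts with $n$, and the Lucas-admissible $a$'s can shrink drastically when $4n-2$ has few binary digits), while for every pair outside this set one must verify directly that an appropriate triple $(a,r,s)$ exists for all $n\geq 1$.
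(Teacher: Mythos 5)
Your setup is sound, and it is worth noting that the paper itself does not carry out any computation here: its ``proof'' is a citation to Baraglia--Konno \cite[Proof of Theorem 5.2]{baragliakonno2024irreducible}, so what you are attempting is a self-contained re-derivation of that cited result. The skeleton you chose is the right one: the Fintushel--Stern formula gives all basic classes of $E(4n)_{p,q}$ as $c_1=\bigl[(4npq-p-q)-2(apq+rq+sp)\bigr]f$ with $\operatorname{SW}=(-1)^a\binom{4n-2}{a}$, divisibility by $32$ becomes your congruence $(\ast)$ mod $16$, oddness of $\operatorname{SW}$ becomes the Lucas submask condition on $a$, and the case $\max(p,q)\ge 17$ is correctly dispatched with $a=0$. (Two small assertions you use silently do need a word: that $f$ is primitive in $H^2(E(4n)_{p,q};\mathbb{Z})$, and that distinct triples $(a,r,s)$ give distinct values of $c_1$ --- the latter is what guarantees $\operatorname{SW}(\mathfrak{s}_{a,r,s})$ is the single binomial $(-1)^a\binom{4n-2}{a}$ rather than a signed sum that could be even; both are true and easy, but they are part of the argument.)

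The genuine gap is that the decisive step is not done: for the finitely many odd coprime pairs with $\max(p,q)\le 15$ outside \eqref{eq: exceptional p q}, you must actually exhibit, \emph{for every} $n\ge 1$, an admissible triple solving $(\ast)$, and your proposal explicitly defers this (``the hard part will be the finite combinatorial case analysis''). Moreover, as written it is not even clear the check is finite, since both the target residue and the set of Lucas-admissible $a$ vary with $n$; to reduce to a finite verification you need the observation that only $a\bmod 16$ matters in $(\ast)$, that the attainable residues of admissible $a$ mod $16$ are exactly the binary submasks of $(4n-2)\bmod 16\in\{2,6,10,14\}$ (so they depend only on $n\bmod 4$), and that the target $2npq-(p+q)/2\bmod 16$ depends only on $n\bmod 8$ --- whence a check over $n\bmod 8$ and the finitely many small pairs suffices. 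Without that reduction and the ensuing table (or an equivalent argument), the lemma is not proved; indeed the exceptional set is exactly the output of this analysis (e.g.\ for $(p,q)=(5,3)$ one finds that $n\equiv 5\pmod 8$ admits no solution, which is why $(3,5)$ is excluded, while for $(15,1)$ the value $a=0$ always works), and your final sentence simply asserts rather than establishes that every non-exceptional small pair passes for all $n$. So the proposal is a correct reduction plus an unexecuted finite computation, not yet a proof.
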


\begin{proof}
This is proven in \cite[Proof of Theorem 5.2.]{baragliakonno2024irreducible}.    
\end{proof}

\begin{theorem}
\label{thm: elliptic surface examples}
Let $n \geq 1$ and $p \geq q \geq 1$. 
Suppose that $p$ and $q$ are odd, coprime integers, and that $(p, q)$ does not lie in the set \eqref{eq: exceptional p q}.  Then $E(4n)_{p,q}$ admits a symplectic structure $\omega$ and a smooth embedding of $M(2,3,7)$ such that:
\begin{itemize}
    \item the embedding of $M(2,3,7)$ into $E(4n)_{p,q}$ is symplectic with respect to $\omega$, and
    \item the Dehn twist on $E(4n)_{p,q}$ along the boundary of $M(2,3,7)$ is not smoothly isotopic to the identity.
\end{itemize} 
\end{theorem}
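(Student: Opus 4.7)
The plan is to apply Corollary \ref{cor: examples} with $X = E(4n)_{p,q}$. The symplectic form $\omega$ on $X$ and the symplectic embedding $M = M(2,3,7) \hookrightarrow X \setminus N(4n)_{p,q}$ are supplied by Lemma \ref{lem: fiber sum}, and the spin-c structure $\mathfrak{s}$ with $c_1(\mathfrak{s})$ divisible by $32$ and $\operatorname{SW}(X,\mathfrak{s})$ odd is supplied by Lemma \ref{lem: BK mod 2 basic class}. Once the hypotheses of Corollary \ref{cor: examples} are verified, the smooth non-triviality of $(\tau_{S_1}\cdots\tau_{S_\mu})^h$ (for $(\mu, h) = (12, 42)$, the invariants of the $E_{12}$ singularity) is upgraded to the smooth non-triviality of the Seifert-fibered Dehn twist along $\partial M(2,3,7) = \Sigma(2,3,7)$ via \cite[Proposition 2.14]{KLMME}, which identifies these two elements of $\pi_0\operatorname{Diff}(X)$.

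Three of the four hypotheses of Corollary \ref{cor: examples} are immediate: the divisibility of $c_1(\mathfrak{s})$ by $32$ and the oddness of $\operatorname{SW}(X,\mathfrak{s})$ are the content of Lemma \ref{lem: BK mod 2 basic class}, and $\sigma(X) = \sigma(E(4n)) = -32n$ is divisible by $32$. The vanishing $d(\mathfrak{s}) = 0$ follows from the standard fact that Seiberg--Witten basic classes on $E(n)_{p,q}$ lie along the (rational) fiber direction, hence have $c_1(\mathfrak{s})^2 = 0$, combined with $2\chi(X) + 3\sigma(X) = 0$ for elliptic surfaces.

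The delicate remaining verification is $c_1(\mathfrak{s})|_M = 0$, and this is the main place where the geometry of the embedding is used. The plan is to exploit that the Baraglia--Konno basic class is built from the (multiple) fiber classes of the elliptic fibration, so that $\operatorname{PD}(c_1(\mathfrak{s})) \in H_2(X;\mathbb{Z})$ admits a smooth representative supported in the Gompf nucleus $N(4n)_{p,q}$. Since Lemma \ref{lem: fiber sum} ensures that $M$ is embedded disjointly from $N(4n)_{p,q}$, every vanishing sphere $S_i \subset M$ has $c_1(\mathfrak{s}) \cdot [S_i] = 0$. Because $H_2(M;\mathbb{Z})$ is freely generated by $[S_1], \ldots, [S_\mu]$ and the evaluation pairing $H^2(M;\mathbb{Z}) \otimes H_2(M;\mathbb{Z}) \to \mathbb{Z}$ is non-degenerate (since $M$ is simply-connected), this forces $c_1(\mathfrak{s})|_M = 0$.

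The hardest part of the overall argument is already packaged inside the two input lemmas --- Lemma \ref{lem: fiber sum} constructing the symplectic form compatible with the Milnor fiber embedding, and Lemma \ref{lem: BK mod 2 basic class} providing the basic class with the required divisibility and ``localised'' support in the Gompf nucleus. With those in hand, Corollary \ref{cor: examples} produces the smooth nontriviality of $(\tau_{S_1}\cdots\tau_{S_\mu})^h$, and the identification with the Seifert-fibered boundary Dehn twist completes the proof.
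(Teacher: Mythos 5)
Your proposal is correct and follows essentially the same route as the paper: invoke Lemma \ref{lem: fiber sum} for the symplectic embedding away from the nucleus, Lemma \ref{lem: BK mod 2 basic class} for the mod 2 basic class divisible by $32$, check $\sigma$-divisibility, $d(\mathfrak{s})=0$, and $c_1(\mathfrak{s})|_{M}=0$ (via the support of basic classes in $N(4n)_{p,q}$), and then apply Corollary \ref{cor: examples} together with the identification of $(\tau_{S_1}\cdots\tau_{S_\mu})^h$ with the boundary Dehn twist from \cite[Proposition 2.14]{KLMME}. Your spelled-out verifications of $d(\mathfrak{s})=0$ and of $c_1(\mathfrak{s})|_M=0$ via the perfect evaluation pairing on the simply-connected Milnor fiber are just more explicit versions of what the paper leaves implicit.
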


\begin{proof}
Lemma~\ref{lem: fiber sum} provides a symplectic structure $\omega$ on $E(4n)_{p,q}$ for which $M(2,3,7)$ is symplectically embedded into $(E(4n)_{p,q}, \omega)$. 
Lemma~\ref{lem: BK mod 2 basic class} yields a mod 2 basic class $\mathfrak{s}$ such that $c_1(\mathfrak{s})$ is divisible by 32. 
In addition, we have $c_1(\mathfrak{s})|_{M(2,3,7)} = 0$ by Lemma~\ref{lem: fiber sum} and the fact that every basic class of the elliptic surface $E(m)_{p,q}$ is supported in the nucleus $N(m)_{p,q}$ (see, for example, \cite[Theorem 3.3.6]{gompfstipsicz}). 
Moreover, the signature of $E(4n)_{p,q}$ is divisible by 32. 
Therefore, we can apply Corollary~\ref{cor: examples} to conclude that the Dehn twist on $E(4n)_{p,q}$ along the boundary of $M(2,3,7)$ is not smoothly isotopic to the identity. 
This completes the proof.
\end{proof}

\begin{corollary}
\label{cor: twist on elliptic surfaces}
Let $n,p,q$ be as in Theorem~\ref{thm: elliptic surface examples}.
Then there is a smooth embedding of $M(2,3,7)$ into $E(4n)_{p,q}$ such that the Dehn twist on $E(4n)_{p,q}$ along $\del M(2,3,7)=\Sigma(2,3,7)$ is an exotic diffeomorphism.
\end{corollary}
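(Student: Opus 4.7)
The plan is to combine the smooth nontriviality furnished by Theorem~\ref{thm: elliptic surface examples} with a topological triviality statement, using the identification of the boundary Seifert-fibered Dehn twist with a product of Dehn twists on vanishing spheres. First, apply Theorem~\ref{thm: elliptic surface examples} to obtain a symplectic form $\omega$ on $X:=E(4n)_{p,q}$ together with a symplectic embedding $(M(2,3,7),\omega_0)\hookrightarrow (X,\omega)$ such that the Dehn twist $T$ on $X$ along $\partial M(2,3,7)=\Sigma(2,3,7)$ is not smoothly isotopic to the identity. This handles the ``not smoothly trivial'' half of the definition of exotic.

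Next, I would argue that $T$ is topologically isotopic to the identity. By \cite[Proposition 2.14]{KLMME}, the Seifert-fibered Dehn twist along $\partial M(2,3,7)\subset X$ agrees in $\pi_0\mathrm{Diff}(X)$ with the product $(\tau_{S_1}\cdots \tau_{S_\mu})^h$ on the distinguished basis $S_1,\ldots,S_\mu\subset M(2,3,7)$ with $\mu=12$, $h=42$, for the exceptional unimodal singularity $x^2+y^3+z^7=0$. Since each $\tau_{S_i}$ acts on $H^2(X,\mathbb{Z})$ by a Picard--Lefschetz reflection and $(\tau_{S_1}\cdots \tau_{S_\mu})^h$ is the $h$-th power of the monodromy of the singularity, whose homological action has order $h$, the diffeomorphism $T$ acts trivially on $H^2(X,\mathbb{Z})$. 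Because $X=E(4n)_{p,q}$ is closed, simply-connected, and smooth, the combination of Quinn's topological isotopy theorem \cite{Quinn86}, Perron's pseudoisotopy result \cite{Perron}, and Gabai's recent work \cite{gabai2023pseudoisotopies} shows that a self-diffeomorphism of a closed simply-connected smooth $4$-manifold which acts trivially on homology is topologically isotopic to the identity. Applied to $T$, this gives the topological triviality.

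Combining the two halves, $T$ is topologically isotopic to the identity but not smoothly isotopic to the identity; that is exactly what it means for $T$ to be an exotic diffeomorphism, completing the proof. The main conceptual obstacle---the smooth nontriviality---has already been resolved by Theorem~\ref{thm: elliptic surface examples} via the family Bauer--Furuta machinery, so the only thing to check carefully here is the identification of $T$ with $(\tau_{S_1}\cdots\tau_{S_\mu})^h$ (provided by \cite[Proposition 2.14]{KLMME}) and the applicability of the Quinn--Perron--Gabai topological isotopy results in the simply-connected setting.
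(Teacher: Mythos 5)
Your proposal is correct and follows essentially the same route as the paper: smooth nontriviality is quoted from Theorem~\ref{thm: elliptic surface examples}, and topological triviality follows from homological triviality of the twist together with Quinn's theorem (with the correction of Gabai) for simply-connected closed $4$-manifolds. The only cosmetic difference is that you justify homological triviality via the factorization $(\tau_{S_1}\cdots\tau_{S_\mu})^h$ and the order-$h$ monodromy, whereas one can also note directly that the twist is supported near $\Sigma(2,3,7)$, a rational homology sphere; either argument is fine.
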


\begin{proof}
The non-triviality of the Dehn twist as a smooth mapping class has been proven in Theorem~\ref{thm: elliptic surface examples}. 
Thus, it suffices to show that the Dehn twist is trivial as a topological mapping class. 
This follows from the fact that the Dehn twist acts trivially on homology, together with a result of Quinn~\cite{Quinn86} (with a recent correction by \cite{gabai2023pseudoisotopies}), which states that a homeomorphism of a simply-connected closed $4$-manifold is topologically isotopic to the identity if it acts trivially on homology.
\end{proof}

\subsection{Non-symplectic irreducible 4-manifolds: knot surgery}\label{subsection:knowsurgery}

The first examples of exotic diffeomorphisms of simply-connected irreducible $4$-manifolds were recently constructed by Baraglia and the first author~\cite{baragliakonno2024irreducible}. 
However, there seems to be no reason to expect that the diffeomorphisms in~\cite{baragliakonno2024irreducible} can be written as Dehn twists along Seifert fibered $3$-manifolds.
Moreover, the construction in~\cite{baragliakonno2024irreducible} essentially uses realization results from complex geometry (\cite{Lonnediffeomorphism98,EbelingOkonekdiffeomorphism91}), so the 4-manifolds there are required to be K{\"a}hler (note that a complex surface admits a K{\"a}hler structure under the assumption of simple-connectivity). 
In contrast, our results can be used to detect exotic diffeomorphisms of irreducible 4-manifolds that do not even admit symplectic structures, highlighting a major difference between the method in~\cite{baragliakonno2024irreducible} and that of the present paper:

\begin{theorem}
\label{thm: nonsymplectic}
There exist simply-connected irreducible closed smooth $4$-manifolds $X$ that do not admit any symplectic structure but admit exotic diffeomorphisms.  
\end{theorem}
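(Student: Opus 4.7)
The plan is to apply the Fintushel--Stern knot surgery construction to the symplectic examples from Theorem~\ref{thm: elliptic surface examples}, using a non-fibered knot to destroy the symplectic structure while preserving all the hypotheses of Corollary~\ref{cor: examples}. Specifically, I would start with $X = E(4n)_{p,q}$ for $n,p,q$ as in Theorem~\ref{thm: elliptic surface examples}, together with the smooth embedding of the Milnor fiber $M = M(2,3,7) \subset X$ and the mod-$2$ basic class $\mathfrak{s}$ of Lemma~\ref{lem: BK mod 2 basic class}, and perform knot surgery along a regular torus fiber $T$ of the elliptic fibration lying in the complement of both the Gompf nucleus $N(4n)_{p,q}$ and the embedded Milnor fiber $M$; such a $T$ exists because $M$ lies away from $N(4n)_{p,q}$ by Lemma~\ref{lem: fiber sum}.

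The torus $T$ is $c$-embedded (its complement is simply connected), so by Fintushel--Stern the resulting manifold $X_K$ is simply connected and homeomorphic to $X$. I would take $K \subset S^3$ to be any non-fibered knot, for instance $K = 5_2$, with symmetrized Alexander polynomial $\Delta_K(t) = \sum_j a_j t^j$ satisfying $a_{-j} = a_j$. The Fintushel--Stern formula gives $\operatorname{SW}_{X_K}(\mathfrak{s}) = a_0 \cdot \operatorname{SW}_X(\mathfrak{s})$ for the class obtained from $\mathfrak{s}$, and the normalization $\Delta_K(1) = a_0 + 2\sum_{j>0} a_j = 1$ forces $a_0$ to be odd, so $\operatorname{SW}_{X_K}(\mathfrak{s})$ remains odd. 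The intersection form, signature, characteristic class $c_1(\mathfrak{s})$, and the embedded Milnor fiber $M$ are all unchanged by the surgery, so the remaining hypotheses of Corollary~\ref{cor: examples} transfer verbatim to $X_K$. Applying Corollary~\ref{cor: examples}, the product of Dehn twists $(\tau_{S_1}\cdots\tau_{S_\mu})^h$ is not smoothly isotopic to the identity on $X_K$; yet it is topologically isotopic to the identity by the same Quinn--Perron argument used in Corollary~\ref{cor: twist on elliptic surfaces}, hence defines an exotic diffeomorphism.

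The main obstacle is verifying that $X_K$ is an irreducible \emph{non-symplectic} $4$-manifold. Non-symplecticity follows from Taubes' constraint on the Seiberg--Witten polynomial of a closed symplectic $4$-manifold together with the Fintushel--Stern formula: were $X_K$ symplectic, its Seiberg--Witten polynomial would have leading coefficient $\pm 1$, but that polynomial equals the product of the Seiberg--Witten polynomial of $X$ (itself with leading coefficient $\pm 1$) with $\Delta_K$ in the variable $t^{2\mathrm{PD}[T]}$, so monicity would force $\Delta_K$ itself to be monic---contradicting our choice of non-fibered $K$ such as $5_2$. For irreducibility, since $X = E(4n)_{p,q}$ is minimal and irreducible, standard arguments for Fintushel--Stern surgery apply: the non-vanishing of Seiberg--Witten basic classes of $X_K$ rules out non-trivial splittings with $b^+ > 0$ on both factors, and the shape of the basic-class set inherited from $X$ together with the divisibility of $c_1(\mathfrak{s})$ rules out negative-definite summands, so $X_K$ remains irreducible. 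This yields the desired closed, simply-connected, irreducible, non-symplectic $4$-manifolds carrying exotic diffeomorphisms.
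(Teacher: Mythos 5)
Your overall strategy --- kill the symplectic structure by Fintushel--Stern knot surgery while keeping an odd Seiberg--Witten class with the right divisibility, then quote Corollary~\ref{cor: examples} --- is the same as the paper's (Example~\ref{ex: knot surgery}). However, as written there are genuine gaps. The central one is the claim $\operatorname{SW}_{X_K}(\mathfrak{s}) = a_0\cdot \operatorname{SW}_X(\mathfrak{s})$. The knot surgery formula is a convolution: $\operatorname{SW}_{X_K}(\mathfrak{s}) = \sum_j a_j \operatorname{SW}_X(\mathfrak{s}-2j[T])$, and $E(4n)_{p,q}$ has many basic classes, all multiples of the primitive fiber class $f$ (with $[T]=pq\,f$), so the classes $\mathfrak{s}-2j[T]$ for $j\neq 0$ can perfectly well be basic classes with odd invariant. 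Without pinning down the full basic-class set of $E(4n)_{p,q}$ and the position of the class from Lemma~\ref{lem: BK mod 2 basic class} in it, neither the value nor even the parity of $\operatorname{SW}_{X_K}(\mathfrak{s})$ is established; note that switching to the extreme class of $X_K$ does not rescue your choice $K=5_2$, whose leading Alexander coefficient is $2$. The paper sidesteps all of this by starting from $E(n)$ with \emph{no} logarithmic transforms, where $\mathcal{SW}(E(n))=(t-t^{-1})^{n-2}$ is known explicitly, using $K=\#_N T(k)$ with $k$ odd, and reading off the leading coefficient $k^N$ of the product polynomial, with $c_1(\mathfrak{s})=(n-2+N)t$; the divisibility by $32$ is then arranged by choosing $N$ (not inherited from Lemma~\ref{lem: BK mod 2 basic class}), and $k\geq 2$ makes \emph{every} coefficient different from $\pm1$, so Taubes applies directly without the monicity discussion.

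There are also problems with the torus you propose. A regular fiber of the elliptic fibration disjoint from the Gompf nucleus $N(4n)_{p,q}$ does not exist: every regular fiber meets the section, hence the nucleus, and ``$M$ lies away from the nucleus'' does not produce such a $T$. (The natural fix, and what the paper does, is to take $T$ \emph{inside} the nucleus, which is automatically disjoint from $M$.) Moreover, your parenthetical ``its complement is simply connected'' is false in $E(4n)_{p,q}$: since $[F]=pq\,f$ is divisible, $H_1(X\setminus F)\cong \mathbb{Z}/pq\neq 0$, so the hypotheses under which you invoke the Fintushel--Stern formula and the homeomorphism statement need separate justification (c-embeddedness alone does give $\pi_1(X_K)=1$, but this must be argued, and the knot-surgery formula as originally stated assumes $\pi_1(X\setminus T)=1$). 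Finally, your irreducibility sketch points at the right Szab\'o-type argument, but the mechanism is not ``divisibility of $c_1(\mathfrak{s})$'': it is that all basic classes of $X_K$ are multiples of the square-zero fiber class, so a difference of basic classes has square $0$, which is incompatible with the square $-4m$ forced by a negative-definite summand $\#_m\overline{\mathbb{CP}}^2$ via the blow-up formula.
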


The proof of this theorem is elaborated in the following example:

\begin{example}
\label{ex: knot surgery}
We consider Fintushel–Stern's knot surgery~\cite{FS-knot-surgery}.
Let 
$k \geq 1$, and $T(k)$ be the $k$-twist knot (see Figure~1 in \cite{FS-knot-surgery}). 
As noted in \cite{FS-knot-surgery}, the Alexander polynomial of $T(k)$ is given by
\[
\Delta_{T(k)}(t) = kt - (2k+1) + kt^{-1}.
\]
For a positive integer $N\geq1$, put $K(k,N) = \#_N T(k)$.
Since the Alexander polynomial is multiplicative under connected sum, we have
\begin{align}
\label{eq: Alexander poly}
\Delta_{K(k,N)}(t) = (kt - (2k+1) + kt^{-1})^N.    
\end{align}

For $n\geq1$, pick a regular elliptic fiber $F$ of $E(n)$ for a given elliptic fibration structure on $E(n)$.
Let $X$ be the Fintushel--Stern knot surgery of $E(n)$ along $F$ using the knot $K(k,N)$: in the notation of \cite{FS-knot-surgery}, $X = E(n)_{K(k,N)}$.

The Seiberg--Witten invariant of $E(n)$ expressed as a Laurent polynomial is given by
\[
\mathcal{SW}(E(n)) = (t-t^{-1})^{n-2},
\]
where $t$ is the (Poincar{\'e} dual of the) homology class of the fiber $F$ (see, for example, \cite[Lecture 2]{FS6lec} or \cite[Theorem 3.3.20]{Nicolaescu-book}).
Hence it follows from the knot surgery formula \cite[Theorem 1.5]{FS-knot-surgery} and \eqref{eq: Alexander poly} that 
\begin{align}
\label{eq: SW polynomial}
\mathcal{SW}(X) = (t-t^{-1})^{n-2}(kt - (2k+1) + kt^{-1})^N.
\end{align}
Expanding \eqref{eq: SW polynomial}, we see that the coefficient of every term is neither $1$ nor $-1$, provided that $k \ge 2$. 
This means that the Seiberg–Witten invariant is neither $1$ nor $-1$ for any spin$^c$ structure. 
Hence, $X$ does not admit a symplectic structure by Taubes’s theorem~\cite{Taubes94}.

We shall use the leading term of \eqref{eq: SW polynomial}, which is
$k^N t^{n-2+N}$, so the Seiberg--Witten invariant of the spin-c structure $\mathfrak{s}$ with 
\begin{align}
\label{eq: dibisibility}
c_1(\mathfrak{s}) = (n-2+N)t
\end{align}
is given by 
\begin{align}
\label{eq: SW inv compute}
\operatorname{SW}(X,\mathfrak{s}) = k^N.
\end{align}

Now we see that the 4-manifold $X$ is irreducible, following \cite[Proof of Theorem 1.6]{Szabo-non-symplectic-irreducible}. 
Assume that $X$ splits into a connected sum, $X= Y\# Z$.
Since the Seiberg--Witten invariant of $X$ is non-trivial as seen above, one of $Y$ and $Z$, say $Z$, is negative-definite.
It follows from Donaldson's diagonalization theorem that $Z$ is homotopy equivalent to $\#_m\overline{\CP}^2$ for some $m\geq0$. Now the blow-up formula of the Seiberg--Witten invariant \cite{FS-blow-up} shows that every basic class of $X$ is of the form $L\pm E_1 \pm \cdots \pm E_n$, where the signs need not be the same.
Here $ (E_1,\cdots, E_m)$ is a basis of $H^2(Z;\Z)$ with $E_i^2= -1$ and $L$ is a basic class of $Y$. 
If $m>0$, let $K, K'$ be basic classes defined by $K=L+E_1+\dots+E_m$ and $K'=L-E_1-\dots-E_m$. 
Then $K-K'=2(E_1+\cdots+E_m)$, thus $(K-K')^2=-4m$.
However, by construction, every basic class of $X$ is a multiple of (the Poincar{\'e} dual of) the fiber $F$ and the self-intersection number of the fiber is zero.
Thus we should have $(K-K')^2=0$. Thus $m=0$, which implies that $X$ is irreducible.

Now we make the following assumptions:
\begin{align*}
\text{$n$ is divisible by $4$}, \  \text{$n - 2 + N$ is divisible by $32$}, \ 
k \ge 3, \ \text{$k$ is odd}.
\end{align*}
It is clear that there are infinitely many tuples $(n, N, k)$ satisfying these assumptions.  
Under these conditions, $c_1(\mathfrak{s})$ is divisible by $32$ by~\eqref{eq: dibisibility}, and $\operatorname{SW}(X, \mathfrak{s})$ is odd by~\eqref{eq: SW inv compute}.  
Furthermore, $\sigma(X)$ is divisible by $32$, since $n$ is divisible by $4$.  
As observed in Lemma~\ref{lem: fiber sum}, the Milnor fiber $M(2,3,7)$ is smoothly embedded in $E(n)$ away from $N(n)$, and we may assume that the knot surgery is performed on $N(n)$.  
Therefore, $M(2,3,7)$ is smoothly embedded in $X$.  
Thus, we can apply Corollary~\ref{cor: examples} and conclude that the Dehn twist on $X$ along $\partial M(2,3,7)$ is not smoothly isotopic to the identity.  
Together with the topological triviality result of~\cite{Quinn86}, we conclude that this Dehn twist defines an exotic diffeomorphism of $X$.
\end{example}

\newpage
\appendix

\section{Mathematica code}\label{appendix:code}

\begin{lstlisting}[language=Mathematica, basicstyle=\scriptsize\ttfamily]
(*Gabrielov numbers and monodromy orders*)
G = {{2, 3, 7}, {2, 3, 8}, {2, 3, 9}, {2, 4, 5}, {2, 4, 6}, {2, 4, 7}, {2, 5, 5}, {2, 5, 6}, {3, 3, 4}, {3, 3, 5}, {3, 3, 6}, {3, 4, 4}, {3, 4, 5}, {4, 4, 4}};
orders = {42, 30, 24, 30, 22, 18, 20, 16, 24, 18, 15, 16, 13, 12};

For[k = 1, k <= Length[G], k++,
 {p, q, r} = G[[k]];
 h = orders[[k]];
 n = p + q + r;
 (*Create intersection matrix*)
 M = DiagonalMatrix[ConstantArray[-2, n]];
 For[i = 1, i <= n - 3, i++,
  If[i < 
      p - 1 || (i > p - 1 && i < p + q - 2) || (i > p + q - 2 && 
       i < p + q + r - 3), M[[i, i + 1]] = 1;
    M[[i + 1, i]] = 1;];];
 M[[n - 2, p - 1]] = 1; M[[p - 1, n - 2]] = 1;
 M[[n - 2, p + q - 2]] = 1; M[[p + q - 2, n - 2]] = 1;
 M[[n - 2, p + q + r - 3]] = 1; M[[p + q + r - 3, n - 2]] = 1;
 M[[n - 1, p - 1]] = 1; M[[p - 1, n - 1]] = 1;
 M[[n - 1, p + q - 2]] = 1; M[[p + q - 2, n - 1]] = 1;
 M[[n - 1, n - 3]] = 1; M[[n - 3, n - 1]] = 1;
 M[[n - 2, n - 1]] = -2; M[[n - 1, n - 2]] = -2;
 M[[n, n - 1]] = 1; M[[n - 1, n]] = 1;
 
 (*Reflection*)
 e[i_] := UnitVector[n, i];
 R[i_, v_List] := v + (v . M[[i]])*e[i];
 (*Vectors a and b*)
 a = 2*e[n - 2] - 2*e[n - 1] - e[n];
 b = ConstantArray[0, n];
 For[i = 1, i <= p - 1, i++, b = b + (i/p)*e[i];];
 For[i = 1, i <= q - 1, i++, b = b + (i/q)*e[p - 1 + i];];
 For[i = 1, i <= r - 1, i++, b = b + (i/r)*e[p + q - 2 + i];];
 b = b + e[n - 2];
 innerProduct[u_List, v_List, N_List] := (u . N) . v;
 (*Compute endpoints of segments in loop eta*)
 vecList = {{1, 0}};
 v = a;
 For[j = 1, j <= h, j++,
  For[i = 1, i <= n, i++,
    v = R[i, v];
    vproj = {innerProduct[v, a, M]/innerProduct[a, a, M], 
      innerProduct[v, b, M]/innerProduct[b, b, M]};
    AppendTo[vecList, vproj];
    ];
  ];
 (*Plot loop eta*)
 l = Length[vecList];
 colors = Table[ColorData["GrayTones"][m/l], {m, 1, l}];
 plot1 = 
  Graphics[{Table[{colors[[m]], PointSize[0.015], 
      Point[vecList[[m]]]}, {m, 1, l}], 
    Table[{colors[[m]], Arrowheads[0.02], 
      Arrow[{vecList[[m]], vecList[[m + 1]]}]}, {m, 1, l - 1}]}, 
   Axes -> True, AxesOrigin -> {0, 0}, GridLines -> Automatic, 
   PlotLabel -> Row[{"p = ", p, ", q = ", q, ", r = ", r}], 
   ImageSize -> 600, AspectRatio -> 1];
 Print[plot1];
 ]
\end{lstlisting}

 \newpage
\section{$\Delta$ for the remaining Exceptional Unimodal Singularities}\label{appendix:figures}

\begin{figure}[htbp]
    \centering

    \begin{subfigure}[b]{0.45\textwidth}
        \includegraphics[width=\textwidth]{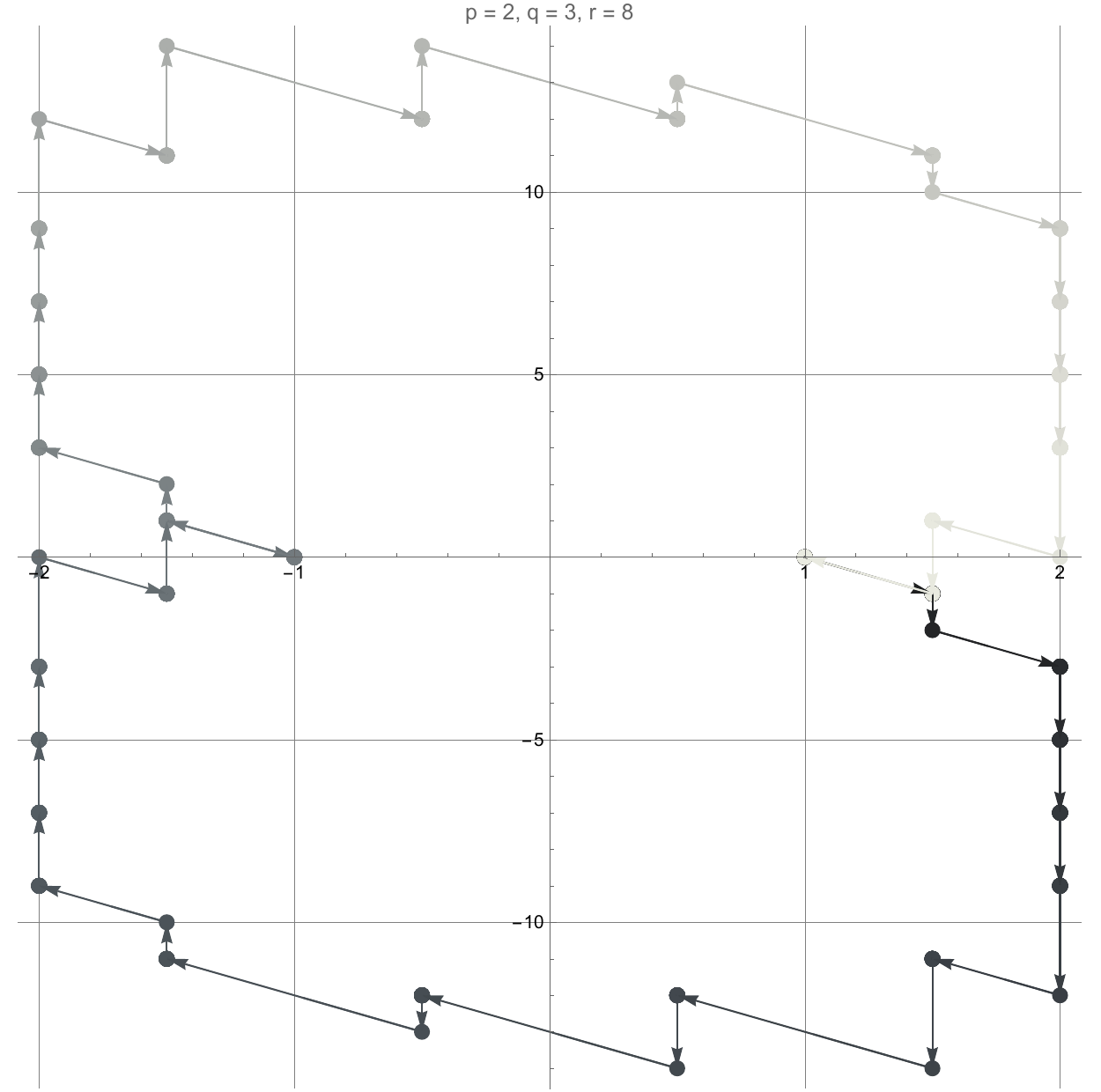}
    \end{subfigure}
    \hfill
    \begin{subfigure}[b]{0.45\textwidth}
        \includegraphics[width=\textwidth]{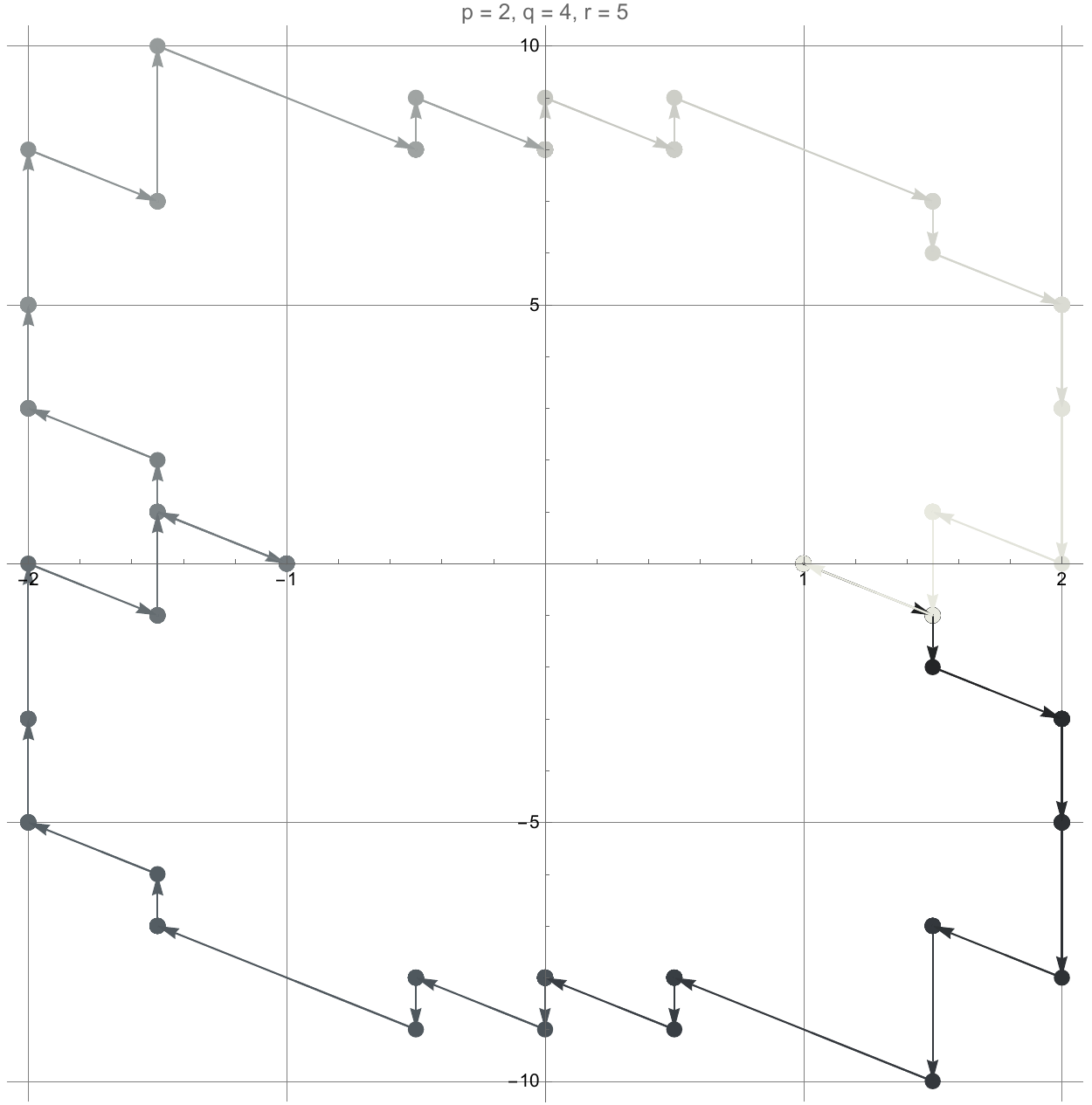}
    \end{subfigure}

    \vspace{0.3cm}

    \begin{subfigure}[b]{0.45\textwidth}
        \includegraphics[width=\textwidth]{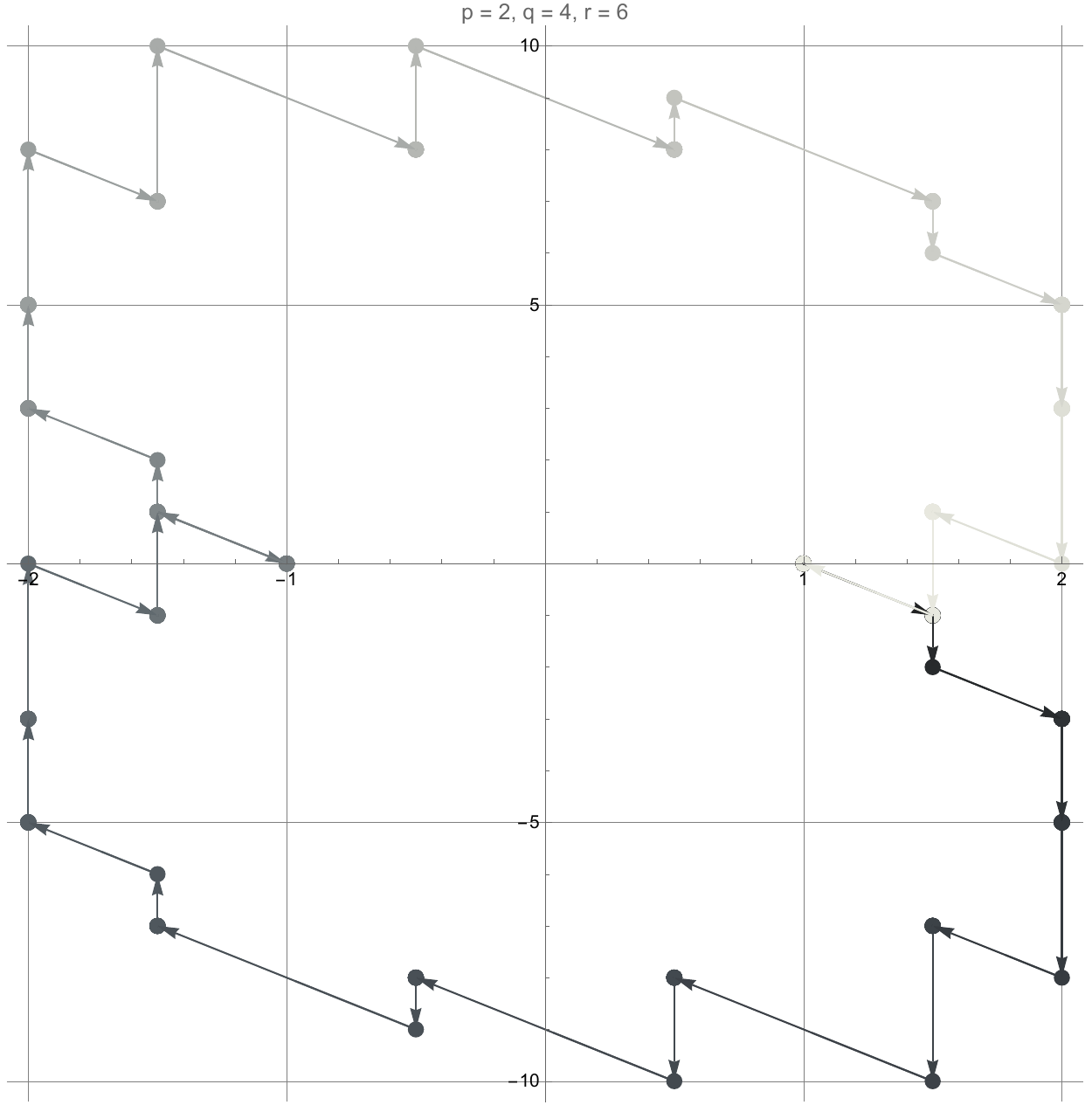}
    \end{subfigure}
    \hfill
    \begin{subfigure}[b]{0.45\textwidth}
        \includegraphics[width=\textwidth]{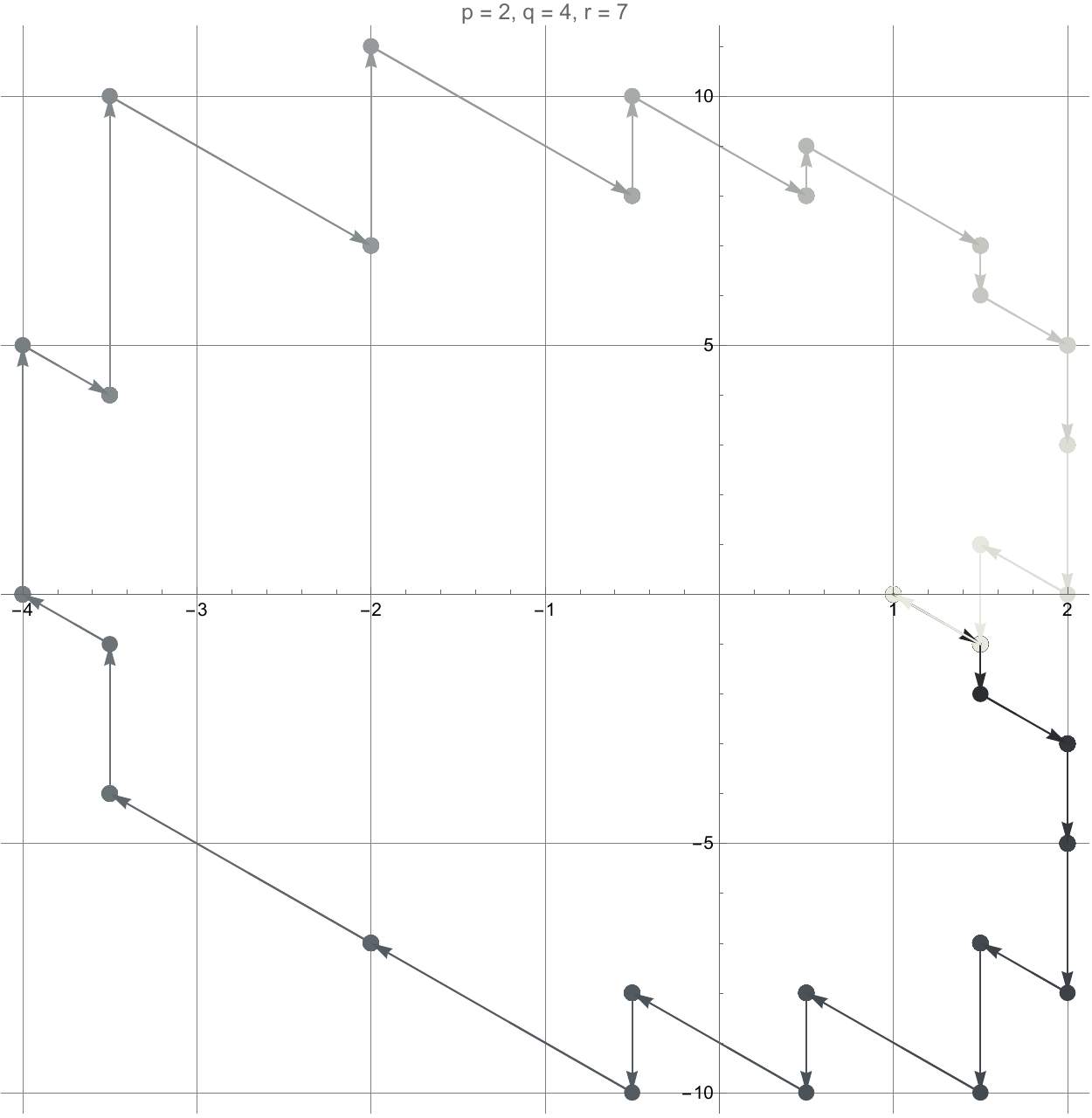}
    \end{subfigure}

    \vspace{0.3cm}

    \begin{subfigure}[b]{0.45\textwidth}
        \includegraphics[width=\textwidth]{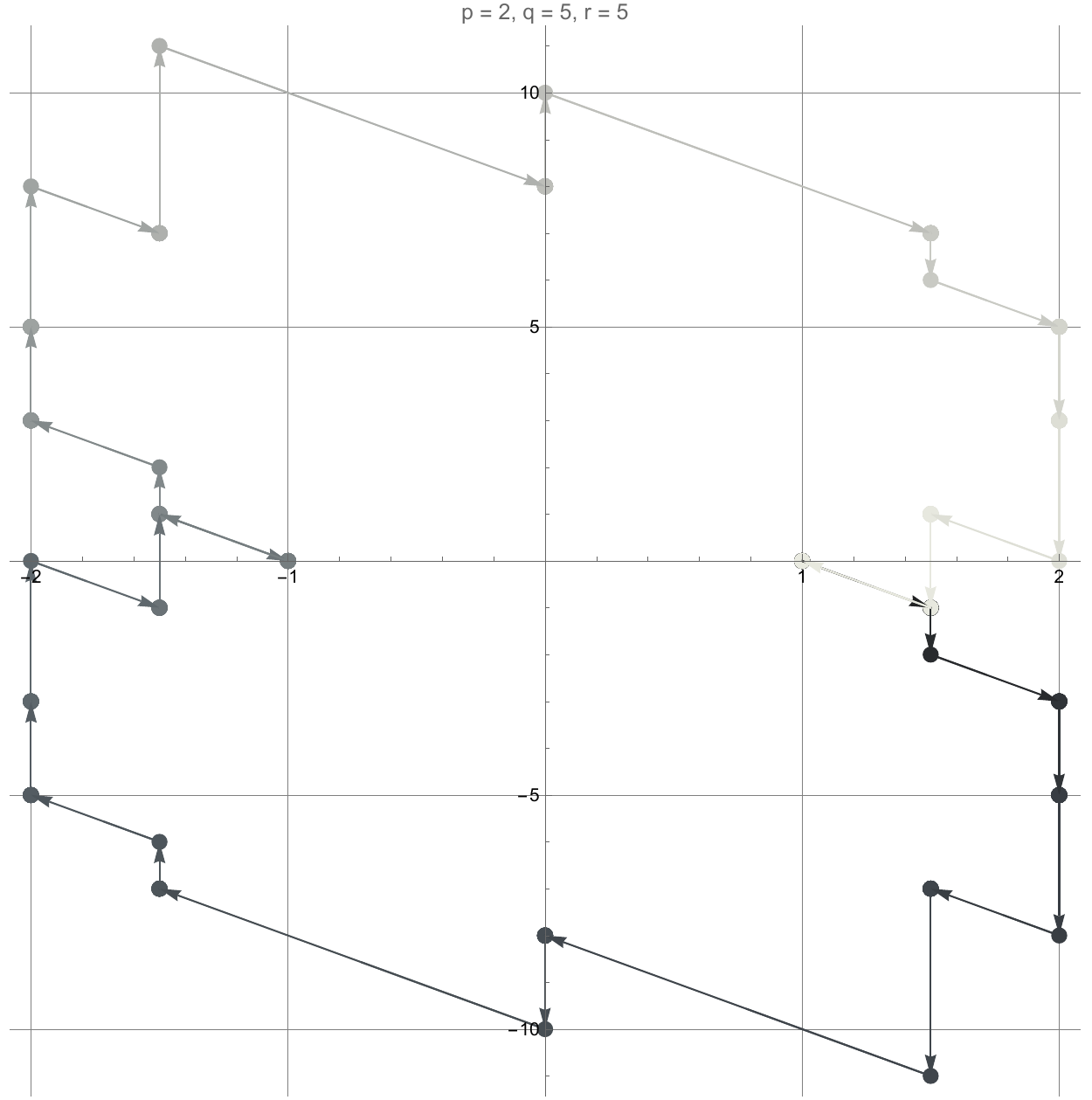}
    \end{subfigure}
    \hfill
    \begin{subfigure}[b]{0.45\textwidth}
        \includegraphics[width=\textwidth]{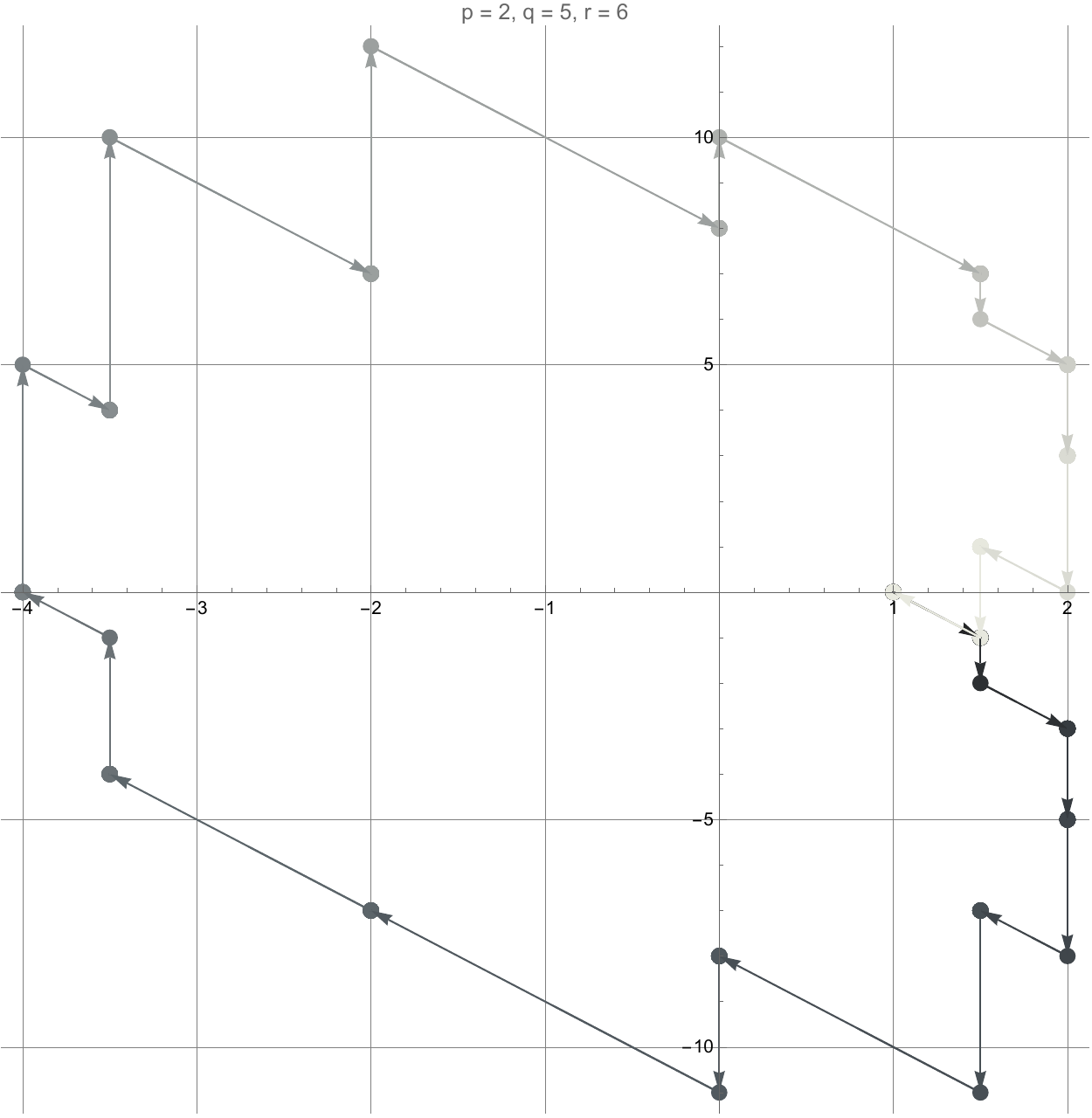}
    \end{subfigure}

\end{figure}

\clearpage

\begin{figure}[htbp]
    \centering

    \begin{subfigure}[b]{0.45\textwidth}
        \includegraphics[width=\textwidth]{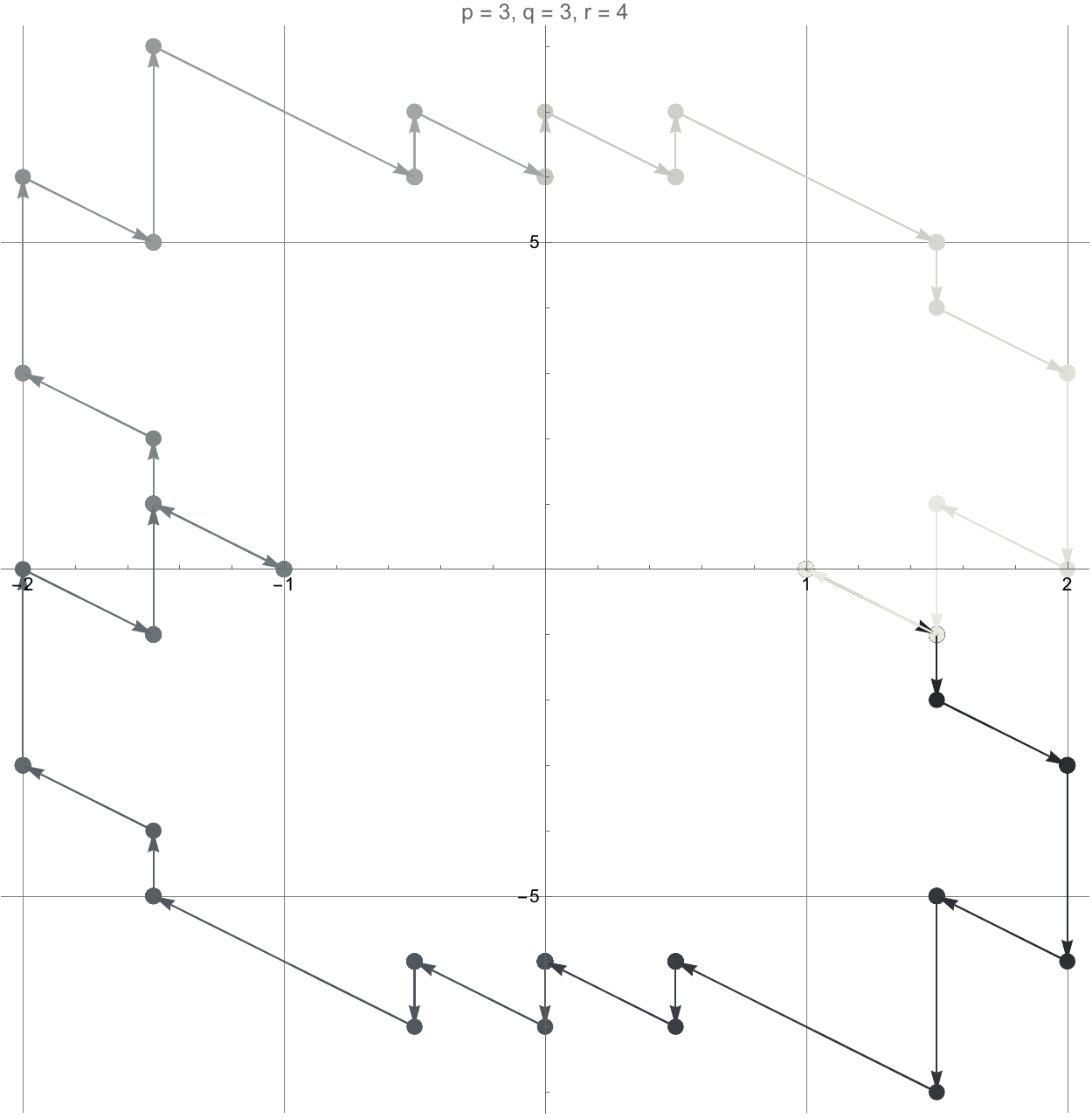}
    \end{subfigure}
    \hfill
    \begin{subfigure}[b]{0.45\textwidth}
        \includegraphics[width=\textwidth]{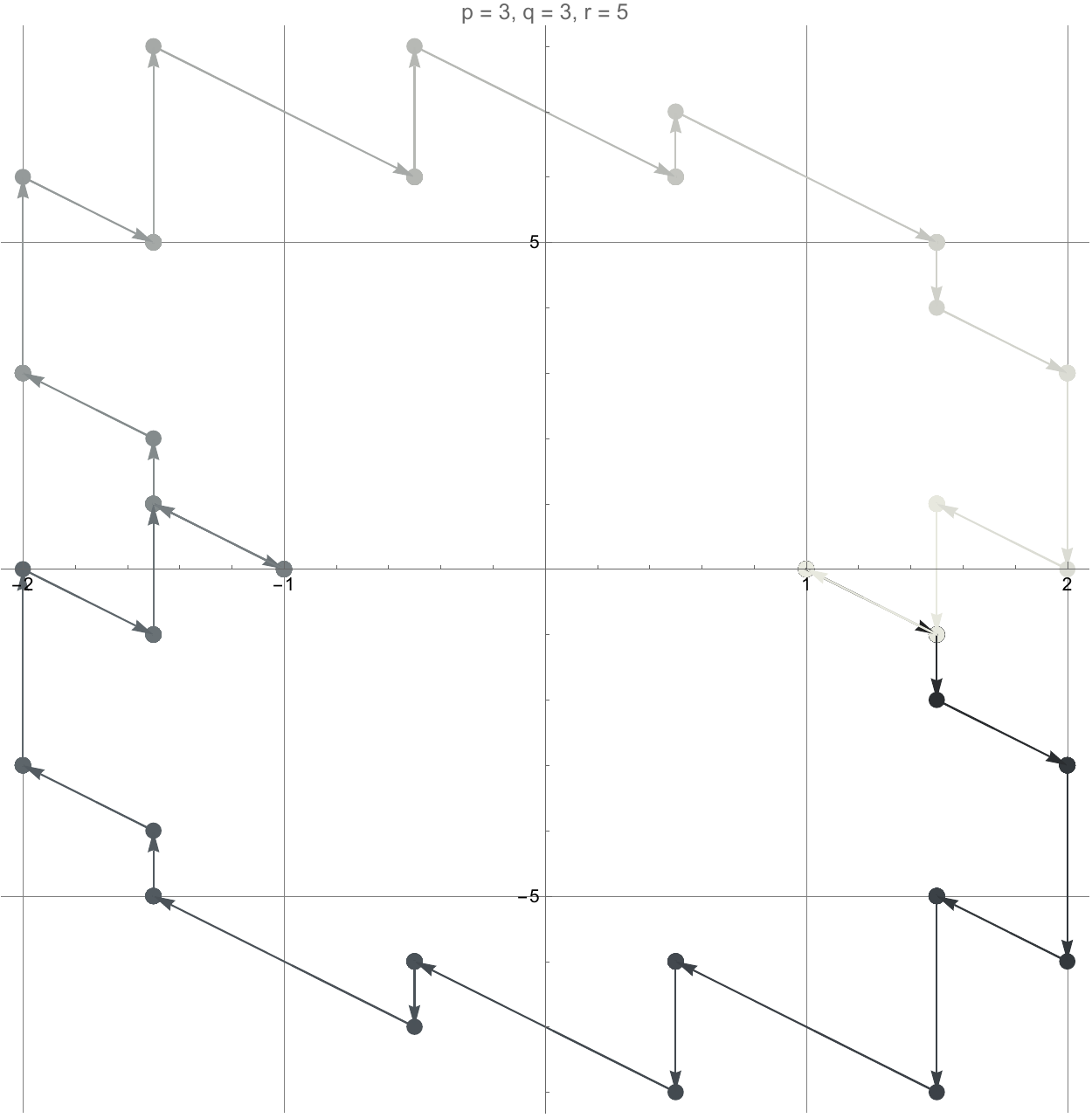}
    \end{subfigure}

    \vspace{0.3cm}

    \begin{subfigure}[b]{0.45\textwidth}
        \includegraphics[width=\textwidth]{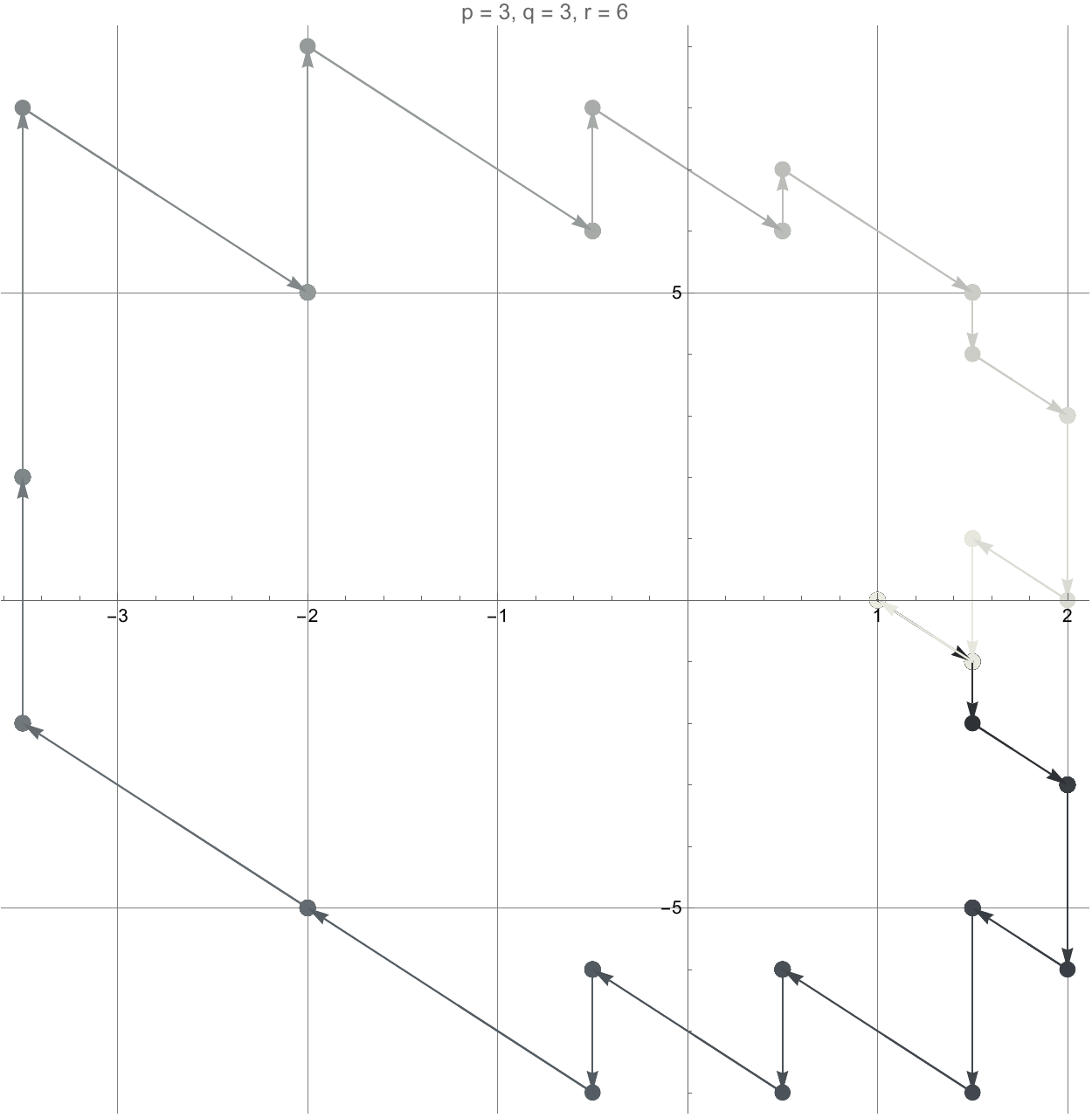}
    \end{subfigure}
    \hfill
    \begin{subfigure}[b]{0.45\textwidth}
        \includegraphics[width=\textwidth]{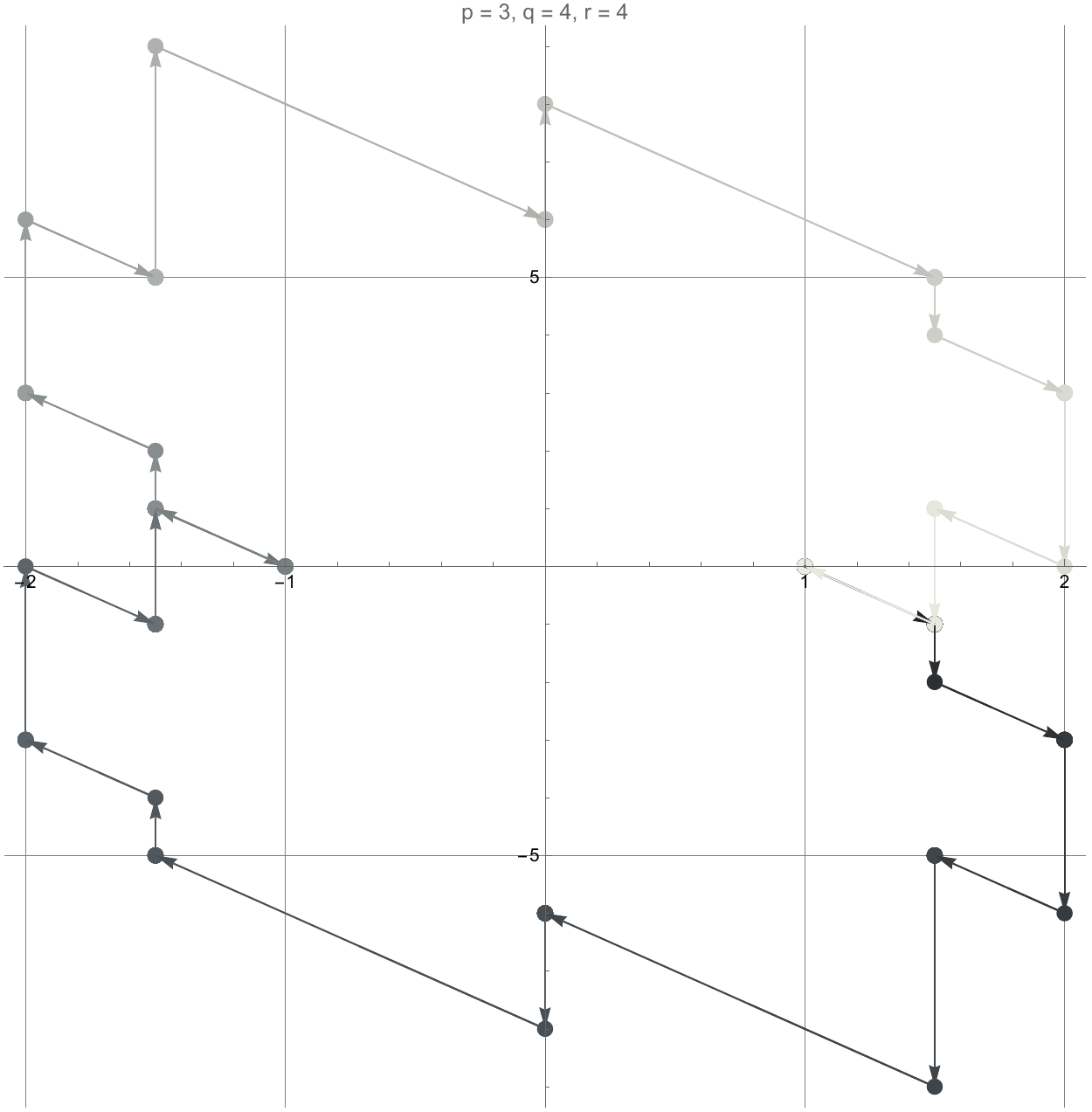}
    \end{subfigure}

    \vspace{0.3cm}

    \begin{subfigure}[b]{0.45\textwidth}
        \includegraphics[width=\textwidth]{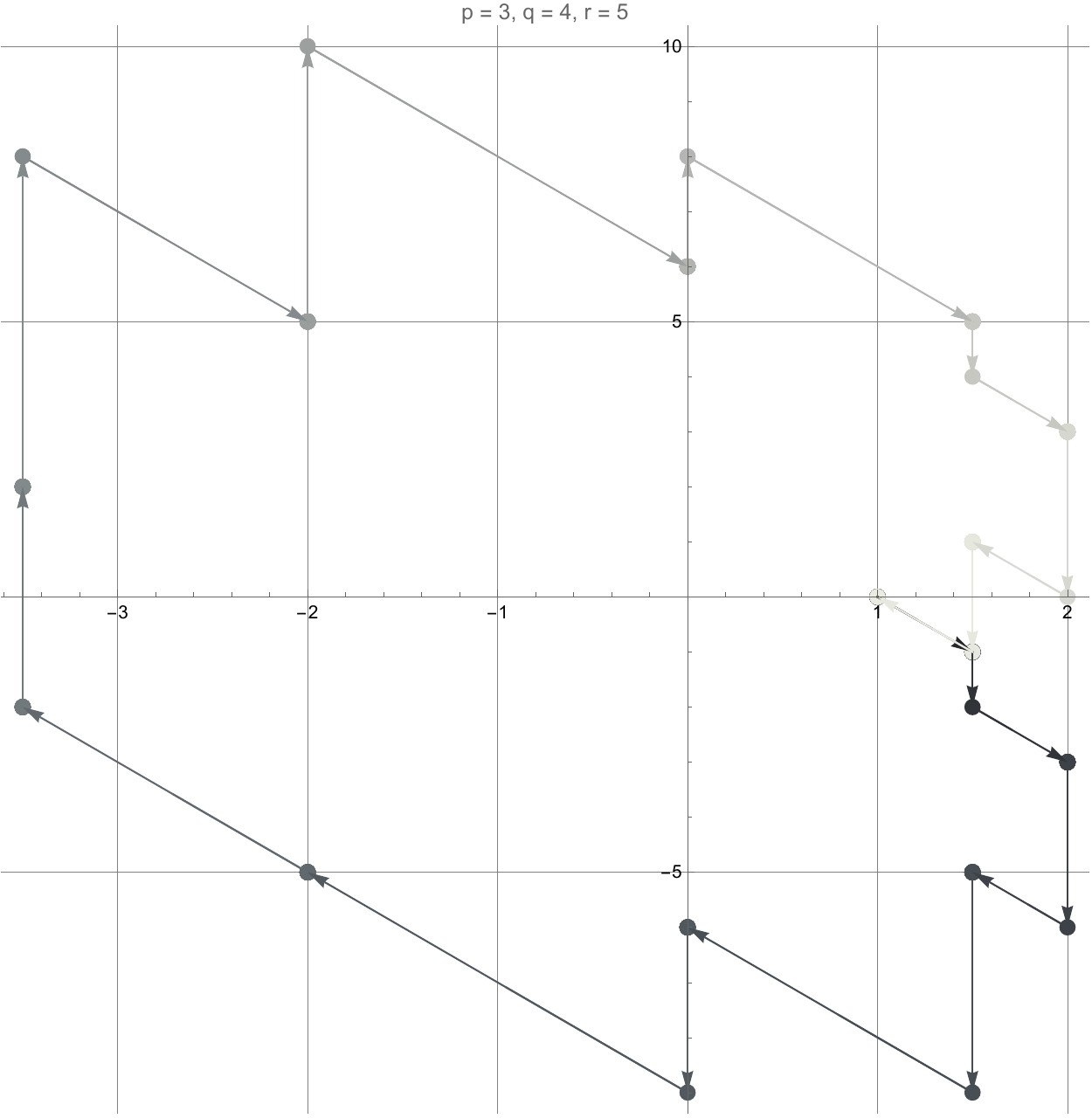}
    \end{subfigure}
    \hfill
    \begin{subfigure}[b]{0.45\textwidth}
        \includegraphics[width=\textwidth]{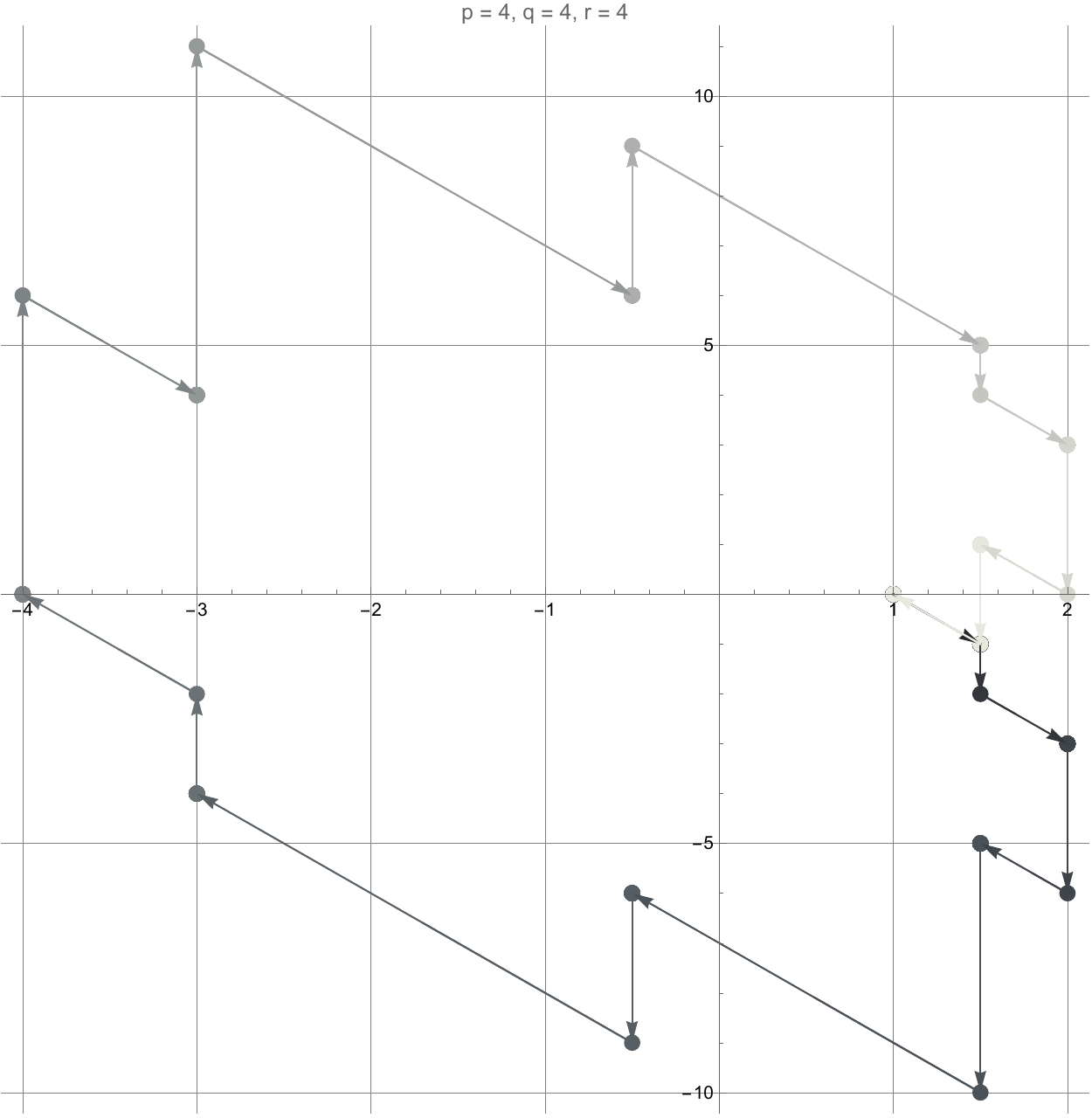}
    \end{subfigure}

\end{figure}


\newpage

\bibliographystyle{alpha}
\bibliography{main.bib}

\end{document}